\documentclass[reqno]{amsart}

\usepackage{xypic,hyperref,todonotes,comment}
\usepackage{graphics,amssymb}
\usepackage{pagecolor,lipsum}
\usepackage{tikz}
\usepackage{latexsym}
\usepackage{mathrsfs}
\usepackage[normalem]{ulem}
\usepackage{hyperref}
\hypersetup{ 
colorlinks,
citecolor=cyan,
filecolor=cyan,
linkcolor=cyan,
urlcolor=cyan
}

\xyoption{curve}

\newtheorem{lemma}{Lemma}[section]
\newtheorem{proposition}[lemma]{Proposition}
\newtheorem{theorem}[lemma]{Theorem}
\newtheorem{corollary}[lemma]{Corollary}

\newtheorem*{theoremA}{Theorem}

\theoremstyle{definition}

\newtheorem{definition}[lemma]{Definition}
\newtheorem{remark}[lemma]{Remark}

\newcommand{\mfk}[1]{\mathfrak{#1}}
\newcommand{\mbb}[1]{\mathbb{#1}}
\newcommand{\mcl}[1]{\mathcal{#1}}
\newcommand{\mrm}[1]{\mathrm{#1}}
\newcommand{\msc}[1]{\mathscr{#1}}
\newcommand{\mbf}[1]{\mathbf{#1}}

\newcommand{\nocontentsline}[3]{}
\newcommand{\tocless}[2]{\bgroup\let\addcontentsline=\nocontentsline#1{#2}\egroup}

\DeclareMathOperator{\Hom}{Hom}
\DeclareMathOperator{\End}{End}
\DeclareMathOperator{\Ext}{Ext}
\DeclareMathOperator{\Aut}{Aut}

\DeclareMathOperator{\rep}{rep}
\DeclareMathOperator{\Rep}{Rep}

\DeclareMathOperator{\Ind}{Ind}

\DeclareMathOperator{\Fr}{Fr}
\DeclareMathOperator{\SL}{SL}
\DeclareMathOperator{\PSL}{PSL}
\DeclareMathOperator{\ord}{ord}
\DeclareMathOperator{\res}{res}
\DeclareMathOperator{\FPdim}{FPdim}
\DeclareMathOperator{\TL}{TL}
\DeclareMathOperator{\K}{K}

\newcommand{\0}{{\small|0\rangle}}

\newcommand{\ot}{\otimes}

\newcommand{\dotu}{\dot{\mathbf{u}}}

\newcommand{\aff}{\mathrm{aff}}

\renewcommand{\1}{\mathbf{1}}
\renewcommand{\tilde}{\widetilde}
\renewcommand{\O}{\mathscr{O}}
\renewcommand{\binom}[2]{{\Small\left[\begin{matrix}\ #1\ \\ #2 \end{matrix} \right]}}

\bibliographystyle{plain}
\setcounter{tocdepth}{1}

%\pagecolor{yellow!30}
%\usepackage[notcite,notref]{showkeys}

\title[]{Quantum $\operatorname{SL}(2)$ and logarithmic vertex operator algebras at $(p,1)$-central charge}
\date{\today}

\author{Terry Gannon}
\email{tgannon@math.ualberta.ca}
\address{Department of Mathematics, University of Alberta,
Edmonton, Alberta, Canada T6G 2G1}

\author{Cris Negron}
\email{cnegron@email.unc.edu}
\address{Department of Mathematics, University of North Carolina, Chapel Hill, NC 27599}

%_______________________________ -  Background - _____________________________________
\begin{document}
\maketitle

\begin{abstract}
We provide a ribbon tensor equivalence between the representation category of small quantum $\SL(2)$, at parameter $q=e^{\pi i/p}$, and the representation category of the triplet vertex operator algebra at integral parameter $p>1$.  We provide similar quantum group equivalences for representation categories associated to the Virasoro, and singlet vertex operator algebras at central charge $c=1-6(p-1)^2/p$.  These results resolve a number of fundamental conjectures coming from studies of logarithmic CFTs in type $A_1$.
\end{abstract}

\tocless\section{}

We prove a number of fundamental conjectures which relate quantum group representations for $\SL(2)$, and modules for non-rational vertex operator algebras VOAs at central charge $c=1-6(1-p)^2/p$.  We consider specifically the triplet $\mcl{W}_p$, singlet $\mcl{M}_p$, and Virasoro $\mcl{V}ir_c$ vertex operator algebras.
\par

For the Virasoro VOA, we consider a category of modules which is the logarithmic analog of a rational minimal model.  By \emph{logarithmic} we mean, in the simplest sense, that the theories under consideration are non-rational, i.e. non-semisimple. The rational minimal models occur at central charge $c_{p,q}=1-6(p-q)^2/pq$, for coprime $p,q>1$. Their logarithmic analogs, which occur at central charge $c=c_{p,1}$, were first considered in works of Pearce, Rasmussen, and Zuber \cite{pearceetal06,rasmussenpearce07}, though from a physical perspective.
\par

The triplet and singlet algebras have been studied extensively from both physical, and representation theoretic perspectives (many of these papers are included in our references below), with both algebras appearing first in work of Kausch from the early 90's \cite{kausch91}.  Taken together, these three classes of vertex operator algebras $\mcl{V}ir_c$, $\mcl{M}_p$, and $\mcl{W}_p$, provide the most well-studied non-rational VOAs, or logarithmic chiral CFTs, available to us at the present moment.
\par

In recent works of Creutzig, McRae, and Yang \cite{creutzigmcraeyang,mcraeyang}, and earlier work of Tsuchiya and Wood \cite{tsuchiyawood13}, it was shown that each of the VOAs mentioned above admits a corresponding ribbon tensor category of ``affine" representations.  We denote these representation categories by
\begin{equation}\label{eq:cats}
\rep(\mcl{V}ir_c)_{\aff},\ \ \rep(\mcl{M}_p)_{\aff},\ \ \text{and}\ \rep(\mcl{W}_p)
\end{equation}
respectively.  For modules over the triplet $\rep(\mcl{W}_p)$, we simply consider finite length $\mcl{W}_p$-modules.  The constructions of the categories $\rep(\mcl{V}ir_c)_{\aff}$ and $\rep(\mcl{M}_p)_{\aff}$ are slightly more involved, and are recalled in Sections \ref{sect:Vaff} and \ref{sect:Mp_rep} below.  Let us say here that the simple objects in $\rep(\mcl{V}ir_c)_{\aff}$ and $\rep(\mcl{M}_p)_{\aff}$ are those simple modules of integral lowest (conformal) weight $h_{n,s}=\frac{1}{4p}((np-s)^2-(p-1)^2)$, $n,s\in \mbb{Z}$.  An important point is that the categories \eqref{eq:cats} are all \emph{affine}, in the sense that they each admit a distinguished tensor generator, or distinguished faithful representation if one prefers (cf.\ \cite[\S II.5 Corollaire 5.2]{demazuregabriel70}).

We compare the tensor categories of \eqref{eq:cats} to categories of representations for quantum $\SL(2)$ at parameter $q=\exp(\pi i/p)$.  We consider specifically the categories
\[
\rep\SL(2)_q,\ \ \rep(\dotu_q(\mfk{sl}_2)),\ \text{and}\ \rep(u_q(\mfk{sl}_2))
\]
of character graded representations of Lusztig's divided power algebra $U^{Lus}_q(\mfk{sl}_2)$, character graded representations of small quantum $\SL(2)$, and usual representations of small quantum $\SL(2)$ respectively (see Sections \ref{sect:SL2q} and \ref{sect:udot}).  We establish the following collection of equivalences, which were conjectured across the works \cite{gainutdinovetal06,bfgt09,creutzigmilas14,costantinoetal15,creutzigetal}.

\begin{theoremA}[{\ref{thm:triplet}/\ref{thm:LMM}/\ref{thm:singlet}}]
There are equivalences of ribbon tensor categories
\[
\left\{\begin{array}{l}
\K:\rep\SL(2)_q\overset{\sim}\to \rep(\mcl{V}ir_c)_{\aff}\vspace{2mm}\\
\Psi:\rep(\dotu_q(\mfk{sl}_2))\overset{\sim}\to \rep(\mcl{M}_p)_{\aff}\vspace{2mm}\hspace{3cm}\\
\Theta:\rep(u_q(\mfk{sl}_2))\overset{\sim}\to \rep(\mcl{W}_p).
\end{array}\right.
\]
\end{theoremA}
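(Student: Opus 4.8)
The plan is to establish one of the three equivalences from scratch --- it is convenient to take the triplet equivalence $\Theta$, since $\rep(\mcl{W}_p)$ and $\rep(u_q(\mfk{sl}_2))$ are \emph{finite} ribbon categories --- and then to obtain $\Psi$ and $\K$ by transporting $\Theta$ along categorical constructions that run in parallel on the two sides. On the quantum side these are the ``Frobenius-kernel'' and ``change-of-torus'' passages relating $u_q(\mfk{sl}_2)$, $\dotu_q(\mfk{sl}_2)$ and $U^{Lus}_q(\mfk{sl}_2)$: at the level of categories, $\rep(\dotu_q(\mfk{sl}_2))$ is a de-equivariantization of $\rep\SL(2)_q$ by the transparent Tannakian subcategory $\rep(\SL(2))$ coming from Lusztig's quantum Frobenius, and $\rep(u_q(\mfk{sl}_2))$ is a de-equivariantization of $\rep(\dotu_q(\mfk{sl}_2))$ by a transparent copy of $\rep(\mbb{G}_m)$ inside the character grading. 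On the vertex-algebra side the conformal inclusions $\mcl{V}ir_c\subset\mcl{M}_p\subset\mcl{W}_p$ realize $\mcl{M}_p$ as a commutative algebra object in $\rep(\mcl{V}ir_c)_{\aff}$ and $\mcl{W}_p$ as one in $\rep(\mcl{M}_p)_{\aff}$, with $\rep(\mcl{M}_p)_{\aff}$ and $\rep(\mcl{W}_p)$ recovered as the corresponding categories of (local) modules; this is essentially the content of the work of Creutzig--McRae--Yang and McRae--Yang. Since de-equivariantization and passage to modules over a commutative algebra object are functorial for ribbon tensor equivalences, it will suffice to produce $\Theta$ together with an identification, under the equivalence it induces, of the algebra objects $\mcl{M}_p$ and $\mcl{W}_p$ and the transparent subcategories with their quantum-group counterparts; $\Psi$ and $\K$ then follow formally.

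For $\Theta$ itself one may begin from the abelian equivalence $\rep(\mcl{W}_p)\simeq\rep(u_q(\mfk{sl}_2))$, which is essentially known (after Feigin--Gainutdinov--Semikhatov--Tipunin, via Nagatomo--Tsuchiya and Tsuchiya--Wood), and let $\W(2)$ denote the $\mcl{W}_p$-module corresponding to the standard $2$-dimensional $u_q(\mfk{sl}_2)$-module $V$. Both categories are tensor-generated by this object --- the ``affine'' property emphasized above --- and one checks directly from the vertex-algebra data that $\W(2)$ is self-dual with the same categorical dimension and ribbon twist as $V$. The functor $\Theta$ is then built from a presentation of $\rep(u_q(\mfk{sl}_2))$ as a ribbon category generated by $V$: the morphism spaces among tensor powers of $V$ admit a Temperley--Lieb-type description at $q=e^{\pi i/p}$, encoding the self-duality and dimension of $V$, the relations $E^p=F^p=0$, the block and linkage structure, and the structure of the projective objects, all of which are explicit at this root of unity. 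One sets $\Theta(V)=\W(2)$, sends the generating morphisms (evaluation, coevaluation, braiding, twist) to the corresponding canonical $\mcl{W}_p$-intertwiners, and verifies that the defining relations hold in $\rep(\mcl{W}_p)$; the decisive input here is the structure of $\rep(\mcl{W}_p)$ established by Tsuchiya--Wood, in particular the indecomposable decomposition of the $\W(2)^{\ot n}$ together with their Loewy structure --- not just the fusion rules.

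Granting the functor, that $\Theta$ is an equivalence is quick: $\Theta$ is surjective, since every $\mcl{W}_p$-module is a subquotient of some $\W(2)^{\ot n}=\Theta(V^{\ot n})$ as $\W(2)$ tensor-generates, while $\FPdim\rep(\mcl{W}_p)=\FPdim\rep(u_q(\mfk{sl}_2))$ --- both finite, and computed from the respective Grothendieck rings and the dimensions of projectives --- so $\Theta$ is an equivalence by the Frobenius--Perron dimension criterion for surjective tensor functors between finite tensor categories. The presentation determines the braiding only up to the Galois substitution $q\leftrightarrow q^{-1}$ and leaves a sign ambiguity in the twist; these are removed by comparing the monodromy and balancing on $\W(2)$ with the canonical twist $e^{2\pi i L_0}$ on the vertex-algebra side. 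Finally, running the identification through $\Theta$, one checks that $\mcl{W}_p\in\rep(\mcl{M}_p)_{\aff}$ and $\mcl{M}_p\in\rep(\mcl{V}ir_c)_{\aff}$ correspond to the coordinate algebras $\O(\mbb{G}_m)$ and $\O(\SL(2))$, and that the transparent subcategories match; transporting the de-equivariantizations then yields $\Psi$ and $\K$.

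The principal obstacle is the root-of-unity non-semisimplicity in the triplet step: the gap between the already-available abelian equivalence and the desired monoidal one is precisely the matching of the \emph{indecomposable} tilting and projective summands of the $\W(2)^{\ot n}$, together with their radical layers, against those on the quantum group side --- not merely their composition factors or dimensions --- and this is what forces the detailed module-theoretic route through Tsuchiya--Wood rather than a soft argument. Two further points require care. First, $\rep(u_q(\mfk{sl}_2))$ carries the correct ribbon structure only when $u_q(\mfk{sl}_2)$ is equipped with its quasi-Hopf structure, so the associator must be carried along throughout the comparison. Second, the Galois twist $q\leftrightarrow q^{-1}$ produces genuinely inequivalent ribbon structures in general, so the explicit comparison of twists and monodromies --- and not of fusion rules alone --- is an essential, not cosmetic, step.
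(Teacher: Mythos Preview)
Your overall architecture --- prove $\Theta$ first, then transport to $\Psi$ and $\K$ via categorical constructions mirroring the VOA inclusions $\mcl{V}ir_c\subset\mcl{M}_p\subset\mcl{W}_p$ --- is close in spirit to the paper, but two of the load-bearing steps are not right as stated.

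First, your construction of $\Theta$. You propose to present $\rep(u_q(\mfk{sl}_2))$ directly by generators and Temperley--Lieb-type relations on the standard object $V$, and then check those relations in $\rep(\mcl{W}_p)$. The problem is that no such clean presentation of the \emph{small} quantum group category is available: the morphism spaces $\Hom_{u_q}(V^{\ot m},V^{\ot n})$ are strictly larger than the Temperley--Lieb spaces once $q$ is a root of unity, and encoding ``$E^p=F^p=0$, the block and linkage structure, and the structure of the projective objects'' by explicit relations is exactly the hard part you would be trying to avoid. The paper's key move is different: Ostrik's theorem gives a universal property for $\rep\SL(2)_q$ --- the \emph{big} quantum group --- via the Temperley--Lieb/tilting identification $\mcl{TL}(d)\simeq\mcl{T}_q$ and the derived equivalence $K^b(\mcl{T}_q)\simeq D^b(\SL(2)_q)$. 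One therefore builds a braided tensor functor $\rep\SL(2)_q\to\rep(\mcl{W}_p)$ from the self-dual object $X^+_2$ (checking only its intrinsic dimension, non-reducedness, and the value of $c_{X^+_2,X^+_2}$), observes that it kills the M\"uger center $\rep\PSL(2)$ because $\rep(\mcl{W}_p)$ is non-degenerate, and \emph{then} descends to $\rep(u_q(\mfk{sl}_2))\cong(\rep\SL(2)_q)_{\PSL(2)}$. Your FP-dimension endgame is correct and is what the paper does; but the functor itself comes from the big category, not from a presentation of the small one.

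Second, your passage to $\K$ and $\Psi$. You describe $\rep(\dotu_q(\mfk{sl}_2))$ and $\rep(u_q(\mfk{sl}_2))$ as successive \emph{de}-equivariantizations of $\rep\SL(2)_q$ (by ``$\rep(\SL(2))$'' and then $\rep(\mbb{G}_m)$), and propose to ``transport the de-equivariantizations'' from $\Theta$ to obtain $\K$ and $\Psi$. Two issues: the M\"uger center of $\rep\SL(2)_q$ is $\rep\PSL(2)$, not $\rep\SL(2)$, and $\rep(\dotu_q(\mfk{sl}_2))$ is not a de-equivariantization of $\rep\SL(2)_q$ at all --- it is the $T$-\emph{equivariantization} of $\rep(u_q(\mfk{sl}_2))$. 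More seriously, de-equivariantization runs from large categories to small ones, so starting from $\Theta$ (the smallest level) you cannot de-equivariantize to reach $\K$ (the largest). To go upward you would need to know that $\Theta$ is $\PSL(2)$-equivariant and then equivariantize; but that equivariance is not automatic --- in the paper it is a \emph{consequence} of having built $\Theta$ by descending the $\rep\SL(2)_q\to\rep(\mcl{W}_p)$ functor in the first place. Accordingly the paper proves $\K$ directly via Ostrik's theorem applied to $\mcl{L}_{1,2}\in\rep(\mcl{V}ir_c)_{\aff}$, using induction to $\rep(\mcl{W}_p)$ and the already-established $\Theta$ only to verify that $\K$ is fully faithful on projectives. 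For $\Psi$ the paper does use the mechanism you have in mind, but in the correct direction: one shows $\Theta$ is $\mbb{G}_m$-equivariant (for the torus action on both sides) and then takes $\mbb{G}_m$-\emph{equivariantization} to obtain $\rep(\dotu_q(\mfk{sl}_2))\cong\rep(u_q(\mfk{sl}_2))^T\overset{\sim}\to\rep(\mcl{W}_p)^{\mbb{G}_m}\cong\rep(\mcl{M}_p)_{\aff}$.
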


The particular ribbon structures employed on the quantum group sides of Theorems \ref{thm:triplet}, \ref{thm:LMM}, and \ref{thm:singlet} are those ``inverse" to the standard choices of \cite{lusztig93,jantzen95} (see Section \ref{sect:SL2q}).  We note that, in the process of proving the above result we establish modularity of the category $\rep(\mcl{W}_p)$ of triplet modules (see Theorem \ref{thm:modular}).  This point may be of independent interest to readers.
\par

For a clearer historical account, the equivalences $\K$ and $\Theta$ were originally conjectured to exist in works of Bushlanov, Feigin, Gainutdinov, and Tipunin \cite{bfgt09}, and Gainutdinov, Semikhatov, Tipunin, and Feigin \cite{gainutdinovetal06} respectively.  The basis for these conjectures was that a number of invariants for quantum groups and their corresponding vertex operator algebras were (essentially) observed to agree.  Indeed, it was argued in \cite{feigingainutdinovsemikhatovtipunin06} that the modular group representations for $u_q(\mfk{sl}_2)$ and $\mcl{W}_p$ agree, and also that their fusion rings agree \cite{fuchshwangsemikhatovtipunin04}.  Furthermore, it was shown in work of Nagatomo and Tsuchiya \cite{nagatomotsuchiya11}, and subsequently McRae and Yang \cite{mcraeyang}, that there is an \emph{abelian}, non-tensor, equivalence between $\rep(u_q(\mfk{sl}_2))$ and $\rep(\mcl{W}_p)$.  At the particular parameter $p=2$, Creutzig, Lentner, and Rupert verified that this equivalence can in fact be enhanced with the desired tensor structure \cite{clr}.  The possibility of the equivalence $\Psi$ was alluded to in the works of Creutzig and Milas, and Costantino, Geer, and Patureau-Mirand \cite{creutzigmilas14,costantinoetal15}, then was conjectured explicitly in work of Creutzig, Gainutdinov, and Runkel \cite{creutzigetal} (see also Remark \ref{rem:CM}).
\par

As one might expect, analogs of Theorems \ref{thm:triplet}, \ref{thm:LMM}, and \ref{thm:singlet} have been conjectured in arbitrary Dynkin type \cite{feigintipunin,sugimoto,lentner21}.  The analog of the equivalence $\Theta$ at a given almost-simple algebraic group $G$, for example, proposes an equivalence of modular tensor categories between representations of the small quantum group for $G$ at $q=e^{\pi i/p}$, and modules for the ``logarithmic $W$-algebra" $\mcl{W}_p(G)$ of \cite{adamovicmilas14,sugimoto}.  Here, for the small quantum group, one should specifically take the cocycle corrected variant of \cite{negron,gainutdinovlentnerohrmann}.
\par

While additional infrastructure is needed to address these conjectures outside of type $A_1$--in particular the VOAs $\mcl{W}_p(G)$ need to be studied further--the results herein give credence to the claim that representations of quantum groups and CFTs should be strongly intertwined, even in the logarithmic context.  If we consider type $A_{>1}$ for example, and suppose a sufficently strong understanding of the algebras $\mcl{W}_p(\SL(n))$, one could presumably employ the Hecke presentation for $\rep\SL(n)_q$ of \cite[Proposition 4.7]{davydovmolev11}, and follow the arguments of the present text, using \cite[Proposition 7.3]{negron} and \cite[Proposition 7.4.2]{coulembieretingofostrikpauwels}, to provide the desired equivalences between $\rep u_q(\mfk{sl}_n)$ and $\rep\mcl{W}_p(\SL(n))$ at arbitrary $n$.

\subsection{Methods}

Let us focus on the equivalence $\Theta:\rep(u_q(\mfk{sl}_2))\overset{\sim}\to \rep(\mcl{W}_p)$ of Theorem \ref{thm:triplet}, which is the primary target of this work.  We first note that our small quantum group $u_q(\mfk{sl}_2)$ is the cocycle corrected variant of \cite{gainutdinovrunkel17}, which we accept at this point as the ``correct" version of the small quantum group at an even order root of unity.  A point which is essential to this work is the observation \cite{negron} that the category of representations $\rep(u_q(\mfk{sl}_2))$ can be understood as the de-equivariantization of $\rep\SL(2)_q$ along the embedding $\Fr:\rep\PSL(2)\to \rep\SL(2)_q$ provided by Lusztig's quantum Frobenius functor \cite{lusztig93}.  Of course, this is a rather technical statement, but the point is the following (Proposition \ref{prop:basechange}): One can identify tensor functors $\rep(u_q(\mfk{sl}_2))\to \msc{A}$ to a given tensor category $\msc{A}$ with a particular class of tensor functors $\rep\SL(2)_q\to \msc{A}$ out of big quantum $\SL(2)$.
\par

With this general framework in mind, we observe furthermore that tensor maps out of $\rep\SL(2)_q$ are classified in work of Ostrik \cite{ostrik08}.  (See Theorems \ref{thm:ostrik} and \ref{thm:braidedostrik}.)  In particular, a tensor map $\rep\SL(2)_q\to \msc{A}$ to a some category $\msc{A}$ is specified by a choice of self-dual object $W$ in the target $\msc{A}$, which satisfies certain non-degeneracy properties.  So we approach the equivalence $\Theta$ by leveraging the works \cite{ostrik08} and \cite{negron} in tandem.  
\par

Of course, in order to produce the equivalence $\Theta$ in the suggested manner, we must have a clear understanding of the category of modules $\rep(\mcl{W}_p)$, and in particular of its self-dual tensor generator $X^+_2$.  One obtains such a concrete understanding of $\rep(\mcl{W}_p)$ by exploiting relationships between matrix entries of compositions of intertwining operators and differential equations on the sphere.  Such relationships go back to the beginnings of CFT, and are also central to the philosophies of, say, Huang \cite{huang08} and Tsuchiya-Wood \cite{tsuchiyawood13}.  (See Section \ref{sect:conf_blocks}.)  This approach is also present in the recent works \cite{creutzigmcraeyang,mcraeyang}.  The equivalence $\Theta$ is therefore deduced via a propitious interplay between category theoretic and analytic techniques.
\par

The equivalence $\operatorname{K}$ for the Virasoro is essentially a corollary of our arguments for the triplet, which we employ in conjunction with works of Creutzig, Kanade, McRae, and Yang \cite{creutzigkanademcrae,mcraeyang}.  The equivalence $\Psi$ is deduced from a nontrivial analysis of the representation category $\rep(\mcl{M}_p)_{\rm aff}$, and an additional analysis of certain rational actions of the torus $\mbb{C}^\times$ on the categories $\rep(u_q(\mfk{sl}_2))$ and $\rep(\mcl{W}_p)$.

\begin{remark}
In work of Creutzig, Lentner, and Rupert \cite{clr}, the authors suggest an alternate construction of the equivalence $\Theta$ for the triplet, and they realize their construction explicitly when $p=2$.  The methods employed in \cite{clr} differ significantly from the ones employed here, and we invite the curious reader to consult the aforementioned text.
\end{remark}

\subsection{Outline}

Sections \ref{sect:C}--\ref{sect:Wp} cover background material.  Section \ref{sect:ostrik} recalls Ostrik's classification of tensor functors out of $\rep\SL(2)_q$, and also recalls basic facts about the Temperley-Lieb category.  In Section \ref{sect:calc} we perform a straightforward calculation of all braidings for the Temperley-Lieb category.  Sections \ref{sect:X2} and \ref{sect:conf_blocks} are dedicated to an analysis of the self-dual generator $X^+_2$ of $\rep(\mcl{W}_p)$.  In Sections \ref{sect:triplet}--\ref{sect:singlet} we establish the equivalences $\Theta$, $\operatorname{K}$, and $\Psi$ for the the triplet, Virasoro, and singlet vertex operator algebras, respectively.  In the appendices we cover some technical information regarding induction for VOA extensions, and the calculus of (de-)equivariantization for tensor categories equipped with algebraic group actions.

\subsection{Acknowledgements}

Thanks to Pavel Etingof and Victor Ostrik for their elaborations on the work \cite{ostrik08}, and also on various aspects of Tannakian reconstruction.  Thanks to Robert McRae for clarification on the literature, and to Thomas Creutzig and Dmitri Nikshych for helpful conversations.  Thanks to Simon Wood, who suggested the use of the ribbon element for the proof of Theorem \ref{thm:modular}.  The first author is supported by NSERC. The second author is supported by NSF grant DMS-2001608.  This material is based upon work supported by the National Science Foundation under grant DMS-1440140, while the authors were in residence at the Mathematical Sciences Research Institute in Berkeley, California, during the Spring 2020 semester.

\tableofcontents

\section{(Finite) tensor categories}
\label{sect:C}

We cover some basic information about finite tensor categories.  Our presentation is based on the texts \cite{bakalovkirillov01,egno15}, as well as the paper \cite{etingofostrik04}.  We work over the base field $k=\mbb{C}$.

\subsection{(Finite) tensor categories and fusion categories}

A tensor category (over $\mbb{C}$) is a $\mbb{C}$-linear, $\Hom$-finite, abelian monoidal category $\msc{C}$ which is rigid, has all objects of finite length, and has a simple unit object $\1$.  Rigidity means that all objects $X$ in $\msc{C}$ have left and right duals $X^\ast$ and ${^\ast X}$ \cite[\S 2.10]{egno15}.  A tensor functor between tensor categories $F:\msc{C}\to \msc{D}$ is, by definition, an \emph{exact}, $\mbb{C}$-linear, monoidal functor.  By a natural isomorphism between tensor functors we mean a natural isomorphism which respects the monoidal structures in the expected ways.  The following basic observation will be used throughout the text.

\begin{proposition}[{\cite[Proposition 1.19]{delignemilne82}}]\label{prop:faithful}
Any tensor functor $F:\msc{C}\to \msc{D}$ between tensor categories is faithful.
\end{proposition}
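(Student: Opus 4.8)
The plan is to exploit rigidity together with the simplicity of the unit object. Recall that a tensor functor $F$ is assumed exact; the claim is that it is moreover faithful, i.e.\ that $F(f)=0$ forces $f=0$ for any morphism $f\colon X\to Y$ in $\msc{C}$. First I would reduce the statement about morphisms to a statement about objects. Since $F$ is exact, it suffices to show that $F(Z)=0$ implies $Z=0$ for every object $Z$: indeed, if $F$ kills some nonzero $f$, then $F$ kills the nonzero image $\mathrm{im}(f)$ (exactness lets $F$ commute with taking images), so we would have a nonzero object sent to $0$. Conversely, if no nonzero object is sent to $0$, then for any $f$ with $F(f)=0$ we get $F(\mathrm{im}\,f)=0$, hence $\mathrm{im}\,f=0$, hence $f=0$.

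\textbf{Reducing to the unit.} So suppose $F(Z)=0$ for some object $Z$; I want to conclude $Z=0$. The key trick is to bring in the dual: by rigidity, $Z$ has a dual $Z^\ast$, and there are coevaluation and evaluation morphisms $\mathrm{coev}_Z\colon \1\to Z\otimes Z^\ast$ and $\mathrm{ev}_Z\colon Z^\ast\otimes Z\to \1$ whose composites (suitably tensored with $Z$ resp.\ $Z^\ast$) are identities — the zig-zag/triangle identities. Because $F$ is monoidal, it carries $\mathrm{coev}_Z$ and $\mathrm{ev}_Z$ (up to the structure isomorphisms $F(\1)\cong\1$ and $F(A\otimes B)\cong F(A)\otimes F(B)$) to coevaluation/evaluation data exhibiting $F(Z^\ast)$ as a dual of $F(Z)$, and it preserves the triangle identities. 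But $F(Z)=0$, so $F(Z)\otimes F(Z^\ast)=0$, and the identity of $F(Z)$ factors through $0$, forcing $\mathrm{id}_{F(Z)}=0$; this is automatic here and harmless. The real consequence comes from looking at $\mathrm{id}_{\1}$: consider the object $Z\otimes Z^\ast$ and note that $F$ applied to $Z\otimes Z^\ast$ is zero. Now $\mathrm{coev}_Z\colon \1\to Z\otimes Z^\ast$ is a morphism whose image, being a subobject of $Z\otimes Z^\ast$, is sent to $0$ by $F$; but also $F(\mathrm{coev}_Z)$, as a map $\1\to 0$, is the zero map, and the triangle identity says $\mathrm{coev}_Z$ has a one-sided inverse after tensoring, so $\mathrm{coev}_Z$ is (split) monic. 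Hence $\1$ is a subobject of $Z\otimes Z^\ast$, so $F(\1)$ is a subobject of $F(Z\otimes Z^\ast)=0$ (again using exactness), giving $F(\1)=0$. This contradicts the fact that $F$ is monoidal, since $F(\1)\cong\1$ and $\1$ is simple, hence nonzero.

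\textbf{The main obstacle} is organizing the compatibility bookkeeping cleanly: one must be careful that the monoidal structure isomorphisms of $F$ really do intertwine the coevaluation of $Z$ with a coevaluation of $F(Z)$, so that the splitting of $\mathrm{coev}_Z$ is genuinely preserved. This is standard (a tensor functor between rigid categories automatically commutes with duals up to canonical isomorphism), and the cleanest route is to invoke that lemma directly rather than rederiving the zig-zag identities in $\msc{D}$. Once that is in place, the argument is: $F(Z)=0\implies F(\1)\hookrightarrow F(Z\otimes Z^\ast)=0\implies F(\1)=0$, contradicting $F(\1)\cong\1\neq 0$; therefore $F$ kills no nonzero object, and by the exactness reduction above, $F$ is faithful. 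Alternatively, and perhaps even more transparently, one can argue directly on morphisms: if $F(f)=0$ for $f\colon X\to Y$, pick a nonzero $Z$ with $Z$ a summand or subquotient realized via $f$ and run the dual-object argument on $Z$; but the object-level formulation above is the crispest.
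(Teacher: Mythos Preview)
The paper does not supply its own proof; it simply cites \cite[Proposition 1.19]{delignemilne82}. So there is no approach to compare against, and the question is just whether your argument is correct.

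Your reduction to ``$F(Z)=0\Rightarrow Z=0$'' via exactness is fine, and the overall strategy (use rigidity to embed $\1$ into $Z\otimes Z^\ast$, then apply $F$) is the standard one. The gap is the claim that the triangle identities make $\mathrm{coev}_Z$ \emph{split} monic. They do not: the zig-zag identities only give a retraction after tensoring with $Z$ or $Z^\ast$, and in non-semisimple tensor categories $\1$ need not be a summand of $Z\otimes Z^\ast$. The paper itself furnishes a counterexample: at $p=2$ the product $X_2^+\otimes X_2^+$ is the indecomposable projective cover of $\1$, so $\mathrm{coev}_{X_2^+}$ is monic but not split.

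The fix is immediate and you already have the ingredient. If $Z\neq 0$ then $\mathrm{id}_Z\neq 0$; the triangle identity factors $\mathrm{id}_Z$ through $\mathrm{id}_Z\otimes\mathrm{coev}_Z$, so $\mathrm{coev}_Z\neq 0$. Since $\1$ is simple, any nonzero morphism out of $\1$ is monic. Thus $\1\hookrightarrow Z\otimes Z^\ast$, and exactness of $F$ gives $\1\cong F(\1)\hookrightarrow F(Z)\otimes F(Z^\ast)=0$, a contradiction. Replace ``split monic'' with this one-line argument and your proof is complete.
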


A tensor category $\msc{C}$ is called \emph{finite} if it has finitely many simple objects, up to isomorphism, and enough projectives.  A tensor category is called a \emph{fusion} category if it is finite and semisimple.
\par

Abstractly, any finite tensor category $\msc{C}$ admits an abelian equivalence $\msc{C}\cong \rep(B)$ to the representation category of a finite-dimensional algebra.  (One can take specifically $B$ to be the endomorphism ring of a projective generator.)  For some examples, one can consider the category $\msc{C}=\rep(\mcl{W}_p)$ of finite length modules over the triplet vertex operator algebra.  It was shown in \cite{nagatomotsuchiya11,tsuchiyawood13} that $\rep(\mcl{W}_p)$ admits a natural finite tensor category structure.  Also, for any finite-dimensional quasi-Hopf algebra $u$ \cite[\S 5.13]{egno15}, the category $\rep(u)$ of finite-dimensional $u$-representations has the natural structure of a finite tensor category, with the product $\ot$ on $\rep(u)$ induced by the coproduct on $u$.

\subsection{Frobenius-Perron dimension}
\label{sect:fpdim}

For $\msc{C}$ a tensor category, the Grothendieck ring $K(\msc{C})$ is a $\mbb{Z}_+$-ring \cite[Definition 3.1.1]{egno15}, in the sense that it is a free $\mbb{Z}$-algebra with specified basis $\{x_i\}_i\subset K(\msc{C})$ and non-negative structure coefficients $c_{i,j}^k$, $x_i\cdot x_j=\sum_kc_{i,j}^kx_k$.  The basis $\{x_i\}_i$ is provided by the isoclasses of the simples $\{[X_i]:X_i\ \text{simple in }\msc{C}\}$, and the unit $[\1]$ in $K(\msc{C})$ is provided by the unit object in $\msc{C}$.  When $\msc{C}$ is a finite tensor category, the Grothendieck ring $K(\msc{C})$ is of finite rank over $\mbb{Z}$.
\par

For any finite-rank $\mbb{Z}_+$-ring $A$ one has a canonically associated dimension function $\FPdim:A\to \mbb{R}$ called the Frobenius-Perron dimension.  For any $x$ in the specified basis for $A$, the Frobenius-Perron dimension $\FPdim(x)$ is defined as the maximal non-negative real eigenvalue of the linear map $x\cdot-:\mbb{R}\ot_\mbb{Z}A\to \mbb{R}\ot_\mbb{Z}A$.  Since multiplication by $x$ is represented by a matrix with non-negative entries, the Frobenius-Perron theorem ensures the existence of such an eigenvalue.  Furthermore, for any such $x$, we have that $\FPdim(x)\geq 1$ \cite[Propositions 3.3.4]{egno15}.
\par

The function $\FPdim$ is a ring homomorphism, and it is in fact the unique character of $A$ which takes positive values on the given basis \cite[Proposition 3.3.6]{egno15}.  We apply the above general construction to deduce a dimension function $\FPdim:K(\msc{C})\to \mbb{R}$ for the Grothendieck ring of any finite tensor category $\msc{C}$.  The uniqueness properties of the Frobenius-Perron dimension imply the following (standard) result.

\begin{lemma}
If $B$ is a finite-dimensional quasi-Hopf algebra, then for any $V$ in $\rep(B)$, $\FPdim(V)=\dim_\mbb{C}(V)$.
\end{lemma}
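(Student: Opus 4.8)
The plan is to use the uniqueness characterization of the Frobenius--Perron dimension stated just above. Recall that $\FPdim \colon K(\msc{C}) \to \mbb{R}$ is the \emph{unique} ring homomorphism which takes positive (equivalently, $\geq 1$) values on the classes of the simple objects. So it suffices to exhibit \emph{some} ring homomorphism $K(\rep(B)) \to \mbb{R}$ which restricts to the vector space dimension on simples and takes positive values there; by uniqueness it must coincide with $\FPdim$, and then it agrees with $\dim_\mbb{C}$ on all objects by additivity in short exact sequences.

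First I would note that for a finite-dimensional quasi-Hopf algebra $B$, the tensor product $\ot$ on $\rep(B)$ is, on underlying vector spaces, just the tensor product over $\mbb{C}$ (the quasi-Hopf structure only twists the associativity constraint, not the underlying bifunctor), and the unit object $\1$ is the one-dimensional trivial module. Consequently, the assignment $V \mapsto \dim_\mbb{C}(V)$ is additive on short exact sequences, hence descends to a well-defined group homomorphism $d \colon K(\rep(B)) \to \mbb{Z} \subset \mbb{R}$; it is multiplicative because $\dim_\mbb{C}(V \ot W) = \dim_\mbb{C}(V)\cdot \dim_\mbb{C}(W)$; and it sends $[\1]$ to $1$. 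Thus $d$ is a ring homomorphism $K(\rep(B)) \to \mbb{R}$.

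Next I would observe that $d$ takes strictly positive values on the basis of simple objects of $K(\rep(B))$: a simple module is nonzero, so has positive $\mbb{C}$-dimension. Now apply the uniqueness statement (\cite[Proposition 3.3.6]{egno15}, as recalled above): since $\FPdim$ is the unique character of the $\mbb{Z}_+$-ring $K(\rep(B))$ taking positive values on the specified basis of simples, we conclude $d = \FPdim$ as functions on $K(\rep(B))$. Finally, for an arbitrary object $V$ of $\rep(B)$, taking a composition series $0 = V_0 \subset V_1 \subset \cdots \subset V_n = V$ with simple subquotients $S_i$, both $\FPdim$ and $\dim_\mbb{C}$ are additive, so $\FPdim(V) = \sum_i \FPdim(S_i) = \sum_i \dim_\mbb{C}(S_i) = \dim_\mbb{C}(V)$, as claimed.

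There is no real obstacle here: the only point requiring a moment's care is that $\rep(B)$ genuinely is a finite tensor category so that $K(\rep(B))$ is a finite-rank $\mbb{Z}_+$-ring and the Frobenius--Perron formalism applies — but this was already noted in the text for any finite-dimensional quasi-Hopf algebra. The essential content is simply the identification of $\dim_\mbb{C}$ as a positive character of the Grothendieck ring together with the cited uniqueness of such a character.
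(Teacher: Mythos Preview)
Your proof is correct and follows exactly the same approach as the paper's: both identify $\dim_\mbb{C}$ as a positive character of $K(\rep(B))$ and invoke the uniqueness of such a character to conclude $\FPdim=\dim_\mbb{C}$. You simply spell out in more detail what the paper compresses into two sentences.
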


\begin{proof}
The vector space dimension defines an algebra map $\dim_\mbb{C}:K(\rep(B))\to \mbb{R}$ which takes positive values on each class $[V]$ of a non-zero representation $V$.  Since $\FPdim$ is the unique character of $K(\rep(B))$ with this property, it follows that $\FPdim=\dim_\mbb{C}$.
\end{proof}

We also have a general notion of Frobenius-Perron dimension for $\mbb{Z}_+$-rings \emph{themselves} \cite[Definition 3.3.12]{egno15}, which reduces to the following in our setting.

\begin{definition}[{\cite[Definition 6.1.7]{egno15}}]
For any finite tensor category $\msc{C}$, the Frobenius-Perron dimension of $\msc{C}$ is defined as
\[
\FPdim(\msc{C}):=\sum_i \FPdim(P_i)\FPdim(X_i),
\]
where the sum runs over the isoclasses of simples $X_i$, and each $P_i$ is the projective cover of $X_i$.
\end{definition}

\subsection{Surjective tensor functors}

A tensor functor $F:\msc{C}\to \msc{D}$ is called \emph{surjective} if any object in $\msc{D}$ is a subquotient of $F(X)$, for some $X$ in $\msc{C}$.  We have the following two essential results.

\begin{theorem}[{\cite[Theorem 2.5]{etingofostrik04}}]
If $F:\msc{C}\to \msc{D}$ is a surjective tensor functor between finite tensor categories, then the image $F(P)$ of any projective object $P$ in $\msc{C}$ is projective in $\msc{D}$.
\end{theorem}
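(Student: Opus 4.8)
The plan is to reduce to a single projective object and then exploit the adjoints of $F$. First recall that in a finite tensor category the tensor product of a projective object with an arbitrary object is again projective \cite[Proposition 6.1.3]{egno15}, and that direct summands of projectives are projective. I claim moreover that every projective $P$ in $\msc{C}$ is a direct summand of $P_\1\ot X$ for a suitable $X$, where $P_\1$ denotes the projective cover of $\1$: since a general projective is a finite sum of projective covers $P_S$ of simples, it is enough to see that $P_S$ is a summand of the projective object $P_\1\ot S$, and this holds because the surjection $P_\1\to\1$, tensored with $S$ and composed with the evaluation of $S$, yields (by the snake identity) a surjection $P_\1\ot S\to S$ of which $P_S$ must be a summand. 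As $F$ is monoidal, $F(P)$ is then a direct summand of $F(P_\1)\ot F(X)$, which is projective as soon as $F(P_\1)$ is; hence it suffices to prove that $F(P_\1)$ is projective in $\msc{D}$.

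Since $F$ is exact between finite abelian categories it admits a right adjoint $R\colon\msc{D}\to\msc{C}$, which is necessarily left exact, and the adjunction isomorphism $\Hom_{\msc{D}}(F(P_\1),-)\cong\Hom_{\msc{C}}(P_\1,R(-))$ together with the projectivity of $P_\1$ shows that $F(P_\1)$ is projective as soon as $R$ is exact. This is the step in which surjectivity of $F$ must be used. The lax monoidal structure on $R$ makes $A:=R(\1)$ an algebra object in $\msc{C}$, and surjectivity of $F$ is exactly the property needed to identify $\msc{D}$ with the category $A\text{-}\mathrm{mod}_{\msc{C}}$ of $A$-modules internal to $\msc{C}$, in such a way that $F$ becomes the free-module functor $V\mapsto A\ot V$ and $R$ becomes the forgetful functor. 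Granting this, the forgetful functor is plainly exact, so $R$ is exact and we are done; equivalently, one sees directly that the free-module functor carries projectives of $\msc{C}$ to projectives of $A\text{-}\mathrm{mod}_{\msc{C}}$, since $\Hom_{A\text{-}\mathrm{mod}}(A\ot P,-)\cong\Hom_{\msc{C}}(P,-)$ is exact for $P$ projective, and transporting back along the equivalence gives $F(P_\1)\cong A\ot P_\1$ projective in $\msc{D}$.

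The main obstacle is therefore the exactness of $R$, equivalently the reconstruction $\msc{D}\simeq A\text{-}\mathrm{mod}_{\msc{C}}$: surjectivity of $F$ has to enter in an essential way, since a non-surjective tensor functor---for instance the inclusion of $\mathrm{Vec}$ into any non-semisimple finite tensor category---already fails to preserve projectivity. One establishes the reconstruction by checking that the canonical comparison functor $Y\mapsto R(Y)$ is essentially surjective (every $A$-module is presented as the cokernel of a map between free modules $A\ot V=F(V)$, using that the action map $A\ot M\to M$ is a split epimorphism of objects) and fully faithful (which, through the adjunction, reduces to sufficient control on the counit $FR\Rightarrow\mathrm{id}_{\msc{D}}$, where one again invokes surjectivity of $F$ together with its faithfulness, Proposition \ref{prop:faithful}). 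A more hands-on alternative would be to verify $\Ext^1_{\msc{D}}(F(P_\1),S')=0$ for every simple $S'$ of $\msc{D}$ directly; but this again seems to hinge on understanding how the simple objects of $\msc{D}$ occur inside the images $F(X)$, i.e.\ on essentially the same input, so I do not expect it to be shorter.
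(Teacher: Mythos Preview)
The paper does not supply its own proof of this statement; it simply quotes \cite[Theorem~2.5]{etingofostrik04}. So there is no argument in the paper to compare against.

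Your reductions are sound: it is enough to treat $P_\1$, and projectivity of $F(P_\1)$ is equivalent to exactness of the right adjoint $R$. The difficulty is your proposed justification of exactness of $R$ via the monadic reconstruction $\msc{D}\simeq A\text{-}\mathrm{mod}_\msc{C}$, which is circular as written. In your essential-surjectivity step you present an arbitrary $A$-module $M$ as a cokernel $A\ot W\to A\ot V\to M\to 0$ and, identifying $A\ot V\cong R(F(V))$, propose to take $Y=\operatorname{coker}\bigl(F(W)\to F(V)\bigr)$ in $\msc{D}$. But concluding $R(Y)\cong M$ requires $R$ to preserve this cokernel, which is exactly the right-exactness you are trying to establish. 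Barr--Beck does not sidestep this: in the abelian setting its hypotheses (conservativity plus preservation of the relevant reflexive coequalizers) already encode the exactness of $R$. So surjectivity of $F$ has not yet been used in a way that actually produces new information.

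The argument in \cite{etingofostrik04} proceeds differently and does not pass through the reconstruction theorem (which, in the standard development, is proved \emph{after} this result). The key extra input is that in a finite tensor category projectives and injectives coincide \cite[Proposition~6.1.3]{egno15}: combined with surjectivity, this forces every projective of $\msc{D}$ to be not merely a subquotient but a \emph{direct summand} of some $F(X)$. Together with the projection formula $R\bigl(F(X)\ot Y\bigr)\cong X\ot R(Y)$ this gives direct control over $R$ on enough of $\msc{D}$ to conclude exactness, without first knowing that $\msc{D}$ is a category of $A$-modules.
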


\begin{theorem}[{\cite[Proposition 2.20]{etingofostrik04}}]
If $F:\msc{C}\to \msc{D}$ is a surjective tensor functor between finite tensor categories, then $\FPdim(\msc{D})\leq \FPdim(\msc{C})$. Furthermore, if $\FPdim(\msc{D})= \FPdim(\msc{C})$, then any surjective tensor functor $F:\msc{C}\to \msc{D}$ is an equivalence.
\end{theorem}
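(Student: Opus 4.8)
The plan is to combine the preceding theorem---that a surjective tensor functor sends projective objects to projective objects---with a Frobenius--Perron dimension count carried out through a right adjoint of $F$. Write $\{X_i\}$ and $\{Y_j\}$ for the simple objects of $\msc{C}$ and $\msc{D}$, and $P_i$, $Q_j$ for their respective projective covers, and introduce the ``virtual regular projective'' $R_\msc{C}:=\sum_i\FPdim(X_i)[P_i]\in K(\msc{C})\ot_\mbb{Z}\mbb{R}$, so that $\FPdim(R_\msc{C})=\FPdim(\msc{C})$ by the definition recalled above. I would begin with two preliminary reductions. First, since $F$ is faithful by Proposition~\ref{prop:faithful}, the composite $\FPdim\circ K(F)\colon K(\msc{C})\to\mbb{R}$ is a character taking positive values on the basis of simples, hence equals $\FPdim$ by uniqueness of the Frobenius--Perron character; thus $\FPdim(F(M))=\FPdim(M)$ for every object $M$ of $\msc{C}$. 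Second, fixing a projective generator $P$ of $\msc{C}$, the object $F(P)$ is projective by the preceding theorem, and a standard argument with projectivity and Krull--Schmidt (lift a split injection $Q_j\hookrightarrow F(X)$ along a surjection $F(P^{\oplus n})\twoheadrightarrow F(X)$) shows every indecomposable projective $Q_j$ is a direct summand of $F(P)$, so that $F(P)$ is a projective generator of $\msc{D}$. Finally, as finite tensor categories are module categories over finite-dimensional algebras, $F$ admits a right adjoint, which I denote $F^{\mathrm{ra}}$.

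The heart of the argument is the identity
\[
K(F)(R_\msc{C})=\sum_j\FPdim(F^{\mathrm{ra}}(Y_j))\,[Q_j]\quad\text{in }K(\msc{D})\ot_\mbb{Z}\mbb{R}.
\]
Indeed, each $F(P_i)$ is projective, so the left-hand side is a non-negative real combination of the classes $[Q_j]$, and its coefficient of $[Q_j]$ equals $\sum_i\FPdim(X_i)\dim\Hom_\msc{D}(F(P_i),Y_j)=\sum_i\FPdim(X_i)\dim\Hom_\msc{C}(P_i,F^{\mathrm{ra}}(Y_j))=\sum_i\FPdim(X_i)[F^{\mathrm{ra}}(Y_j):X_i]=\FPdim(F^{\mathrm{ra}}(Y_j))$, using the adjunction together with the standard identities $\dim\Hom(P_i,M)=[M:X_i]$ and $\FPdim(M)=\sum_i[M:X_i]\FPdim(X_i)$. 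Applying $\FPdim$ and invoking the first preliminary gives $\FPdim(\msc{C})=\sum_j\FPdim(F^{\mathrm{ra}}(Y_j))\FPdim(Q_j)$. Now $F^{\mathrm{ra}}(Y_j)\ne 0$, since $\Hom_\msc{D}(F(P),Y_j)\cong\Hom_\msc{C}(P,F^{\mathrm{ra}}(Y_j))$ and the left side is nonzero as $F(P)$ is a projective generator; hence the counit $FF^{\mathrm{ra}}(Y_j)\to Y_j$ is nonzero, so $[FF^{\mathrm{ra}}(Y_j):Y_j]\ge 1$, and therefore $\FPdim(F^{\mathrm{ra}}(Y_j))=\FPdim(FF^{\mathrm{ra}}(Y_j))\ge\FPdim(Y_j)$ by the first preliminary and the fact that $\FPdim(Y_k)\ge 1$ for every $k$. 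Summing against the positive weights $\FPdim(Q_j)$ yields $\FPdim(\msc{C})=\sum_j\FPdim(F^{\mathrm{ra}}(Y_j))\FPdim(Q_j)\ge\sum_j\FPdim(Y_j)\FPdim(Q_j)=\FPdim(\msc{D})$.

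For the equality statement, suppose $\FPdim(\msc{C})=\FPdim(\msc{D})$. Then every inequality above is an equality, forcing $\FPdim(FF^{\mathrm{ra}}(Y_j))=\FPdim(Y_j)$ for all $j$, which in turn (with $[FF^{\mathrm{ra}}(Y_j):Y_j]\ge 1$ and $\FPdim(Y_k)\ge 1$) forces $FF^{\mathrm{ra}}(Y_j)\cong Y_j$. Exactness and faithfulness of $F$ then force $F^{\mathrm{ra}}(Y_j)$ to be simple, say $F^{\mathrm{ra}}(Y_j)=Z_j$ with $F(Z_j)\cong Y_j$ and the $Z_j$ pairwise non-isomorphic. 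The computation $\dim\Hom_\msc{D}(F(P_i),Y_j)=\dim\Hom_\msc{C}(P_i,Z_j)=\delta_{X_i,Z_j}$ then shows each $F(P_i)$ is an indecomposable projective of $\msc{D}$---it is $Q_j$ for the unique $j$ with $X_i\cong Z_j$, faithfulness excluding $F(P_i)=0$---so $F$ is a bijection on isomorphism classes of simples and of indecomposable projectives. Consequently $F\colon\End_\msc{C}(P)\to\End_\msc{D}(F(P))$ is an injective algebra homomorphism between algebras of the same dimension (both equal $\sum_{i,i'}[P_{i'}:X_i]$, using exactness of $F$ and the bijectivity just established), hence an isomorphism; the progenerator equivalences $\msc{C}\simeq\rep\End_\msc{C}(P)$ and $\msc{D}\simeq\rep\End_\msc{D}(F(P))$ then identify $F$ with an equivalence of abelian categories, which is automatically a tensor equivalence.

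The step I expect to require the most care is the equality case: passing from $FF^{\mathrm{ra}}(Y_j)\cong Y_j$ (for all $j$) to bijectivity of $F$ on indecomposable projectives, and thence to full faithfulness via the $\End(P)$ dimension count. By contrast the inequality $\FPdim(\msc{D})\le\FPdim(\msc{C})$ is comparatively mechanical once the displayed identity is in place, the only delicate inputs being that $F$ preserves Frobenius--Perron dimensions of objects and that $F$ carries a projective generator of $\msc{C}$ to one of $\msc{D}$.
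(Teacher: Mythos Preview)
The paper does not prove this statement; it is simply quoted from \cite[Proposition 2.20]{etingofostrik04} as background material, with no accompanying argument. So there is no ``paper's own proof'' to compare against.

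That said, your argument is correct and is essentially the approach of Etingof--Ostrik. The key computation $K(F)(R_{\msc{C}})=\sum_j\FPdim(F^{\mathrm{ra}}(Y_j))[Q_j]$, combined with $\FPdim\circ F=\FPdim$ and the nonvanishing of $F^{\mathrm{ra}}(Y_j)$, is exactly how they obtain the inequality. In fact they prove slightly more: the right adjoint satisfies $\FPdim(F^{\mathrm{ra}}(Y))=\frac{\FPdim(\msc{C})}{\FPdim(\msc{D})}\FPdim(Y)$ for all $Y$, which immediately gives both the inequality and, in the equality case, that $FF^{\mathrm{ra}}(Y_j)\cong Y_j$. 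Your route to the equality case via a direct dimension count on $\End(P)$ is a clean alternative to their argument, which instead shows $F^{\mathrm{ra}}$ is exact and then that the unit and counit of the adjunction are isomorphisms. Both approaches rest on the same preliminary facts (preservation of projectives under surjective tensor functors, and preservation of Frobenius--Perron dimension of objects), so the difference is cosmetic.
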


\subsection{Tensor generators}
\label{sect:tgen}

For a collection of objects $\{Y_j\}_j$ in a tensor category $\msc{C}$, let $\langle Y_j\rangle_j$ denote the smallest (full) tensor subcategory in $\msc{C}$ which contains the $Y_j$ and is closed under taking subquotients.  We call this subcategory the tensor subcategory in $\msc{C}$ \emph{generated by} the $Y_j$.  We say $\msc{C}$ is tensor generated by the collection $\{Y_j\}_j$ if $\msc{C}=\langle Y_j\rangle_j$.

\subsection{Braided tensor categories, ribbon tensor categories, etc.}

A braiding on a tensor category $\msc{C}$ is a chosen collection of natural isomorphisms $c_{X,Y}:X\ot Y\to Y\ot X$, for each $X$ and $Y$ in $\msc{C}$, which satisfy the equation
\begin{equation}\label{eq:191}
c_{X,Y\ot Z}=(id\ot c_{X,Z})(c_{X,Y}\ot id)\ \ \text{and}\ \ c_{X\ot Y,Z}=(c_{X,Z}\ot id)(id\ot c_{Y,Z})
\end{equation}
for each triple of objects in $\msc{C}$, and $c_{X',Y'}(f\ot g)=(g\ot f)c_{X,Y}$ for each pair of morphisms $f:X\to X'$ and $g:Y\to Y'$ in $\msc{C}$.  We also require that the braidings $c_{\1,X}$ and $c_{X,\1}$ composed with the unit isomorphisms are the identity.  A tensor category equipped with a particular choice of braiding is called a \emph{braided tensor category}.

\begin{remark}
We have suppressed the associator in the equations \eqref{eq:191}.
\end{remark}

For a braiding $c$ on a tensor category $\msc{C}$, we let $c^2$ denote the square operation $c^2_{X,Y}:=c_{X,Y}c_{X,Y}$.  The \emph{M\"uger center} $Z_{\text{\rm M\"ug}}(\msc{C})$ of a braided tensor category $\msc{C}$ is the full subcategory consisting of all objects $X$ in $\msc{C}$ for which $c^2_{X,-}=id_{X\ot-}$.  We call a finite braided tensor category \emph{non-degenerate} if the M\"uger center $Z_{\text{\rm M\"ug}}(\msc{C})$ is just $Vect$, i.e.\ if any M\"uger central object is isomorphic to some additive power of the unit.

A \emph{twist} for a braided tensor category $\msc{C}$ is a choice of natural automorphism $\theta$ of the identity functor, i.e.\ a collection of natural isomorphisms $\theta_X:X\to X$ for each $X$, such that
\[
\theta_{X\ot Y}=(\theta_X\ot\theta_Y)c_{X,Y}^2
\]
at all $X$ and $Y$ in $\msc{C}$.  A \emph{ribbon tensor category} is a braided tensor category with a choice of twist $\theta$ which is stable under duality, in the sense that $\theta_X^\ast=\theta_{X^\ast}$ at all $X$ in $\msc{C}$.  A \emph{modular tensor category} is a finite, non-degenerate, ribbon tensor category. Note that, unlike some authors, we do not require a modular tensor category to be semisimple. When it is, we call it a modular fusion category.

\subsection{Tensor categories coming from vertex operator algebras}\label{sect:voacat}

Consider $\mcl{V}$ a vertex operator algebra (VOA). VOAs and their modules carry an action of the Virasoro algebra, coming from the conformal vector $\omega\in\mcl{V}$. By a generalized $\mcl{V}$-module $W$ is meant a module in the obvious algebraic sense, together with a grading $W=\coprod_{r\in\mbb{C}}W_{(r)}$ into generalized eigenspaces of the Virasoro operator $L(0)$.  The presence of the adjective `generalized' here is unfortunate and historical, and merely refers to the subspaces $W_{(r)}$ being generalized eigenspaces. We are primarily interested in grading-restricted $W$, which means each subspace $W_{(r)}$ is finite-dimensional, and for each $r\in\mbb{C}$ we have $W_{(r+k)}=0$ for all sufficiently small $k\in\mbb{Z}$. For example, $\mcl{V}$ is a grading-restricted generalized module over itself, where all $V_{(r)}$ are actually eigenspaces for $L(0)$, and where all weights $r$ are integers. A common requirement on generalized modules is $C_1$-cofiniteness (defined e.g. in \cite{huang09}). 

\par In the series of papers \cite{huanglepowskyzhang14}-\cite{huanglepowskyzhang}, Huang, Lepowsky and Zhang give technical conditions under which a full subcategory of the category of grading-restricted generalized $\mcl{V}$-modules can be a braided tensor category. In particular, see \cite[Theorem 12.15, Corollary 12.16]{huanglepowskyzhang}. In these cases, the VOA $\mcl{V}$ itself serves as the unit, and all structure maps are deduced via a certain analysis of multivalued functions on the punctured complex plane.  We should be clear that, when we speak of a class $\mcl{C}$ of $\mcl{V}$-modules admitting \emph{a} braided tensor structure, the braided tensor structure is specified uniquely.  In general, the problem is that, for a given class of $\mcl{V}$-modules, no such structure may exist.

\par Establishing rigidity is more subtle. The dual of a grading-restricted generalized module $W$ should be the contragredient  $W^*$, which is the natural $\mcl{V}$-module structure on the restricted dual $\coprod_rW_{(r)}^*$. In a natural sense,  $(W^*)^*$ can be identified with $W$ -- this is clearly true as a vector space (since the $W_{(r)}$ are finite-dimensional), but as well the formula for the vertex operator of $(W^*)^*$ collapses to that of $W$. In establishing rigidity, the (co-)evaluation maps are generally clear up to scaling; the challenge is to verify the rescalings are finite.

\par Such a tensor category of VOA modules comes equipped with a twist provided by the exponential $\theta=e^{2\pi i L(0)}$, which one verifies as in \cite[Theorem 4.1]{huang08}.  (A concise recounting of the situation can be found in \cite[Section 3]{creutzigkanademcrae}.)

\par We follow the standard VOA practice of distinguishing between modules and representations. The notion of a representation of a VOA $\mcl{V}$ (i.e. a homomorphism from $\mcl{V}$ to some sort of VOA  canonically associated to vector space $W$), and its relation to $\mcl{V}$-modules,  is much more subtle than it is for say associative algebras. This is discussed in more detail in the book \cite{lepowskyli}, which also serves as a standard introduction to VOA theory.

\section{Quantum $\SL(2)$}

We recall basic information for the category of quantum $\SL(2)$-representations, and give two interpretations of the small quantum group for $\SL(2)$ at a root of unity $\zeta\in \mbb{C}^\times$.  Much of our presentation is general, in the sense that the quantum parameter $\zeta$ can be an arbitrary (nonzero) complex number.  However, when specificity is needed, we focus on the even order case $\ord(\zeta)=2p$.

\subsection{Big quantum $\SL(2)$}
\label{sect:SL2q}

Consider an arbitrary parameter $\zeta\in \mbb{C}^\times$, and let $\Lambda$ denote the character lattice for $\SL(2)$, so that $\Lambda=\frac{1}{2}\mbb{Z}\alpha$ with $\alpha$ simple and positive.  We consider the tensor category
\[
\rep \SL(2)_\zeta=\left\{
\begin{array}{c}
\text{The category of $\Lambda$-graded representations}\\
\text{of Lusztig's divided power algebra }U^{Lus}_\zeta(\mfk{sl}_2)
\end{array}\right\}=\rep\dot{U}_\zeta(\mfk{sl}_2)
\]
The final algebra $\dot{U}_\zeta(\mfk{sl}_2)$ is the modified quantum enveloping algebra of \cite[Chapter 31]{lusztig93}, and for the $\Lambda$-grading we require that toral elements in $U^{Lus}_\zeta(\mfk{sl}_2)$ act on the $\lambda$-space $V_\lambda$ via the appropriate eigenfunctions.  We refer to objects in $\rep\SL(2)_\zeta$ as $\SL(2)_\zeta$-representations.
\par

For a finite order parameter $\ord(\zeta^2)=p$, for example, we require that the toral elements in the quantum group act as
\[
K_\alpha\cdot v=\zeta^{(\alpha,\lambda)}v\ \text{and}\ \binom{K_\alpha; 0}{p}\cdot v=\binom{\langle \alpha,\lambda\rangle}{p}v,\ \text{where $\binom{a}{b}$ is the $\zeta$-binomial.
}
\]
At such $\zeta$ the category $\rep\SL(2)_\zeta$ can then be described explicitly as the category of finite-dimensional $\Lambda$-graded vector spaces $V$ equipped with linear operators $E, F, E^{(p)}, F^{(p)}:V\to V$ which shift the grading as
\[
E\cdot V_\lambda\subset V_{\lambda+\alpha},\ \ E^{(p)}\cdot V_\lambda\subset V_{\lambda+p\alpha},\ \ F\cdot V_\lambda\subset V_{\lambda-\alpha},\ \ F^{(p)}\cdot V_\lambda\subset V_{\lambda-p\alpha}
\]
and satisfy the standard quantum group relations of \cite{lusztig90,lusztig93}.  At an infinite order parameter one can deduce a similar (but slightly easier) presentation of $\rep\SL(2)_\zeta$.
\par

Let $\Omega:V\ot W\to V\ot W$ denote the diagonal endomorphism associated to the normalized Killing form,
\[
\Omega(v\ot w)=\zeta^{(\deg(v),\deg(w))}v\ot w,\ \ (\alpha,\alpha)=2.
\]
When $\zeta$ is of finite order $\ord(\zeta^2)=p$, we have the formal element
\begin{equation}\label{eq:R}
R=(\sum_{n=0}^{p-1}\zeta^{-n(n-1)/2}\frac{(\zeta^{-1}-\zeta)^n}{[n]!}E^n\ot F^n)\Omega^{-1}=(1-(\zeta-\zeta^{-1})E\ot F+\dots)\Omega^{-1}
\end{equation}
which acts as a well-defined linear endomorphism on products $V\ot W$ of quantum group representations.  Similarly, at an infinite order parameter we have the element $R$ defined by replacing the finite sum \eqref{eq:R} with the evident power series.  The element $R$ provides an $R$-matrix for the category $\rep\SL(2)_\zeta$, so that we have the associated braiding
\[
\begin{array}{l}
c_{V,W}:V\ot W\to W\ot V,\\
 c_{V,W}(v\ot w):=R_{21}\cdot(w\ot v)=\zeta^{-(\deg v,\deg w)}\sum_n\zeta^{-n(n-1)/2}\frac{(\zeta^{-1}-\zeta)^n}{[n]!}F^nw\ot E^n v
\end{array}
\]
\cite{jantzen95,lusztig93}.

\begin{remark}
To be clear, the normalized Killing form takes half-integer values on the character lattice $\Lambda$, so that the definition of $\Omega=\sum_{\lambda,\mu\in \Lambda}\zeta^{(\lambda,\mu)}1_\lambda\ot 1_\mu$ involves a choice of square root for $\zeta$.  We simply halve the argument and take the positive square root of the magnitude to define $\zeta^{1/2}:=\sqrt{|\zeta|}\exp(i\operatorname{arg}/2)$, where $0\leq \operatorname{arg}<2\pi$ is such that $\zeta=|\zeta|e^{i\operatorname{arg}}$.  There is, however, another $R$-matrix for $\rep\SL(2)_\zeta$ defined by taking the negative square root of $|\zeta|$ and, accounting for reverse braidings, we observe a total of \emph{four} possible braidings on the category of $\SL(2)_\zeta$-representations.
\par

We refer to the above braiding on $\rep\SL(2)_\zeta$, defined by taking the positive square root of the magnitude $|\zeta|$ and corresponding $R$-matrix \eqref{eq:R}, as the ``standard braiding" on $\rep\SL(2)_\zeta$.
\end{remark}

Suppose now, for the sake of specificity, that $\zeta$ is of even order $\ord(\zeta)=2p$.  The category $\rep\SL(2)_\zeta$ is pivotal \cite[Definition 4.7.7]{egno15}, with slightly unorthodox pivotal element provided by the grouplike $K^{p-1}$.  We consider the Drinfeld morphism
\[
u_W:W\to W,\ \ u_W(w)=\zeta^{(\deg w,\deg w)}\sum_{n=0}^{p-1} \zeta^{n(n+1)/2}\zeta^{n(\deg w,\alpha)}\frac{(\zeta^2-1)^n}{[n]!}F^nE^n w
\]
defined by $R$ to find the corresponding twist
\[
\theta_W^{-1}=u_WK^{-p+1}:W\to W,
\]
\[
\theta_W^{-1}(w)=(-1)^{(\deg w,\alpha)}\zeta^{(\deg w,\deg w)}\sum_{n=0}^{p-1} \zeta^{n(n+1)/2}\zeta^{(n+1)(\deg w,\alpha)}\frac{(\zeta^2-1)^n}{[n]!}F^nE^n w.
\]
This twist provides the category of quantum group representations $\rep\SL(2)_\zeta$ with a ribbon structure.

\subsection{Simple objects in $\rep\SL(2)_\zeta$ and the standard representation}

For any dominant weight $\lambda\in \Lambda$, i.e.\ any weight of the form $\lambda=n\frac{\alpha}{2}$ with $n$ positive, we have a uniquely associated simple module $L(\lambda)$ in $\rep\SL(2)_\zeta$ of highest weight $\lambda$.  Furthermore, these representations exhaust all of the simple objects in $\rep\SL(2)_\zeta$ \cite[Proposition 6.4]{lusztig89}.  Each $L(\lambda)$ appears as
\[
L(\lambda)=\mbb{C}v_\lambda\oplus \mbb{C}v_{\lambda-m_1\alpha}\oplus \dots \oplus \mbb{C}v_{\lambda-m_{n(\lambda)}\alpha},
\]
where the $m_i$ are positive integers which depend on $\lambda$.  At $\lambda=\frac{\alpha}{2}$ we obtain a $2$-dimensional simple representation
\[
\mbb{V}:=L(\frac{1}{2}\alpha)=\mbb{C}v_1\oplus \mbb{C}v_{-1},\ \ \text{with}\ \ v_{\pm 1}=v_{\pm \frac{\alpha}{2}}.
\]
We call $\mbb{V}$ the \emph{standard representation} for $\SL(2)_\zeta$.
\par

When $\zeta$ is of infinite order, each simple $L(n\alpha/2)$ is of dimension $n+1$ and the category $\rep\SL(2)_\zeta$ is semisimple.  Indeed, at such $\zeta$ we have the obvious abelian equivalence between classical representations $\rep\SL(2)$ and $\rep\SL(2)_\zeta$ which induces an isomorphism on Grothendieck rings.
\par

Consider now $\zeta$ of finite order, and take $p=\ord(\zeta^2)$.  We adopt special notations for the first $p$ simples:
\begin{equation}\label{eq:Vs}
V_1=\1=L(0),\ V_2=\mbb{V}=L(\frac{1}{2}\alpha),\ V_3=L(\alpha),\ \dots,\ V_p=L(\frac{p-1}{2}\alpha).
\end{equation}
Each simple $V_s$ is of dimension $s$, and has non-vanishing weight spaces in degrees $(s-1-2j)\alpha/2$, for all $j=0,\dots, s-1$.
\par

By considering the behaviors of weight spaces under the tensor product, it is easy to see that each simple $L(\lambda)$ is a quotient of some power $\mbb{V}^{\ot n}$ of the standard representation.  The following is well-known, and is a consequence of the fact that the simple representation $V_p$ is also projective \cite[Lemma 3.2.1]{bfgt09}.

\begin{lemma}
The standard representation $\mbb{V}$ generates $\rep\SL(2)_\zeta$, as a tensor category.
\end{lemma}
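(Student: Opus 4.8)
The plan is to show that the tensor subcategory $\langle \mbb{V}\rangle \subseteq \rep\SL(2)_\zeta$ generated by the standard representation contains every simple object, whence it is all of $\rep\SL(2)_\zeta$ since the ambient category has all objects of finite length. First I would record the two facts already assembled in the excerpt: every simple $L(\lambda)$ arises as a quotient (hence a subquotient) of some tensor power $\mbb{V}^{\ot n}$, by a weight-space count on the tensor product, and the top simple $V_p = L(\tfrac{p-1}{2}\alpha)$ is projective (cited to \cite[Lemma 3.2.1]{bfgt09}). The first fact immediately puts all simples into $\langle \mbb{V}\rangle$ because that subcategory is closed under subquotients by definition (Section \ref{sect:tgen}), and it contains $\mbb{V}$ and is a tensor subcategory so it contains all $\mbb{V}^{\ot n}$. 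So in fact the statement follows directly, and I would spell that out as the main line of argument.

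The only subtlety worth addressing is the infinite-order case, where $p$ is not defined and the category is semisimple with $L(n\alpha/2)$ of dimension $n+1$; there the weight-space argument is even cleaner, since $\mbb{V}^{\ot n}$ visibly has $L(n\alpha/2)$ as a summand (highest weight $n\alpha/2$ appears with multiplicity one) and one proceeds by induction on $n$, peeling off the top summand and noting the complement is a sum of $L(m\alpha/2)$ with $m < n$. In the finite-order case the same induction on the highest weight works at the level of the Grothendieck ring: $[\mbb{V}^{\ot n}]$ has $[L(n\alpha/2)]$ appearing with coefficient one in the standard basis and all other constituents of strictly smaller highest weight, so by downward induction each $[L(\lambda)]$ is a $\mbb{Z}_{\geq 0}$-combination of classes of the $\mbb{V}^{\ot n}$, forcing $L(\lambda)$ to be a subquotient of a direct sum of tensor powers of $\mbb{V}$, hence to lie in $\langle\mbb{V}\rangle$.

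I expect there to be essentially no obstacle: the content is entirely in the two cited inputs. If one wanted to avoid invoking projectivity of $V_p$ altogether, one could argue purely through the Grothendieck-ring induction sketched above, which shows directly that every simple is a subquotient of $\bigoplus_n \mbb{V}^{\ot n}$; the role of projectivity of $V_p$ in \cite{bfgt09} is really to strengthen this to a statement about direct summands, which is not needed here since $\langle\mbb{V}\rangle$ is closed under subquotients and not merely summands. So the proof I would write is: (1) note $\langle\mbb{V}\rangle$ is a tensor subcategory closed under subquotients containing $\mbb{V}$, hence contains all $\mbb{V}^{\ot n}$; (2) by the weight-space observation every simple $L(\lambda)$ is a subquotient of some $\mbb{V}^{\ot n}$, hence lies in $\langle\mbb{V}\rangle$; (3) since every object of $\rep\SL(2)_\zeta$ has finite length with all composition factors in $\langle\mbb{V}\rangle$, and $\langle\mbb{V}\rangle$ is closed under extensions within $\rep\SL(2)_\zeta$ as a full abelian subcategory, conclude $\langle\mbb{V}\rangle = \rep\SL(2)_\zeta$.
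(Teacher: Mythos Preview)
Your argument has a genuine gap at step (3). The subcategory $\langle\mbb{V}\rangle$ is defined (Section~\ref{sect:tgen}) as the smallest full tensor subcategory closed under \emph{subquotients}; nothing in that definition gives closure under \emph{extensions}. Your sentence ``$\langle\mbb{V}\rangle$ is closed under extensions within $\rep\SL(2)_\zeta$ as a full abelian subcategory'' asserts exactly the missing property without justification. A clean counterexample to the implicit claim ``contains all simples $\Rightarrow$ is everything'': take $\msc{C}=\rep(\mbb{C}[x]/(x^2))$ with $x$ primitive. The unique simple is $\1$, and $\langle\1\rangle=Vect$, which misses the indecomposable projective of length two. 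So knowing that every simple is a subquotient of some $\mbb{V}^{\ot n}$ does not finish the proof, and your suggestion that projectivity of $V_p$ can be avoided via a Grothendieck-ring induction is mistaken---the Grothendieck ring cannot see extensions.

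This is precisely why the paper flags projectivity of $V_p$ as the essential input. The intended argument (compare the proof of Corollary~\ref{cor:wp_gen}) is: $V_p$ lies in $\langle\mbb{V}\rangle$ as a subquotient of $\mbb{V}^{\ot(p-1)}$ and is projective; then $V_p^\ast\ot V_p$ is projective in $\langle\mbb{V}\rangle$ and surjects onto $\1$ via evaluation, so the projective cover $P_1$ of $\1$ is a summand of $V_p^\ast\ot V_p$ and hence lies in $\langle\mbb{V}\rangle$; tensoring $P_1$ with each simple $L(\lambda)$ produces a projective in $\langle\mbb{V}\rangle$ surjecting onto $L(\lambda)$, so every projective cover lies in $\langle\mbb{V}\rangle$; finally every object is a quotient of its projective cover. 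Your steps (1)--(2) are fine and match the paper's setup, but you need to replace (3) with this projectivity argument.
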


As for the simples of highest weight $>\frac{p-1}{2}\alpha$, each simple $L(r\frac{p}{2}\alpha)$ is $(r+1)$-dimensional, is supported in weight spaces $(r-2j)\frac{p}{2}\alpha$, for $0\leq j\leq r$, and is annihilated by $E$ and $F$.  We have the following basic result.

\begin{lemma}[{\cite[Theorem 1.10]{andersenwen92}}]\label{lem:L_decomp}
Each simple $L(\lambda)$ admits a unique decomposition $L(\lambda)=L(\mu)\ot V_s$, where $\mu\in p\mbb{Z}\frac{\alpha}{2}$ and $\lambda=\mu+\frac{(s-1)}{2}\alpha$.
\end{lemma}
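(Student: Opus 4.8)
The plan is to recognize this as the Steinberg tensor product theorem for quantum $\SL(2)$ at the even order root of unity $\zeta$, and to derive it from Lusztig's quantum Frobenius; the alternative is simply to cite \cite[Theorem 1.10]{andersenwen92}. I would split the argument into the (immediate) uniqueness of the numerical data and the module isomorphism. For uniqueness, write the dominant weight as $\lambda=n\tfrac\alpha2$ with $n\ge 0$. A decomposition $\lambda=\mu+\tfrac{s-1}2\alpha$ with $\mu\in p\mbb{Z}\tfrac\alpha2$ and with $V_s$ defined (i.e.\ $1\le s\le p$) is exactly an expression $n=rp+(s-1)$ with $r\ge 0$ and $0\le s-1\le p-1$, which is the Euclidean division of $n$ by $p$; hence $(\mu,s)=(rp\tfrac\alpha2,\,s)$ is uniquely determined, with $\mu$ dominant and $L(\mu)=L(r\tfrac p2\alpha)$ the $(r{+}1)$-dimensional simple recorded above, on which $E$ and $F$ vanish. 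With the data fixed, set $M:=L(\mu)\ot V_s$; since the weights of $M$ are bounded above by $\mu+\tfrac{s-1}2\alpha=\lambda$, with $\lambda$-weight space $L(\mu)_\mu\ot(V_s)_{(s-1)\alpha/2}$ of dimension one, the module $L(\lambda)$ occurs as a composition factor of $M$ with multiplicity one, and it suffices to prove that $M$ is simple.

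For simplicity I would first record how the generators act on $M$. Because the weights of $L(\mu)$ lie in $p\mbb{Z}\tfrac\alpha2$, a short weight-support count shows that every divided power $E^{(i)},F^{(i)}$ with $0<i<p$ annihilates $L(\mu)$; dually, since $E^{(m)}$ raises the weight by $m\alpha$ and the weights of $V_s$ span only $s-1$ steps of $\alpha$, one has $E^{(m)}=F^{(m)}=0$ on $V_s$ for all $m\ge s$, and in particular $E^{(p)}=F^{(p)}=0$ on $V_s$ as $s\le p$. Feeding this into Lusztig's comultiplication formulas for $E^{(p)}$ and $F^{(p)}$ kills all cross terms on $M$, so that $E^{(p)},F^{(p)}$ act on $M$ through the first tensor factor only, while $E,F$ act through the second factor only; the torus acts diagonally, and one checks $K$ acts on all of $L(\mu)$ by the single scalar $(-1)^r$ (using $\zeta^p=-1$). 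Restricting along the small quantum group $\mfk u=u_\zeta(\mfk{sl}_2)=\langle E,F,K\rangle\subset U^{Lus}_\zeta(\mfk{sl}_2)$, this exhibits $M|_{\mfk u}$ as a simple-isotypic module $S^{\oplus(r+1)}$, where $S$ is $V_s$ with its $K$-action sign-twisted by $(-1)^r$.

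Now I would invoke quantum Frobenius \cite{lusztig93}: $\mfk u$ is a normal Hopf subalgebra of $U^{Lus}_\zeta(\mfk{sl}_2)$ whose quotient $\overline U:=U^{Lus}_\zeta(\mfk{sl}_2)/\!/\mfk u$ is the classical hyperalgebra of $\mfk{sl}_2$. By the resulting Clifford theory, any $U^{Lus}_\zeta(\mfk{sl}_2)$-submodule $N\subseteq M$ is in particular an $\mfk u$-submodule of $S^{\oplus(r+1)}$ and therefore corresponds to a submodule of the multiplicity space $\Hom_{\mfk u}(S,M)$ over $\overline U$ (up to a $2$-cocycle twist, when $S$ does not extend to $U^{Lus}_\zeta(\mfk{sl}_2)$); under the identification of $E^{(p)},F^{(p)}$ with the classical Chevalley operators, this multiplicity space is the simple $\overline U$-module $L(\mu)$, so $N=0$ or $N=M$. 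Hence $M$ is simple, and since its highest weight is $\lambda$ we conclude $M\cong L(\lambda)$.

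The routine parts are the uniqueness and the highest-weight bookkeeping; the real content, and the main obstacle, is the simplicity of $L(\mu)\ot V_s$, which rests essentially on the normality of the small quantum group inside Lusztig's divided-power algebra together with the identification of the quotient as the classical hyperalgebra. The mild additional wrinkle at an even root of unity is the sign-twisted simple $S$, which in general does not extend to $U^{Lus}_\zeta(\mfk{sl}_2)$ and so forces the Clifford step to be run up to a (harmless, since in the end only simplicity is needed) $2$-cocycle; this is exactly the sort of bookkeeping carried out in \cite{andersenwen92}, whose Theorem 1.10 one may cite in lieu of the argument above.
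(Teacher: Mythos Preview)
Your proposal is correct. The paper's own proof is much terser: it simply remarks that the general result is \cite[Theorem 1.10]{andersenwen92}, and that in the $\SL(2)$ case ``one can examine the product $L(\mu)\ot V_s$ directly to see that it admits no submodules,'' concluding by highest weight. You instead spell out the mechanism behind such a direct check (and behind the cited theorem): after observing that $E,F$ act through the $V_s$ factor and $E^{(p)},F^{(p)}$ through the $L(\mu)$ factor, you run a Clifford-theory argument over the normal Hopf subalgebra $u_\zeta(\mfk{sl}_2)\subset U^{Lus}_\zeta(\mfk{sl}_2)$ with quotient the classical hyperalgebra. So your route is not so much different as more explicit---the paper buys brevity by citing, while you buy self-containedness by reproving the special case. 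One minor simplification: your weight-support argument for the vanishing of $E^{(i)}$ on $L(\mu)$ for $0<i<p$ is correct but roundabout; since the paper already records that $E$ and $F$ annihilate $L(\mu)$ and $E^{(i)}=E^i/[i]!$ for $i<p$, the vanishing is immediate.
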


\begin{proof}
The general version of this theorem is covered in \cite{andersenwen92}.  In our case, one can examine the product $L(\mu)\ot V_s$ directly to see that it admits no submodules, and so is simple.  Since $L(\mu)\ot V_s$ is of highest weight $\lambda=\mu+\frac{(s-1)}{2}\alpha$, we conclude $L(\mu)\ot V_s=L(\lambda)$.
\end{proof}

\subsection{Lusztig's quantum Frobenius}
\label{sect:Fr}

\emph{For the remainder of the section we suppose $\zeta$ is of even order $\ord(\zeta)=2p$}, again for the sake of specificity.
\par

Let us consider the sub-lattice $\Lambda^M:=p\mbb{Z}\alpha$ in $\Lambda$.  The collection of objects $W$ in $\rep \SL(2)_\zeta$ whose $\Lambda$-grading is supported on the sublattice $\Lambda^M$ forms a tensor subcategory in $\rep\SL(2)_\zeta$.  This subcategory is semisimple and is equivalent to the category of representations of classical $\PSL(2)$.  Indeed, Lusztig's quantum Frobenius provides a braided tensor embedding
\[
\Fr:\rep\PSL(2)\to \rep\SL(2)_\zeta
\]
\cite[Chapter 31]{lusztig93}.  Furthermore, one sees that the image of the functor $\Fr$ is precisely the M\"uger center of $\rep\SL(2)_\zeta$ \cite[Theorem 4.3]{negron}, so that we have an identification
\[
Z_{\text{\rm M\"ug}}(\rep\SL(2)_\zeta)=\rep\PSL(2).
\]

\subsection{Small quantum $\SL(2)$ as a quasi-Hopf algebra \cite{creutzigetal}}
\label{sect:smallsl2}

Recall our standing assumption $\ord(\zeta)=2p$.  As an algebra, the small quantum group $u_\zeta(\mfk{sl}_2):=u_\zeta(\SL(2))$ has a presentation
\[
u_q(\mfk{sl}_2)=\mbb{C}\langle E,F,K\rangle/(K^{2p}-1,\ E^p,\ F^p,\ [E,F]-\frac{K-K^{-1}}{\zeta-\zeta^{-1}},\ KE-\zeta^2E,\ KF-\zeta^{-2}FK).
\]
We let $G=\langle K\rangle$ denote the subgroup of units in $u_\zeta(\mfk{sl}_2)$ generated by $K$.
\par

The quasi-Hopf structure on $u_\zeta(\mfk{sl}_2)$ is a ``toral perturbation" of the usual Hopf structure induced by the algebra inclusion $u_\zeta(\mfk{sl}_2)\to U^{Lus}_\zeta(\mfk{sl}_2)$ into Lusztig's divided power algebra \cite{lusztig90,lusztig93}.  While we won't explicitly describe the quasi-Hopf structure here, let us say that the (co)associator $\phi\in u_\zeta(\mfk{sl}_2)\ot u_\zeta(\mfk{sl}_2)$ lies in the group of grouplikes $\phi\in \mbb{C}G\ot \mbb{C}G$, and the coproduct is of some form
\[
\Delta(E)=E\ot L+M(K\ot E),\ \ \Delta(F)=E\ot L^{-1}+M'(K^{-1}\ot F),\ \ \Delta(K)=K\ot K,
\]
with $L\in \mbb{C}G$ and $M,M'\in \mbb{C}G\ot \mbb{C}G$.  The counit $u_\zeta(\mfk{sl}_2)\to \mbb{C}$ is the usual one.  (More details can be found in any of \cite{gainutdinovrunkel17,gainutdinovlentnerohrmann,negron}.)
\par

The category of $u_\zeta(\mfk{sl}_2)$-representations admits a ribbon structure for which the restriction functor
\[
\res^\omega=(\res,T^\omega):\rep\SL(2)_\zeta\to \rep u_\zeta(\mfk{sl}_2)
\]
becomes a map of ribbon tensor categories, after we introduce a nontrivial tensor compatibility $T^\omega_{V,W}:\res(V)\ot \res(W)\to \res(V\ot W)$ \cite[Proposition 6.3]{negron}.
\par

The particular form of the quasi-Hopf structure on $u_\zeta(\mfk{sl}_2)$ will not be important for us.  Indeed, when addressing the tensor category $\rep u_\zeta(\mfk{sl}_2)$ we prefer the presentation of Section \ref{sect:dE} below.  There are, however, some advantages to considering the quasi-Hopf interpretation of representations for the small quantum group.  Namely, simple facts about the representation theory of $u_\zeta(\mfk{sl}_2)$ are clear from this concrete perspective.
\par

We observe for $u_\zeta(\mfk{sl}_2)$ the anomalous $1$-dimensional representation
\[
\chi=\mbb{C}v_p,\ \ E\cdot v_p=F\cdot v_p=0,\ K\cdot v_p=-v_p.
\]
Since $\chi$ is $1$-dimensional, one sees abstractly that it must be invertible in the tensor category $\rep u_\zeta(\mfk{sl}_2)$, and since $\chi$ is in fact the \emph{unique} non-trivial $1$-dimensional representation for $u_\zeta(\mfk{sl}_2)$ we see that it is self-dual.  So $\chi\ot \chi\cong \mbf{1}$.
\par

There are furthermore a total of $2p$ simple $u_\zeta(\mfk{sl}_2)$-representations which correspond to the $2p$ characters of the group of grouplikes $G$.  The first $p$ simples are provided by the images of the simples $V_s$ in $\rep\SL(2)_\zeta$, under restriction.  The final $p$ simples are given as the products $\chi\ot V_s$, so that we have a complete list of simple representations $\{V_s,\ \chi\ot V_s:1\leq s\leq p\}$, up to isomorphism.
\par

The fact that the restriction functor from $\rep\SL(2)_\zeta\to \rep u_\zeta(\mfk{sl}_2)$ is a tensor functor, or rather admits the structure of a tensor functor, tells us that the fusion rules for $\rep u_\zeta(\mfk{sl}_2)$ are the expected ones.  Rather, the fusion rules are those one would calculate from the na\"ive Hopf structure on $u_\zeta(\mfk{sl}_2)$ induced by that of $U^{Lus}_\zeta(\mfk{sl}_2)$.

\subsection{Small quantum $\SL(2)$ as Frobenius de-equivariantizaton \cite{arkhipovgaitsgory03,negron}}
\label{sect:dE}

We have the quantum Frobenius $\Fr:\rep\PSL(2)\to \rep\SL(2)_\zeta$ of Section \ref{sect:Fr}, and can consider the central algebra object $\O:=\Fr\O(\PSL(2))$ in the category $\Rep\SL(2)_\zeta$ of infinite-dimensional quantum group representations.  (Specifically, $\Rep\SL(2)_\zeta$ is the category of arbitrary $U^{Lus}_\zeta(\mfk{sl}_2)$-modules $M$ which are the unions $M=\cup_\mu M_\mu$ of finite-dimensional subobjects $M_\mu$ in $\rep\SL(2)_\zeta$.)  We consider the de-equivariantization
\[
(\rep\SL(2)_\zeta)_{\PSL(2)}:=\left\{
\begin{array}{c}
\text{The monoidal category of finitely presented}\\
\O\text{-modules in }\Rep\SL(2)_\zeta
\end{array}
\right\}
\]
\cite{arkhipovgaitsgory03,davydovetingofnikshych18}.  The category $(\rep\SL(2)_\zeta)_{\PSL(2)}$ is monoidal, with product $\ot_\O$, and we have the monoidal functor
\[
dE:\rep\SL(2)_\zeta\to (\rep\SL(2)_\zeta)_{\PSL(2)},\ \ dE(V):=\O\ot V
\]
\cite[Theorem 1.6]{kirillovostrik02}.  There is a unique ribbon structure on the category $(\rep\SL(2)_\zeta)_{\PSL(2)}$ so that the de-equivariantization map $dE$ is a map of ribbon monoidal categories, and we consider $(\rep\SL(2)_\zeta)_{\PSL(2)}$ as a ribbon category with its induced ribbon structure.

\begin{theorem}[{\cite{negron}}]\label{thm:N}
The category $(\rep\SL(2)_\zeta)_{\PSL(2)}$ is finite, rigid, and non-degenerate, and hence modular.  Furthermore, there is a ribbon tensor equivalence $\mbb{C}\ot^\omega_\O-:(\rep\SL(2)_\zeta)_{\PSL(2)}\overset{\sim}\to \rep u_\zeta(\mfk{sl}_2)$ which fits into a diagram
\[
\xymatrix{
\rep\SL(2)_\zeta\ar[rr]^{\res^\omega}\ar[dr]_{dE} & & \rep u_\zeta(\mfk{sl}_2)\\
 & (\rep\SL(2)_\zeta)_{\PSL(2)}\ar[ur]_{\mbb{C}\ot^\omega_\O-} & .
}
\]
\end{theorem}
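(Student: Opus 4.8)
The plan is to exploit that $(\rep\SL(2)_\zeta)_{\PSL(2)}$ is by construction the de-equivariantization of $\rep\SL(2)_\zeta$ along the Tannakian subcategory $\Fr(\rep\PSL(2))$: one reads off the structural properties from the general de-equivariantization formalism together with an explicit count of simple objects, and then realizes the comparison with $\rep u_\zeta(\mfk{sl}_2)$ as the canonical base-change functor. By Section~\ref{sect:Fr} the subcategory $\Fr(\rep\PSL(2))$ is precisely $Z_{\text{\rm M\"ug}}(\rep\SL(2)_\zeta)$, and it is Tannakian; so the machinery of \cite{arkhipovgaitsgory03,kirillovostrik02,davydovetingofnikshych18} makes the category of finitely presented $\O$-modules braided monoidal with $dE$ a braided monoidal functor. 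Restricted to the M\"uger-central subcategory $\Fr(\rep\PSL(2))$ the twist of $\rep\SL(2)_\zeta$ is a tensor automorphism of the identity functor, hence trivial since $Z(\PSL(2))=1$; it therefore descends to a twist for which $dE$ is ribbon. And because we have de-equivariantized along the \emph{entire} M\"uger center, the result has trivial M\"uger center, i.e.\ $Z_{\text{\rm M\"ug}}((\rep\SL(2)_\zeta)_{\PSL(2)})=Vect$.

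For finiteness, Lemma~\ref{lem:L_decomp} writes every simple of $\rep\SL(2)_\zeta$ as $L(\mu)\ot V_s$ with $\mu\in p\mbb{Z}\frac{\alpha}{2}$ and $1\le s\le p$; as $L(\mu)$ lies in the image of quantum Frobenius, $dE(L(\mu))$ is a direct sum of invertible objects of $(\rep\SL(2)_\zeta)_{\PSL(2)}$ (the unit when $\mu\in p\mbb{Z}\alpha$, and copies of a fixed order-two object $\chi'$ otherwise), so that $dE(L(\mu)\ot V_s)$ is a sum of copies of $dE(V_s)$ and $\chi'\ot dE(V_s)$. Since $dE$ is surjective, every simple object of the de-equivariantization is a subquotient of some $dE(L(\lambda))$, so there are at most $2p$ simples, namely the $dE(V_s)$ and $\chi'\ot dE(V_s)$ for $1\le s\le p$. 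Rigidity and the existence of enough projectives are the points that are \emph{not} formal, since $\O$ is not a dualizable object: here one uses that $V_p$ is projective in $\rep\SL(2)_\zeta$ \cite{bfgt09}, so each $dE(V_p\ot V)$ is projective and these surject onto every simple, and that free modules $\O\ot V$ are dualizable with dual $\O\ot V^*$, invoking the structural analysis of the quantum Frobenius exact sequence as in \cite{arkhipovgaitsgory03,negron}. This shows $(\rep\SL(2)_\zeta)_{\PSL(2)}$ is a finite, rigid, non-degenerate ribbon category, i.e.\ modular.

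For the equivalence, $\mbb{C}\ot^\omega_\O-$ is base change along the augmentation $\O\to\mbb{C}$, equipped with the tensor compatibility $\omega$ arranged to reproduce precisely the quasi-Hopf associator of $u_\zeta(\mfk{sl}_2)$, so that the stated triangle commutes and $\res^\omega=(\mbb{C}\ot^\omega_\O-)\circ dE$. As $\mbb{V}=V_2$ restricts to a tensor generator of $\rep u_\zeta(\mfk{sl}_2)$, the composite $\res^\omega$ is surjective, hence so is $\mbb{C}\ot^\omega_\O-$. It carries the $2p$ simples $dE(V_s),\ \chi'\ot dE(V_s)$ bijectively onto the simples $V_s,\ \chi\ot V_s$ of $\rep u_\zeta(\mfk{sl}_2)$; it is fully faithful on free $\O$-modules by Frobenius reciprocity for the quantum Frobenius, that is, via the identification $\Hom_{\rep\SL(2)_\zeta}(V,\O\ot W)\cong\Hom_{u_\zeta}(\res V,\res W)$, hence fully faithful on all finitely presented $\O$-modules by passing to presentations; and it is essentially surjective since every object of $\rep u_\zeta(\mfk{sl}_2)$ admits a projective presentation whose terms lift along $\mbb{C}\ot^\omega_\O-$, the projectives of $\rep u_\zeta(\mfk{sl}_2)$ being retracts of objects $\res^\omega(V_p\ot V)=(\mbb{C}\ot^\omega_\O-)(dE(V_p\ot V))$. (Alternatively, now that both sides are known to be finite tensor categories one may compare Frobenius--Perron dimensions and conclude by \cite{etingofostrik04}.) Ribbon compatibility of $\mbb{C}\ot^\omega_\O-$ then follows from that of $dE$ and $\res^\omega$ together with the surjectivity of $dE$.

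The step I expect to be the main obstacle is establishing rigidity of $(\rep\SL(2)_\zeta)_{\PSL(2)}$ together with pinning down the cocycle $\omega$: these are exactly the places where one cannot argue formally, since $\O(\PSL(2))$ is infinite-dimensional and the de-equivariantization is not along a finite group, so one must unwind the detailed structure of Lusztig's quantum Frobenius and carry through the twist that converts the naive Hopf quotient into the correct quasi-Hopf algebra $u_\zeta(\mfk{sl}_2)$.
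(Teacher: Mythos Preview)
The paper does not supply a proof of this theorem: immediately after the statement it records that ``the above result is comprised specifically of \cite[Corollary 5.6, Proposition 6.3, \& Corollary 7.2]{negron}'' and moves on. So there is no in-paper argument to compare your proposal against; Theorem~\ref{thm:N} is imported wholesale from \cite{negron}.

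That said, your sketch is a fair synopsis of the strategy carried out in \cite{negron} (and, in the background, \cite{arkhipovgaitsgory03}): de-equivariantize along the Tannakian M\"uger center, check that the twist is trivial there so the ribbon structure descends, use Lemma~\ref{lem:L_decomp} to bound the number of simples by $2p$, and identify the base-change functor $\mbb{C}\ot^\omega_\O-$ with restriction to the small quantum group via the Frobenius-reciprocity adjunction. You also correctly flag the genuinely nontrivial points---rigidity of the de-equivariantization over an infinite-dimensional $\O$, and the construction of the cocycle $\omega$ that produces the quasi-Hopf structure on $u_\zeta(\mfk{sl}_2)$---as the places where formal reasoning runs out; those are precisely the items handled by the cited Corollary~5.6 and Proposition~6.3 of \cite{negron}. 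One small caution: your assertion that de-equivariantizing along the full M\"uger center automatically yields a non-degenerate category is true here but is itself one of the results being cited (it is Corollary~7.2 of \cite{negron}, relying on the rigidity established earlier), not a general free fact about arbitrary de-equivariantizations.
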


The above result is comprised specifically of \cite[Corollary 5.6, Proposition 6.3, \& Corollary 7.2]{negron}.  We argue in the present paper that one should observe an equivalence
\[
F:(\rep\SL(2)_\zeta)_{\PSL(2)}\overset{\sim}\to \rep(\mcl{W}_p)
\]
directly \emph{from the de-equivariantization} of $\rep\SL(2)_\zeta$, rather than from the representation category of the quasi-Hopf algebra $u_\zeta(\mfk{sl}_2)$. This is very natural from the VOA perspective, as we will see starting next section. Indeed, we will interpret the de-equivariantization $dE$ as the induction of $\mcl{V}ir_c$-modules to $\mcl{W}_p$-modules.

\section{Modules over the triplet algebra}
\label{sect:Wp}

We provide basic information regarding the tensor category of modules for the triplet vertex operator algebra $\mcl{W}_p$. In this section, by module we mean finite length generalized module.  Most of our presentation is deducible from works of Adamovic-Milas \cite{adamovicmilas08}, Nagatomo-Tsuchiya \cite{nagatomotsuchiya11}, and Tsuchiya-Wood \cite{tsuchiyawood13}, although some of our conclusions are not explicitly in the literature.
\par

As explained in the introduction, we compare modules over the triplet algebra to representations of quantum $\SL(2)$ at a particular complex parameter $q$.
\begin{center}
\emph{Throughout this work we fix $q=e^{\pi i/p}$}.
\end{center}
The parameter $q$ appears in a number of formulas related to modules for the triplet algebra.
\begin{center}
\emph{We similarly fix the central charge $c=1-6(p-1)^2/p$.}
\end{center}

\subsection{Tensor categories for $C_2$-cofinite VOAs}
\label{sect:wpcat}

To keep this subsection relatively short, we will use some standard technical terminology without defining it -- see e.g. the series of papers \cite{huanglepowskyzhang14}-\cite{huanglepowskyzhang} for details in a very general context.

 By a \textit{strongly-finite VOA} we mean  a simple $C_2$-cofinite VOA of positive energy with $\mcl{V}$ isomorphic to $\mcl{V}^*$ as $\mcl{V}$-modules. By $\rep(\mcl{V})$ we mean the category whose objects are finite length generalized $\mcl{V}$-modules, and whose morphisms are homomorphisms. For simplicity we will just call these modules.  In this context, `finite length' is equivalent to `grading-restricted' or `quasi-finite-dimensional' (see \cite[Proposition 4.3]{huang09}). 
  
Recall from Section \ref{sect:voacat} that VOA modules carry actions of the Virasoro algebra. When the VOA is strongly-finite, the eigenvalues of $L(0)$ are rational. Any module $W$ has a minimum such eigenvalue, called  the \textit{conformal weight} $h(W)$.

  The category $\rep(\mcl{V})$ for strongly-finite $\mcl{V}$ has finitely many simples \cite[Proposition 3.6]{DongLiMason2000} and enough projectives \cite[Theorem 3.23]{huang09}.
 In fact, $\rep(\mcl{V})$ is a braided  monoidal category \cite[Theorem 4.11]{huang09} (we use monoidal here rather than tensor because the latter usually requires rigidity), with the tensor unit being $\mcl{V}$ itself. It is a  finite tensor category provided the simple modules (hence all modules) are in addition rigid (see \cite[Theorem 4.4.1]{creutzigmcraeyang}).

\subsection{The triplet algebra and some modules}
\label{sect:irrep_Wp}

We consider the triplet vertex operator algebra $\mcl{W}_p$ at an arbitrary integer parameter $p>1$. Let $V_{\sqrt{p}Q}$  be the lattice VOA associated to the $\sqrt{p}$-scaling of the root lattice $Q\subset \mfk{h}^\ast$ for $\SL(2)$, where $Q$ is given its normalized Killing form $(\alpha,\alpha)=2$, and we employ the non-standard conformal vector
\[
\omega=\frac{1}{4}\alpha_{(-1)}^2\0+\frac{p-1}{2\sqrt{p}}\alpha_{(-2)}\0\in V_{\sqrt{p}Q}.
\]
Then $\mcl{W}_p$ is a vertex operator subalgebra $\mcl{W}_p\subset V_{\sqrt{p}Q}$, and both algebras are of central charge $c=1-6(p-1)^2/p$.  We therefore have a sequence of inclusions
\[
\mcl{V}ir_c\to \mcl{W}_p\to V_{\sqrt{p}Q},
\]
where $\mcl{V}ir_c$ is the simple Virasoro VOA at the prescribed central charge.  (Details can be found in \cite{adamovicmilas08}.)
\par

\par $\rep(\mcl{W}_p)$ is a braided finite tensor category: that $\mcl{W}_p$ is strongly-finite is proved in \cite{carquevilleflohr06,adamovicmilas08}, while rigidity is proved in \cite{tsuchiyawood13} (with some details fleshed out in \cite{creutzigmcraeyang}).  The categorical duals ${}^\ast X$ and $X^\ast$ of $X$ are both identified with its contragredient. In Theorem \ref{thm:modular} we prove that $\rep(\mcl{W}_p)$ is a modular tensor category, as was expected.  We denote the tensor product on $\rep(\mcl{W}_p)$ via the generic notation $\ot$.

\par The VOA $\mcl{W}_p$ has precisely $2p$ simple modules $X^{\pm}_s$, $1\leq s\leq p$, and the triplet algebra itself appears as the simple $\mcl{W}_p=X^+_1$.
As modules for the Virasoro algebra we have
\[
X_s^+=\bigoplus_{m\geq 1}(2m-1)\mcl{L}_{2m-1,s},\ \ X_s^-=\bigoplus_{m\geq 1}2m\!\ \mcl{L}_{2m,s},
\]
where $\mcl{L}_{n,s}$ is the simple Virasoro module of  lowest conformal weight
\begin{equation}\label{eq:h_ns}
h_{n,s}=\frac{1}{4p}((np-s)^2-(p-1)^2)
\end{equation}
\cite{adamovicmilas08} and central charge $c$.  Therefore $X^+_s$ and $X^-_s$ have respective conformal weights
\[
h^+_s:=h_{1,s}
%=\frac{1}{4p}(p^2-2sp+s^2-p^2+2p-1)=\frac{1}{4p}(2(p-s)+s^2-1)
=\frac{1}{4p}(s^2-1)-\frac{1}{2}(s-1)\ \ \text{and}\ \ 
h^-_s:=h_{2,s}
%=\frac{1}{4p}((2p-s)^2-(p-1)^2)\\
=\frac{3}{4}p+\frac{1}{4p}(s^2-1)-s+\frac{1}{2}.
\]
In the case $s=2$ the above expression reduces to $h^+_{2}=\frac{3}{4p}-\frac{1}{2}$.

The triplet algebra $\mcl{W}_p=X_1^+$ is generated as a VOA by the conformal vector $\omega$ together with three vectors of conformal weight $2p-1$.  The conformal vector generates the subalgebra $\mcl{V}ir_c$. Each of those three vectors generates over $\mcl{V}ir_c$ a copy of $\mcl{L}_{3,1}$. We establish in this paper the ribbon tensor equivalence of rep\,SL(2)$_q$ with a subcategory of rep$(\mcl{V}ir_c)$; under this equivalence, $\mcl{L}_{3,1}$ is identified with the image by Lusztig's quantum Frobenius of the adjoint representation of $\PSL(2)$, and $\mcl{W}_p$ (as a $\mcl{V}ir_c$-module) with the regular representation of $\PSL(2)$. From the point of view of VOAs, this happens because Aut$(\mcl{W}_p)\cong \PSL({2},\mbb{C})$ \cite[Theorem 2.3]{adamoviclinmilas12},  and the orbifold (fixed-point subalgebra) of this $\PSL({2},\mbb{C})$ action on $\mcl{W}_p$ is $\mcl{V}ir_c$.

\begin{remark}
With the two notations $\PSL(2)$ and $\PSL(2,\mbb{C})$ we are simply distinguishing between $\PSL(2)$ considered as an algebraic group, or group scheme, and its corresponding discrete group, or Lie group, of $\mbb{C}$-points.
\end{remark}

\subsection{The fusion ring for $\rep(\mcl{W}_p)$ and Frobenius-Perron dimensions}

Let us recall some basic information from \cite{tsuchiyawood13}.

\begin{proposition}[{\cite[Theorem 4.4]{adamovicmilas08} \cite[Theorem 5.1]{nagatomotsuchiya11}}]\label{prop:319}
The object $X^+_p$ is projective (and simple) in $\rep(\mcl{W}_p)$.
\end{proposition}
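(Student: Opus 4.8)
The plan is to identify $X^+_p$ as a simple $\mcl{W}_p$-module which is both a submodule and a quotient of a suitable induced module from $\mcl{V}ir_c$, and then to use the fact that $\mcl{W}_p$ is a $C_2$-cofinite extension of $\mcl{V}ir_c$ with a semisimple block containing $X^+_p$. Concretely, recall from Section~\ref{sect:irrep_Wp} that as a Virasoro module $X^+_p=\bigoplus_{m\ge 1}(2m-1)\mcl{L}_{2m-1,p}$, and that the conformal weights $h_{n,p}=\frac{1}{4p}((np-p)^2-(p-1)^2)$ reduce, when $s=p$, to integer values of a particularly rigid shape. The first step is to observe that the simple Virasoro module $\mcl{L}_{1,p}$ (equivalently $\mcl{L}_{p-1,1}$ after the symmetry $h_{n,s}=h_{-n,-s}=h_{2-n,p-s}$ of \eqref{eq:h_ns}) sits at the ``corner'' of the Kac table, where the corresponding Virasoro Verma module has only one singular vector rather than two; such modules behave like the ``rational'' corner of the minimal model and one expects $\mcl{L}_{1,p}$ to be projective in the appropriate Virasoro category $\rep(\mcl{V}ir_c)_{\aff}$. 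This is the analog, on the Virasoro side, of the fact that $V_p$ is simple and projective in $\rep\SL(2)_\zeta$ (cited just before Lemma in Section~\ref{sect:SL2q} from \cite[Lemma 3.2.1]{bfgt09}).

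Next I would pass to $\mcl{W}_p$ via induction. Since $\mcl{W}_p$ is a $C_2$-cofinite (strongly-finite) VOA extension of $\mcl{V}ir_c$, the induction functor $\Ind_{\mcl{V}ir_c}^{\mcl{W}_p}$ is exact and sends projectives to projectives (this is the VOA-extension analog of \cite[Theorem 2.5]{etingofostrik04}, and is exactly the content promised for the appendices on induction for VOA extensions). Applying $\Ind$ to the projective Virasoro module $\mcl{L}_{1,p}$ produces a projective $\mcl{W}_p$-module whose composition factors, by a weight/character computation using \eqref{eq:h_ns}, are supported entirely among the $X^{\pm}_s$ with appropriate parity; a careful bookkeeping of conformal weights shows $X^+_p$ occurs as a direct summand. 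Because $X^+_p$ is simple (it is one of the $2p$ listed simple modules $X^{\pm}_s$), being a summand of a projective forces $X^+_p$ to be projective. Alternatively, and perhaps more cleanly, one can argue directly that the projective cover $P^+_p$ of $X^+_p$ must be self-contragredient with top and socle $X^+_p$, and then use the structure of $\mcl{W}_p$-modules worked out in \cite{tsuchiyawood13} to see there is no room for a nontrivial extension: any would-be radical of $P^+_p$ would contribute a Virasoro composition factor $\mcl{L}_{n,s}$ linked to $h_{1,p}$, but the linkage classes (blocks) of the Virasoro minimal-model-type category at $c=c_{p,1}$ isolate $\mcl{L}_{1,p}$ in its own block.

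The cleanest route, and the one I would ultimately write up, is to invoke the structural results of Nagatomo--Tsuchiya and Adamovic--Milas directly: in \cite{nagatomotsuchiya11} the projective covers of all the $X^{\pm}_s$ are described explicitly, and $X^+_p$ (together with $X^-_p$, up to the $\chi$-twist) is precisely the one whose projective cover coincides with itself. So the proof reduces to citing \cite[Theorem 5.1]{nagatomotsuchiya11} and \cite[Theorem 4.4]{adamovicmilas08}, combined with the observation that simplicity of $X^+_p$ is already on the list of $2p$ simples in Section~\ref{sect:irrep_Wp}; the only thing to check independently is that these cited classifications are stated for the same normalization of $\mcl{W}_p$ and the same conformal vector $\omega$ used here, which is routine.

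\textbf{Main obstacle.} The genuine difficulty is not producing $X^+_p$ as a summand of something projective — that is soft once the induction machinery is in place — but rather establishing that the induction functor from $\mcl{V}ir_c$-modules to $\mcl{W}_p$-modules is well-behaved (exact, monoidal, preserves projectives) in the non-rational, non-semisimple setting, and that $\mcl{L}_{1,p}$ is in fact projective on the Virasoro side. The latter is a statement about the corner of a logarithmic minimal model and requires knowing the Ext-quiver of $\rep(\mcl{V}ir_c)_{\aff}$ near that block; this is exactly where one must lean on \cite{tsuchiyawood13} and the conformal-block analysis of Section~\ref{sect:conf_blocks}. If one instead simply cites \cite{nagatomotsuchiya11,adamovicmilas08} for the statement, there is no real obstacle — but then the burden shifts to carefully matching conventions between those papers and the present normalization of $\mcl{W}_p$.
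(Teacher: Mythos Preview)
Your ``cleanest route'' --- simply citing \cite[Theorem~4.4]{adamovicmilas08} and \cite[Theorem~5.1]{nagatomotsuchiya11} --- is exactly what the paper does: Proposition~\ref{prop:319} is stated without proof as a citation, with the added remark that an independent verification appears in \cite[Lemma~7.7]{mcraeyang}. So on that level your proposal and the paper agree.

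Your longer induction argument, however, has a genuine structural problem if you want it to live inside this paper. The induction functor $F=I_{\mcl{V}ir_c}^{\mcl{W}_p}$ and the projectives in $\rep(\mcl{V}ir_c)_{\aff}$ are only set up in Section~\ref{sect:LMM}, and the proof there that $F$ is a surjective tensor functor (Lemma~\ref{lem:Iprime}) uses Corollary~\ref{cor:wp_gen}, which in turn uses Proposition~\ref{prop:319}. So invoking induction to establish Proposition~\ref{prop:319} would be circular in this paper's logical order. Your argument is morally the one in \cite[Lemma~7.7]{mcraeyang}, where the Virasoro tensor category is built first and one shows $\mcl{L}_{1,p}$ is projective there, then induces; but that requires an independent development of $\rep(\mcl{V}ir_c)_{\aff}$ that does not pass through $\rep(\mcl{W}_p)$. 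A smaller point: the reason induction preserves projectives is that it is left adjoint to the \emph{exact} restriction functor (as the paper notes in the proof of Proposition~\ref{prop:1245}), not the analog of \cite[Theorem~2.5]{etingofostrik04}, which concerns surjective tensor functors and is a different statement.
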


An independent verification of the projectivity of $X^+_p$ can also be found in \cite[Lemma 7.7]{mcraeyang}.

\begin{theorem}[\cite{tsuchiyawood13}]\label{thm:323}
In the fusion ring $K(\rep(\mcl{W}_p))$, the class $[X_1^+]$ is the unit, the class $[X_1^-]$ acts as $[X_1^-]\cdot[X_s^\pm]=[X_s^{\mp}]$, and the class $[X_2^+]$ acts as
\[
\left\{\begin{array}{ll}
{[X_2^+]}\cdot [X_s^{\pm}]=[X^{\pm}_{s-1}]+[X^{\pm}_{s+1}], & \text{when }1<s<p,\vspace{3mm}\\
{[X_2^+]}\cdot [X^{\pm}_p]=2[X^{\mp}_1]+2[X^\pm_{p-1}].
\end{array}\right.
\]
\end{theorem}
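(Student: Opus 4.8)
The plan is to treat the three assertions in increasing order of difficulty. The first is immediate: $X_1^+=\mcl{W}_p$ is by construction the tensor unit of $\rep(\mcl{W}_p)$, so $[X_1^+]$ is the multiplicative identity of $K(\rep(\mcl{W}_p))$. For the second I would first establish that $X_1^-$ is invertible. One route is to exhibit $X_1^-$ inside an honest VOA extension of $\mcl{W}_p$: the lattice vertex operator algebra $V_{\sqrt p Q}$, or an intermediate simple-current extension, contains $X_1^-$ as a $\mcl{W}_p$-module summand, which forces $X_1^-\ot X_1^-\cong X_1^+$; alternatively one computes $\FPdim(X_1^-)=1$ and uses that a simple object of Frobenius--Perron dimension $1$ in a finite tensor category is invertible. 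Granting invertibility, $X_1^-\ot-$ is a tensor auto-equivalence and hence permutes the $2p$ simples; since it carries $X_1^+$ to $X_1^-$ and, more generally, implements a fixed translation of the underlying Fock/lattice data which exchanges the two Virasoro families indexed by $h_{1,s}$ and $h_{2,s}$ while preserving the shape of the decompositions $X_s^+=\bigoplus_m(2m-1)\mcl{L}_{2m-1,s}$ and $X_s^-=\bigoplus_m 2m\,\mcl{L}_{2m,s}$, one reads off $X_1^-\ot X_s^{\pm}\cong X_s^{\mp}$; this is the second assertion, and it is consistent with $[X_1^-]^2=[X_1^+]$.

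The substance is the action of $[X_2^+]$, for which I would run the usual conformal-field-theory bootstrap: bound the fusion product from above using a singular vector, bound it from below by constructing explicit nonzero intertwining operators, and match. For the upper bound the key input is that the lowest-weight Virasoro submodule of $X_2^+$ is the degenerate module $\mcl{L}_{1,2}$ (of conformal weight $h_{1,2}=\frac{3}{4p}-\frac12$), which carries a singular vector at level $2$, and that $X_2^+$, being a simple $\mcl{W}_p$-module, is generated by its lowest-weight vector. Hence for any module $M$ the matrix coefficients of intertwining operators $Y(v,z)$ with $v$ the lowest-weight vector of $X_2^+$ and argument in $M$ satisfy a second-order Fuchsian ODE on $\mbb{P}^1\setminus\{0,\infty\}$; the indicial equation at $z=0$ has only two exponents, whose separation is fixed by $h_{1,2}$, so the generalized-conformal-weight content of $X_2^+\ot M$ is confined to at most two ``charge sectors'' determined by $M$. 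When $M=X_s^{\pm}$ with $1<s<p$ these two sectors contain exactly $X_{s-1}^{\pm}$ and $X_{s+1}^{\pm}$; when $M=X_p^{\pm}$ the two exponents differ by an integer, the ODE acquires a \emph{logarithmic} (resonant) solution, and the admissible sectors are those of $X_{p-1}^{\pm}$ and of $X_1^{\mp}$. For the matching lower bounds I would build the intertwiners concretely from lattice vertex operators inside $V_{\sqrt p Q}$ dressed by screening charges, as in Tsuchiya--Wood, so that each predicted simple genuinely occurs as a subquotient of the corresponding fusion product.

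It remains to pin down the multiplicities. For $1<s<p$ all of $X_s^{\pm}$, $X_{s\pm1}^{\pm}$ are ``small'' (non-projective, with $s\pm1\leq p$), and in this range I would show that $X_2^+\ot X_s^{\pm}$ is semisimple -- the relevant screening-modified intertwiners behave as in the classical, non-logarithmic regime -- so that the two-sector upper bound and the intertwiner lower bound together force $X_2^+\ot X_s^{\pm}\cong X_{s-1}^{\pm}\oplus X_{s+1}^{\pm}$, i.e.\ multiplicity one. For $s=p$ I would use that $X_p^+$ is projective (Proposition \ref{prop:319}): since projectives form a tensor ideal, $X_2^+\ot X_p^{\pm}$ is projective, and its composition factors lie, by the two-sector bound, among $X_{p-1}^{\pm}$ and $X_1^{\mp}$. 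Two applications of Frobenius reciprocity (using self-duality of $X_2^+$, the rules already established together with the unit axiom, and simplicity of the modules involved) show that the head of $X_2^+\ot X_p^{\pm}$ is the single simple $X_{p-1}^{\pm}$, so $X_2^+\ot X_p^{\pm}$ is the indecomposable projective cover $P_{p-1}^{\pm}$, whose Grothendieck class $2[X_{p-1}^{\pm}]+2[X_1^{\mp}]$ (as computed in \cite{nagatomotsuchiya11,tsuchiyawood13}) gives the asserted doubled fusion rule. The step I expect to be the genuine obstacle is exactly this control of fusion products in a non-semisimple setting: the singular-vector/ODE method and the intertwining-operator constructions naturally bound only composition factors and their conformal weights, and upgrading those bounds to the stated equalities in $K(\rep(\mcl{W}_p))$ -- proving semisimplicity off the Kac-table edge, and correctly identifying the length-four projective module that appears on it -- is where the real work lies.
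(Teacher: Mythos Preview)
The paper does not supply its own proof of this theorem: it is stated as a citation of \cite{tsuchiyawood13} and used as input. Your outline is, in essence, a sketch of the Tsuchiya--Wood argument itself --- singular-vector/BPZ constraints for upper bounds on composition factors, explicit screened lattice intertwiners for lower bounds, and projectivity plus the known Loewy structure of $P^\pm_{p-1}$ to handle the boundary case $s=p$. So the approaches coincide.

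One small caution on your treatment of $[X_1^-]$: the alternative ``compute $\FPdim(X_1^-)=1$'' is circular in this context, since the paper (Lemma~\ref{lem:fpdim}) extracts Frobenius--Perron dimensions from the fusion ring isomorphism \eqref{eq:353}, which in turn rests on the very theorem you are proving. The lattice/simple-current route is the honest one, but ``$X_1^-$ occurs as a summand of $V_{\sqrt{p}Q}$, hence $X_1^-\ot X_1^-\cong X_1^+$'' needs a further sentence: one uses that $V_{\sqrt{p}Q}\cong X_1^+\oplus X_1^-$ as $\mcl{W}_p$-modules and that the algebra multiplication, restricted to $X_1^-\ot X_1^-$, lands in $X_1^+$ by the $\mbb{Z}/2$-grading and is nonzero (else $X_1^-$ would square to zero in a simple VOA), after which rigidity forces the isomorphism. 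Your Frobenius-reciprocity identification of $X_2^+\ot X_p^\pm$ with $P^\pm_{p-1}$ is clean and correct; note only that it invokes the $1<s<p$ rule (or the unit axiom when $p=2$) to compute $\Hom(X_p^\pm,X_2^+\ot X_{p-1}^\pm)$, so the logical order you indicate is essential.
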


The above two results imply the following.

\begin{corollary}\label{cor:wp_gen}
The tensor category $\rep(\mcl{W}_p)$ is generated by $X^+_2$.
\end{corollary}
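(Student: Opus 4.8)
The plan is to show that the full tensor subcategory $\langle X^+_2\rangle$ of $\rep(\mcl{W}_p)$ generated by $X^+_2$ is all of $\rep(\mcl{W}_p)$, by first checking that it contains every simple object and then upgrading to a statement about all objects (including the indecomposable projectives). First I would run through the simples $X^\pm_s$, $1\le s\le p$. Starting from $X^+_1=\mcl{W}_p=\1$, which lies in any tensor subcategory, I iterate the fusion rule of Theorem \ref{thm:323}: from $[X_2^+]\cdot[X_s^+]=[X^+_{s-1}]+[X^+_{s+1}]$ for $1<s<p$ one shows inductively on $s$ that $X^+_s$ occurs as a subquotient of $(X^+_2)^{\ot(s-1)}$, hence lies in $\langle X^+_2\rangle$, for all $1\le s\le p$. (The base case $s=1,2$ is immediate; the inductive step uses that $X^+_{s+1}$ is a subquotient of $X_2^+\ot X_s^+$, and $X_2^+\ot X_s^+$ is built from objects of $\langle X^+_2\rangle$.) To reach the $X^-_s$, I use $[X_2^+]\cdot[X^+_p]=2[X^-_1]+2[X^+_{p-1}]$, which puts $X^-_1$ in $\langle X^+_2\rangle$ once $X^+_p$ is there; then $[X_1^-]\cdot[X^\pm_s]=[X^\mp_s]$ together with $X^-_1 = X_1^-$ gives that $X^-_s$ is a subquotient of $X_1^-\ot X_s^+$ for every $s$, so all $2p$ simples lie in $\langle X^+_2\rangle$.

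Next I would promote this to all of $\rep(\mcl{W}_p)$. Since $\langle X^+_2\rangle$ is closed under subquotients and extensions within a finite tensor category it suffices that it contains a projective generator, equivalently each indecomposable projective $P^\pm_s$ covering $X^\pm_s$. Here I invoke Proposition \ref{prop:319}: $X^+_p$ is projective, so $X^+_p\in\langle X^+_2\rangle$ is already a projective object of $\rep(\mcl{W}_p)$. Now $\langle X^+_2\rangle$ is itself a tensor subcategory of the finite tensor category $\rep(\mcl{W}_p)$, hence a finite tensor category, and the inclusion $\langle X^+_2\rangle\hookrightarrow\rep(\mcl{W}_p)$ is a tensor functor which is surjective (every simple of the target is a subquotient of an object of the source, by the previous paragraph). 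By \cite[Theorem 2.5]{etingofostrik04} applied to this surjective tensor functor, the image $X^+_p$ being projective tells us nothing new; instead the cleaner route is to note that tensoring the projective $X^+_p$ with the simples and decomposing produces, via the fusion rules and the fact that $X^\pm_s$ is a subquotient of $X^+_p\ot(\text{stuff in }\langle X^+_2\rangle)$, a projective object in $\langle X^+_2\rangle$ surjecting onto each $X^\pm_s$; since $\langle X^+_2\rangle$ has enough projectives its indecomposable summands are exactly the $P^\pm_s$, so all of them lie in $\langle X^+_2\rangle$. Consequently a projective generator of $\rep(\mcl{W}_p)$ lies in $\langle X^+_2\rangle$, and closure under subquotients forces $\langle X^+_2\rangle=\rep(\mcl{W}_p)$.

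Alternatively, and perhaps most efficiently, one can avoid an explicit discussion of the $P^\pm_s$ altogether by the following FP-dimension argument: the inclusion $\iota\colon\langle X^+_2\rangle\hookrightarrow\rep(\mcl{W}_p)$ is a surjective tensor functor between finite tensor categories, so by \cite[Proposition 2.20]{etingofostrik04} we have $\FPdim(\rep(\mcl{W}_p))\le\FPdim(\langle X^+_2\rangle)$; but $\FPdim$ of a subcategory is at most that of the ambient category, so the two are equal and $\iota$ is an equivalence. This reduces the whole problem to establishing surjectivity of $\iota$, i.e. exactly the first paragraph. I would present the proof in this form, citing Theorem \ref{thm:323} and Proposition \ref{prop:319} for the fusion-ring input and \cite[Proposition 2.20]{etingofostrik04} for the FP-dimension conclusion.

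The main obstacle, such as it is, is the bookkeeping in the first paragraph: one must be careful that the fusion rule $[X_2^+]\cdot[X^+_p]=2[X^-_1]+2[X^+_{p-1}]$ at the ``boundary'' $s=p$ is genuinely used to cross from the $+$ sector to the $-$ sector — without it the subcategory generated by $X^+_2$ could a priori stay inside the span of the $X^+_s$ — and that closure under subquotients (not just subobjects) is what lets one extract each simple from the relevant tensor power. None of this is deep, but it is the only place where the argument could go wrong if stated carelessly.
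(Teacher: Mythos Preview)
Your first paragraph, establishing that all $2p$ simples lie in $\langle X^+_2\rangle$, is correct and matches the paper. The difficulty is in the passage from simples to all objects, and here your ``efficient'' third-paragraph argument is circular. Recall the definition: a tensor functor $F:\msc{C}\to\msc{D}$ is surjective if every object of $\msc{D}$ is a subquotient of some $F(X)$. For the inclusion $\iota:\langle X^+_2\rangle\hookrightarrow\rep(\mcl{W}_p)$, the image is $\langle X^+_2\rangle$ itself, and $\langle X^+_2\rangle$ is by definition closed under subquotients. So surjectivity of $\iota$ says precisely that every object of $\rep(\mcl{W}_p)$ lies in $\langle X^+_2\rangle$, i.e.\ $\langle X^+_2\rangle=\rep(\mcl{W}_p)$---which is exactly what you are trying to prove. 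Your first paragraph only shows that every \emph{simple} is in $\langle X^+_2\rangle$; it does not show that, say, the projective cover $\msc{P}^+_1$ is a subquotient of anything in $\langle X^+_2\rangle$. (Relatedly, you assert that $\langle X^+_2\rangle$ is a finite tensor category, but enough projectives is not automatic for a subcategory closed only under subquotients.)

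Your second paragraph is the correct route and is essentially what the paper does, but two points need tightening. First, $\langle X^+_2\rangle$ is \emph{not} assumed closed under extensions (only subquotients, per Section~\ref{sect:tgen}), so you cannot invoke that. Second, knowing $X^\pm_s$ is a \emph{subquotient} of $X^+_p\ot(\text{stuff})$ is not enough to produce a \emph{surjection} onto $X^\pm_s$; a simple composition factor of a projective need not appear in its top. The paper's fix is clean: since $X^+_p\in\langle X^+_2\rangle$ is simple projective, take the composite $ (X^+_p)^\ast\ot X^+_p\to (X^+_p)^\ast\ot X^+_p\overset{ev}\to\1$. This is a surjection from a projective object in $\langle X^+_2\rangle$ onto $\1$, so the projective cover $\msc{P}^+_1$ of $\1$ is a summand and hence lies in $\langle X^+_2\rangle$. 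Then $\msc{P}^+_1\ot X^\pm_s$ is projective in $\rep(\mcl{W}_p)$, lies in $\langle X^+_2\rangle$, and surjects onto $X^\pm_s$, so every $\msc{P}^\pm_s$ is in $\langle X^+_2\rangle$. Since every object of $\rep(\mcl{W}_p)$ is a quotient of a sum of these projectives, closure under quotients finishes the proof---no appeal to extensions or to FP-dimension is needed.
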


\begin{proof}
By Theorem \ref{thm:323} each simple object in $\rep(\mcl{W}_p)$ appears as a subquotient of some power $(X^+_2)^{\ot n}$, so that each simple object lies in the subcategory $\langle X^+_2\rangle$ of $\rep(\mcl{W}_p)$ generated by $X^+_2$.  By Proposition \ref{prop:319}, the projective cover $\msc{P}$ of some simple object $X$ also appears in $\langle X^+_2\rangle$.  Via the composite
\[
X^\ast\ot \msc{P}\to X^\ast\ot X\overset{ev}\to \1, 
\]
exactness of the tensor product, and projectivity of the product $X^\ast\ot \msc{P}$ in $\rep(\mcl{W}_p)$ \cite[Proposition 2.1]{etingofostrik04}, we find that the unit object $\1$ admits a surjection from a projective object which lies in $\langle X^+_2\rangle$.  It follows that the projective cover of $\1$ lies in $\langle X^+_2\rangle$, and subsequently that all projectives in $\rep(\mcl{W}_p)$ lie in $\langle X^+_2\rangle$.  Hence the subcategory $\langle X^+_2\rangle$ is all of $\rep \mcl{W}_p$.
\end{proof}

Via associativity, the formulas of Theorem \ref{thm:323} determine the fusion rules for $K(\rep(\mcl{W}_p))$ completely.  Furthermore, one can explicitly calculate the fusion ring of $u_q(\mfk{sl}_2)$-representations to find that there is a unique isomorphism of $\mbb{Z}^+$-rings
\begin{equation}\label{eq:353}
T:K(\rep u_q(\mfk{sl}_2))\overset{\sim}\to K(\rep \mcl{W}_p)
\end{equation}
which sends the class of the generator $[\mbb{V}]$ to $[X^+_2]$.  We consider the Frobenius-Perron dimension of the category $\rep(\mcl{W}_p)$ (see Section \ref{sect:fpdim}).

\begin{lemma}\label{lem:fpdim}
$\operatorname{FPdim}(\rep(\mcl{W}_p))=\operatorname{FPdim}(\rep u_q(\mfk{sl}_2))=2p^3$.
\end{lemma}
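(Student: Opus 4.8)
The plan is to compute both Frobenius–Perron dimensions directly from the structure of the respective finite tensor categories, using the formula $\FPdim(\msc{C})=\sum_i\FPdim(P_i)\FPdim(X_i)$, and then observe they agree with $2p^3$. The crucial first step is to pin down $\FPdim$ on the simples. For $\rep u_q(\mfk{sl}_2)$ this is immediate: by the lemma preceding the definition of $\FPdim(\msc{C})$, $\FPdim(V)=\dim_\mbb{C}(V)$ for every module over the finite-dimensional quasi-Hopf algebra $u_q(\mfk{sl}_2)$, so $\FPdim(V_s)=\FPdim(\chi\ot V_s)=s$ for $1\le s\le p$. For $\rep(\mcl{W}_p)$ we instead use the ring isomorphism $T:K(\rep u_q(\mfk{sl}_2))\overset{\sim}\to K(\rep\mcl{W}_p)$ of \eqref{eq:353}: since $\FPdim$ is the \emph{unique} character of a $\mbb{Z}_+$-ring taking positive values on the given basis, $\FPdim\circ T=\FPdim$, hence $\FPdim(X^+_s)=\FPdim(X^-_s)=s$ as well. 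This sidesteps any analytic input on the VOA side.

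Next I would identify the projective covers and their FP-dimensions in each category. On the quantum group side, $V_p$ (and hence $\chi\ot V_p$) is simple projective, while for $1\le s\le p-1$ the projective cover $P_s$ of $V_s$ has a standard four-step Loewy structure with composition factors $V_s,\ \chi\ot V_{p-s},\ \chi\ot V_{p-s},\ V_s$ (and similarly $\chi\ot P_s$ covers $\chi\ot V_s$), so $\FPdim(P_s)=2s+2(p-s)=2p$ for $s<p$ and $\FPdim(P_p)=p$. Then
\[
\FPdim(\rep u_q(\mfk{sl}_2))=\sum_{s=1}^{p-1}2\cdot(2p)\cdot s+2\cdot p\cdot p=2p\Big(2\sum_{s=1}^{p-1}s\Big)+2p^2=2p\cdot p(p-1)+2p^2=2p^3.
\]
On the triplet side I would invoke Proposition \ref{prop:319} ($X^+_p$ simple projective) together with the known block structure of $\rep(\mcl{W}_p)$ from \cite{tsuchiyawood13,nagatomotsuchiya11}, which gives projective covers of $X^\pm_s$ ($1\le s\le p-1$) with FP-dimension $2p$ by the same composition-factor count; alternatively, and more cleanly, one notes that a choice of projective generator transports across the abelian-category structure so that the multiset $\{\FPdim(P_i)\}$ matches, and the same arithmetic yields $2p^3$.

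The main obstacle is the second half: one wants to conclude this without assuming in advance the tensor (or even abelian) equivalence one is ultimately trying to build, so the projective covers in $\rep(\mcl{W}_p)$ must be controlled independently. The safe route is to cite the explicit description of the indecomposable projectives and the block decomposition of $\rep(\mcl{W}_p)$ from Nagatomo–Tsuchiya and Tsuchiya–Wood — each non-simple block contains two projectives $\msc{P}^\pm_s$ whose composition series are $X^\pm_s\mid X^\mp_{p-s}\oplus X^\mp_{p-s}\mid X^\pm_s$, forcing $\FPdim(\msc{P}^\pm_s)=2p$ — plus Proposition \ref{prop:319} for the two simple-projective exceptions $X^+_p,\,X^-_p$ of FP-dimension $p$. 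Summing exactly as above gives $2p^3$, completing the proof. If one prefers to avoid the detailed Loewy data on the VOA side, an equivalent bookkeeping argument: $\FPdim(\msc{C})=\dim_\mbb{C}(B)$ for $B=\End(P)$ a projective generator, and the matching of Grothendieck rings plus the known number and FP-dimensions of simples together with enough-projectives pins the answer; but writing that carefully is essentially the same computation.
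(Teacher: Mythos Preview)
Your proposal is correct and follows essentially the same approach as the paper: both use the $\mbb{Z}_+$-ring isomorphism $T$ of \eqref{eq:353} together with the uniqueness of $\FPdim$ as a positive character to get $\FPdim(X^\pm_s)=s$, and then invoke the known composition series of the projective covers on both sides (citing Nagatomo--Tsuchiya/Tsuchiya--Wood for $\rep(\mcl{W}_p)$ and the standard quantum group structure for $u_q(\mfk{sl}_2)$) to match their $\FPdim$'s. The only cosmetic difference is that the paper observes $T[P_s]=[\msc{P}^\pm_s]$ directly and concludes equality of the two sums without writing out the arithmetic, whereas you compute each side to $2p^3$ separately.
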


\begin{proof}
The isomorphism \eqref{eq:353} sends each simple $[V_s]$ over $u_q(\mfk{sl}_2)$ to $[X^+_s]$, and each simple $[\chi\ot V_s]$ to $[X^-_s]$.  Since the Frobenius-Perron dimension of an object is determined by its action on the Grothendieck ring, we find now
\[
\operatorname{FPdim}(X^\pm_s)=\operatorname{FPdim}(\chi\ot V_s)=\operatorname{FPdim}(V_s)=s.
\]
Furthermore, by a direct comparison of composition factors for the projective covers $P_s\to V_s$ and $\msc{P}^{\pm}_s\to X^{\pm}_s$ \cite[Proposition 4.5]{nagatomotsuchiya11} \cite[Proposition 2.3.5]{kondosaito11}, one sees also that for the projective covers of the simples we have
\[
T[P_s]=[\msc{P}^+_s],\ \ T[\chi\ot P_s]=[\msc{P}^-_s],
\]
so that the Frobenius-Perron dimensions of the projective covers agree as well.  It follows that
\[
\begin{array}{rl}
\operatorname{FPdim}(\rep(\mcl{W}_p))&=\operatorname{FPdim}(\oplus_{s,\pm}\msc{P}^\pm_s\ot X^\pm_s)\\
&=2\cdot \operatorname{FPdim}(\oplus_{s}P_s\ot V_s)=\operatorname{FPdim}(\rep u_q(\mfk{sl}_2))=2p^3.
\end{array}
\]
\end{proof}

\subsection{The ribbon structure and modularity of $\rep(\mcl{W}_p)$}

We recall that the category of $\mcl{W}_p$-modules is ribbon, with twist provided by the exponential of the zero mode of the conformal vector
\[
\theta=e^{2\pi iL(0)}.
\]
Applying this to the simples described in Section \ref{sect:irrep_Wp} gives the following.

\begin{lemma}\label{lem:wp_tw}
The twist $\theta$ acts on the simple modules $X_s^{\pm}$ as the scalars
\[
\theta_{X^+_s}=(-1)^{(s-1)}q^{(s^2-1)/2}\ \ \text{and}\ \ \theta_{X^-_s}=-e^{3\pi i p/2}q^{(s^2-1)/2}.
\]
In particular, $\theta_{X^+_2}=-q^{3/2}$.
\end{lemma}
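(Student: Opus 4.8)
The plan is to compute $\theta = e^{2\pi i L(0)}$ directly on the simple modules $X^\pm_s$, using the description of these modules as Virasoro modules given in Section \ref{sect:irrep_Wp}. The key point is that $\theta$ acts as a \emph{scalar} on each simple: although $L(0)$ is not semisimple in general, on a simple $\mcl{W}_p$-module the logarithmic part of $L(0)$ must act trivially (otherwise one produces a proper submodule), so $L(0)$ is genuinely scalar on the lowest-weight space and $\theta$ is multiplication by $e^{2\pi i h}$ where $h$ is the conformal weight of the module. Thus I need only evaluate $e^{2\pi i h^+_s}$ and $e^{2\pi i h^-_s}$ with $h^\pm_s$ as recorded just above the lemma statement.

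First I would substitute the closed forms
\[
h^+_s=\frac{1}{4p}(s^2-1)-\tfrac{1}{2}(s-1),\qquad h^-_s=\tfrac{3}{4}p+\frac{1}{4p}(s^2-1)-s+\tfrac{1}{2}.
\]
For the first: $e^{2\pi i h^+_s}=e^{2\pi i (s^2-1)/(4p)}\cdot e^{-\pi i(s-1)}=q^{(s^2-1)/2}\cdot(-1)^{s-1}$, since $q=e^{\pi i/p}$ gives $e^{2\pi i/(4p)}=e^{\pi i/(2p)}=q^{1/2}$. This is exactly $(-1)^{(s-1)}q^{(s^2-1)/2}$. For the second: $e^{2\pi i h^-_s}=e^{2\pi i\cdot 3p/4}\cdot e^{2\pi i (s^2-1)/(4p)}\cdot e^{-2\pi i s}\cdot e^{\pi i}=e^{3\pi i p/2}\cdot q^{(s^2-1)/2}\cdot 1\cdot(-1)=-e^{3\pi i p/2}q^{(s^2-1)/2}$, which is the claimed formula. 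The special case $s=2$ in the $X^+$ formula gives $(-1)^1 q^{(4-1)/2}=-q^{3/2}$, proving the last assertion.

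The only genuine obstacle is justifying that $\theta$ acts by the scalar $e^{2\pi i h}$ rather than as a unipotent-times-scalar operator, i.e.\ that the nilpotent part $N$ of $L(0)$ vanishes on these simples. This follows because each $X^\pm_s$ is simple and $\ker N$ is a nonzero $\mcl{W}_p$-submodule (it contains the lowest-weight space, on which $L(0)$ is scalar by grading-restrictedness together with simplicity of the lowest graded piece as a module over the zero-mode algebra), hence equals all of $X^\pm_s$. Once this is in hand, the rest is the elementary arithmetic above, and I would present it compactly. An alternative, if one prefers to avoid even this much, is to cite the general fact that on a simple module in a ribbon category coming from a VOA the twist is a scalar, which is implicit in the discussion of Section \ref{sect:voacat}; I would mention this but carry out the direct computation regardless, since it is short and makes the formulas transparent.
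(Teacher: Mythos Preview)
Your proof is correct and follows essentially the same approach as the paper: both compute $\theta = e^{2\pi i L(0)}$ by evaluating $e^{2\pi i h^\pm_s}$ using the explicit conformal weights $h^\pm_s$ recorded in Section~\ref{sect:irrep_Wp}. Your added justification that the nilpotent part of $L(0)$ vanishes on a simple module is a welcome clarification that the paper leaves implicit.
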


\begin{proof}
One simply checks the value of $e^{2\pi i L(0)}$ on the vector $v^\pm_s$ of lowest conformal weight in $X^\pm_s$,
\[
e^{2\pi i L(0)}v^\pm_s=e^{2\pi i h^\pm_s}v^\pm_s.
\]
We plug in the weights $h^\pm_s$ to obtain
\[
e^{2\pi i h^+_s}=\exp(\frac{\pi i(s^2-1)}{2p})\exp(-\pi i(s-1))=(-1)^{(s-1)}q^{(s^2-1)/2}
\]
and
\[
e^{2\pi i h^-_s}=\exp(\pi i\frac{3}{2}p+\pi i\frac{1}{2p}(s^2-1)-2\pi is+\pi i)=-e^{3\pi i p/2}q^{(s^2-1)/2},
\]
as claimed.
\end{proof}

One can use the above formula for the twist to establish modularity of the category $\rep(\mcl{W}_p)$ of triplet modules.

\begin{theorem}\label{thm:modular}
The category $\rep(\mcl{W}_p)$ is a non-degenerate, and hence modular, tensor category.
\end{theorem}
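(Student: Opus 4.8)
The plan is to prove non-degeneracy by showing that the M\"uger center $Z_{\text{\rm M\"ug}}(\rep(\mcl{W}_p))$ contains no nontrivial simple object, and then invoke the general fact that a transparent object in a finite braided category generates a trivial M\"uger center only if it is a sum of copies of $\1$. Concretely, suppose $X$ is a simple object lying in the M\"uger center. Since the simples of $\rep(\mcl{W}_p)$ are exactly the $X^\pm_s$, $1 \leq s \leq p$, it suffices to rule out all of these except $X^+_1 = \mcl{W}_p$. The key tool is the balancing (ribbon) equation together with the twist values computed in Lemma \ref{lem:wp_tw}: for a transparent object $X$, the double braiding $c^2_{X,Y}$ is the identity for all $Y$, so the balancing axiom $\theta_{X\ot Y} = (\theta_X \ot \theta_Y) c^2_{X,Y}$ forces $\theta_{X\ot Y} = \theta_X\theta_Y$ on every indecomposable summand of $X \ot Y$ (using that $\theta$ acts by a scalar on each $X^\pm_s$ and is compatible with the block decomposition).

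The main step is therefore to test this multiplicativity against the fusion rules of Theorem \ref{thm:323}. Taking $Y = X^+_2$, if $X^+_s$ ($1 < s < p$) were transparent we would need
\[
\theta_{X^+_{s-1}} = \theta_{X^+_{s+1}} = \theta_{X^+_s}\theta_{X^+_2},
\]
since $X^+_2 \ot X^+_s = X^+_{s-1} \oplus X^+_{s+1}$; in particular $\theta_{X^+_{s-1}} = \theta_{X^+_{s+1}}$. Plugging in $\theta_{X^+_s} = (-1)^{s-1} q^{(s^2-1)/2}$ gives $(-1)^{s-2} q^{((s-1)^2-1)/2} = (-1)^{s} q^{((s+1)^2-1)/2}$, i.e. $q^{2s} = 1$; since $q = e^{\pi i/p}$ has order $2p$ this fails for all $1 \leq s \leq p-1$, so no $X^+_s$ with $1 < s \leq p-1$ is transparent, and the same computation with the constraint $\theta_{X^+_s}\theta_{X^+_2}=\theta_{X^+_{s-1}}$ (for $s=p$, using $X^+_2 \ot X^+_p = 2X^-_1 \oplus 2X^+_{p-1}$) must also be examined — there one gets a condition relating $\theta_{X^+_p}\theta_{X^+_2}$ to both $\theta_{X^-_1}$ and $\theta_{X^+_{p-1}}$, which again cannot hold simultaneously. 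For the $X^-_s$, one can either repeat the argument with the explicit twist formula $\theta_{X^-_s} = -e^{3\pi ip/2} q^{(s^2-1)/2}$ against the fusion rule $X^+_2 \ot X^-_s$, or more cheaply observe that $X^-_s = X^-_1 \ot X^+_s$ and that transparency of $X^-_s$ together with transparency or non-transparency of $X^+_s$ constrains $X^-_1$; in any case the twist values are arranged so that the only surviving candidate is $X^+_1$.

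I expect the main obstacle to be the bookkeeping at the boundary case $s = p$, where the fusion rule involves multiplicity two and mixes the $+$ and $-$ families, so the naive multiplicativity argument needs the refinement that $\theta$ is constant on each indecomposable block and that transparency of $X$ implies $\theta$ acts by the \emph{same} scalar $\theta_X \theta_Y$ on every summand of $X\ot Y$ appearing with any multiplicity. One also needs to be slightly careful that $\rep(\mcl{W}_p)$ is not semisimple, so "summand of $X \ot Y$" should be read as "indecomposable direct summand," and the balancing identity applied summand-by-summand; since $X^\pm_s$ are simple and (by the fusion rules) $X^+_2 \ot X^\pm_s$ has simple composition factors with the stated multiplicities, for the transparency test it is enough to note that a transparent simple $X$ makes $c^2_{X, X^+_2}$ trivial and hence $\theta$ must be multiplicative on the composition factors, which is exactly the numerical contradiction above. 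Having eliminated every nontrivial simple from the M\"uger center, $Z_{\text{\rm M\"ug}}(\rep(\mcl{W}_p)) \simeq Vect$, so $\rep(\mcl{W}_p)$ is non-degenerate; being finite and ribbon, it is modular.
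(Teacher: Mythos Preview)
Your approach is essentially identical to the paper's: both reduce to showing no nontrivial simple lies in the M\"uger center by testing the balancing identity $\theta_{X\ot X^+_2}=(\theta_X\ot\theta_{X^+_2})c^2_{X,X^+_2}$ against the twist values of Lemma~\ref{lem:wp_tw} and the fusion rules of Theorem~\ref{thm:323}. The paper handles the edge cases $X^-_1$ and $X^\pm_p$ by a direct single-eigenvalue comparison (since $X^+_2\ot X^-_1\cong X^-_2$ has only one composition factor, and $X^+_2\ot X^\pm_p$ is a non-semisimple projective, so one compares an eigenvalue of $\theta$ on a composition factor to the scalar $\theta_{X^+_2}\theta_{X^\pm_p}$), which is exactly the refinement you anticipate in your last paragraph.
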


\begin{proof}
By naturality of the braiding, if a given object $X$ is M\"uger central, then all of its simple composition factors are also M\"uger central.  Additionally, the unit object in a finite tensor category admits no extensions \cite[Theorem 2.17]{etingofostrik04}.  So $Z_{\text{\rm M\"ug}}(\rep(\mcl{W}_p))=Vect$ if and only if the only M\"uger central simple is the unit object.  So we can simply check the braidings of $X^+_2$ against the simples, using the formula for the twist given in Lemma \ref{lem:wp_tw}, to observe the claimed triviality of the M\"uger center.  Let us compute.
\par

The fusion
\[
X^+_2\ot X^{\pm}_s\cong X^{\pm}_{s-1}\oplus X^{\pm}_{s+1},
\]
at $1< s<p$, implies that the twist $\theta_{X^+_2\ot X^{+}_s}$ has eigenvalues $\theta_{X^+_{s-1}}=(-1)^{s}q^{(s^2-2s)/2}$ and $\theta_{X^+_{s+1}}=(-1)^{s}q^{(s^2+2s)/2}$.  Similarly $\theta_{X^+_2\ot X^{-}_s}$ has eigenvalues $\theta_{X^-_{s-1}}=-e^{3\pi i p/2}q^{(s^2-2s)/2}$ and $\theta_{X^+_{s+1}}=-e^{3\pi i p/2}q^{(s^2+2s)/2}$.  Since $q^s=q^{-s}$ if and only if $s=p$, we see that each twist $\theta_{X^+_2\ot X^{\pm}_s}\in \End_{W_p}(X^+_2\ot X^{\pm}_s)$ has two distinct eigenvalues when $1<s<p$.  The competing endomorphism $\theta_{X^+_2}\ot \theta_{X^{\pm}_s}$ acts as a scalar, so that the square braiding
\[
\theta_{X^+_2\ot X^{\pm}_s}\circ(\theta^{-1}_{X^+_2}\ot \theta^{-1}_{X^{\pm}_s})=c^2_{X^+_2,X^{\pm}_s}
\]
is \emph{not} the identity at $1<s<p$.  It follows that no such simple $X^{\pm}_s$ is M\"uger central.
\par

At $s=1$, $\pm =-$, we have $X^+_2\ot X^{-}_1\cong X^-_2$ and $\theta_{X^-_2}=-e^{3\pi i p/2}q^{3/2}$ while $\theta_{X^+_2}\ot \theta_{X^-_1}=e^{3\pi ip/2}q^{3/2}$.  Hence
\[
-1=\theta_{X^+_2\ot X^{-}_1}\circ(\theta^{-1}_{X^+_2}\ot \theta^{-1}_{X^{-}_1})=c^2_{X^+_2,X^{-}_1},
\]
and we conclude that $X^-_1$ is not M\"uger central.
\par

Finally, for $X^{\pm}_p$ we note that $\theta_{X^+_2\ot X^{\pm}_p}$ has an eigenvalue $-e^{3\pi ip/2}$ or $1$, while $\theta_{X^+_2}\ot \theta_{X^{\pm}_p}$ is scaling by
\[
(-1)^pq^{(p^2-1)/2}q^{3/2}%=q^{3p^2/2}q
=e^{3\pi ip/2}q\ \ \text{or}\ \ e^{3\pi i p/2}q^{(p^2-1)/2}q^{3/2}%=e^{3\pi i p/2}e^{\pi i p/2}q
=q,
\]
respectively.  From this information we conclude, just as above, that the square braiding $c^2_{X^+_2,X^{\pm}_p}$ has an eigenvalue $\pm q^{-1}$, so that the objects $X^{\pm}_p$ are not M\"uger central.  This exhausts all non-trivial simples in $\rep(\mcl{W}_p)$, and we conclude that the M\"uger center of $\rep(\mcl{W}_p)$ is precisely $Vect$.
\end{proof}

\section{Ostrik's theorem}
\label{sect:ostrik}

We recall a theorem of Ostrik, which classifies tensor functors $\rep\SL(2)_\zeta\to \msc{A}$ from quantum $\SL(2)$ into an arbitrary tensor category $\msc{A}$.
\par

Our presentation is relatively robust, and we discuss various mechanisms employed in the proof of Ostrik's theorem in detail.  In particular, we recall the construction of the Temperley-Lieb category $\TL(d)$ and its universal properties.  The Temperley-Lieb category will play a prominent role in many of the arguments in the sections that follow.

\subsection{Ostrik's theorem}
\label{sect:thm}

By a self-dual object $W$ in a tensor category $\msc{A}$ we mean a triple $(W,coev,ev)$ of an object $W$, and maps $coev:\1\to W\ot W$ and $ev:W\ot W\to \1$ which identify $W$ as its own left and right dual (in the sense of \cite[\S 2.10]{egno15}).

\begin{definition}
For a self-dual object $W$ in a tensor category $\msc{A}$, the intrinsic dimension of $W$ is the composite endomorphism
\[
d(W):=(\1\overset{coev}\longrightarrow W\ot W\overset{ev}\longrightarrow \1).
\]
\end{definition}

Since $\End_\msc{A}(\1)=\mbb{C}id_\1$, we can identify the endomorphism $d(W)$ with a scalar.  Also, when $W$ is simple, this scalar $d(W)$ is independent of choice of structure maps for $W$, and so is an invariant of $W$ as an object in $\msc{A}$.  When $W$ is \emph{not} simple however, this dimension does depend on the choices of coevaluation and evaluation.  So calling such a dimension intrinsic is a slight abuse of language in this case.
\par

\begin{remark}
More generally, when $W$ is simple and not self-dual, all that can be defined is an intrinsic squared dimension. When $\msc{A}$ has a pivotal structure, the square of the intrinsic dimension of $W$ equals the product of the left and right categorical dimensions of $W$.
\end{remark}

For a self-dual object $W$ of intrinsic dimension $d(W)$, there is a unique-up-to-inversion scalar $\zeta\in \mbb{C}^\times$ so that $d(W)=-(\zeta+\zeta^{-1})$.  Hence the order $\operatorname{ord}(\zeta^2)$ is an invariant of $W$ as a self-dual object, or simply as an object in $\msc{A}$ when $W$ is simple.  Let us call this number the quantum order of $\msc{A}$ at $W$, or just the quantum order of $W$.
\par

As we explain below, when $W$ is of (finite) quantum order $p$, the coevaluation and evaluation maps provide a collection of self-dual objects $W_1=\1, W_2=W,\ W_3,\dots, W_p$ in $\msc{A}$ and splittings
\[
W\ot W_r\cong W_{r-1}\oplus W_{r+1},\ \ \text{for all }r<p.
\]
Each object $W_r$ is of intrinsic dimension $\pm (\zeta^{r-1}+\zeta^{r-3}+\dots+\zeta^{-r+1})$ for appropriate choice of signs.  In particular, $d(W_p)=0$.  We say $W$ is \emph{non-reduced} (resp.\ \emph{reduced}) if the object $W_p$ is non-vanishing (resp.\ vanishes) in $\msc{A}$.

\begin{theorem}[{Ostrik's Theorem \cite[\S 2.6]{ostrik08}}]\label{thm:ostrik}
Suppose $\msc{A}$ is a tensor category, and $W$ is a self-dual object in $\msc{A}$ which is of intrinsic dimension $-(\zeta+\zeta^{-1})$.  Suppose also that $\ord(\zeta^2)<\infty$, and that $W$ is non-reduced.  Then there is a uniquely associated (exact) tensor functor
\[
F_W:\rep\SL(2)_\zeta\to \msc{A}\ \ \text{with }F_W(\mbb{V})=W.
\]
Furthermore, any tensor functor $F:\rep\SL(2)_\zeta\to \msc{A}$ is isomorphic to $F_W$ for some non-reduced, self-dual object $W$.
\end{theorem}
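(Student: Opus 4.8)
The plan is to realize the functor $F_W$ by first constructing a functor out of the Temperley--Lieb category $\TL(d)$ with $d = d(W) = -(\zeta+\zeta^{-1})$, and then descending to $\rep\SL(2)_\zeta$ via a quotient. First I would recall that $\TL(d)$ is the universal $\mbb{C}$-linear monoidal category generated by a single self-dual object $*$ of intrinsic dimension $d$: concretely, its morphism spaces $\Hom_{\TL(d)}(*^{\ot m}, *^{\ot n})$ are spanned by planar diagrams (non-crossing pairings) and composition introduces a factor of $d$ for each closed loop. Given the self-dual object $(W, \mrm{coev}, \mrm{ev})$ in $\msc{A}$ of intrinsic dimension $d$, the universal property immediately produces a $\mbb{C}$-linear monoidal functor $G \colon \TL(d) \to \msc{A}$ with $G(*) = W$, sending the cup and cap generators to $\mrm{coev}$ and $\mrm{ev}$; the loop relation is exactly the statement $d(W) = d$. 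This functor is generally neither full, nor faithful, nor landing in an abelian category, so the real content is in passing from $\TL(d)$ to its ``representation-theoretic completion.''

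Next I would invoke the structure theory of $\TL(d)$ at $d = -(\zeta+\zeta^{-1})$ with $\ord(\zeta^2) = p < \infty$. In this case $\TL(d)$ admits a well-known sequence of Jones--Wenzl idempotents $f_1, f_2, \dots$ living in $\End_{\TL(d)}(*^{\ot n})$, and these survive precisely up to level $n = p$; the idempotent $f_p$ cuts out an object $W_p$ of intrinsic dimension $d(W_p) = 0$. The additive Karoubi envelope $\overline{\TL(d)}$ has indecomposable objects indexed by the $W_r$ (together with tensor products involving $W_p$), and the two-sided tensor ideal generated by $f_p$ — equivalently, the ideal of negligible morphisms — has as quotient a semisimple (fusion) category which is exactly the semisimplified Temperley--Lieb category, equivalent to the fusion quotient of $\rep\SL(2)_\zeta$. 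But I do not want the semisimplification: I want all of $\rep\SL(2)_\zeta$. The correct statement, which is the heart of Ostrik's argument, is that $\rep\SL(2)_\zeta$ is the \emph{abelian envelope} of $\overline{\TL(d)}$ — the universal abelian tensor category receiving a functor from $\TL(d)$ sending $*$ to a self-dual object with the given dimension. This is where the non-reducedness hypothesis enters: the condition $W_p \neq 0$ in $\msc{A}$ is exactly what forces the functor $G$ to factor through $\rep\SL(2)_\zeta$ rather than through a proper quotient (if $W$ were reduced, $G$ would kill $f_p$ and factor through the fusion quotient, which is \emph{not} $\rep\SL(2)_\zeta$). So the strategy is: extend $G$ to the Karoubi envelope $\overline{\TL(d)} \to \msc{A}$ using that $\msc{A}$ is idempotent-complete (all objects have finite length, hence split), then appeal to the universal property of the abelian envelope $\rep\SL(2)_\zeta = \mrm{Ab}(\overline{\TL(d)})$ to obtain the exact tensor functor $F_W \colon \rep\SL(2)_\zeta \to \msc{A}$ with $F_W(\mbb{V}) = W$. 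Exactness and monoidality come for free from the universal property; faithfulness is automatic by Proposition \ref{prop:faithful}.

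For uniqueness of $F_W$ given $W$: any two tensor functors agreeing on $\mbb{V}$ agree on the monoidal subcategory generated by $\mbb{V}$, which is dense (every object of $\rep\SL(2)_\zeta$ is a subquotient of a power of $\mbb{V}$), and exactness plus the rigidity of the generator pins down the functor on subquotients; one checks the natural isomorphism class is determined by compatibility with $\mrm{coev}, \mrm{ev}$. For the converse — that \emph{every} tensor functor $F \colon \rep\SL(2)_\zeta \to \msc{A}$ arises this way — set $W := F(\mbb{V})$ with structure maps the images of the (co)evaluation on $\mbb{V}$; then $W$ is self-dual, $d(W) = F(d(\mbb{V})) = -(\zeta + \zeta^{-1})$ so $W$ has quantum order dividing $p$ (one argues it is exactly $p$ since $F$ is faithful and $\mbb{V}$ has quantum order $p$), and $W_p = F(V_p) \neq 0$ again by faithfulness, so $W$ is non-reduced; then $F_W \cong F$ by the uniqueness just established. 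The main obstacle is the identification of $\rep\SL(2)_\zeta$ with the abelian envelope of $\overline{\TL(d)}$ and the precise role of the non-reducedness condition in ruling out factorization through the smaller semisimple quotient — this is the genuinely delicate categorical input, drawn from \cite{ostrik08}, and everything else is assembling universal properties.
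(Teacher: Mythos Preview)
Your approach via abelian envelopes is conceptually sound and arrives at the right conclusion, but it differs substantially from the paper's (i.e., Ostrik's original) argument. The paper works at the level of derived categories: one first obtains the additive monoidal functor $F_W: \mcl{T}_\zeta \cong \mcl{TL}(d) \to \msc{A}$ from the tilting subcategory (Corollary \ref{cor:809}), then invokes the Beilinson--Bezrukavnikov--Mirkovi\'c equivalence $K^b(\mcl{T}_\zeta) \overset{\sim}{\to} D^b(\SL(2)_\zeta)$ of tensor triangulated categories. The candidate extension is the composite
\[
\rep\SL(2)_\zeta \hookrightarrow D^b(\SL(2)_\zeta) \overset{A^{-1}}{\longrightarrow} K^b(\mcl{T}_\zeta) \overset{RF_W}{\longrightarrow} D^b(\msc{A}) \overset{H^0}{\longrightarrow} \msc{A},
\]
and the entire content is showing this lands in the heart, i.e., that $RF_W(A^{-1}(V))$ has cohomology concentrated in degree $0$ for every $V$. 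This vanishing is reduced to the single simple $L(\tfrac{p}{2}\alpha)$, where one writes out an explicit three-term tilting resolution and tensors with $V_p$; the non-reduced hypothesis $F_W(V_p)\neq 0$ then combines with faithfulness of the tensor product in $\msc{A}$ to force the vanishing.

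Your abelian-envelope framing repackages the same phenomenon at a higher level of abstraction: the statement that $\rep\SL(2)_\zeta$ is the abelian envelope of $\mcl{TL}(d)$ is correct (and is now a theorem in the Coulembier--Etingof--Ostrik framework), and non-reducedness is indeed exactly the faithfulness condition on $\mcl{TL}(d)\to\msc{A}$ needed to invoke the universal property, since the unique proper tensor ideal of $\mcl{TL}(d)$ is generated by $V_p$. But note the slight circularity in your writeup: you defer the hard step to \cite{ostrik08}, which establishes precisely this universal property---via the derived argument just described, not via abelian-envelope machinery (which postdates that paper). So your route buys conceptual clarity at the cost of importing a theory whose content, in this instance, \emph{is} the paper's computation. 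A minor inaccuracy: your description of the indecomposables of the Karoubi envelope (``the $W_r$ together with tensor products involving $W_p$'') is not correct at a root of unity---the indecomposable tilting modules are more intricate---though this does not affect the outline.
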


As we recall below, the standard representation $\mbb{V}$ in $\rep\SL(2)_\zeta$ is self-dual, and the equality $F_W(\mbb{V})=W$ in the statement of Theorem \ref{thm:ostrik} should be interpreted as an identification of self-dual objects.  So, $F_W$ is required to send the (co)evaluation maps for $\mbb{V}$ to the (co)evaluation maps for $W$.  Theorem \ref{thm:ostrik} appears precisely in \cite[Remark 2.10]{ostrik08}.  We sketch the proof of Ostrik's theorem below.
\par

For the sake of consistency, we call a self-dual object non-reduced whenever $\ord(\zeta^2)=\infty$ as well.  In the infinite order (aka generic) case, the appropriate analog of the above theorem holds.  Namely, a self-dual object $W$ in $\msc{A}$ which is of infinite quantum order uniquely specifies a map $F_W:\rep\SL(2)_\zeta\to \msc{A}$.  This generic version of Theorem \ref{thm:ostrik} is well-known, and is easily deduced from our presentation.

\begin{remark}\label{rem:739}
Consider the situation presented in Theorem \ref{thm:ostrik}.  In the case in which $W$ is self-dual and \emph{reduced}, with $d(W)=-(\zeta+\zeta^{-1})$, we obtain a uniquely associated tensor functor $\overline{F}_W:\msc{C}(\SL(2),\zeta)\to \msc{A}$ from the semisimplified representation category of $\SL(2)$ at $\zeta$.   The category $\msc{C}(\SL(2),\zeta)$ is explicitly the quotient of the category of tilting modules in $\rep\SL(2)_\zeta$ by the ideal of negligible morphisms \cite[\S 3.3]{bakalovkirillov01}.  An example of a tensor category $\msc{A}$ realizing this case is the modular fusion category rep$(L_{\widehat{\mfk{sl}_2}}(k,0))$ with $\zeta=e^{\pi i/(k+2)}$, where $L_{\widehat{\mfk{sl}_2}}(k,0)$ is the rational $C_2$-cofinite VOA associated to affine $\mfk{sl}_2$ at level $k\in\mbb{Z}_{>0}$ \cite[Chapter 6]{lepowskyli}. So, a more complete recounting of Ostrik's theorem would include the reduced case as well.  This point, however, is inessential for our study.
\end{remark}

\subsection{The Temperley-Lieb category}

Consider $\TL(d)$, the Temperley-Lieb category at parameter $d\in \mbb{C}$ \cite{kauffmanlins94} \cite[Appendix]{freedman03}.  This category has objects $[n]$, for each non-negative integer $n$, and morphisms $\Hom_{\TL}([m],[n])$ given as the $\mbb{C}$-linear span of non-crossing pairings (non-crossing planar string diagrams) between $m$ points and $n$ points, up to isotopy.  So, a general morphism from $[5]$ to $[7]$ is a sum of diagrams which look like
\[
\begin{tikzpicture}[scale=0.5]
  \draw[densely dotted] (-3.5, 0.0) -- (3.5,0.0);
  \draw[densely dotted] (-3.5, 2.5) -- (3.5,2.5);
  \foreach \i in {-2,...,2} {
    \draw[fill] (\i, 0) circle (0.07);
  }
  \foreach \i in {-3,...,3} {
    \draw[fill] (\i, 2.5) circle (0.07);
  }
  \draw (-2, 0) to[out=90, in=270] (-3,2.5);
  \draw (-1, 0) to[out=90, in=270] (0,2.5);
  \draw (0, 0) to[out=90, in=90] (1,0);
  \draw (2, 0) to[out=90, in=270] (1,2.5);

  \draw (-2, 2.5) to[out=270, in=270] (-1,2.5);
  \draw (2, 2.5) to[out=270, in=270] (3,2.5);
\end{tikzpicture}
\hspace{1cm},\hspace{1cm}
\begin{tikzpicture}[scale=0.5]
  \draw[densely dotted] (-3.5, 0.0) -- (3.5,0.0);
  \draw[densely dotted] (-3.5, 2.5) -- (3.5,2.5);
  \foreach \i in {-2,...,2} {
    \draw[fill] (\i, 0) circle (0.07);
  }
  \foreach \i in {-3,...,3} {
    \draw[fill] (\i, 2.5) circle (0.07);
  }
  \draw (-2, 0) to[out=90, in=270] (-3,2.5);
  \draw (-1, 0) to[out=90, in=270] (-2,2.5);
  \draw (0, 0) to[out=90, in=270] (-1,2.5);
  \draw (1, 0) to[out=90, in=270] (2,2.5);
  \draw (2, 0) to[out=90, in=270] (3,2.5);

  \draw (0, 2.5) to[out=270, in=270] (1,2.5);
\end{tikzpicture}\ ,\ \ \text{etc}.
\]
Composition is given by stacking diagrams vertically, where we reduce any diagram with loops to a diagram without loops by replacing each loop with the scalar $d$, $\bigcirc=d\cdot id\in \Hom_{\TL}([0],[0])$.
\par

The category $\TL(d)$ is a $\mbb{C}$-linear, rigid monoidal category with products given by placing diagrams alongside each other horizontally, so that $[n]\ot [m]=[n+m]$.  The generating object $[1]$ is self-dual with coevaluation and evaluation given by the cup and cap morphisms
\[
coev_{[1]}=\cup\in \Hom_{\TL}([0],[2]),\ \ ev_{[1]}=\cap\in \Hom_{\TL}([2],[0]).
\]
Hence all products $[m]=[1]^{\ot m}$ are also self-dual, and the object $[1]$ is of intrinsic dimension
\[
d([1])=ev_{[1]}\circ coev_{[1]}=\bigcirc=d.
\]
The category $\TL(d)$ has a universal property among monoidal categories equipped with a chosen self-dual object.  We recall this universal property momentarily, after introducing a preferred additive completion of $TL(d)$.
\par

Let $\mcl{TL}(d)$ denote the Karoubi, followed by additive, completion of the Temperley-Lieb category
\[
\mcl{TL}(d)=Kar(\TL(d))^\oplus.
\]
So, we construct $\mcl{TL}(d)$ by splitting idempotents in $\TL(d)$, then adding in finite sums of objects.

\begin{theorem}[{\cite[Theorem 4.1.1]{freydyetter89}, \cite[Lemma 6.1]{yamagami}}]
For $\msc{A}$ a $\mbb{C}$-linear, pre-additive monoidal category, and $W$ a self-dual object in $\msc{A}$, there is a unique linear monoidal functor $F_W:\TL(d)\to \msc{A}$ with $F_W([1])=W$, $F_W(coev_{[1]})=coev_W$, and $F_W(ev_{[1]})=ev_W$.
\par

When $\msc{A}$ is a tensor category, the aforementioned universal map extends uniquely to an additive monoidal functor $F_W:\mcl{TL}(d)\to \msc{A}$.
\end{theorem}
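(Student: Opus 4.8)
The plan is to build $F_W$ by hand on $\TL(d)$ and then extend it formally to $\mcl{TL}(d)$. On objects there is no choice: monoidality and $[n]=[1]^{\ot n}$ force $F_W([n])=W^{\ot n}$, with $F_W([0])=\1$ (the associativity and unit constraints on $\msc{A}$ are matched up using the coherence theorem, and I would suppress them). All the content is in defining $F_W$ on morphisms and checking that it is well defined, $\mbb{C}$-linear, functorial, monoidal, and unique.

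To define $F_W$ on morphisms, I would use that every morphism in $\TL(d)$ is a $\mbb{C}$-linear combination of non-crossing planar diagrams, and that any such diagram from $[m]$ to $[n]$ admits a decomposition, by horizontal slicing with respect to the height function, into a vertical composite of \emph{elementary} layers, each being a horizontal tensor product of copies of $id_{[1]}$ together with at most one cup $coev_{[1]}$ or one cap $ev_{[1]}$. Send an elementary layer to the corresponding tensor product of $id_W$, $coev_W$, $ev_W$ in $\msc{A}$, a composite of layers to the composite of images, and extend $\mbb{C}$-linearly. A closed loop then goes to $ev_W\circ coev_W=d(W)$; for consistency with the relation $\bigcirc=d$ in $\TL(d)$ one needs the compatibility $d(W)=d$, which I would take as understood (equivalently, $\TL(d)$ is considered at the parameter $d:=d(W)$).

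The crux, and the step I expect to cause the most trouble, is well-definedness: the image of a diagram must be independent of the chosen slicing, and invariant under planar isotopy and loop removal. I would reduce this to showing that any two slicings of a fixed diagram are connected by a finite sequence of local moves, each realized by an identity holding in $\msc{A}$: (i) sliding elementary pieces past one another between adjacent layers, realized by the interchange (bifunctoriality) law for $\ot$ on $\msc{A}$; (ii) a cup immediately capped on the same pair of strands, realized by $ev_W\circ coev_W=d$; and (iii) the two zig-zag moves, in which an adjacent cup and cap straighten to a single through-strand, realized \emph{precisely} by the snake (triangle) identities for the self-dual datum $(W,coev_W,ev_W)$ --- this is where, and the only place where, the self-duality hypothesis is used. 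Introducing a normal form (all cups pushed to the bottom, all caps to the top, the intervening through-strands carrying no crossings since the diagrams are planar) shows these moves suffice; the combinatorial confluence bookkeeping here is the one genuinely non-formal part. Once $F_W$ is well defined it is monoidal by construction (horizontal juxtaposition of diagrams corresponds to tensor product of morphisms), and it is the unique monoidal functor with the prescribed values because $[1]$, $coev_{[1]}$ and $ev_{[1]}$ generate $\TL(d)$ as a $\mbb{C}$-linear monoidal category, so any such functor must agree with $F_W$ on generators, hence everywhere.

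For the final assertion, note that a tensor category $\msc{A}$ is in particular abelian, hence additive and idempotent-complete. I would therefore extend $F_W$ to the Karoubi envelope by sending an idempotent $e\colon[n]\to[n]$ to the image of $F_W(e)$ in $\msc{A}$, with the evident action on morphisms, and then to $\mcl{TL}(d)=Kar(\TL(d))^\oplus$ by sending a finite formal direct sum of objects to the corresponding biproduct in $\msc{A}$; exactness of $\ot$ on $\msc{A}$ guarantees biproducts distribute over the tensor product, so the monoidal structure is inherited. Since every object of $\mcl{TL}(d)$ is a retract of a finite biproduct of objects $[n]$, an additive functor out of $\mcl{TL}(d)$ is determined by its restriction to $\TL(d)$, which gives both existence and uniqueness of the extension.
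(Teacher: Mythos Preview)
Your argument is correct and is essentially the hands-on version of what the paper does by citation. The paper's proof simply observes that $\TL(d)$ admits a monoidal presentation by generators and relations (generating object $[1]$, generating morphisms $coev_{[1]}$ and $ev_{[1]}$, subject to the zig-zag and loop relations), and then invokes a general lemma of Turaev on universal properties of such presentations; your slicing-into-elementary-layers construction, together with the reduction of well-definedness to interchange, zig-zag, and loop moves, is precisely what that presentation encodes. One small remark: in the last paragraph you appeal to exactness of $\ot$ to get distributivity over biproducts, but bilinearity of $\ot$ on morphisms (already part of the tensor category axioms) suffices; exactness is not needed for this step.
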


By ``unique" we mean unique up to natural isomorphism of monoidal functors.

\begin{proof}
As an alternative to the cited texts, one can note that $\TL(d)$ admits a monoidal presentation by generators and relations \cite[\S 2.2]{turaev90}, with generating object $[1]$ and generating morphisms given by evaluation and coevaluation.  One then applies \cite[Lemma 4.3.1]{turaev90} to obtain the claimed universal property.
\end{proof}

\subsection{The self-dual generator for $\SL(2)_\zeta$}

The object $\mbb{V}$ in $\SL(2)_\zeta$ is self-dual, with (co)evaluation maps
\[
\begin{array}{rl}
coev:\1\to \mbb{V}\ot \mbb{V},& coev(1)=\zeta^{-1/2}v_1\ot v_{-1}-\zeta^{1/2}v_{-1}\ot v_1\\
ev:\mbb{V}\ot \mbb{V}\to \1,& ev(v_1\ot v_1)=ev(v_{-1}\ot v_{-1})=0\\
 & ev(v_1\ot v_{-1})=-\zeta^{-1/2}\\
 & ev(v_{-1}\ot v_1)=\zeta^{1/2}.
\end{array}
\]
One deduces these morphisms via the (co)evaluation maps for the usual dual $\mbb{V}^\ast$ and the explicit $\SL(2)_\zeta$-isomorphism
\[
\phi:\mbb{V}\overset{\cong}\to\mbb{V}^\ast,\ \ \phi(v_1)=-\zeta^{-1/2} v^{-1},\ \phi(v_{-1})=\zeta^{1/2}v^1,
\]
where $v^i$ denotes the dual function to $v_i$ in the above expression.  We then calculate the dimension
\[
d(\mbb{V})=ev\circ coev=-(\zeta+\zeta^{-1}).
\]
The following result is fundamental.

\begin{theorem}[{\cite[Lemma A.7]{eliaslibedinsky17} \cite{ostrik08}}]\label{thm:Tq}
For $d=-(\zeta+\zeta^{-1})$, the map $F_\mbb{V}:\mcl{TL}(d)\to \rep\SL(2)_\zeta$, determined by the self-dual object $\mbb{V}$, is fully faithful, and restricts to an equivalence
\[
F_\mbb{V}:\mcl{TL}(d)\overset{\sim}\to \mcl{T}_\zeta
\]
onto the subcategory of tilting objects in $\rep\SL(2)_\zeta$.
\end{theorem}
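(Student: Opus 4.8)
The plan is to extend the given map to all of $\mcl{TL}(d)$ via the universal property just recorded, then establish full faithfulness and essential surjectivity onto $\mcl{T}_\zeta$ separately.

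First, by the preceding theorem the self-dual object $\mbb{V}$, of intrinsic dimension $d=-(\zeta+\zeta^{-1})$, determines a unique additive monoidal functor $F_\mbb{V}:\mcl{TL}(d)\to\rep\SL(2)_\zeta$ with $F_\mbb{V}([1])=\mbb{V}$. Its essential image lies in $\mcl{T}_\zeta$: the object $\mbb{V}=L(\frac{1}{2}\alpha)$ is tilting (for $\SL(2)$ at a root of unity it is at once a Weyl and a dual Weyl module), tensor products and direct summands of tilting $\mfk{sl}_2$-modules are again tilting, and $\mcl{TL}(d)$ is generated from $[1]$ under $\ot$, $\oplus$, and passage to direct summands. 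So it remains to prove that $F_\mbb{V}$ is fully faithful; granting this, essential surjectivity onto $\mcl{T}_\zeta$ is immediate. Indeed, every indecomposable tilting $T(\frac{n}{2}\alpha)$ occurs as a direct summand of $\mbb{V}^{\ot n}$ (induct on $n$: $T(\frac{n-1}{2}\alpha)\ot\mbb{V}$ is tilting, and the indecomposable summand through a highest-weight vector of weight $\frac{n}{2}\alpha$ must be $T(\frac{n}{2}\alpha)$); the corresponding projection pulls back, along the (by then established) isomorphism $\End_{\mcl{TL}(d)}([n])\overset{\sim}\to\End(\mbb{V}^{\ot n})$, to an idempotent whose image in $\mcl{TL}(d)$ realises $T(\frac{n}{2}\alpha)$; and $\mcl{T}_\zeta$ is the additive idempotent-closure of the $T(\lambda)$.

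By additivity and the universal property, full faithfulness reduces to showing that $F_\mbb{V}$ induces an isomorphism $\Hom_{\mcl{TL}(d)}([m],[n])\to\Hom_{\rep\SL(2)_\zeta}(\mbb{V}^{\ot m},\mbb{V}^{\ot n})$ for all $m,n$. I would first match dimensions. The left side carries the crossingless-matching basis, of some cardinality $N(m,n)$ independent of $d$. On the right, $\mbb{V}^{\ot m}$ and $\mbb{V}^{\ot n}$ are tilting, so $\dim\Hom=\sum_\lambda(\mbb{V}^{\ot m}:\Delta(\lambda))\,(\mbb{V}^{\ot n}:\nabla(\lambda))$, the $\Delta$- and $\nabla$-filtration multiplicities being read off in the Grothendieck ring, where multiplication in the $\Delta$-basis is the classical Clebsch--Gordan rule; hence this sum agrees with its value in the generic (semisimple) setting, which is again $N(m,n)$. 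With source and target now of equal finite dimension, full faithfulness is equivalent to faithfulness.

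Faithfulness is the crux, and I expect it to be the only genuinely difficult point. One route: $\ker F_\mbb{V}$ is a proper tensor ideal of $\mcl{TL}(d)$ (proper since $F_\mbb{V}(\mrm{id}_{[1]})\neq 0$), and because $F_\mbb{V}$ is compatible with the Markov trace form — it carries $coev_{[1]},ev_{[1]}$ to $coev_\mbb{V},ev_\mbb{V}$ — this kernel lies in the radical of that form, i.e.\ in the ideal $\mcl{N}$ of negligible morphisms. An analysis of the tensor ideals of $\mcl{TL}(d)$ over $\mbb{C}$ (at a root-of-unity value of $d$ there is nothing strictly between $0$ and $\mcl{N}$) forces $\ker F_\mbb{V}\in\{0,\mcl{N}\}$; but $\ker F_\mbb{V}=\mcl{N}$ would make $F_\mbb{V}$ factor through the semisimplification $\mcl{TL}(d)/\mcl{N}$, and hence make $F_\mbb{V}(\mbb{V}^{\ot p})$ — which has the non-semisimple summand $T(\frac{p}{2}\alpha)$ — a semisimple object, a contradiction. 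Alternatively, and essentially how \cite[Lemma A.7]{eliaslibedinsky17} and \cite{ostrik08} proceed, one realises the diagram morphisms and the $\Hom$-spaces among tensor powers of the standard module over the Lusztig $\mbb{Z}[v^{\pm 1}]$-form, checks that the comparison map is an isomorphism of free $\mbb{Z}[v^{\pm 1}]$-modules — the subtlety being that the quantum-integer denominators arising via Jones--Wenzl idempotents and $\Delta$-filtrations never vanish at $v=\zeta$ — and then specialises $v\mapsto\zeta$. Either way $F_\mbb{V}$ is faithful, hence fully faithful, and the essential-surjectivity step described above completes the proof.
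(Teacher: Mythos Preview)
Your proposal is essentially correct and considerably more detailed than the paper's own argument, which is a two-sentence sketch: it cites \cite[Lemma A.7]{eliaslibedinsky17} for full faithfulness of $F_\mbb{V}:\TL(d)\to\rep\SL(2)_\zeta$ via base change from the integral $\mbb{Z}[v^{\pm1}]$-form (your route (b)), and \cite{sawin06} for the fact that indecomposable tiltings occur as summands of the $\mbb{V}^{\ot n}$. Your dimension count via $\Delta/\nabla$-filtration multiplicities, reducing full faithfulness to faithfulness, is a genuine and correct elaboration not present in the paper.

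There is, however, a gap in your route (a). The step ``$\ker F_\mbb{V}=\mcl{N}$ would make $F_\mbb{V}(\mbb{V}^{\ot p})$ a semisimple object'' is not justified as written: factoring a monoidal functor through a semisimple category does not by itself force images of objects to be semisimple in the target. What you need is that the induced tensor functor $\overline{F}_\mbb{V}:\mcl{TL}(d)/\mcl{N}\to\rep\SL(2)_\zeta$ sends each simple $W_i$ to the simple $V_i$; this follows by induction on $i$ from the fusion rules, but the induction itself already produces the contradiction at $i=p-1$, since in the quotient $W_2\ot W_{p-1}\cong W_{p-2}$ while $\mbb{V}\ot V_{p-1}\cong V_{p-2}\oplus V_p$ with $V_p\neq 0$. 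So the contradiction arrives earlier and more directly than via non-semisimplicity of $T(\tfrac{p}{2}\alpha)$. With this adjustment route (a) works; route (b) is what the paper actually invokes.
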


While the tilting category $\mcl{T}_\zeta$ in $\rep\SL(2)_\zeta$ admits a rich representation theoretic structure (see e.g.\ \cite{andersen92,andersenparadowski95,sawin06,andersenstroppeltubbenhauer17}), for us it is simply the full, additive, Karoubian, monoidal subcategory in $\rep\SL(2)_\zeta$ generated by $\mbb{V}$.  This category is all of $\rep\SL(2)_\zeta$ precisely when $\zeta$ has infinite order.

\begin{proof}[Sketch proof]
Fully faithfulness of the initial functor $F_\mbb{V}:\TL(d)\to \rep\SL(2)_\zeta$ follows from \cite[Lemma A.7]{eliaslibedinsky17} via base change, and implies that the corresponding map from the completion $\mcl{TL}(d)$ is also fully faithful.  Since the indecomposable objects in $\mcl{T}_\zeta$ are summands of the powers $F_\mbb{V}([n])= \mbb{V}^{\ot n}$ of the standard representation \cite[Proposition 4]{sawin06}, the image of $\mcl{TL}(d)$ is precisely the subcategory $\mcl{T}_\zeta$ of tilting objects.
\end{proof}

\begin{comment}
\begin{lemma}
Consider any point $\mbb{Q}[v,v^{-1}]\to K$, and let $q\in K$ be the image of $v$.  Let $\rep \SL(2)_v$ denote representations of the quantum group $\mbb{Q}[v,v^{-1}]$, and $\tilde{\mbb{V}}$ denote the $2$-dimensional simple in this category \cite{lusztig93}.  The map
\[
K\ot_{\mbb{Q}[v,v^{-1}]}\Hom_{\SL(2)_v}(\tilde{\mbb{V}}^{\ot n},\tilde{\mbb{V}}^{\ot m})\to \Hom_{\SL(2)_q}(\mbb{V}^{\ot n},\mbb{V}^{\ot m})
\]
is an isomorphism.
\end{lemma}

Here by $\tilde{\mbb{V}}$ (resp.\ $\tilde{V}_j$) we mean the $2$-dimensional (resp.\ $j$-dimensional) representation which is cyclically generated by a highest weight vector $v_1$ (resp.\ $v_{j-1}$) of weight $f\in P$ (resp.\ $(j-1)f\in P$).

\begin{proof}
Via duality it suffices to show that the base change of the map
\[
K\ot_{\mbb{Q}[v,v^{-1}]}\Hom_{\SL(2)_v}(\1_v,\tilde{\mbb{V}}^{\ot n})\to \Hom_{\SL(2)_q}(\1,\mbb{V}^{\ot n})
\]
is an isomorphism.  As we have a decomposition
\[
\tilde{\mbb{V}}^{\ot n}\cong \oplus_{j\leq n} \tilde{V}_j^{n_j}
\]
into generic simples [citation], it suffices to show further that the map
\[
\dim\Hom_{\SL(2)_q}(\1,V_j)=\delta_{1,j},
\]
i.e.\ that $V_j$ has no invariant vectors which $j>1$.  But this is clear by a direct analysis of this module 
\end{proof}
\end{comment}

From the above identification $\mcl{TL}(d)\cong \mcl{T}_\zeta$ we deduce a universal property for the subcategory of tilting modules in $\rep\SL(2)_\zeta$.

\begin{corollary}[{\cite[Theorem 2.4]{ostrik08}}]\label{cor:809}
For any tensor category $\msc{A}$, and self-dual object $W$ in $\msc{A}$ of dimension $d(W)=-(\zeta+\zeta^{-1})$, there is a unique additive monoidal functor $F_W:\mcl{T}_\zeta\to \msc{A}$ with $F_W(\mbb{V})=W$, $F_W(coev_{\mbb{V}})=coev_W$, and $F_W(ev_{\mbb{V}})=ev_W$.
\end{corollary}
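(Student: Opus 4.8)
The plan is to reduce Corollary \ref{cor:809} to the fully-faithfulness statement of Theorem \ref{thm:Tq}, which has already done the essential work. First I would recall that, by Theorem \ref{thm:Tq}, the functor $F_\mbb{V}:\mcl{TL}(d)\overset{\sim}\to\mcl{T}_\zeta$ is an equivalence of additive monoidal categories, and in particular admits a quasi-inverse $G:\mcl{T}_\zeta\overset{\sim}\to\mcl{TL}(d)$ which is itself an additive monoidal functor carrying the self-dual object $\mbb{V}$ (with its structure maps $coev_\mbb{V}$, $ev_\mbb{V}$) to the self-dual object $[1]$ (with $coev_{[1]}$, $ev_{[1]}$). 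Next, given a tensor category $\msc{A}$ and a self-dual object $W$ of dimension $d(W)=-(\zeta+\zeta^{-1})=d$, the universal property of $\mcl{TL}(d)$ (the second half of the preceding theorem, using that $\msc{A}$ is a tensor category, hence $\mbb{C}$-linear and pre-additive) produces a unique additive monoidal functor $F_W^{\TL}:\mcl{TL}(d)\to\msc{A}$ with $F_W^{\TL}([1])=W$ sending $(coev_{[1]},ev_{[1]})$ to $(coev_W,ev_W)$. I would then simply define $F_W:=F_W^{\TL}\circ G:\mcl{T}_\zeta\to\msc{A}$; by construction this is additive and monoidal, and since $G(\mbb{V})=[1]$ compatibly with structure maps, we get $F_W(\mbb{V})=W$, $F_W(coev_\mbb{V})=coev_W$, $F_W(ev_\mbb{V})=ev_W$, as required.

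For uniqueness, suppose $F,F':\mcl{T}_\zeta\to\msc{A}$ are two additive monoidal functors agreeing on $\mbb{V}$ and on its (co)evaluation maps. Precomposing with $F_\mbb{V}:\mcl{TL}(d)\to\mcl{T}_\zeta$ gives two additive monoidal functors $\mcl{TL}(d)\to\msc{A}$ which agree on $[1]$ and on $coev_{[1]},ev_{[1]}$ (since $F_\mbb{V}$ carries the latter to $coev_\mbb{V},ev_\mbb{V}$). By the uniqueness clause in the universal property of $\mcl{TL}(d)$, there is a monoidal natural isomorphism $F\circ F_\mbb{V}\cong F'\circ F_\mbb{V}$; transporting along the equivalence $G$ (i.e. whiskering with $G$ and using $F_\mbb{V}\circ G\cong \mrm{id}$) yields a monoidal natural isomorphism $F\cong F'$. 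This is exactly the meaning of ``unique" flagged by the authors just before the statement.

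One mild technical point I would want to check carefully, rather than the heart of the argument, is that the quasi-inverse $G$ can genuinely be taken to be a \emph{monoidal} functor preserving the self-dual structure on the nose up to the coherent isomorphisms — this is standard (a monoidal equivalence has a monoidal quasi-inverse), but it is the one place where one must be slightly attentive to associator and unit bookkeeping. I expect this to be the only real obstacle, and it is a routine-though-fiddly coherence verification rather than a substantive difficulty; everything else is a formal consequence of Theorem \ref{thm:Tq} combined with the universal property of $\mcl{TL}(d)$ recorded above.
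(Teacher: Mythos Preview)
Your proposal is correct and matches the paper's approach: the paper does not write out a separate proof but simply states that the corollary follows ``from the above identification $\mcl{TL}(d)\cong \mcl{T}_\zeta$,'' which is exactly the reduction you carry out by transporting the universal property of $\mcl{TL}(d)$ (from the Freyd--Yetter/Yamagami theorem) across the equivalence $F_\mbb{V}$ of Theorem~\ref{thm:Tq}. Your explicit treatment of uniqueness via whiskering with the quasi-inverse is a reasonable unpacking of what the paper leaves implicit.
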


As stated above, $\mcl{T}_\zeta=\rep\SL(2)_\zeta$ when $\ord(\zeta)=\infty$.  So Corollary \ref{cor:809} already implies the generic version of Ostrik's theorem.

 When $\zeta$ of finite order $\ord(\zeta^2)=p$, recall that we have the $p$ simple $\SL(2)_\zeta$-representations $V_s$ of respective highest weights $(s-1)\alpha/2$, for $s\leq p$, as in \eqref{eq:Vs}.  These simples $V_s$ all lie in the subcategory of tilting modules in $\rep\SL(2)_\zeta$.  It follows that for any self-dual object $W$ in $\msc{A}$ which is of dimension $d(W)=-(\zeta+\zeta^{-1})$, the functor $F_W$ defines distinguished objects $W_s$ in $\msc{A}$ by taking
\begin{equation}\label{eq:Ws}
W_1=\1,\ W_2=W,\ \text{and}\ W_s=F_W(V_s)\ \ \text{for all }s\leq p.
\end{equation}
These $W_s$ are alternatively defined via certain idempotent endomorphisms in $\TL(d)$ \cite{wenzl87,jones97}.

\subsection{The proof of Ostrik's theorem}
\label{sect:Oproof}

We paraphrase the proof from \cite{ostrik08}.  Consider $W$ a self-dual object in $\msc{A}$ of dimension $d(W)=-(\zeta+\zeta^{-1})$, and $\ord(\zeta^2)=p<\infty$.  Suppose also that $W$ is non-reduced.  In the arguments below, by a tensor triangulated category we simply mean a triangulated category with a compatible rigid monoidal structure.
\par

Fix $F=F_W:\mcl{T}_\zeta\to \msc{A}$.  We consider the bounded homotopy category $K^b(\mcl{T}_\zeta)$ of tilting complexes and have maps
\begin{equation}\label{eq:611}
A:K^b(\mcl{T}_\zeta)\to D^b(\SL(2)_\zeta)\ \ \text{and}\ \ RF:K^b(\mcl{T}_\zeta)\to D^b(\msc{A})
\end{equation}
induced by the inclusion $\mcl{T}_\zeta\to \rep\SL(2)_\zeta$ and the additive map $F:\mcl{T}_\zeta\to \msc{A}$.  Here $D^b(\msc{C})$ denotes the bounded derived category of the given abelian category $\msc{C}$, and $RF$ specifically denotes the composite
\[
K^b(\mcl{T}_\zeta)\overset{K^bF}\longrightarrow K^b(\msc{A})\overset{\rm localize}\longrightarrow D^b(\msc{A}).
\]
All of the functors of \eqref{eq:611} are maps of tensor triangulated categories.
\par

By a general result of Beilinson, Bezrukavnikov, and Mirkovi\'{c} \cite{beilinsonbezrukavnikovmirkovic04} \cite[Proposition 2.4]{andersenstroppeltubbenhauer17} \cite[Proposition 2.7]{ostrik08}, the functor $A:K^b(\mcl{T}_\zeta)\to D^b(\SL(2)_\zeta)$ is an equivalence of tensor triangulated categories.  We consider the candidate extension of $F$ to all of $\rep\SL(2)_\zeta$ defined via the composite
\begin{equation}\label{eq:722}
\msc{F}:=\left(\rep\SL(2)_\zeta\to D^b(\SL(2))\overset{A^{-1}}\longrightarrow K^b(\mcl{T}_\zeta)\overset{RF}\longrightarrow D^b(\msc{A})\overset{H^0}\longrightarrow \msc{A}\right).
\end{equation}
The functor $\msc{F}$ is a perfectly well-defined map of additive categories, and satisfies $\msc{F}|_{\mcl{T}_\zeta}=F$.  We must argue now that $\msc{F}$ is exact, and preserves tensor products, provided $F(V_p)\neq 0$.
\par

We claim that, for any $V$ in $\rep\SL(2)_\zeta$,
\begin{equation}\label{eq:629}
\text{the cohomology $H^\ast(RF\circ A^{-1}(V))$ vanishes in all nonzero degrees}.
\end{equation}
If we can verify this claim, then the tensor map $\rep\SL(2)_\zeta\to D^b(\msc{A})$ appearing in \eqref{eq:722} has image in the abelian subcategory $D^b(\msc{A})^\heartsuit$ of objects with cohomology concentrated in degree $0$, and it follows that $\msc{F}$ is an exact, $\mbb{C}$-linear, monoidal functor as desired.  Indeed, in this case we have an exact tensor functor $\rep\SL(2)_\zeta\to D^b(\msc{A})^\heartsuit$ and composing with the tensor equivalence $H^0:D^b(\msc{A})^\heartsuit\overset{\cong}\to \msc{A}$ we see that $\msc{F}$ is exact and monoidal.
\par

As all objects in $\rep\SL(2)_\zeta$ are obtainable from the simples via extension, it suffices to prove the desired vanishing \eqref{eq:629} for all of the simples $L(\lambda)$ in $\rep\SL(2)_\zeta$.  As explained in \cite[\S 2.6]{ostrik08}, it furthermore suffices to prove the desired vanishing (only) at the $2$-dimensional simple $L(\frac{p}{2}\alpha)$ in $\rep\SL(2)_\zeta$.  For this final claim one writes out $L(\frac{p}{2}\alpha)$ as a $3$-term complex $A^{-1}(L(\frac{p}{2}\alpha))=0\to L\to \mbb{V}\ot L'\to L\to 0$ of tilting modules, and observes that after tensoring with the projective $V_p$, the complex $A^{-1}(L(\frac{p}{2}\alpha))\ot V_p$ is isomorphic to a tilting module $L''$ concentrated in degree $0$ in $K^b(\mcl{T}_\zeta)$ \cite[Sublemma 1 \S 2.5]{ostrik08}.  We therefore calculate
\[
RF\big(A^{-1}(L(\frac{p}{2}\alpha))\big)\ot F(V_p)\cong RF\big(A^{-1}(L(\frac{p}{2}\alpha))\ot V_p\big)\cong F(L''),
\]
and find that the leftmost object has cohomology concentrated in degree $0$.  Via exactness and faithfulness of the product on $\msc{A}$ \cite[Proposition 2.1.8]{bakalovkirillov01}, and the fact that $F(V_p)$ is nonzero in $\msc{A}$ by assumption, we conclude that $RF(A^{-1}(L(\frac{p}{2}\alpha)))$ has cohomology concentrated in degree $0$, as desired.  So we establish the desired vanishing \eqref{eq:629}.
\par

We conclude finally that, when $W_p=F(V_p)\neq 0$ in $\msc{A}$, the map $\msc{F}:\rep\SL(2)_\zeta\to \msc{A}$ of \eqref{eq:722} provides the desired extension of $F:\mcl{T}_\zeta\to \msc{A}$ to all of $\rep\SL(2)_\zeta$.  Uniqueness follows from the fact that, by the above information, any two such extensions $\msc{F}$ and $\msc{F}'$ of $F$ must have isomorphic derived functors, and hence must themselves be isomorphic.

\section{Braidings for $\TL(d)$ and quantum group representations}
\label{sect:calc}

We directly calculate all possible braidings on $\TL(d)$, and on $\rep\SL(2)_\zeta$.  The results of this section are used to determine when a given tensor functor $F_W:\rep\SL(2)_\zeta\to \msc{A}$ into a braided tensor category $\msc{A}$ is in fact a \emph{braided} tensor functor.

\subsection{Calculating braidings for $\TL(d)$}

Consider $d\in\mbb{C}$ and write $d=-(\zeta+\zeta^{-1})$ for some $\zeta\in \mbb{C}^\times$.  Recall that there is a unique solution to the equation $d=-(X+X^{-1})$ up to inversion, so that the choice here is between $\zeta$ and $\zeta^{-1}$.  We define the square root $\zeta^{1/2}$ unambiguously by halving the argument of $\zeta$ and taking the positive square root of the magnitude $|\zeta|$.

Consider the Temperley-Lieb category $\TL(d)$ and write, for any nonnegative integer $m$, $\TL_m(d)=\Hom_{\TL}([m],[m])$.  We have the map $f:[2]\to [2]$ given by
\[
f=\cup\circ \cap=coev\circ ev
\]
and the two elements $\{id_{[2]},f\}$ form a basis for the endomorphism ring $\TL_2(d)$.  When $d\neq 0$ we furthermore have the idempotent $d^{-1}f$ which realizes $[0]$ as a summand of $[2]$ in the Karoubi completion of $\TL(d)$.
\par

In $\TL_3(d)$ one can calculate the (standard) equations
\[
f\circ coev=d\cdot coev,\ \ (f\ot 1)(1\ot coev)=(coev\ot 1),\ \ (1\ot f)(coev\ot 1)=(1\ot coev).
\]
For an element $c\in \TL_2(d)=\End_{\TL}([1]\ot[1])$ let us define
\[
c_{[2],[1]}=(c\ot 1)(1\ot c)\ \ \text{and}\ \ c_{[0],[1]}=id.
\]
The following two lemmas are straightforward, and certainly well-known.

\begin{lemma}\label{lem:660}
There are precisely four solutions $c\in \TL_2(d)$ to the equation
%Suppose an endomorphism $c=af+b$ in $\TL_2(d)$ solves the equation
\begin{equation}\label{eq:786}
c_{[2],[1]}(coev_{[1]}\ot 1)=(coev_{[1]}\ot 1)c_{[0],[1]}
\end{equation}
in $\TL_3(d)$, namely
%in $\TL_3(d)$.  
%Then $a=\pm \zeta^{\pm 1/2}$ and $b=a^{-1}$.  Rather, there are precisely four solutions to the equation \eqref{eq:786} in $\TL_2(d)$,
\[
c=\pm (\zeta^{1/2}f+\zeta^{-1/2})\ \ \text{and}\ \ c=\pm(\zeta^{-1/2}f+\zeta^{1/2}).
\]
\end{lemma}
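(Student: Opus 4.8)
The plan is to parametrize $c$ by its coordinates in the basis $\{id_{[2]},f\}$ of $\TL_2(d)=\End_{\TL}([2])$, writing $c=a\cdot id_{[2]}+b\cdot f$ with $a,b\in\mbb{C}$, and to reduce \eqref{eq:786} to a pair of polynomial equations in $a,b$. For this I would first note that $\Hom_{\TL}([1],[3])$ is two-dimensional, spanned by the two non-crossing pairings $coev_{[1]}\ot 1$ and $1\ot coev_{[1]}$ (the relevant Catalan number is $C_2=2$). Hence both sides of \eqref{eq:786} are pinned down by their coordinates in this basis, and \eqref{eq:786} becomes two scalar equations.

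The first computational step is to expand the left-hand side $c_{[2],[1]}(coev_{[1]}\ot 1)=(c\ot 1)(1\ot c)(coev_{[1]}\ot 1)$ using only the relations already recorded in this subsection: $f\circ coev_{[1]}=d\cdot coev_{[1]}$, $(1\ot f)(coev_{[1]}\ot 1)=1\ot coev_{[1]}$, and $(f\ot 1)(1\ot coev_{[1]})=coev_{[1]}\ot 1$. Applying $1\ot c$ to $coev_{[1]}\ot 1$, then $c\ot 1$ to the result, and collecting terms gives
\[
(c\ot 1)(1\ot c)(coev_{[1]}\ot 1)=(a^2+abd+b^2)\,(coev_{[1]}\ot 1)+ab\,(1\ot coev_{[1]}).
\]
Comparing coordinates with the right-hand side of \eqref{eq:786} then yields the system $ab=1$ and $a^2+abd+b^2=0$.

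It remains to solve this system, which is elementary. Since $ab=1$ forces $a\neq 0$ and $b=a^{-1}$, the other equation becomes $a^2+d+a^{-2}=0$, i.e.\ $a^4+da^2+1=0$; substituting $d=-(\zeta+\zeta^{-1})$ this factors as $(a^2-\zeta)(a^2-\zeta^{-1})=0$, so $a^2\in\{\zeta,\zeta^{-1}\}$ and hence $a\in\{\pm\zeta^{1/2},\pm\zeta^{-1/2}\}$ with $\zeta^{1/2}$ the square root fixed above, each choice determining $b=a^{-1}$. Reading off $c=a\cdot id_{[2]}+b\cdot f$ in the four cases produces exactly $\pm(\zeta^{-1/2}f+\zeta^{1/2})$ and $\pm(\zeta^{1/2}f+\zeta^{-1/2})$, and these four are pairwise distinct provided $\zeta^2\neq 1$ (which holds for the parameters $\zeta=q$ of interest). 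I do not expect a genuine obstacle: the only step needing care is the diagrammatic bookkeeping in expanding $(c\ot 1)(1\ot c)$ — keeping track of which of the two cup diagrams is produced at each application of $f$ — but this is purely mechanical given the three relations above.
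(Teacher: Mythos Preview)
Your proof is correct and follows essentially the same route as the paper's: write $c$ in the basis $\{id_{[2]},f\}$, expand $(c\ot 1)(1\ot c)(coev\ot 1)$ using the three diagrammatic relations, compare coefficients in $\Hom_{\TL}([1],[3])$, and solve the resulting quadratic system. The only cosmetic difference is that the paper writes $c=af+b$ (so its $a$ is your $b$ and vice versa); the computation and conclusion are otherwise identical.
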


\begin{proof}
Write  $c=af+b$. We have directly
\[
\begin{array}{l}
c_{[2],[1]}(coev\ot 1)
=(c_{[1],[1]}\ot 1)(1\ot c_{[1],[1]})(coev\ot 1)\vspace{1mm}\\
\hspace{4mm}=a(c_{[1],[1]}\ot 1)(1\ot f)(coev\ot 1)+b(c_{[1],[1]}\ot 1)(coev\ot 1)\vspace{1mm}\\
\hspace{4mm}=a^2(f\ot 1)(1\ot f)(coev\ot 1)+ ab(1\ot f+f\ot 1)(coev\ot 1)+b^2(coev\ot 1)\vspace{1mm}\\
\hspace{4mm}=(a^2+b^2)(coev\ot 1)+ ab(1\ot f+f\ot 1)(coev\ot 1)\vspace{1mm}\\
\hspace{4mm}=(a^2+b^2+dab)(coev\ot 1)+ab(1\ot coev).
\end{array}
\]
Since the two representing diagrams for $(coev\ot 1)$ and $(1\ot coev)$ are non-isotopic, and hence these maps are linearly independent in $\TL_3(d)$, the identity \eqref{eq:786} implies the equations
\[
ab=1\ \Rightarrow\ b=a^{-1}\ \ \text{and subsequently}\ \ d=-(a^2+a^{-2}).
\]
The final equation gives $a^2=\zeta^{\pm 1}$.  Hence $a=\pm \zeta^{\pm 1/2}$.
\end{proof}

Note that for any braiding $c$ on $\TL(d)$ the endomorphism $c_{[1],[1]}\in \TL_2(d)$ solves the equation \eqref{eq:786}.  Note also that any braiding on $\TL(d)$ is specified uniquely by its value on $[1]\ot [1]$.  So the above lemma says that there are precisely four possible braidings on $\TL(d)$.  In the above formulas, inverting $\zeta$ corresponds to replacing the braiding by its opposite. Changing the sign of the braiding corresponds to choosing a different square root of $\zeta$.  The existence of the Kauffman bracket implies that all four possibilities are realized as braidings on $\TL(d)$.

\begin{proposition}\label{prop:braidingsTL}
The category $\TL(d)$ has precisely four braidings, and precisely two braidings modulo inversion.  These two braidings (up to inversion) are specified uniquely by their values on $[1]\ot[1]$, and differ only by a sign
\[
c_{[1],[1]}=\pm (\zeta^{1/2}f+\zeta^{-1/2}).
\]
\end{proposition}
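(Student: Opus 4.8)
The plan is to assemble the proposition from Lemma \ref{lem:660} together with the elementary remarks recorded just above it. The first step is to argue that a braiding on $\TL(d)$ is completely determined by the single endomorphism $c_{[1],[1]}\in\TL_2(d)$. This uses that $[1]$ tensor-generates $\TL(d)$: every object is a power $[1]^{\ot n}$, and every morphism is a composite of tensor products of $coev_{[1]}$, $ev_{[1]}$ and identities, via the generators-and-relations presentation of $\TL(d)$ recalled above. Given this, the hexagon identities \eqref{eq:191} express each $c_{[n],[m]}$ as an iterated composite built solely from $c_{[1],[1]}$ --- with $c_{[0],-}$ and $c_{-,[0]}$ the identity by the unit axiom --- and naturality then forces the value of $c$ on every morphism. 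In particular $c_{[n],[1]}$ is forced to equal the composite used to define $c_{[2],[1]}$, and naturality of $c_{-,[1]}$ with respect to $coev_{[1]}\colon[0]\to[2]$ becomes precisely equation \eqref{eq:786}. Hence $c_{[1],[1]}$ must be one of the four solutions of Lemma \ref{lem:660}, namely $\pm(\zeta^{1/2}f+\zeta^{-1/2})$ or $\pm(\zeta^{-1/2}f+\zeta^{1/2})$.

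Second, I would establish the converse --- that each of these four solutions actually comes from a braiding. This is classical via the Kauffman bracket: for any $A$ with $d=-A^2-A^{-2}$ one obtains a braiding on $\TL(d)$ whose value on $[1]\ot[1]$ is $A\cdot\mathrm{id}_{[2]}+A^{-1}f$, and the admissible values $A\in\{\pm\zeta^{1/2},\pm\zeta^{-1/2}\}$ reproduce exactly the four endomorphisms above. (Using $f^2=df$ one checks directly that each of these endomorphisms is invertible in $\TL_2(d)$, as a braiding must be; alternatively invertibility is automatic from the Kauffman realization.) Combined with the first step, this gives exactly four braidings on $\TL(d)$, each pinned down by its value on $[1]\ot[1]$.

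Third, I would count modulo inversion. The opposite braiding $\bar c_{X,Y}:=c_{Y,X}^{-1}$ is again a braiding, and on the self-dual generator it acts by $\bar c_{[1],[1]}=c_{[1],[1]}^{-1}$. A one-line computation with $f^2=df$ gives $(\zeta^{1/2}f+\zeta^{-1/2})^{-1}=\zeta^{-1/2}f+\zeta^{1/2}$, and the same identity with the signs changed, so passing to the opposite braiding interchanges the two ``$\zeta$'' solutions with the two ``$\zeta^{-1}$'' solutions; this is the precise content of the slogan that inverting $\zeta$ reverses the braiding. Hence the four braidings fall into exactly two inversion-classes, represented by $c_{[1],[1]}=\pm(\zeta^{1/2}f+\zeta^{-1/2})$, which differ only by a sign --- the two signs corresponding to the two square roots of $\zeta$ --- which is exactly the assertion. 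The one step I expect to require the most care is the reconstruction claim of the first paragraph, namely that the braiding axioms genuinely force $c$ to be rebuilt from $c_{[1],[1]}$ through the formula defining $c_{[2],[1]}$; everything else is a short calculation or a citation.
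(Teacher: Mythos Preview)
Your proposal is correct and follows essentially the same approach as the paper: the paper records, in the paragraph preceding the proposition, that any braiding is determined by $c_{[1],[1]}$ and that this value must solve \eqref{eq:786}, then in the proof proper invokes the Kauffman bracket for existence and verifies the inverse pairing via the same computation $(\zeta^{-1/2}f+\zeta^{1/2})(\zeta^{1/2}f+\zeta^{-1/2})=(d+\zeta+\zeta^{-1})f+1=1$ that you give. Your write-up is more explicit about why the braiding is reconstructed from its value on the generator, but the argument is the same.
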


\begin{proof}
The Kauffman bracket realizes all four of the possible braidings suggested in Lemma \ref{lem:660} as braidings on $\TL(d)$.  (These braidings are also realized as braidings on $\rep\SL(2)_\zeta$, which one recalls involves a choice of square root for $\zeta$.)  To pair inverses we check
\[
(\zeta^{-1/2}f+\zeta^{1/2})(\zeta^{1/2}f+\zeta^{-1/2})=(d+\zeta+\zeta^{-1})f+1=1.
\]  
\end{proof}

When $d\neq 0$, we have the two idempotents $e_1=d^{-1}f$ and $e_3=1-d^{-1}f$ which provide an alternate basis for the endomorphisms $\TL_2(d)$.  One translates directly to find $\zeta^{1/2}f+\zeta^{-1/2}=-\zeta^{3/2}e_1+\zeta^{-1/2}e_3$.  So, in terms of this basis of idempotents, Proposition \ref{prop:braidingsTL} yields the following.

\begin{proposition}\label{prop:braidingsTL2}
When $d\neq 0$, the category $\TL(d)$ has precisely two braidings, modulo inversion, which are specified by the values
\[
c_{[1],[1]}=\pm(-\zeta^{3/2}e_1+\zeta^{-1/2}e_3).
\]
\end{proposition}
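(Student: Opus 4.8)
The plan is to obtain this as an immediate reformulation of Proposition \ref{prop:braidingsTL} after changing basis in the two-dimensional endomorphism algebra $\TL_2(d)$ from $\{id_{[2]},f\}$ to $\{e_1,e_3\}$. Since $d\neq 0$, the elements $e_1=d^{-1}f$ and $e_3=1-d^{-1}f$ are orthogonal idempotents with $e_1+e_3=id_{[2]}$, and they form a basis of $\TL_2(d)$; in particular a braiding is still determined by its value on $[1]\ot[1]$, and the count of ``precisely two braidings modulo inversion'' carries over verbatim from Proposition \ref{prop:braidingsTL}, as a change of basis does not alter the set of solutions. It therefore remains only to re-express the value $c_{[1],[1]}=\pm(\zeta^{1/2}f+\zeta^{-1/2})$ of Proposition \ref{prop:braidingsTL} in terms of $e_1$ and $e_3$.

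For this I would substitute $f=d\,e_1$ and $1=e_1+e_3$, obtaining $\zeta^{1/2}f+\zeta^{-1/2}=(\zeta^{1/2}d+\zeta^{-1/2})e_1+\zeta^{-1/2}e_3$, and then use $d=-(\zeta+\zeta^{-1})$ to simplify the coefficient of $e_1$, namely $\zeta^{1/2}d+\zeta^{-1/2}=-\zeta^{3/2}-\zeta^{-1/2}+\zeta^{-1/2}=-\zeta^{3/2}$. This yields $c_{[1],[1]}=\pm(-\zeta^{3/2}e_1+\zeta^{-1/2}e_3)$, as claimed. There is no genuine obstacle here: the only computational content is this one-line identity, which is in any case already recorded in the paragraph preceding the statement, and everything else is inherited directly from Proposition \ref{prop:braidingsTL}.
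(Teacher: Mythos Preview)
Your proposal is correct and matches the paper's approach exactly: the paper also derives this proposition as an immediate restatement of Proposition~\ref{prop:braidingsTL} via the change of basis $f=de_1$, $1=e_1+e_3$, carrying out precisely the same one-line computation $\zeta^{1/2}d+\zeta^{-1/2}=-\zeta^{3/2}$ that you give. In fact the paper does not even supply a separate proof environment for this proposition, recording the translation in the paragraph preceding the statement just as you note.
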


\subsection{Calculations with the generator $\mbb{V}$ in $\rep\SL(2)_\zeta$}

Via the equivalence $K^b(\mcl{TL}(d))\cong D^b(\SL(2)_\zeta)$ we observe a bijection between braidings on $\rep\SL(2)_\zeta$ and braidings on the Temperley-Lieb category (see \S \ref{sect:Oproof}).  In particular, restricting along the fully faithful functor $F_\mbb{V}:\TL(d)\to \rep\SL(2)_\zeta$ induces a bijection between the respective collections of braidings.  So we may compare the standard braiding of Section \ref{sect:SL2q}, for the category of quantum group representations, with the possibilities of Proposition \ref{prop:braidingsTL}.  We perform some calculations with the generator $\mbb{V}$.
\par

Consider the braiding $c$ on $\rep\SL(2)_\zeta$ determined by the $R$-matrix \eqref{eq:R}.  Since $\mbb{V}$ is annihilated by the second powers of $E$ and $F$, we have
\[
c_{\mbb{V},\mbb{V}}(v\ot v') =\zeta^{-(\deg(v),\deg(v')}v'\ot v-\zeta^{-(\deg(v),\deg(v'))}(\zeta-\zeta^{-1})Fv'\ot Ev
\]
so that
\[
c_{\mbb{V},\mbb{V}}=\left\{
\begin{array}{l}
v_1\ot v_1\mapsto \zeta^{-1/2}v_1\ot v_1,\\
v_{-1}\ot v_{-1}\mapsto \zeta^{-1/2}v_{-1}\ot v_{-1},\\
v_1\ot v_{-1}\mapsto \zeta^{1/2}v_{-1}\ot v_1,\\
v_{-1}\ot v_1\mapsto \zeta^{1/2}v_1\ot v_{-1}-\zeta^{1/2}(\zeta-\zeta^{-1})v_{-1}\ot v_1.
\end{array}\right.
\]
Also, the element $f_\mbb{V}=coev\circ ev\in \End_{\rep\SL(2)_\zeta}(\mbb{V}\ot \mbb{V})$ is such that
\[
\begin{array}{c}
f_\mbb{V}(v_1\ot v_1)=f_\mbb{V}(v_{-1}\ot v_{-1})=0,\\
f_\mbb{V}(v_1\ot v_{-1})=-\zeta^{-1}v_1\ot v_{-1}+v_{-1}\ot v_1,\ \ f_{\mbb{V}}(v_{-1}\ot v_1)=v_1\ot v_{-1}-\zeta v_{-1}\ot v_1.
\end{array}
\]
One observes  an expression for the standard braiding directly from the above calculations.

\begin{lemma}
$c_{\mbb{V},\mbb{V}}=\zeta^{1/2}f_\mbb{V}+\zeta^{-1/2}$.
\end{lemma}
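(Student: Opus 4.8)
The plan is to verify the claimed identity directly, by evaluating both sides on the basis $\{v_1\ot v_1,\ v_{-1}\ot v_{-1},\ v_1\ot v_{-1},\ v_{-1}\ot v_1\}$ of $\mbb{V}\ot\mbb{V}$. Both operators $c_{\mbb{V},\mbb{V}}$ and $f_\mbb{V}=coev\circ ev$ have already been written out explicitly on this basis in the computations preceding the statement, so the verification is a finite, mechanical comparison: on $v_{\pm 1}\ot v_{\pm 1}$ one finds $f_\mbb{V}$ kills the vector while $c_{\mbb{V},\mbb{V}}$ scales by $\zeta^{-1/2}$, matching the constant term $\zeta^{-1/2}$ on the right; on $v_1\ot v_{-1}$ and $v_{-1}\ot v_1$ the two terms $\zeta^{1/2}f_\mbb{V}$ and $\zeta^{-1/2}\operatorname{id}$ recombine to give exactly the stated images of $c_{\mbb{V},\mbb{V}}$ (the cross terms $-\zeta^{-1/2}v_1\ot v_{-1}$ cancel, and $-\zeta^{1/2}\cdot\zeta + \zeta^{-1/2} = -\zeta^{1/2}(\zeta-\zeta^{-1})$ produces the correct coefficient of $v_{-1}\ot v_1$).

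As an alternative, more structural, route: since $F_\mbb{V}:\TL(d)\to\rep\SL(2)_\zeta$ is a fully faithful monoidal functor (Theorem \ref{thm:Tq}) and $c$ is a braiding on $\rep\SL(2)_\zeta$, the endomorphism $c_{\mbb{V},\mbb{V}}\in\End(\mbb{V}\ot\mbb{V})$ is pulled back from an endomorphism of $[1]\ot[1]$ in $\TL(d)$ solving equation \eqref{eq:786}. By Lemma \ref{lem:660} it must therefore equal one of $\pm(\zeta^{1/2}f_\mbb{V}+\zeta^{-1/2})$ or $\pm(\zeta^{-1/2}f_\mbb{V}+\zeta^{1/2})$. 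One then pins down which solution occurs by testing a single eigenvalue: $c_{\mbb{V},\mbb{V}}(v_1\ot v_1)=\zeta^{-1/2}v_1\ot v_1$ and $f_\mbb{V}(v_1\ot v_1)=0$ force the constant term to be $+\zeta^{-1/2}$, which singles out $c_{\mbb{V},\mbb{V}}=\zeta^{1/2}f_\mbb{V}+\zeta^{-1/2}$ among the four candidates.

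There is no real obstacle here; the only thing to be slightly careful about is the normalization of $\zeta^{1/2}$ (halving the argument and taking the positive square root of $|\zeta|$, as fixed earlier), so that the square roots appearing in $coev_\mbb{V}$, $ev_\mbb{V}$, and in the $R$-matrix are consistent — but this is exactly the convention under which $f_\mbb{V}$ and $c_{\mbb{V},\mbb{V}}$ were computed above, so the two expressions are being compared on equal footing. I would present the direct basis-by-basis check as the proof, since it is short and self-contained given the preceding calculations.
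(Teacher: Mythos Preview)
Your proposal is correct and matches the paper's approach: the paper's proof is simply the remark ``One observes an expression for the standard braiding directly from the above calculations,'' i.e.\ exactly your direct basis-by-basis comparison using the already-computed values of $c_{\mbb{V},\mbb{V}}$ and $f_\mbb{V}$. Your alternative structural argument via Lemma \ref{lem:660} is a pleasant bonus but not needed, and not in the paper.
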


\section{An analysis of the generator $X^+_2$}
\label{sect:X2}

The object $X^+_2$ is self-dual \cite[Theorem 37]{tsuchiyawood13}, and so the materials of Section \ref{sect:thm} imply the existence of a tensor functor from some quantum group category to $\rep(\mcl{W}_p)$.  As explained in Theorem \ref{thm:ostrik} and Remark \ref{rem:739}, the precise domain of this functor depends on the behaviors of $X^+_2$ in $\rep(\mcl{W}_p)$.  In this section we prove the following

\begin{proposition}\label{prop:dimX2}
The object $X^+_2$ is of intrinsic dimension $d(X^+_2)=-(q+q^{-1})$, and is non-reduced.
\end{proposition}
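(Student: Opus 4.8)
The plan is to compute both assertions directly from the structure of $\rep(\mcl{W}_p)$ recorded in Section \ref{sect:Wp}, using the twist and the fusion rules as the main inputs. First I would pin down the intrinsic dimension $d(X^+_2)$. Since $X^+_2$ is simple and self-dual, $d(X^+_2)$ is a genuine invariant of the object, and in a ribbon category the intrinsic dimension of a self-dual simple agrees (up to the pivotal normalization, which is trivial here for simples) with its categorical dimension, which in turn is computable from the ribbon twist and the balancing via the standard Vafa-type relations. Concretely, I would exploit the decomposition $X^+_2\ot X^+_2\cong X^+_1\oplus X^+_3$ together with the twist values from Lemma \ref{lem:wp_tw}: $\theta_{X^+_1}=1$, $\theta_{X^+_2}=-q^{3/2}$, $\theta_{X^+_3}=q^4$. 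The ribbon identity $c^2_{X,X}\circ(\theta_X^{-1}\ot\theta_X^{-1}) = \theta_{X\ot X}^{-1}$ expresses the square braiding on $X^+_2\ot X^+_2$ as a diagonalizable operator with eigenvalues $\theta_{X^+_2}^2/\theta_{X^+_1}$ and $\theta_{X^+_2}^2/\theta_{X^+_3}$ on the two summands; combining this with the cabling relation for the Hopf link / the Markov trace on the Temperley–Lieb-type generator $f_{X^+_2}=coev\circ ev$ gives a scalar equation whose solution forces $d(X^+_2)=-(q+q^{-1})$. (Alternatively, and perhaps more cleanly, one can cite that $X^+_2$ generates a Temperley–Lieb-type subcategory and read off $d$ from $\theta_{X^+_2}$ and the first nontrivial fusion coefficient, as in \cite{ostrik08}.)

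Granting $d(X^+_2)=-(q+q^{-1})$, the quantum order of $X^+_2$ is $\ord(q^2)=p$, so by Theorem \ref{thm:ostrik} / Corollary \ref{cor:809} there is a universal functor $F_{X^+_2}:\mcl{T}_q\to\rep(\mcl{W}_p)$ sending $\mbb{V}\mapsto X^+_2$, and it produces objects $W_s = F_{X^+_2}(V_s)$ for $s\leq p$ satisfying $X^+_2\ot W_r\cong W_{r-1}\oplus W_{r+1}$ for $r<p$. To identify $W_s$ with $X^+_s$ I would induct on $s$: $W_1=\1=X^+_1$, $W_2=X^+_2$, and the recursion $W_{s+1}\oplus W_{s-1}\cong X^+_2\ot W_s$ matched against the fusion rule $X^+_2\ot X^+_s\cong X^+_{s-1}\oplus X^+_{s+1}$ of Theorem \ref{thm:323} forces $W_{s+1}\cong X^+_{s+1}$ for $1<s<p$ (using that $X^+_{s-1}$ already equals $W_{s-1}$ by induction, and Krull–Schmidt). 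Non-reducedness is then exactly the statement $W_p=F_{X^+_2}(V_p)\neq 0$, i.e.\ $X^+_p\neq 0$ in $\rep(\mcl{W}_p)$, which is immediate: $X^+_p$ is one of the $2p$ simple modules listed in Section \ref{sect:irrep_Wp}, and in fact is projective and simple by Proposition \ref{prop:319}.

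The main obstacle is making the identification $W_s\cong X^+_s$ rigorous at the top of the chain and confirming the dimension computation does not secretly permit the inverted root $q\leftrightarrow q^{-1}$ — but since $d$ is symmetric under $\zeta\mapsto\zeta^{-1}$ this ambiguity is harmless for the statement as phrased ($-(q+q^{-1})=-(q^{-1}+q)$), and the only real content is (i) ruling out that $X^+_2$ has some other intrinsic dimension $-(\eta+\eta^{-1})$ with $\eta^2$ of order different from $p$, and (ii) checking the recursive identification $W_s\cong X^+_s$ terminates correctly, i.e.\ that $W_p$ really does match $X^+_p$ rather than degenerating. Point (i) is handled by the twist computation above together with $\ord(q)=2p$; point (ii) is where one must be slightly careful near $s=p-1$, using Theorem \ref{thm:323} and the fact (Corollary \ref{cor:wp_gen}) that $X^+_2$ generates, so that the chain $W_1,\dots,W_p$ cannot collapse prematurely without contradicting the known rank of the fusion ring. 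I would close by remarking that this simultaneously shows $F_{X^+_2}(V_s)=X^+_s$ for all $s\leq p$, a fact that will be reused in the construction of $\Theta$ in Section \ref{sect:triplet}.
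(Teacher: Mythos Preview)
Your argument for non-reducedness (the inductive identification $W_s\cong X^+_s$ via the fusion rules of Theorem~\ref{thm:323}, terminating in $W_p\cong X^+_p\neq 0$) is essentially the paper's Lemma~\ref{lem:X2_nr}, and is correct.

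The dimension computation, however, is where your proposal diverges from the paper and where there is a genuine gap. You want to extract $d(X^+_2)=-(q+q^{-1})$ \emph{first}, directly from the twist values via ``Vafa-type relations'' or a ``cabling/Markov trace'' argument, and then deduce the quantum order is $p$. But the twist eigenvalues only give you the eigenvalues of $c^2_{X^+_2,X^+_2}$ on the summands (this is the paper's Lemma~\ref{lem:braid2Wp}); to turn that into a constraint on $d$ you need the hexagon relation, i.e.\ you need to know that the braiding on $X^+_2$ is one of the four Temperley--Lieb braidings of Proposition~\ref{prop:braidingsTL2}. That classification lives in $\TL(d)$, and to transport it to $\rep(\mcl{W}_p)$ you need the universal map $\TL(d)\to\rep(\mcl{W}_p)$ to be faithful (at least on $\TL_2$ and $\TL_3$). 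The paper gets faithfulness from non-reducedness (via Ostrik's theorem and Proposition~\ref{prop:faithful}); in your order of argument this is circular. Your parenthetical alternative (``read off $d$ from $\theta_{X^+_2}$ as in \cite{ostrik08}'') does not exist as a standalone formula: twist and dimension are independent data until one has pinned down the Temperley--Lieb structure.

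The paper's route avoids this by reversing the order: first bound the quantum order by showing $d(X^+_p)=0$ from projectivity of $X^+_p$ (Lemma~\ref{lem:qordX_2}), so $\ord(\zeta^2)\mid p$; then run your inductive identification to get non-reducedness with $p'\leq p$ (Lemma~\ref{lem:X2_nr}); \emph{then} use the resulting faithfulness to make $\TL_2(d)\to\End_{\mcl{W}_p}(X^+_2\ot X^+_2)$ an isomorphism, pull back the braiding, and conclude $\zeta=q^{\pm 1}$ from Proposition~\ref{prop:braidingsTL2}. You also do not address $p=2$, where $X^+_2\ot X^+_2$ is indecomposable projective rather than $X^+_1\oplus X^+_3$, so your eigenvalue argument for $c^2$ does not apply; the paper handles this case separately.
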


Proposition \ref{prop:dimX2} implies, via Ostrik's theorem, the existence of a unique tensor functor
\[
F_{X^+_2}:\rep\SL(2)_q\to \rep(\mcl{W}_p)
\]
with $F_{X^+_2}(\mbb{V})=X^+_2$.  This functor is examined in more detail in Section \ref{sect:triplet}.
\par

We note that the dimension $d(X^+_2)$ can be calculated directly by following the analysis of Tsuchiya and Wood \cite[\S 4.2]{tsuchiyawood13}.  We will, however, determine the dimension via abstract arguments which simultaneously address the non-reduced property of $X^+_2$, and also provide information on the braiding for $\rep(\mcl{W}_p)$.  At the conclusion of the section we determine the braiding $c_{X^+_2,X^+_2}$, up to a sign.  We complete our analysis of the braiding in Section \ref{sect:conf_blocks}.

\subsection{Proof of Proposition \ref{prop:dimX2}}

Recall that at $p=2$ the product $X^+_2\ot X^+_2$ is the minimal projective cover of the unit $\1=X^+_1$ \cite[Theorem 37]{tsuchiyawood13}, and the evaluation map $X^+_2\ot X^+_2\to \1$ identifies $\1$ with the cosocle of this projective module.  In particular, evaluation is the unique nonzero map in $\Hom_{\mcl{W}_p}(X^+\ot X^+,\1)$, up to scaling.  This statement also holds at $p>2$, since we have a decomposition $X^+_2\ot X^+_2\cong \1\oplus X^+_3$.  Therefore at arbitrary $p$ there is a unique linear map
\[
{_\1|(-)}:\End_{\mcl{W}_p}(X^+_2\ot X^+_2)\to \End_{\mcl{W}_p}(\1)
\]
defined by composing functions with any nonzero morphism $X^+_2\ot X^+_2\to \1$.  In terms of evaluation for example, this maps is uniquely defined by the property $({_\1|f})\circ ev=ev\circ f$.

\begin{lemma}\label{lem:braid2Wp}
At $p>2$ the natural braiding on $\rep(\mcl{W}_p)$ is such that
\[
c^2_{X^+_2,X^+_2}=q^{-3}e_1+qe_3,
\]
and at all $p$ the corestriction of the braiding along evaluation is ${_\1|c^2_{X^+_2,X^+_2}}=q^{-3}$.
\end{lemma}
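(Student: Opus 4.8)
The plan is to read off both formulas directly from naturality of the ribbon twist $\theta=e^{2\pi i L(0)}$ on $\rep(\mcl{W}_p)$, using only the twist axiom $\theta_{X\ot Y}=(\theta_X\ot\theta_Y)c^2_{X,Y}$ and the scalars computed in Lemma~\ref{lem:wp_tw}. Since $X^+_2$ is simple, $\theta_{X^+_2}=-q^{3/2}\,id_{X^+_2}$ is a scalar, so $\theta_{X^+_2}\ot\theta_{X^+_2}=q^{3}\,id$ is invertible and the twist axiom rearranges to the single identity
\[
c^2_{X^+_2,X^+_2}=q^{-3}\,\theta_{X^+_2\ot X^+_2}.
\]
Thus the entire lemma is reduced to an analysis of the one endomorphism $\theta_{X^+_2\ot X^+_2}$.

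For $p>2$ I would invoke the decomposition $X^+_2\ot X^+_2\cong\1\oplus X^+_3$ recalled above. Because $\theta$ is a natural automorphism of the identity functor, $\theta_{X^+_2\ot X^+_2}$ is forced to be block-diagonal for this decomposition and to act as the scalars $\theta_\1=1$ and $\theta_{X^+_3}$ on the two summands; writing $e_1,e_3$ for the (unique, since each summand occurs with multiplicity one) orthogonal idempotents onto the $\1$- and $X^+_3$-parts, this reads $\theta_{X^+_2\ot X^+_2}=e_1+\theta_{X^+_3}\,e_3$. By Lemma~\ref{lem:wp_tw} one has $\theta_{X^+_3}=(-1)^{3-1}q^{(3^2-1)/2}=q^{4}$, and substituting into the displayed identity gives
\[
c^2_{X^+_2,X^+_2}=q^{-3}\bigl(e_1+q^{4}e_3\bigr)=q^{-3}e_1+q\,e_3 .
\]

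For the corestriction statement, which I want to establish for \emph{all} $p$ — in particular for $p=2$, where $X^+_2\ot X^+_2$ is indecomposable and $\theta_{X^+_2\ot X^+_2}$ need not be semisimple — I would instead apply naturality of $\theta$ along the evaluation $ev\colon X^+_2\ot X^+_2\to\1$. Since $\theta_\1=id_\1$, naturality gives $ev\circ\theta_{X^+_2\ot X^+_2}=\theta_\1\circ ev=ev$. Composing the identity $c^2_{X^+_2,X^+_2}=q^{-3}\theta_{X^+_2\ot X^+_2}$ with $ev$ on the left therefore yields $ev\circ c^2_{X^+_2,X^+_2}=q^{-3}\,ev$; as $ev$ spans $\Hom_{\mcl{W}_p}(X^+_2\ot X^+_2,\1)$, this says precisely that ${}_\1|c^2_{X^+_2,X^+_2}=q^{-3}$ by definition of the corestriction map ${}_\1|(-)$.

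I do not expect a genuine obstacle here: once Lemma~\ref{lem:wp_tw} and the fusion facts recalled in this section (the decomposition $X^+_2\ot X^+_2\cong\1\oplus X^+_3$ at $p>2$, and one-dimensionality of $\Hom_{\mcl{W}_p}(X^+_2\ot X^+_2,\1)$ at all $p$) are in hand, the computation is forced. The single point deserving care is that the argument must \emph{not} use any prior information about $d(X^+_2)$ — neither that it is nonzero nor its value — and indeed it does not: the idempotents $e_1,e_3$ are pinned down purely by the multiplicity-one decomposition, and $ev$ by the one-dimensionality of the relevant $\Hom$-space. This is exactly what allows the lemma to be fed back, via the classification of squared braidings on $\TL(d)$ in Proposition~\ref{prop:braidingsTL2}, into the determination of $d(X^+_2)$ itself.
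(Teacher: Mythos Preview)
Your proof is correct and follows essentially the same line as the paper: both use the twist axiom to write $c^2_{X^+_2,X^+_2}=q^{-3}\theta_{X^+_2\ot X^+_2}$, compute $\theta_{X^+_2\ot X^+_2}$ via naturality on the summands $\1\oplus X^+_3$ for $p>2$, and then use naturality along $ev$ for the corestriction at all $p$. Your presentation is slightly more expansive, but there is no substantive difference in strategy.
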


\begin{proof}
When $p>2$ we have, by naturality of the twist,
\[
\theta_{X^+_2\ot X^+_2}=\theta_{\1}e_1+\theta_{X^+_3}e_3=e_1+q^4e_3
\]
and $\theta_{X^+_2}^2=q^3$.  Hence
\[
c^2_{X^+_2,X^+_2}=\theta_{X^+_2\ot X^+_2}(\theta_{X^+_2}^{-1}\ot\theta_{X^+_2}^{-1})=q^{-3}e_1+qe_3.
\]
In general, naturality of the twist gives
\[
{_\1|\theta_{X^+_2\ot X^+_2}(\theta_{X^+_2}^{-1}\ot\theta_{X^+_2}^{-1})}=q^{-3}({_\1|\theta_{X^+_2\ot X^+_2}})=q^{-3}\theta_\1=q^{-3}.
\]
\end{proof}

\begin{lemma}\label{lem:qordX_2}
The category $\rep(\mcl{W}_p)$ is of finite quantum order at $X^+_2$ (see \S \ref{sect:thm}).  Furthermore, this order divides $p$.
\end{lemma}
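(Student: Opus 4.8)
One can argue this directly from the fusion rules and the sphericality of $\rep(\mcl{W}_p)$, without recourse to the Temperley--Lieb machinery. As $\rep(\mcl{W}_p)$ is ribbon it is spherical, so for every object $X$ the left and right categorical dimensions of Section \ref{sect:thm} coincide; write this common scalar as $\dim(X)\in\End(\1)=\mbb{C}$, and set $\delta_s:=\dim(X^+_s)$ for $1\le s\le p$, so $\delta_1=\dim(\1)=1$. Recall from Section \ref{sect:thm} that the self-dual object $X^+_2$ determines a scalar $\zeta\in\mbb{C}^\times$, unique up to inversion, with $d(X^+_2)=-(\zeta+\zeta^{-1})$; the Lemma asserts exactly that $\zeta^{2p}=1$.

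The first step is to record the recursion for the $\delta_s$. By Theorem \ref{thm:323}, $[X^+_2]\cdot[X^+_s]=[X^+_{s-1}]+[X^+_{s+1}]$ in the Grothendieck ring for $1<s<p$, and since $\dim$ is a ring homomorphism out of the Grothendieck ring this gives $\delta_2\,\delta_s=\delta_{s-1}+\delta_{s+1}$ for $1<s<p$. Together with $\delta_1=1$, this linear recursion determines $\delta_1,\dots,\delta_p$ in terms of $\delta_2$ alone: writing $\delta_2=\mu+\mu^{-1}$ with $\mu\in\mbb{C}^\times$ (unique up to inversion), the solution is the familiar $\delta_s=(\mu^s-\mu^{-s})/(\mu-\mu^{-1})$ for $1\le s\le p$, read as $\delta_s=\pm s$ when $\mu=\pm1$. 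The second step links $\delta_2$ to $\zeta$: since the two categorical dimensions of $X^+_2$ both equal $\delta_2$, the relation recalled in Section \ref{sect:thm} gives $\delta_2^2=d(X^+_2)^2=(\zeta+\zeta^{-1})^2$, so $\delta_2=\pm(\zeta+\zeta^{-1})$ and hence $\mu^2=\zeta^{\pm2}$. Consequently, once we know $\delta_p=0$, the case $\mu=\pm1$ is excluded (there $\delta_p=\pm p\ne0$) and we conclude $\mu^{2p}=1$, whence $\zeta^{2p}=1$.

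The crux -- and the only step that is not pure bookkeeping -- is therefore the vanishing $\delta_p=\dim(X^+_p)=0$. For this I would combine Proposition \ref{prop:319}, that $X^+_p$ is projective, with the standard fact that in a \emph{non-semisimple} finite tensor category every projective object has vanishing categorical dimension: were $P$ projective with $\dim(P)\ne0$, the composite $\1\overset{coev_P}\longrightarrow P\ot{}^{\ast}P\overset{ev_P}\longrightarrow\1$ would be a nonzero scalar, realizing $\1$ as a direct summand of the projective object $P\ot{}^{\ast}P$ (projectivity of $P\ot{}^{\ast}P$ by \cite[Proposition 2.1]{etingofostrik04}, as in the proof of Corollary \ref{cor:wp_gen}); then $\1$ would be projective, forcing semisimplicity, which is false -- e.g.\ because $\FPdim(\rep(\mcl{W}_p))=2p^3$ by Lemma \ref{lem:fpdim}, while a semisimple category with these simples would have Frobenius--Perron dimension $2\sum_{s=1}^{p}s^2$. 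Plugging $\delta_p=0$ into the computation above gives the Lemma.

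It remains to dispose of $p=2$, where the recursion is empty: one then uses $X^+_2\ot X^+_2\cong\msc{P}^+_1$, the projective cover of $\1$ \cite[Theorem 37]{tsuchiyawood13}, so that $\delta_2^2=\dim(\msc{P}^+_1)=0$ by the same negligibility principle, whence $(\zeta+\zeta^{-1})^2=0$ and $\zeta^4=1$. I expect essentially all the content to lie in the vanishing $\dim(X^+_p)=0$; the remainder is a short computation with the fusion rules. (This argument yields only $\ord(\zeta^2)\mid p$ and not equality; the sharpening $\ord(\zeta^2)=p$, i.e.\ $d(X^+_2)=-(q+q^{-1})$, is Proposition \ref{prop:dimX2} and rests on the more delicate analysis there.)
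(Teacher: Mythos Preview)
Your proof is correct and follows essentially the same arc as the paper's: compute the dimensions of the $X^+_s$ recursively from the fusion rules of Theorem~\ref{thm:323}, observe that $X^+_p$ must have dimension zero because it is projective (Proposition~\ref{prop:319}) while $\1$ is not, and conclude $\zeta^{2p}=1$. The paper argues this in three lines, asserting directly that $d(X^+_r)=\pm(\zeta^{r-1}+\zeta^{r-3}+\cdots+\zeta^{-r+1})$ ``by the fusion rule \dots\ and induction'' and then deriving $\zeta^{2p}=1$ from $d(X^+_p)=0$ via the factorization $\zeta^{2p}-1=(\zeta^2-1)\sum_{m=0}^{p-1}\zeta^{2m}$.

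The one genuine difference is that you route the computation through the \emph{categorical} dimension $\dim$, which is manifestly a ring homomorphism on $K(\rep(\mcl{W}_p))$, and only afterwards link it to the intrinsic dimension $d$ via the remark in \S\ref{sect:thm} that $d(W)^2$ equals the product of the left and right categorical dimensions (hence $d(X^+_2)^2=\delta_2^2$ by sphericality). This is arguably more careful: the intrinsic dimension of a non-simple self-dual object depends on the chosen duality data, so the additivity $d(X^+_{s-1}\oplus X^+_{s+1})=d(X^+_{s-1})+d(X^+_{s+1})$ that the paper's induction implicitly uses is not automatic without either passing to categorical dimension as you do, or fixing compatible self-dual structures via the Temperley--Lieb functor. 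Your detour through $\mu$ with $\mu^2=\zeta^{\pm 2}$ precisely accounts for the undetermined sign the paper records with its $\pm$. A minor point: your separate treatment of $p=2$ is unnecessary, since your general argument already gives $\delta_p=\dim(X^+_p)=0$ from projectivity of $X^+_p$, and at $p=2$ this reads $\delta_2=0$ directly; and invoking $\FPdim$ to certify non-semisimplicity is heavier than needed---it suffices to note, as the paper does, that $\1$ is not projective.
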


\begin{proof}
Take $\zeta\in \mbb{C}^\times$ so that $d(X^+_2)=-(\zeta+\zeta^{-1})$.  Then by the fusion rule for $\rep(\mcl{W}_p)$, and induction, we observe
\[
d(X^+_r)=\pm(\zeta^{r-1}+\zeta^{r-3}+\dots +\zeta^{-r+1})
\]
for all $1\leq r\leq p$.  But now, the simple $X^+_{p}$ is projective, and so must be of dimension $0$.  To make this point more clear, simply note that any composite $\1\to X^+_p\ot X^+_p\to \1$ must be $0$, since otherwise $\1$ would appear as a summand of the projective object $X^+_p\ot X^+_p$, and would therefore be projective.  But, we know this is not the case.
\par

The vanishing $d(X^+_p)=0$ implies
\[
0=\zeta^{p-1}d(X^+_p)=\sum_{m=0}^{p-1}(\zeta^2)^m\ \ \Rightarrow\ \ 0=(\zeta^2-1)(\sum_{m=0}^{p-1}(\zeta^2)^m)=\zeta^{2p}-1.
\]
Hence $\ord(\zeta^2)$, which is the quantum order of $X^+_2$ in $\rep(\mcl{W}_p)$, is finite and divides $p$.
\end{proof}

\begin{lemma}\label{lem:X2_nr}
The object $X^+_2$ is non-reduced in $\rep(\mcl{W}_p)$ (see \S \ref{sect:thm}).
\end{lemma}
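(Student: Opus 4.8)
The plan is to identify the intermediate objects $W_1 = \1, W_2 = X^+_2, W_3, \dots, W_n$ of \S\ref{sect:thm} associated to $X^+_2$ (where $n$ denotes the quantum order of $X^+_2$) with the simple modules $X^+_r$ of $\rep(\mcl{W}_p)$. Since $X^+_2$ is non-reduced precisely when $W_n \neq 0$, and $X^+_n$ is a nonzero simple $\mcl{W}_p$-module as soon as $n \leq p$, this identification will immediately yield the claim.

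First I would record the arithmetic constraints on $n$. By Lemma \ref{lem:qordX_2} the quantum order $n = \ord(\zeta^2)$, where $d(X^+_2) = -(\zeta + \zeta^{-1})$, is finite and divides $p$. Moreover $n \geq 2$: otherwise $\zeta^2 = 1$, which would give $d(X^+_p) = \pm p \neq 0$ and contradict the vanishing $d(X^+_p) = 0$ established in the proof of Lemma \ref{lem:qordX_2}. So $2 \leq n \leq p$. Since $X^+_2$ is self-dual of intrinsic dimension $-(\zeta+\zeta^{-1})$, Corollary \ref{cor:809} supplies the tensor functor $F_{X^+_2} : \mcl{T}_\zeta \to \rep(\mcl{W}_p)$, and \S\ref{sect:thm} produces the objects $W_r = F_{X^+_2}(V_r)$ together with genuine direct-sum decompositions $X^+_2 \ot W_r \cong W_{r-1} \oplus W_{r+1}$ for $1 \leq r < n$.

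The key step is an induction on $r$ showing $W_r \cong X^+_r$ for $1 \leq r \leq n$. The base cases $W_1 = \1 = X^+_1$ and $W_2 = X^+_2$ are immediate. For the inductive step, assume $W_{r-1} \cong X^+_{r-1}$ and $W_r \cong X^+_r$ for some $2 \leq r \leq n - 1$. The decomposition above then reads $X^+_2 \ot X^+_r \cong X^+_{r-1} \oplus W_{r+1}$, so passing to the Grothendieck ring $K(\rep(\mcl{W}_p))$ and invoking the fusion rule of Theorem \ref{thm:323} (applicable since $1 < r < p$, using $r \leq n-1 \leq p-1$) gives $[W_{r+1}] = [X^+_2]\cdot[X^+_r] - [X^+_{r-1}] = [X^+_{r+1}]$. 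As $X^+_{r+1}$ is simple (its index $r+1$ lies in $\{2,\dots,p\}$ since $n \leq p$), any module whose class in $K(\rep(\mcl{W}_p))$ equals $[X^+_{r+1}]$ must be isomorphic to $X^+_{r+1}$. This completes the induction, and in particular $W_n \cong X^+_n \neq 0$.

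I do not anticipate a real obstacle. The only points requiring care are keeping the indices inside the common range of validity of the Temperley--Lieb recursion ($r < n$) and the triplet fusion rule ($1 < r < p$), which is automatic because $n \leq p$, together with the degenerate case $p = 2$, where necessarily $n = 2$ and the statement collapses to the tautology $W_2 = X^+_2 \neq 0$. As a sanity check one can also argue contrapositively: if $X^+_2$ were reduced, Remark \ref{rem:739} would yield a tensor functor $\overline{F}_{X^+_2} : \msc{C}(\SL(2),\zeta) \to \rep(\mcl{W}_p)$ whose image contains the tensor generator $X^+_2$ (Corollary \ref{cor:wp_gen}) and is therefore surjective, forcing $\FPdim(\rep(\mcl{W}_p)) = 2p^3 \leq \FPdim(\msc{C}(\SL(2),\zeta)) = \frac{n}{2\sin^2(\pi/n)} \leq \frac{n^3}{8} \leq \frac{p^3}{8}$, which is absurd. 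The direct identification of the $W_r$ is cleaner, so I would present that argument.
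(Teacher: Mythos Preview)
Your argument is correct and is essentially the paper's own proof: both proceed by showing inductively that $F_{X^+_2}(V_r)\cong X^+_r$ for $r\leq p'$ (the quantum order), using the splitting $X^+_2\ot W_r\cong W_{r-1}\oplus W_{r+1}$ together with the fusion rules of Theorem~\ref{thm:323}, and then conclude $W_{p'}\cong X^+_{p'}\neq 0$. Your Frobenius--Perron sanity check is a pleasant alternative route not taken in the paper.
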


\begin{proof}
Take $\zeta$ such that $d=d(X^+_2)=-(\zeta+\zeta^{-1})$.  We already saw at Lemma \ref{lem:qordX_2} that $\ord(\zeta^2)\leq p$.  Take $p'=\ord(\zeta^2)$.  In the arguments that follow, the fact that $p'$ is less than or equal to $p$ is of significance.
\par

We have the universal map $F_{X_2^+}:\mcl{T}_\zeta\cong \mcl{TL}(d)\to \rep(\mcl{W}_p)$, $F(\mbb{V})=X^+_2$, and one sees by a recursive argument that $F_{X^+_2}(V_s)\cong X^+_s$ for all $s\leq p'$.  Namely, we have $F_{X^+_2}(\mbb{V})=F_{X^+_2}(V_2)=X^+_2$, and if we assume $F_{X^+_2}(V_r)\cong X^+_r$ for all $r<s$ then we have
\[
\begin{array}{rl}
X^+_{s-2}\oplus F_{X^+_2}(V_s)&\cong F_{X^+_2}(\mbb{V}\ot V_{s-1})\\
&\cong F_{X^+_2}(\mbb{V})\ot F_{X^+_2}(V_{s-1})\cong X^+_2\ot X^+_{s-1}\cong X^+_{s-2}\oplus X^+_s. 
\end{array}
\]
Uniqueness of Jordan-Holder series therefore forces $F_{X^+_2}(V_s)\cong X^+_s$, as proposed.  These identifications imply, in particular, that $F_{X^+_2}(V_{p'})$ is isomorphic to the (nonzero) simple $X^+_{p'}$.  So we see that $X^+_2$ is non-reduced.
\end{proof}

We can now prove the claimed result.

\begin{proof}[Proof of Proposition \ref{prop:dimX2}]
We have already seen that $X^+_2$ is non-reduced, in Lemma \ref{lem:X2_nr}.  So we only have to determine the dimension.  When $p=2$ the object $X_2^+\ot X^+_2$ is projective, and thus the intrinsic dimension of $X^+_2$ is $0=-(i-i^{-1})$.  Suppose now $p>2$.
\par

Fix $\zeta$ so that $d=d(X^+_2)=-(\zeta+\zeta^{-1})$.  We want to show that $\zeta=q^{\pm 1}$.  We have the universal map $F_{X^+_2}:\TL(d)\to \rep(\mcl{W}_p)$, $F([1])=X^+_2$.  Since $X^+_2$ is non-reduced, the map $F_{X^+_2}$ extends to a tensor functor from $\rep\SL(2)_q$ and is therefore faithful \cite[Proposition 1.19]{delignemilne82}.  Faithfulness, and the fact that
\[
\dim_\mbb{C}\End_{\mcl{W}_p}(X^+_2\ot X^+_2)=\dim_\mbb{C}\End_{\mcl{W}_p}(X^+_1\oplus X^+_3)=2,
\]
implies that the map on morphisms $F_{X^+_2}:\TL_2(d)\to\End_{\mcl{W}_p}(X^+_2\ot X^+_2)$ is an isomorphism.
\par

Now, by Lemma \ref{lem:660} and Proposition \ref{prop:braidingsTL}, the preimage $c\in \TL_2(d)$ of the element $c_{X^+_2,X^+_2}\in \End_{\mcl{W}_p}(X^+_2\ot X^+_2)$ specifies a unique braiding on $\TL(d)$ under which the functor $F_{X^+_2}$ is braided monoidal.  By the calculation of Lemma \ref{lem:braid2Wp}, this braiding satisfies
\[
c^2_{[1],[1]}=q^{-3}e_1+qe_3.
\]
By the constraints placed on braidings on $\TL(d)$ given in Proposition \ref{prop:braidingsTL2}, we see that $\zeta=q^{\pm 1}$ and $d=d(X^+_2)=-(q+q^{-1})$.
\end{proof}

\subsection{The braiding on $\rep(\mcl{W}_p)$, up to a sign}

In keeping with our general convention, we let $f_{X^+_2}:X^+_2\ot X^+_2\to X^+_2\ot X^+_2$ denote the composite of the evaluation and coevaluation maps
\[
X^+_2\ot X^+_2\to \1\to X^+_2\ot X^+_2.
\]
Since $X^+_2$ is simple, the endomorphism $f_{X^+_2}$ is independent of the specific choice of evaluation and coevaluation.

\begin{lemma}\label{lem:pre_braidWp}
The braiding on $\rep(\mcl{W}_p)$ is specified, up to a sign, as
\begin{equation}\label{eq:864}
c_{X^+_2,X^+_2}=\pm(q^{-1/2}f_{X_2^+}+q^{1/2})
\end{equation}
\end{lemma}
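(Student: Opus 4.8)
The plan is to read the braiding $c_{X^+_2,X^+_2}$ off the faithful tensor functor $F_{X^+_2}\colon\rep\SL(2)_q\to\rep(\mcl{W}_p)$ produced in the proof of Proposition~\ref{prop:dimX2}, using the classification of braidings on the Temperley--Lieb category from Section~\ref{sect:calc}. Set $d=d(X^+_2)=-(q+q^{-1})$. The functor $F_{X^+_2}$ restricts to the universal monoidal functor $\TL(d)\to\rep(\mcl{W}_p)$, it carries $coev_{[1]}$ and $ev_{[1]}$ to the structure maps of $X^+_2$, and so it sends the basis element $f=coev_{[1]}\circ ev_{[1]}\in\TL_2(d)$ to $f_{X^+_2}$. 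Since $F_{X^+_2}$ is faithful (Proposition~\ref{prop:faithful}) and $\dim_\mbb{C}\End_{\mcl{W}_p}(X^+_2\ot X^+_2)=2=\dim_\mbb{C}\TL_2(d)$, the induced map $\TL_2(d)\to\End_{\mcl{W}_p}(X^+_2\ot X^+_2)$ is an isomorphism; let $c\in\TL_2(d)$ be the preimage of $c_{X^+_2,X^+_2}$. As already recorded in the proof of Proposition~\ref{prop:dimX2}, $c$ solves equation~\eqref{eq:786}, so by Lemma~\ref{lem:660} and Proposition~\ref{prop:braidingsTL} it is one of the four elements $\pm(q^{1/2}f+q^{-1/2})$ and $\pm(q^{-1/2}f+q^{1/2})$ (here using $d=-(q+q^{-1})$). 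Applying $F_{X^+_2}$ together with $F_{X^+_2}(f)=f_{X^+_2}$, it remains only to exclude $c_{X^+_2,X^+_2}=\pm(q^{1/2}f_{X^+_2}+q^{-1/2})$.

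The second step is to square and compare with Lemma~\ref{lem:braid2Wp}. For $p>2$ one has $d\neq0$, so the idempotents $e_1,e_3$ are available, and the identity $\zeta^{1/2}f+\zeta^{-1/2}=-\zeta^{3/2}e_1+\zeta^{-1/2}e_3$ gives $(q^{1/2}f_{X^+_2}+q^{-1/2})^2=q^3e_1+q^{-1}e_3$ and $(q^{-1/2}f_{X^+_2}+q^{1/2})^2=q^{-3}e_1+qe_3$. By Lemma~\ref{lem:braid2Wp} we have $c^2_{X^+_2,X^+_2}=q^{-3}e_1+qe_3$, and since $q\neq q^{-1}$ for every $p>1$ the two candidate squares have distinct $e_3$-coefficients; hence $c_{X^+_2,X^+_2}=\pm(q^{-1/2}f_{X^+_2}+q^{1/2})$. (Equivalently, this refines the identification $\zeta=q^{\pm1}$ obtained in the proof of Proposition~\ref{prop:dimX2} to $\zeta=q^{-1}$, via Proposition~\ref{prop:braidingsTL2}.) The remaining case is $p=2$, where $d=-(q+q^{-1})=0$ and the idempotent description degenerates; there I would instead use the corestriction formula ${_\1|c^2_{X^+_2,X^+_2}}=q^{-3}$ of Lemma~\ref{lem:braid2Wp}. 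From $f_{X^+_2}^2=d(X^+_2)f_{X^+_2}=0$ and $ev_{X^+_2}\circ coev_{X^+_2}=d(X^+_2)=0$ one computes ${_\1|(q^{1/2}f_{X^+_2}+q^{-1/2})^2}=q^{-1}$ and ${_\1|(q^{-1/2}f_{X^+_2}+q^{1/2})^2}=q$; since $q^{-3}=q\neq q^{-1}$ when $p=2$, again only $c_{X^+_2,X^+_2}=\pm(q^{-1/2}f_{X^+_2}+q^{1/2})$ is consistent with Lemma~\ref{lem:braid2Wp}.

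I expect the main difficulty here to be organizational rather than conceptual: the substantive input is already in place, in Proposition~\ref{prop:dimX2} (the faithful functor $F_{X^+_2}$ and the isomorphism on $\End_{\mcl{W}_p}(X^+_2\ot X^+_2)$) and in Lemma~\ref{lem:braid2Wp} (the value of $c^2_{X^+_2,X^+_2}$), so the remaining work amounts to keeping the square-root conventions of Section~\ref{sect:calc} straight and to handling the degenerate parameter $p=2$, where $d(X^+_2)=0$ forces one to argue with the square-zero endomorphism $f_{X^+_2}$ and the corestriction ${_\1|(-)}$ in place of the idempotents $e_1,e_3$. Finally, I would note that the sign ambiguity in \eqref{eq:864} genuinely cannot be removed by these arguments, since $c^2_{X^+_2,X^+_2}$ does not see it; pinning down the sign is exactly the computation carried out in Section~\ref{sect:conf_blocks} using conformal-block (analytic) input.
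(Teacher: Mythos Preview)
Your proof is correct and follows essentially the same strategy as the paper: use faithfulness of $F_{X^+_2}$ to pull $c_{X^+_2,X^+_2}$ back to one of the four Temperley--Lieb braidings of Proposition~\ref{prop:braidingsTL}, then invoke Lemma~\ref{lem:braid2Wp} to cut the four candidates down to the sign-ambiguous pair~\eqref{eq:864}. The only organizational difference is that the paper phrases the discriminating step uniformly via the corestriction ${_\1|c^2_{X^+_2,X^+_2}}=q^{-3}$, whereas you compare $e_3$-coefficients for $p>2$ and use the corestriction only at $p=2$; your version is arguably more transparent, since the $e_3$-coefficients $q$ versus $q^{-1}$ are distinct for every $p>1$. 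One small point: your assertion $\dim_\mbb{C}\End_{\mcl{W}_p}(X^+_2\ot X^+_2)=2$ at $p=2$ is true (it is $\dim\End_{\mcl{W}_p}(\msc{P}^+_1)=[\msc{P}^+_1:X^+_1]=2$) but was only recorded in the proof of Proposition~\ref{prop:dimX2} for $p>2$, so strictly speaking it deserves a one-line justification.
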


\begin{proof}
At $p>2$ the endomorphism \eqref{eq:864} can be rewritten as $\pm (-q^{-3/2}e_1+q^{1/2}e_3)$ and squares to the appropriate formula according to Lemma \ref{lem:braid2Wp}.  At $p=2$ the square of this endomorphism is $2f+q=2f+q^{-3}$, since $q^4=1$ and $f^2=0$, where we write $f=f_{X_2^+}$.  (At $p=2$ the square of $f$ is zero since $\1$ is not projective, and so does not appear as a summand of the projective object $X^+_2\ot X^+_2$.)
\par

As explained in the proof of Proposition \ref{prop:dimX2}, there is a unique braiding on $\TL(d)$ under which the (faithful) monoidal functor $F_{X^+_2}:\TL(d)\to \rep(\mcl{W}_p)$ is braided monoidal.  By the constraints of Propositions \ref{prop:braidingsTL}, and the fact that ${_\1|c^2_{X^+_2,X^+_2}}=q^{-3}$ by Lemma \ref{lem:braid2Wp}, we see that the only possible values for $c_{X^+_2,X^+_2}$ are those of the form \eqref{eq:864}.
\end{proof}

At general $p$, these two possibilities for the braiding can be distinguished by their compositions along any nonzero projection $X^+_2\ot X^+_2\to \1$.

\begin{corollary}\label{cor:1143}
We have ${_\1|c_{X^+_2,X^+_2}}=\pm(-q^{-3/2})$, with sign $\pm$ corresponding precisely to the sign at \eqref{eq:864}.
\end{corollary}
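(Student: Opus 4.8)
The plan is to read the identity off directly from Lemma \ref{lem:pre_braidWp} by applying the corestriction functional ${_\1|(-)}\colon \End_{\mcl{W}_p}(X^+_2\ot X^+_2)\to \End_{\mcl{W}_p}(\1)=\mbb{C}$ to the formula \eqref{eq:864}. First I would note that ${_\1|(-)}$, being defined through a fixed nonzero evaluation $ev\colon X^+_2\ot X^+_2\to\1$ via the rule $({_\1|g})\circ ev = ev\circ g$, is $\mbb{C}$-linear in $g$ and sends $id$ to $id_\1$. Applying it to $c_{X^+_2,X^+_2}=\pm(q^{-1/2}f_{X^+_2}+q^{1/2}\,id)$ therefore yields
\[
{_\1|c_{X^+_2,X^+_2}}=\pm\bigl(q^{-1/2}({_\1|f_{X^+_2}})+q^{1/2}\bigr),
\]
with the overall sign inherited verbatim from \eqref{eq:864}; this already establishes the asserted correspondence of signs, so only the scalar value remains.

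The one remaining input is ${_\1|f_{X^+_2}}$. Since $f_{X^+_2}=coev\circ ev$ by definition, one computes $ev\circ f_{X^+_2}=(ev\circ coev)\circ ev=d(X^+_2)\cdot ev$, whence ${_\1|f_{X^+_2}}=d(X^+_2)$, which equals $-(q+q^{-1})$ by Proposition \ref{prop:dimX2} (this holds uniformly in $p$, including $p=2$ where $q=i$ and the quantity vanishes). Substituting gives ${_\1|c_{X^+_2,X^+_2}}=\pm\bigl(-q^{-1/2}(q+q^{-1})+q^{1/2}\bigr)=\pm(-q^{-3/2})$, the two $q^{1/2}$ contributions cancelling. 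I do not anticipate any genuine obstacle: the whole argument is a short linear computation, and the only points requiring a moment's care are the cancellation producing the clean exponent $-3/2$ and the observation that the $f_{X^+_2}$-summand contributes precisely the intrinsic dimension $d(X^+_2)$ and nothing more.
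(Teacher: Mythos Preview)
Your proof is correct and follows the same approach as the paper: you apply $ev$ to the formula \eqref{eq:864}, use $ev\circ f_{X^+_2}=d(X^+_2)\cdot ev$, and substitute $d(X^+_2)=-(q+q^{-1})$ to obtain $-q^{-3/2}$ after cancellation. The paper's one-line computation is exactly this.
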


\begin{proof}
One calculates directly
\[
ev\circ (q^{-1/2}f_{X_2^+}+q^{1/2})=(q^{-1/2}d+q^{1/2})ev=-q^{-3/2}ev.
\]
\end{proof}

\section{The braiding for $\rep(\mcl{W}_p)$}
\label{sect:conf_blocks}

We calculate the braiding on $\rep(\mcl{W}_p)$ explicitly via its vertex tensor structure.

\subsection{Categorical data for strongly-finite VOAs}

Let $\mcl{V}$ be a strongly-finite VOA, and $W\in\rep(\mcl{V})$ be a module (recall the discussion in Section \ref{sect:wpcat}).
Then as in Section \ref{sect:voacat} we obtain the decompositions $\mcl{V}=\coprod_{n=0}^\infty\mcl{V}_{(n)}$ and $W=\coprod_rW_{(r)}$ by $L(0)$.
 For each $v\in \mcl{V}_{(n)}$  we have the vertex operator $Y_W(v,z)=\sum_{m\in\mbb{Z}}v_{m}z^{-m-1}$, where the linear map $v_m$ sends $W_{(r)}$ to $W_{(r+n-m-1)}$.

To understand the tensor product of $\mcl{V}$-modules, we need a generalization of vertex operators called (logarithmic) intertwining operators $\mcl{Y}$. If $W^1,W^2,W^3$ are simple, then an intertwining operator of type $\left({W^3\atop W^1W^2}\right)$ has shape
$\mcl{Y}(w^1,z)=z^{h(W^3)-h(W^1)-h(W^2)}\sum_{m\in\mbb{Z}}w^1_mz^{-m-1}$ for any $w^1\in W^1$, where $w^1_m$ sends $W^2_{(s)}$ to $W^3_{(r-h(W^1)+s-h(W^2)-m-1+h(W^3))}$ when $w^1\in W^1_{(r)}$. More generally, when $W^i$ are merely indecomposable,
each term in the sum will include polynomials in $\log(z)$. More precisely, choosing $w^1\in W^1_{(r)}$ and $w^2\in W^2_{(s)}$, let $k_i$ be the smallest values such that $(L(0)-r)^{k_1}w^1=0=(L(0)-s)^{k_2}w^2$, then the $m$th term in the sum is $z^{h(W^3)-h(W^1)-h(W^2)-m-1}$  times a polynomial in $\log(z)$ of degree  $k_1+k_2+k_3-3$  with coefficients in  $W^3_{(t)}$, where $t=h(W^3)+r-h(W^1)+s-h(W^2)-m-1$ and  $k_3$ is the smallest power such that $(L(0)-t)^{k_3}w^3=0$ for all $w^3\in W^3_{(t)}$.
\par

We denote by $\mcl{V}\left({W^3\atop W^1W^2}\right)$ the space of intertwining operators of that type. Its dimension equals that of Hom$_{\mcl{V}}(W^1\otimes W^2,W^3)$ (\cite[Proposition 4.17]{huanglepowskyzhang10}). Thus when $W^3$ is simple and $W^1\otimes W^2$ is semisimple, 
dim$\,\mcl{V}\left({W^3\atop W^1W^2}\right)$ will equal the multiplicity of $W^3$ in $W^1\ot W^2$. 
\par

The tensor unit is $\mcl{V}$. There is a unique nondegenerate invariant bilinear form on $\mcl{V}$ up to scaling; let $(u,v)$ denote the one normalised so that $(\mathbf{1},\mathbf{1})=1$, where $\mcl{V}_0=\mbb{C}\mathbf{1}$. 
We will need an explicit intertwining operator $\mcl{Y}$ of type $\left({\mcl{V}\atop W\!\ W^*}\right)$ (for $W$ simple, this space is always 1-dimensional). It is defined by
\begin{equation}\label{eq:wwv}( v,\mcl{Y}_{W,W*}^{\mcl{V}}(w,z)w')=\langle  z^{-2\mathrm{wt}\,w}e^{L(-1)/z}Y_W(v,-z^{-1})e^{-zL(1)}w,w'\rangle\end{equation}
for any $v\in\mcl{V}$, $w\in W$ and $w'\in W^*$. The Virasoro operator $L(1)$ lowers weights by 1 (so automatically kills lowest weight vectors), while $L(-1)$ raises them by 1.
\par

For VOAs, the  braiding isomorphism $c_{W^1,W^2}:W^1\otimes W^2\to W^2\otimes W^1$ is skew-symmetry. More precisely, suppose $\mcl{V}$ is strongly-finite and $W^1,W^2$ are simple. When  $W^1\otimes W^2$ is semisimple (i.e. isomorphic to a sum of simples), we can identify
the $\mcl{V}$-module $W^1\otimes W^2$ with $\oplus_{W^3}\mcl{V}\left({W^3\atop W^1W^2}\right)\otimes_{\mbb{C}} W^3$, where the (finite) sum is over inequivalent simple $\mcl{V}$-modules $W^3$. In this case we can interpret $c_{W^1,W^2}$ as a linear map between spaces of intertwining operators  $\mcl{V}\left({W^3\atop W^1W^2}\right)\to \mcl{V}\left({W^3\atop W^2W^1}\right)$:
\begin{equation}\label{eq:voabraiding}c_{W^1,W^2}(\mcl{Y})(w^2,z)w^1=e^{zL(-1)}\mcl{Y}(w^1,e^{\pi i} z)w^2\end{equation}
for any $\mcl{Y}\in\mcl{V}\left({W^3\atop W^1W^2}\right)$, where we're using the skew-symmetry operator $\Omega_r$ defined in \cite[equation (7.1)]{huanglepowskyii}
(for us, $r=0$). The notation $e^{\pi i}$ here indicates the appropriate choice of branch of logarithm, needed to evaluate fractional powers. 
We also use \eqref{eq:voabraiding} when $W^1\otimes W^2$ is not semisimple, though in this case the relation of the intertwining operator to the tensor product can be slightly more subtle.

\subsection{The sign of the braiding}

In Proposition \ref{prop:dimX2} we determined the dimension $d(X^+_2)$ of $X^+_2$ by ``abstract nonsense".  In Lemma \ref{lem:pre_braidWp} we similarly determined the braiding on $\rep(\mcl{W}_p)$ up to a sign, again by ``nonsense".  Here we specify the braiding on the category of triplet modules precisely, by dealing directly with the vertex tensor structure on $\rep(\mcl{W}_p)$.

\begin{proposition}\label{prop:c_XX}
The braiding on $\rep(\mcl{W}_p)$ satisfies $c_{X^+_2,X^+_2}=q^{-1/2}f_{X_2^+}+q^{1/2}$.
\end{proposition}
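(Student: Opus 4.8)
The plan is to pin down the sign in \eqref{eq:864} by evaluating the corestricted braiding ${_\1|c_{X^+_2,X^+_2}}$ directly on the vertex tensor structure, and then invoking Corollary \ref{cor:1143}. Since $X^+_2$ is simple and self-dual and $ev:X^+_2\ot X^+_2\to\1$ spans $\Hom_{\mcl{W}_p}(X^+_2\ot X^+_2,\1)$, the dictionary between $\Hom$-spaces and intertwining operators \cite[Proposition 4.17]{huanglepowskyzhang10} identifies $ev$ with a nonzero element $\mcl{Y}$ of the one-dimensional space $\mcl{W}_p\!\left({\1\atop X^+_2\,X^+_2}\right)$, where we transport along a fixed isomorphism $X^+_2\cong(X^+_2)^\ast$ so that $\mcl{Y}$ genuinely has this type. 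Under this identification, precomposition of $ev$ by $c_{X^+_2,X^+_2}$ corresponds, via \eqref{eq:voabraiding}, to the endomorphism $\mcl{Y}\mapsto c(\mcl{Y})$ of that one-dimensional space, and the scalar by which it acts is exactly ${_\1|c_{X^+_2,X^+_2}}$. So it suffices to compute this scalar and check that it equals $-q^{-3/2}$.

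First I would fix a lowest-weight vector $v$ spanning the one-dimensional space $(X^+_2)_{(h^+_2)}$, with $h^+_2=\frac{3}{4p}-\frac12$. Using the explicit operator \eqref{eq:wwv} evaluated at the vacuum, together with nondegeneracy of the form on $\mcl{W}_p$, the weight-$0$ component of $\mcl{Y}(v,z)v$ is $(v_{-1}v)\,z^{-2h^+_2}$, where $v_{-1}v$ is a nonzero multiple of $\1$ and there are no logarithmic corrections (all three modules being simple). By \eqref{eq:voabraiding}, $c(\mcl{Y})(v,z)v=e^{zL(-1)}\mcl{Y}(v,e^{\pi i}z)v$; the weight-$0$ component of $\mcl{Y}(v,e^{\pi i}z)v$ is $(v_{-1}v)(e^{\pi i}z)^{-2h^+_2}$, and since $L(-1)$ strictly raises conformal weight the factor $e^{zL(-1)}$ leaves this component unchanged. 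As $c(\mcl{Y})$ and $\mcl{Y}$ span the same one-dimensional space, comparison of weight-$0$ components yields $c(\mcl{Y})=e^{-2\pi i h^+_2}\mcl{Y}$, and
\[
e^{-2\pi i h^+_2}=e^{-2\pi i(3/(4p)-1/2)}=-e^{-3\pi i/(2p)}=-q^{-3/2}.
\]
Hence ${_\1|c_{X^+_2,X^+_2}}=-q^{-3/2}$, and Corollary \ref{cor:1143} forces the sign in \eqref{eq:864} to be $+$, i.e.\ $c_{X^+_2,X^+_2}=q^{-1/2}f_{X_2^+}+q^{1/2}$.

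I expect the main obstacle to be bookkeeping rather than anything conceptual: carefully matching the categorical corestriction ${_\1|(-)}$ with the scalar action on $\mcl{W}_p\!\left({\1\atop X^+_2\,X^+_2}\right)$ (in particular handling the identification $X^+_2\cong(X^+_2)^\ast$ so that the operator in play really is of this type), tracking the branch-of-logarithm conventions built into \eqref{eq:wwv} and \eqref{eq:voabraiding}, and justifying that neither the $e^{zL(-1)}$ prefactor nor the higher-weight tail of $\mcl{Y}(v,z)v$ interferes with the leading-order comparison. As a sanity check, the value $-q^{-3/2}$ we obtain agrees with $\theta_{X^+_2}^{-1}$ from Lemma \ref{lem:wp_tw}, consistent with $F_{X^+_2}$ intertwining the braiding on $\rep(\mcl{W}_p)$ with the reverse of the standard braiding on $\rep\SL(2)_q$ under $F_{X^+_2}(\mbb{V})=X^+_2$.
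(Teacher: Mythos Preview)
Your proposal is correct and follows essentially the same approach as the paper: reduce to computing ${_\1|c_{X^+_2,X^+_2}}$ via Corollary \ref{cor:1143}, apply the skew-symmetry formula \eqref{eq:voabraiding} to the unique (up to scale) intertwining operator of type $\left({\mcl{W}_p\atop X^+_2\,X^+_2}\right)$ evaluated at a lowest-weight vector in both slots, and read off the scalar $e^{\pi i(-2h^+_2)}=-q^{-3/2}$ from the leading term. The paper's write-up is slightly terser (it packages the leading term as $z^{1-3/(2p)}(x\mathbf{1}+z\mcl{W}_p[[z]])$ and notes $x\ne 0$ from \eqref{eq:wwv}), but the computation and the logic are the same.
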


\begin{proof}
By Corollary \ref{cor:1143}, in order to specify uniquely the braiding of rep($\mcl{W}_p$)
 it suffices to apply  \eqref{eq:voabraiding} to the intertwining operator $\mcl{Y}\in\mcl{W}_p\left({\mcl{W}_p\atop X_2^+X_2^+}\right)$ defined in  \eqref{eq:wwv}.   For all $p$, this $\mcl{Y}$ corresponds to the (unique up to scaling) homomorphism $X_2^+\ot X_2^+\to \mcl{W}_p$.

Take $w^1=w^2=w$, a nonzero vector in the lowest weight space of $X_2^+$, which is 1-dimensional. Note that
$\mcl{Y}(w,z)w\in z^{1-3/2p}(x\mathbf{1}+z\mcl{W}_p[[z]])$ for some $x\in\mbb{C}$.  It is clear from \eqref{eq:wwv} that $x\ne 0$. We find 
 $$c_{X_2^+,X_2^+}(\mcl{Y})(w,z)w\in e^{zL(-1)}e^{\pi i(1-3/2p)}z^{1-3/2p}(x\mathbf{1}+ z\mcl{W}_p[[z]])$$ $$=-e^{-3\pi i/2p}z^{1-3/2p}(x\mathbf{1}+z\mcl{W}_p[[z]])$$   
 This must be a multiple of $\mcl{Y}$ (since dim$\,\mcl{W}_p\left({\mcl{V}\atop X_2^+X_2^+}\right)=1$), and because $x\ne 0$, that multiple is thus $-q^{-3/2}$.
 \end{proof}

For $p=2$ the product $X_2^+\ot X_2^+$ is not semisimple, but rather is the minimal projective cover $P_1^+$ of the unit. This plays no role in the proof, however. When $p=2$, for what it's worth, $\mcl{Y}$ is the part of the full logarithmic intertwining operator of type $\left({P_1^+\atop X_2^+X_2^+}\right)$ coming with the highest powers of $\log(z)$.

For $p>2$ we can determine the rest of the braiding in a similar way.  Take any nonzero $\mcl{Y}\in\mcl{W}_p\left({X_3^+\atop X_2^+X_2^+}\right)$. Then $\mcl{Y}(w,z)w\in z^{1/2p}(w_3+zX_3^+[[z]])$, so
 $$c_{X_2^+,X_2^+}(\mcl{Y})(w,z)w\in e^{zL(-1)}e^{\pi i/2p}z^{1/2p}(w_3+ zX_3^+[[z]])=e^{\pi i/2p}z^{1/2p}(w_3+zX_3^+[[z]])$$   
for some $w_3\in X_3^+$. This must be a multiple of $\mcl{Y}$, and provided $w_3\ne 0$, that multiple is thus $q^{1/2}$, as desired. It is elementary to show that $w_3\ne 0$ using the hypergeometric function calculations in \cite{tsuchiyawood13,creutzigmcraeyang}.  But we will skip the details as  we already knew $q^{1/2}$ is correct from Lemma \ref{lem:pre_braidWp} together with the calculation in the preceding proof.

We note that, by naturality of the braiding and the braid relation, the braiding on the category $\rep(\mcl{W}_p)$ is determined uniquely by its value $c_{X^+_2,X^+_2}$ on the generator, as implied by Lemma \ref{lem:pre_braidWp}.

\section{A modular tensor equivalence $\rep(u_q(\mfk{sl}_2))\cong \rep(\mcl{W}_p)$}
\label{sect:triplet}

At Theorem \ref{thm:triplet} below, we prove that there is an equivalence of (non-semisimple) modular tensor categories $\rep(u_q(\mfk{sl}_2))^{\rm rev}\cong \rep(\mcl{W}_p)$, at quantum parameter $q=e^{\pi i/p}$.  Our proof relies on a number of technical points.  First, we identify tensor functors out of the de-equivariantization $(\rep\SL(2)_q)_{\PSL(2)}$ with a certain class of tensor functors out of $\rep\SL(2)_q$.  We then prove a braided version of Ostrik's theorem, which is given in Theorem \ref{thm:braidedostrik} below.  These results, in conjunction with the analysis of $\rep(\mcl{W}_p)$ provided in Sections \ref{sect:X2} and \ref{sect:conf_blocks}, will imply the claimed equivalence.

\subsection{De-equivariantization as categorical base change}

Suppose $\zeta\in \mbb{C}^\times$ is of even order $2p$, for the sake of specificity.  We say a functor $\operatorname{M}:\rep\SL(2)_\zeta\to \msc{A}$ \emph{annihilates} the central subcategory $\rep\PSL(2)\subset \rep\SL(2)_\zeta$ if the image of $\rep\PSL(2)$ in $\msc{A}$ is precisely the subcategory $Vect\subset \msc{A}$ generated by the unit, and for any $W$ in $\rep\PSL(2)$ and $V$ in $\rep\SL(2)_\zeta$ the diagram
\begin{equation}\label{eq:829}
\xymatrix{
\operatorname{M}(W\ot V)\ar[rr]^{\operatorname{M}(c_{V,W})}\ar[d]_\cong & & \operatorname{M}(V\ot W)\ar[d]^{\cong}\\
\operatorname{M}(W)\ot \operatorname{M}(V)\ar[rr]^{\tau_{\operatorname{M}(W),\operatorname{M}(V)}} & & \operatorname{M}(V)\ot \operatorname{M}(W)
}
\end{equation}
commutes.  Here the vertical maps are the structure maps for $\operatorname{M}$, and $\tau_{\operatorname{M}(W),\operatorname{M}(V)}$ is the half-braiding provided by the trivial lift $Vect\to Z(\msc{A})$ of the unit $Vect\to \msc{A}$.  (Rather, $\tau$ is the half-braiding provided by the unit structure $-\ot\1\cong id\cong \1\ot-$.)  A very practical way to observe the above diagram \eqref{eq:829} is the following.

\begin{lemma}
Given a braided tensor functor $\operatorname{M}:\rep\SL(2)_\zeta\to \msc{A}$, into a braided tensor category $\msc{A}$, $\operatorname{M}$ annihilates $\rep\PSL(2)$ if and only if $\operatorname{M}(\rep\PSL(2))\subset Vect$.
\end{lemma}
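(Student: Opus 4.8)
The plan is to split into the two implications. The ``only if'' direction is immediate from the definition of annihilation: it explicitly requires that the image of $\rep\PSL(2)$ in $\msc{A}$ be precisely $Vect$, hence in particular $\operatorname{M}(\rep\PSL(2))\subset Vect$. So all of the content lies in the ``if'' direction, which I would argue as follows.

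Suppose $\operatorname{M}$ is a braided tensor functor with $\operatorname{M}(\rep\PSL(2))\subset Vect$. Since $\operatorname{M}$ is a tensor functor, $\operatorname{M}(\1)\cong\1$, and since $\operatorname{M}$ is additive the essential image of $\operatorname{M}$ restricted to $\rep\PSL(2)$ is closed under finite direct sums; hence this essential image both contains and is contained in the subcategory $Vect$ generated by $\1$, so it equals $Vect$. This establishes the first requirement in the definition of annihilation, and it remains to verify that the diagram \eqref{eq:829} commutes for every $W$ in $\rep\PSL(2)$ and $V$ in $\rep\SL(2)_\zeta$. The key observation is that, because $\operatorname{M}$ is a \emph{braided} tensor functor, the vertical structure isomorphisms of \eqref{eq:829} identify its top horizontal map with the braiding $c^{\msc{A}}_{\operatorname{M}(W),\operatorname{M}(V)}$ of $\msc{A}$; so commutativity of \eqref{eq:829} is equivalent to the single identity $c^{\msc{A}}_{\operatorname{M}(W),\operatorname{M}(V)}=\tau_{\operatorname{M}(W),\operatorname{M}(V)}$.

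To finish, I would appeal to the general fact that the braiding of $\msc{A}$ restricts to the trivial half-braiding on $Vect$. This is essentially an axiom: $c^{\msc{A}}_{\1,-}$ and $c^{\msc{A}}_{-,\1}$ are the unit constraints, so the inclusion $Vect\hookrightarrow\msc{A}$ lifts to a central functor $Vect\to Z(\msc{A})$ via the ambient braiding, and this lift coincides with the trivial lift defining $\tau$ since both are assembled purely from the unit isomorphisms. Concretely, after choosing an isomorphism $\operatorname{M}(W)\cong\1^{\oplus n}$ and using naturality of $c^{\msc{A}}$ along the canonical injections and projections of that direct sum, the identity above reduces to the case $\operatorname{M}(W)=\1$, where $c^{\msc{A}}_{\1,\operatorname{M}(V)}$ and $\tau_{\1,\operatorname{M}(V)}$ are literally the same unit isomorphism. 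This proves that \eqref{eq:829} commutes, and hence that $\operatorname{M}$ annihilates $\rep\PSL(2)$.

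The only real obstacle is careful bookkeeping with conventions: the direction of the tensor-structure isomorphisms of $\operatorname{M}$, the orientation of the braiding $c_{V,W}$ versus $c_{W,V}$ inside \eqref{eq:829}, and the precise description of $\tau$ as the half-braiding of the trivial lift $Vect\to Z(\msc{A})$. There is no substantive difficulty; in particular, the proof does not require the M\"uger-centrality of $\rep\PSL(2)$ inside $\rep\SL(2)_\zeta$.
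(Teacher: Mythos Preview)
Your argument is correct and follows essentially the same approach as the paper: both observe that compatibility of $\operatorname{M}$ with the braidings makes the diagram \eqref{eq:829} commute with $\tau$ replaced by the braiding $c^{\msc{A}}$, and then use the axiom $c^{\msc{A}}_{\1,-}=\tau_{\1,-}$ (extended by naturality to all of $Vect$) to conclude. Your write-up is more explicit about the ``only if'' direction and about reducing to $\operatorname{M}(W)=\1$, but there is no substantive difference.
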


\begin{proof}
Compatibility with the braiding implies that the diagram \eqref{eq:829} commutes, after we replace $\tau$ with the braiding $c$ for $\msc{A}$.  But now, the hypotheses on any braiding requires $c_{X,Y}=\tau_{X,Y}$ for all trivial $X$, so that compatibility with the braiding implies the diagram \eqref{eq:829}.
\end{proof}

The following is a general result about de-equivariantization, which we state only in the case of $(\rep\SL(2)_\zeta)_{\PSL(2)}$.  The general situation is discussed in Appendix \ref{sect:C_G}.

\begin{proposition}\label{prop:basechange}
Suppose a tensor functor $\operatorname{M}:\rep\SL(2)_\zeta\to \msc{A}$ annihilates the central subcategory $\rep\PSL(2)\subset \rep\SL(2)_\zeta$.  Then there exists a tensor functor from the de-equivariantization $\mu:(\rep\SL(2)_\zeta)_{\PSL(2)}\to \msc{A}$ which fits into a ($2$-)diagram
\begin{equation}\label{eq:849}
\xymatrix{
\rep\SL(2)_\zeta\ar[rr]^{\operatorname{M}}\ar[dr]_{dE} & & \msc{A}\\
 & (\rep\SL(2)_\zeta)_{\PSL(2)}\ar[ur]_{\mu} & .
}
\end{equation}
When $\msc{A}$ is braided, and $\operatorname{M}$ is a map of braided tensor categories, then $\mu$ can be taken to be a map of braided tensor categories as well.  Furthermore, the collection of all such factorizations $\{\mu\ \text{\rm admitting a diagram }\eqref{eq:849}\}$ admits a natural $\PSL(2)$-action.
\end{proposition}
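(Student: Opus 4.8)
The plan is to deduce the proposition from the universal property of the de-equivariantization functor $dE$ worked out in Appendix~\ref{sect:C_G}, according to which a tensor functor out of $(\rep\SL(2)_\zeta)_{\PSL(2)}$ is the same data as a tensor functor $\operatorname{M}\colon\rep\SL(2)_\zeta\to\msc{A}$ together with a tensor-natural trivialization $\gamma\colon\operatorname{M}\circ\Fr\xrightarrow{\sim}\underline{\omega}$ of the restriction of $\operatorname{M}$ to the central subcategory $\rep\PSL(2)$, where $\underline{\omega}$ denotes the composite $\rep\PSL(2)\xrightarrow{\omega}Vect\hookrightarrow\msc{A}$ of the forgetful functor with the inclusion $Vect\hookrightarrow\msc{A}$. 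Granting this, the real content is to produce such a $\gamma$ out of the annihilation hypothesis, and the first --- and main --- step is the observation that ``$\operatorname{M}$ annihilates $\rep\PSL(2)$'' says exactly that $\operatorname{M}\circ\Fr$ is a symmetric fibre functor on $\rep\PSL(2)$. Indeed, the condition $\operatorname{M}(\rep\PSL(2))\subseteq Vect$ factors $\operatorname{M}\circ\Fr$ as $\rep\PSL(2)\xrightarrow{\bar{\operatorname{M}}}Vect\hookrightarrow\msc{A}$ with $\bar{\operatorname{M}}$ an exact (hence, by Proposition~\ref{prop:faithful}, faithful) $\mbb{C}$-linear tensor functor, and the commutativity of \eqref{eq:829} says precisely that $\bar{\operatorname{M}}$ carries the symmetry of $\rep\PSL(2)$ to the canonical symmetry $\tau$ on $Vect$; identifying $(Vect,\tau)$ with $(\mathrm{Vec}_{\mbb{C}},\mathrm{swap})$ then exhibits $\bar{\operatorname{M}}$ as a fibre functor.

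Next I would invoke Tannakian reconstruction. A fibre functor on $\rep\PSL(2)$ corresponds to a $\PSL(2)$-torsor over $\Spec\mbb{C}$, and since $\mbb{C}$ is algebraically closed every such torsor is trivial, so there is an isomorphism of tensor functors $\gamma\colon\bar{\operatorname{M}}\xrightarrow{\sim}\omega$; moreover, since $Vect$ carries the swap symmetry, every monoidal such $\gamma$ automatically respects symmetries. Feeding the pair $(\operatorname{M},\gamma)$ into the universal property produces the desired $\mu\colon(\rep\SL(2)_\zeta)_{\PSL(2)}\to\msc{A}$ sitting in the triangle \eqref{eq:849}, and it is braided whenever $\operatorname{M}$ is. If one prefers to bypass the abstract universal property, one can write $\mu$ down directly, mirroring the special case of Theorem~\ref{thm:N}: extend $\operatorname{M}$ to a colimit-preserving monoidal exact functor on ind-completions, use $\gamma$ to identify the algebra object $\operatorname{M}(\O)$ with the coordinate algebra $\mbb{C}[\PSL(2)]$, and set $\mu(N):=\mbb{C}\otimes_{\operatorname{M}(\O)}\operatorname{M}(N)$, the fibre at the identity taken along the counit $\mbb{C}[\PSL(2)]\to\mbb{C}$. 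The remaining points are routine: finite presentation of $N$ makes $\mu(N)$ a genuine (finite) cokernel in $\msc{A}$; the monoidal structure on $\operatorname{M}$ together with base change for relative tensor products over $\operatorname{M}(\O)$ equips $\mu$ with a monoidal structure; the computation $\mu(\O\otimes V)=\mbb{C}\otimes_{\operatorname{M}(\O)}\bigl(\operatorname{M}(\O)\otimes\operatorname{M}(V)\bigr)\cong\operatorname{M}(V)$ gives the identification $\mu\circ dE\cong\operatorname{M}$; and when $\msc{A}$ and $\operatorname{M}$ are braided, $\mu$ is braided, either by the braided refinement of the universal property or directly from the facts that $dE$ is a surjective braided tensor functor and $\mu\circ dE\cong\operatorname{M}$ is braided, via a density argument using exactness of $\mu$ and naturality of the braidings.

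For the last assertion I would track the torsor structure through the construction. The only choice made above was the trivialization $\gamma$, and the collection of trivializations is a torsor under $\Aut^\otimes(\omega)$, which by Tannakian reconstruction is the affine group scheme $\PSL(2)$ (functorially, $\Aut^\otimes(\omega_R)=\PSL(2)(R)$ for commutative $\mbb{C}$-algebras $R$). Transporting this through the universal property equips the collection of factorizations with a natural $\PSL(2)$-action --- and in fact makes it a $\PSL(2)$-torsor once the comparison isomorphism of \eqref{eq:849} is remembered as data; equivalently, $\PSL(2)$ acts on $(\rep\SL(2)_\zeta)_{\PSL(2)}$ by the tensor autoequivalences arising from left translation on $\O$, these autoequivalences fix every free module $\O\otimes V$ up to isomorphism (one transports along $L_{g^{-1}}\otimes id_V$), so precomposition with this action sends factorizations to factorizations and induces the desired $\PSL(2)$-action. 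The precise formulation and verification of the universal property of $dE$, and of this algebro-geometric group action, is the business of Appendix~\ref{sect:C_G}; the genuinely substantive point --- the main obstacle, such as it is --- is the very first step, recognizing that the annihilation condition is exactly what promotes $\operatorname{M}\circ\Fr$ to a symmetric fibre functor. Once that is in hand, the existence of $\gamma$, and hence of $\mu$, is Tannakian formalism over an algebraically closed field, and what remains is the routine monoidality and base-change bookkeeping together with the density argument in the braided case.
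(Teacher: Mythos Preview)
Your approach is essentially the same as the paper's: the construction $\mu(N)=\1\ot_{\operatorname{M}(\O)}\operatorname{M}(N)$ you write down ``directly'' is exactly what the paper does, and your Tannakian identification of $\operatorname{M}\O$ with $\O(\PSL(2))$ (hence the existence of an augmentation $\operatorname{M}\O\to\1$) matches the paper's parenthetical appeal to uniqueness of the fibre functor. Your description of the $\PSL(2)$-action via $\Aut^\otimes(\omega)$ is a clean repackaging of the paper's ``translation action permutes the points $R\to\1$''.

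Two caveats. First, you should not lean on Appendix~\ref{sect:C_G} for the universal property: Lemma~\ref{lem:A5} there states the general version but its proof reads ``Same as the proof of Proposition~\ref{prop:basechange}'', so invoking it is circular. Your direct construction already avoids this, so just drop the framing via an off-the-shelf universal property. Second, you omit the verification that $\mu$ is \emph{exact} (required for it to be a tensor functor in the paper's sense), and this is not quite routine: $\1\ot_R-$ is only a priori right exact. The paper handles this by observing that $\1\ot_R-$ has a right adjoint (the ``trivial $R$-module'' functor), giving right exactness, and then uses compatibility with duality to get left exactness. Your density argument for the braiding is fine once exactness is in hand, though the paper's route---noting that $\mrm{mod}_\msc{A}(R)$ inherits a braiding and that both constituent functors are braided---is slightly more direct.
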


As is made clear in the appendix, a more careful statement of Proposition \ref{prop:basechange} would specify not only the functor $\operatorname{M}$, but also a choice of trivialization of the restriction $\operatorname{M}|_{\rep\PSL(2)}$.

The factorization $\mu$ can be written explicitly as $\mu=\1\ot_{\operatorname{M}\O}\operatorname{M}(-)$, where $\O=\Fr\O(\PSL(2))$.  The ambiguity appears in the choice of the point (algebra map) $\operatorname{M}\O\to \1$ at which we take the fiber to produce $\mu$.  The collection of such points for $\operatorname{M}\O$ has the structure of a $\PSL(2)$-torsor.

\begin{proof}
Suppose we have such a map $\operatorname{M}:\rep\SL(2)_\zeta\to \msc{A}$ which annihilates $\rep\PSL(2)$.  Then the algebra object $\O=\operatorname{Fr}\O(\PSL(2))$ in $\rep\SL(2)_\zeta$ has image $R=\operatorname{M}\O$, a trivial algebra in $\Ind\msc{A}$.  Thus we may consider the monoidal category $\mrm{mod}_\msc{A}(R)$ of finitely presented $R$-modules in $\msc{A}$.  Given any augmentation $R\to \1$ we have the associated monoidal functor $\1\ot_R-:\mrm{mod}_\msc{A}(R)\to \msc{A}$ for which the composite
\[
\msc{A}\overset{Free}\longrightarrow \mrm{mod}_\msc{A}(R)\overset{\1\ot_R-}\longrightarrow \msc{A}
\]
is isomorphic to the identity.  (Note that we use the canonical central structure on the embedding $Vect\to \msc{A}$ in order to consider $R$-modules in $\msc{A}$ as $R$-bimodules.)
\par

The tensor functor $\operatorname{M}$ induced an exact monoidal functor
\[
\operatorname{M}_\O:(\rep\SL(2)_\zeta)_{\PSL(2)}\to \mrm{mod}_\msc{A}(R),
\]
and we compose with the fiber $\1\ot_R-$ at any (fixed) augmentation $R\to \1$ to obtain the desired factoring of $\operatorname{M}$ through the de-equivariantization $\mu=(\1\ot_R-)\circ \operatorname{M}_\O$.  (We note that, by uniqueness of the fiber functor for $\rep\PSL(2)$ \cite{delignemilne82}, there is an algebra isomorphism $R\cong forget\O(\PSL(2))$, and any point of $\PSL(2)$ subsequently specifies an algebra map $R\to \1$.)  The functor $\mu$ is monoidal, as it is a composition of monoidal functors, and we claim that it is exact.
\par

For exactness, note that the reduction $\1\ot_R-:\mrm{mod}_\msc{A}(R)\to \msc{A}$ admits a right adjoint $\msc{A}\to \mrm{mod}_\msc{A}(R)$ which sends an object $V$ in $\msc{A}$ to the $R$-module in $\msc{A}$ which is simply $V$, as an object in $\msc{A}$, with $R$ acting through the augmentation $R\to \1$.  As any functor which has a right adjoint is right exact, we see that the reduction functor is right exact, and hence that $\mu$ is right exact as it is a composition of right exact functors.  Now, compatibility with duality \cite[Exercise 2.10.6]{egno15} implies that $\mu$ is left exact as well.

In the braided context we have that $\mrm{mod}_\msc{A}(R)$ inherits a braiding from that of $\msc{A}$, and the braiding caries through all of the given arguments.  The $\PSL(2)$-action comes from the fact that the $\PSL(2)$-action on $\O(\PSL(2))$, by translation, induces a $\PSL(2)$-action on the algebra $R$.  This $\PSL(2)$-action permutes the points $R\to \1$ at which we define the functor $\mu=\1\ot_R\operatorname{M}_\O(-)$.
\end{proof}

\begin{remark} Note that restriction (or Ostrik's Theorem) gives us a functor from $\rep\SL(2)_\zeta$ to the representation category of the small quantum group (not cocycle corrected), which sends $\rep\PSL(2)$ to $Vect$. However it does not annihilate $\rep\PSL(2)$ in our sense (it does not satisfy \eqref{eq:829}), and won't factorize as in \eqref{eq:849}, so cannot be directly used to establish an equivalence with $(\rep\SL(2)_\zeta)_{\PSL(2)}$. Likewise, the forgetful functor $\rep\SL(2)_\zeta\to Vect$ also does not annihilate $\rep\PSL(2)$ in our sense, so we do not get from it an induced fiber functor for the de-equivariantization. It does admit the structure of a quasi-fiber functor, however. \end{remark}

\subsection{A braided version of Ostrik's theorem}

Consider a self-dual object $W$ in a tensor category $\msc{A}$.  For such $W$ we have an associated endomorphism 
\[
f_W:W\ot W\to \1\to W\ot W,\ \ f_W=coev\circ ev,
\] 
of the second tensor power of $W$.  When the endomorphism algebra $\End_\msc{A}(W)$ is $1$-dimensional the morphism $f_W$ is independent of the choice of structure maps $coev$ and $ev$.  This occurs, for example, when $W$ is simple.

\begin{theorem}\label{thm:braidedostrik}
Suppose $W$ is a self-dual object in a braided tensor category $\msc{A}$, with $d(W)=-(\zeta+\zeta^{-1})$.  Suppose also that $\ord(\zeta)<\infty$, that $W$ is non-reduced, and that the braiding endomorphism $c_{W,W}\in \End_\msc{A}(W\ot W)$ is in the subspace spanned by $f_W$ and the identity.
\par

Then, under one of the four braided structures on $\rep\SL(2)_\zeta$ provided in Proposition \ref{prop:braidingsTL}, the tensor functor $F_W:\rep\SL(2)_\zeta\to \msc{A}$ promised in Theorem \ref{thm:ostrik} is a \emph{braided} tensor functor.
\end{theorem}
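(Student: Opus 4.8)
The plan is to leverage Ostrik's classification machinery in its Temperley–Lieb guise: the functor $F_W$ is already known to exist (by Theorem \ref{thm:ostrik}), so the only thing to check is compatibility with \emph{some} choice of braiding on $\rep\SL(2)_\zeta$. I would first recall that $F_W$ restricts to the universal monoidal functor $F_W\colon\mcl{T}_\zeta\cong\mcl{TL}(d)\to\msc{A}$ out of the Temperley–Lieb category, and that this functor is faithful because $W$ is non-reduced (so $F_W(V_p)\neq 0$ and Proposition \ref{prop:faithful} applies). Faithfulness is the engine: it means that the map on hom-spaces $\TL_2(d)\to\End_\msc{A}(W\ot W)$ is injective, and since $\dim_\mbb{C}\TL_2(d)=2$ with basis $\{id,f\}$, the hypothesis that $c_{W,W}$ lies in the span of $\{id_{W\ot W},f_W\}$ says precisely that $c_{W,W}$ is in the image of this map — i.e. there is a unique $c\in\TL_2(d)$ with $F_W(c)=c_{W,W}$.

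Next I would argue that this preimage $c$ defines a genuine braiding on $\TL(d)$. Since $\msc{A}$ is braided, $c_{W,W}$ automatically satisfies the hexagon-type constraint coming from naturality and \eqref{eq:191}; concretely, the identity $c_{W\ot W,W}(coev_W\ot 1)=(coev_W\ot 1)$ (the analogue of \eqref{eq:786}, using that $coev_W\colon\1\to W\ot W$ is a morphism and that $c_{\1,W}$ is trivial) holds in $\msc{A}$. Pulling this back along the faithful functor $F_W$ — noting that $F_W$ sends $coev_{[1]}$ to $coev_W$ and is injective on the relevant hom-spaces — forces $c$ to satisfy equation \eqref{eq:786} in $\TL_3(d)$. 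By Lemma \ref{lem:660} and Proposition \ref{prop:braidingsTL}, $c$ is then one of the four braidings on $\TL(d)$ (equivalently $c=\pm(\zeta^{\pm1/2}f+\zeta^{\mp1/2})$), and since any braiding on $\TL(d)$ is determined by its value on $[1]\ot[1]$, we equip $\TL(d)$ with this braiding.

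Finally I would transport this braiding to $\rep\SL(2)_\zeta$. Under the equivalence $K^b(\mcl{TL}(d))\cong D^b(\SL(2)_\zeta)$ of Section \ref{sect:Oproof}, and hence under the fully faithful $F_\mbb{V}\colon\TL(d)\to\rep\SL(2)_\zeta$, braidings on $\TL(d)$ correspond bijectively to braidings on $\rep\SL(2)_\zeta$; the four braidings of Proposition \ref{prop:braidingsTL} are exactly the four of the Remark in Section \ref{sect:SL2q} (standard $R$-matrix times the two square-root choices, together with their reverses). Choosing on $\rep\SL(2)_\zeta$ the braiding corresponding to $c$, the functor $F_W$ is braided on the subcategory $\mcl{T}_\zeta$ of tilting modules by construction — it matches $c_{\mbb{V},\mbb{V}}$ to $c_{W,W}$ — and then a standard extension argument (compatibility of $F_W$ with the derived/triangulated structures exactly as in the construction of $\msc{F}$ in \eqref{eq:722}, or simply the fact that the braiding on all of $\rep\SL(2)_\zeta$ is determined by its restriction to $\mbb{V}$ via naturality and the hexagon axioms) shows $F_W$ is braided on all of $\rep\SL(2)_\zeta$. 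The main obstacle I anticipate is the last step: verifying that a braided-compatibility that is checked only on the generator $\mbb{V}$ propagates to all objects of $\rep\SL(2)_\zeta$. This should follow because every object is built from the $\mbb{V}^{\ot n}$ by subquotients and the braiding isomorphisms are natural, so once $F_W$ intertwines $c_{\mbb{V}^{\ot n},\mbb{V}^{\ot m}}$ with $c_{W^{\ot n},W^{\ot m}}$ (a formal consequence of the single identity on $\mbb{V}\ot\mbb{V}$ plus \eqref{eq:191}), naturality forces the same on all subquotients — but making this rigorous requires knowing that the braiding on $\rep\SL(2)_\zeta$ we selected genuinely restricts to the $\TL(d)$-braiding $c$, which is where the equivalence $K^b(\mcl{T}_\zeta)\cong D^b(\SL(2)_\zeta)$ and faithfulness of $F_\mbb{V}$ do the real work.
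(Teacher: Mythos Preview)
Your proposal is correct and follows essentially the same approach as the paper's proof: pull back $c_{W,W}$ through the faithful functor $F_W|_{\mcl{T}_\zeta}$ to identify it with one of the four Temperley--Lieb braidings of Proposition~\ref{prop:braidingsTL}, then use the triangulated equivalence $K^b(\mcl{T}_\zeta)\overset{\sim}\to D^b(\SL(2)_\zeta)$ to propagate braided compatibility from tilting modules to all of $\rep\SL(2)_\zeta$. The paper is terser in the middle step (it back-references the argument of Proposition~\ref{prop:dimX2} rather than spelling out the pullback of equation~\eqref{eq:786}), and it commits directly to the derived-category route for the extension step rather than entertaining the naturality/subquotient argument you sketch as an alternative, but the substance is the same.
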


\begin{proof}
Since the tensor functor $F_W$ is necessarily faithful, its restriction to the subcategory of tilting modules $F_W|_{\mcl{T}_\zeta}$ is faithful as well.  As in the proof of Proposition \ref{prop:dimX2}, faithfulness and the fact the the braiding endomorphism $c_{W,W}$ is in the image of the map $F_W:\End_{\mcl{T}_\zeta}(\mbb{V}\ot \mbb{V})\to \End_\msc{A}(W\ot W)$ implies that $F_W|_{\mcl{T}_\zeta}$ respects the braidings on $\mcl{T}_\zeta$ and $\msc{A}$, under one of the four braidings on $\mcl{T}_\zeta$ considered in Proposition \ref{prop:braidingsTL2}. Hence the corresponding functor of triangulated categories $R(F_W|_{\mcl{T}_\zeta}):K^b(\mcl{T}_\zeta)\to D^b(\msc{A})$ is also a map of braided monoidal categories.
\par

We recall that the inclusion $\mcl{T}_\zeta\to \rep\SL(2)_q$ induces an equivalence of tensor triangulated categories $K^b(\mcl{T}_\zeta)\overset{\sim}\to D^b(\SL(2)_\zeta)$ \cite{beilinsonbezrukavnikovmirkovic04}, and note the diagram
\[
\xymatrixrowsep{3mm}
\xymatrix{
& K^b(\mcl{T}_\zeta)\ar[dr]^{R(F_W|_{\mcl{T}_\zeta})}\ar[dl]_\sim & \\
D^b(\SL(2)_\zeta)\ar[rr]_{RF_W} & & D^b(\msc{A}).
}
\]
The above diagram implies that the functor $RF_W$ is braided monoidal, since both maps out of $K^b(\mcl{T}_\zeta)$ are braided.  It follows that the original map $F_W$ is a braided tensor functor.
\end{proof}

\subsection{A modular tensor equivalence}

\begin{theorem}\label{thm:triplet}
There is an equivalence of modular tensor categories
\[
\Theta:\rep(u_q(\mfk{sl}_2))^{\rm rev}\overset{\sim}\to \rep(\mcl{W}_p).
\]
The functor $\Theta$ is such that $\Theta(\mbb{V})\cong X^+_2$.
\end{theorem}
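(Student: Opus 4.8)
The plan is to realize $\Theta$ as the canonical factorization of an Ostrik functor through the de-equivariantization of $\rep\SL(2)_q$ along its M\"uger center $\rep\PSL(2)$, and then to certify it is an equivalence by a Frobenius--Perron dimension count.

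First I would feed Proposition \ref{prop:dimX2} into Ostrik's Theorem \ref{thm:ostrik} to produce a tensor functor $F_{X^+_2}\colon \rep\SL(2)_q\to \rep(\mcl{W}_p)$ with $F_{X^+_2}(\mbb{V})=X^+_2$. Since $X^+_2$ is simple, $f_{X^+_2}$ is well defined and, by Lemma \ref{lem:pre_braidWp} (equivalently Proposition \ref{prop:c_XX}), the braiding endomorphism $c_{X^+_2,X^+_2}$ lies in the span of $f_{X^+_2}$ and the identity; hence the braided Ostrik Theorem \ref{thm:braidedostrik} applies and makes $F_{X^+_2}$ braided for one of the four braidings of Proposition \ref{prop:braidingsTL}. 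Comparing the explicit value $c_{X^+_2,X^+_2}=q^{-1/2}f_{X^+_2}+q^{1/2}$ of Proposition \ref{prop:c_XX} with the value $c_{\mbb{V},\mbb{V}}=\zeta^{1/2}f_\mbb{V}+\zeta^{-1/2}$ of the standard braiding computed in Section \ref{sect:calc}, and noting that $\zeta^{-1/2}f+\zeta^{1/2}$ is the inverse of the standard $c_{[1],[1]}$, I conclude that the relevant braiding is the reverse of the standard one, so that $F_{X^+_2}$ is a braided tensor functor $(\rep\SL(2)_q)^{\mathrm{rev}}\to \rep(\mcl{W}_p)$.

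Next I would verify that $F_{X^+_2}$ is surjective and annihilates $\rep\PSL(2)$. Surjectivity is immediate: the essential image contains $X^+_2$ and is closed under $\ot$, $\oplus$ and subquotients, while $X^+_2$ tensor-generates $\rep(\mcl{W}_p)$ by Corollary \ref{cor:wp_gen}. A surjective braided tensor functor carries M\"uger-central objects to M\"uger-central objects --- write an arbitrary target object as a subquotient of something in the image and use naturality of $c^2$ --- and $Z_{\text{\rm M\"ug}}(\rep(\mcl{W}_p))=Vect$ by Theorem \ref{thm:modular}, so $F_{X^+_2}(\rep\PSL(2))\subset Vect$; by the Lemma preceding Proposition \ref{prop:basechange} this means $F_{X^+_2}$ annihilates $\rep\PSL(2)$. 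Proposition \ref{prop:basechange} then supplies a braided tensor functor $\mu\colon (\rep\SL(2)_q)^{\mathrm{rev}}_{\PSL(2)}\to \rep(\mcl{W}_p)$ with $\mu\circ dE=F_{X^+_2}$. Since de-equivariantization does not alter the underlying monoidal category and merely transports the braiding, $(\rep\SL(2)_q)^{\mathrm{rev}}_{\PSL(2)}=\big((\rep\SL(2)_q)_{\PSL(2)}\big)^{\mathrm{rev}}$, which Theorem \ref{thm:N} identifies with $\rep(u_q(\mfk{sl}_2))^{\mathrm{rev}}$; I set $\Theta:=\mu$ under this identification, so that $\Theta(\mbb{V})=\mu(dE(\mbb{V}))=X^+_2$.

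It remains to see that $\Theta$ is an equivalence and is ribbon. It is a surjective tensor functor between finite tensor categories, since its essential image contains the tensor generator $X^+_2$; and $\FPdim(\rep(u_q(\mfk{sl}_2))^{\mathrm{rev}})=\FPdim(\rep(u_q(\mfk{sl}_2)))=2p^3=\FPdim(\rep(\mcl{W}_p))$ by Lemma \ref{lem:fpdim} (Frobenius--Perron dimension ignores the braiding), so the equality case of the surjective-functor criterion of \cite{etingofostrik04} recalled above forces $\Theta$ to be an equivalence of tensor categories. For the ribbon structure, transport the twist $\theta$ of $\rep(\mcl{W}_p)$ along $\Theta$ to obtain a second twist on $\rep(u_q(\mfk{sl}_2))^{\mathrm{rev}}$; a twist is determined by its value on a tensor generator via $\theta_{X\ot Y}=(\theta_X\ot\theta_Y)c^2_{X,Y}$ together with naturality, and on the generator $\mbb{V}$ both twists equal $-q^{3/2}$ --- indeed $\theta_{X^+_2}=-q^{3/2}$ by Lemma \ref{lem:wp_tw}, while the reversed twist of $\rep\SL(2)_q$ sends $\mbb{V}$ to $-q^{3/2}$ by the Drinfeld-morphism formula of Section \ref{sect:SL2q}, a value which descends through $dE$. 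Hence $\Theta$ respects twists, and --- both categories being modular by Theorems \ref{thm:N} and \ref{thm:modular} --- $\Theta$ is an equivalence of modular tensor categories with $\Theta(\mbb{V})\cong X^+_2$. The one genuinely delicate point, given Propositions \ref{prop:dimX2} and \ref{prop:c_XX} (which carry the real work), is the bookkeeping of braiding and twist conventions: ensuring the ``$\mathrm{rev}$'' decoration lands in the right place and that the de-equivariantization of Proposition \ref{prop:basechange} is compatible with the identification of Theorem \ref{thm:N}. The surjectivity, M\"uger-center, and Frobenius--Perron steps are then routine.
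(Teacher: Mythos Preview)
Your proposal is correct and follows essentially the same route as the paper's proof: build $F_{X^+_2}$ via Ostrik, upgrade to a braided functor via Theorem \ref{thm:braidedostrik} and Proposition \ref{prop:c_XX}, use non-degeneracy of $\rep(\mcl{W}_p)$ to annihilate $\rep\PSL(2)$, factor through the de-equivariantization via Proposition \ref{prop:basechange}, and conclude by the FPdim count of Lemma \ref{lem:fpdim}. The only cosmetic difference is in the ribbon step, where the paper enumerates the two possible twists on $\rep(u_q(\mfk{sl}_2))^{\rm rev}$ (coming from the two central grouplikes $1$ and $K^p$) and selects $\theta^{-1}$ by its value $-q^{3/2}$ on $\mbb{V}$, whereas you argue directly that a twist is determined on a tensor generator via naturality and the twist axiom.
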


The superscript $(-)^{\rm rev}$ indicates that we consider the representation category for $u_q(\mfk{sl}_2)$ with inverse braiding, and inverse twist, relative to that of Section \ref{sect:SL2q}.
 
\begin{proof}
Throughout the proof we consider $\rep\SL(2)_q$ with its inverse ribbon structure.  Since we have the ribbon equivalence $(\rep\SL(2)_q)_{\PSL(2)}\cong \rep(u_q(\mfk{sl}_2))$ of Theorem \ref{thm:N}, it suffices to produce such an equivalence $\Theta$ from the de-equivariantization $(\rep\SL(2)_q)_{\PSL(2)}$.
\par

By Theorem \ref{thm:braidedostrik}, Proposition \ref{prop:dimX2}, and Proposition \ref{prop:c_XX} we have a braided tensor functor
\[
F_W:\rep\SL(2)_q\to \rep(\mcl{W}_p)
\]
with $F_W(\mbb{V})=X_2^+$.  Since $\rep(\mcl{W}_p)$ is generated by $X_2^+$, the functor $F_W$ is surjective.  Consequently, the M\"uger center in $\rep\SL(2)_q$ maps to a M\"uger central subcategory of $\rep(\mcl{W}_p)$.  But by Theorem \ref{thm:modular}, the category of triplet modules is non-degenerate, so that the only central subcategory in $\rep(\mcl{W}_p)$ is $Vect$.  It follows that $F_W(\rep\PSL(2))\subset Vect$.  By Proposition \ref{prop:basechange} the map $F_W$ therefore factors to provide a braided tensor functor
\[
\Theta:(\rep\SL(2)_q)_{\PSL(2)}\to \rep(\mcl{W}_p)
\]
from the de-equivariantization.
\par

Since $\Theta\circ dE\cong F_W$ we have $\Theta(\mbb{V})\cong X_2^+$, and since $\rep(\mcl{W}_p)$ is generated by $X^+_2$ we conclude that $\Theta$ is surjective.  Finally, since
\[
\FPdim\left((\rep\SL(2)_q)_{\PSL(2)}\right)=\FPdim(\rep(u_q(\mfk{sl}_2)))=\FPdim( \rep(\mcl{W}_p)),
\]
by Lemma \ref{lem:fpdim}, surjectivity of $\Theta$ implies that $\Theta$ is an equivalence \cite[Proposition 6.3.4]{egno15}.
\par

Finally, for the ribbon structure, let $\theta$ denote the twist of Section \ref{sect:SL2q}.  The category $(\rep\SL(2)_q)_{\PSL(2)}$, with its reversed braiding, admits precisely two twists, $\theta^{-1}$ and $K^p\theta^{-1}$.  This is because $u_q(\mfk{sl}_2)$ has precisely two central grouplike elements, $1$ and $K^p$.  These twists are such that $\theta^{-1}_{\mbb{V}}=-q^{3/2}$ and $K^p\theta^{-1}_{\mbb{V}}=q^{3/2}$.  Since the value of the twist for $\rep(\mcl{W}_p)$ at $X^+_2$ is $-q^{3/2}$, we see that $\Theta$ is an equivalence of modular tensor categories, after we provide $(\rep\SL(2)_q)_{\PSL(2)}$ with the inverse ribbon structure $\theta^{-1}$.
\end{proof}

\section{Results for logarithmic minimal models}
\label{sect:LMM}

Consider $\mcl{V}ir_c$, the Virasoro vertex operator algebra at our fixed central charge $c=1-6(p-1)^2/p$. This algebra is not $C_2$-cofinite (hence not strongly-finite).  In \cite{mcraeyang} McRae and Yang verify the existence of a vertex tensor structure on a distinguished ``affine" category $\rep(\mcl{V}ir_c)_{\rm aff}$ of Virasoro modules, which we recall below.  Some important features of the category $\rep(\mcl{V}ir_c)_{\rm aff}$ are that it (a) admits a tensor generator, and (b) contains all of the integral simples $\mcl{L}_{r,s}$.
\par

We prove the following theorem, which was first conjectured in \cite{bfgt09} (cf.\ \cite[\S 11.2]{negron}).

\begin{theorem}\label{thm:LMM}
There is a ribbon tensor equivalence $\K:\rep\SL(2)_q^{\rm rev}\overset{\sim}\to \rep(\mcl{V}ir_c)_{\aff}$ which fits into a ($2$-)diagram
\begin{equation}\label{eq:1192}
\xymatrix{
\rep\SL(2)^{\rm rev}_q\ar[d]_{\res^\omega}\ar[rr]^{\K}& & \rep(\mcl{V}ir_c)_{\aff}\ar[d]^{\mcl{W}_p\ot-}\\
\rep(u_q(\mfk{sl}_2))^{\rm rev}\ar[rr]^\Theta & & \rep(\mcl{W}_p).
}
\end{equation}
\end{theorem}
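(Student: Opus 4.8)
The plan is to build $\K$ directly out of $\rep\SL(2)_q$ by Ostrik's theorem --- exactly as $\Theta$ was obtained in Theorem \ref{thm:triplet} --- and to transport the needed data from $\rep(\mcl{W}_p)$ across the induction functor for the extension $\mcl{V}ir_c\subset \mcl{W}_p$. First I would recall from \cite{mcraeyang} (see also \cite{creutzigkanademcrae}) the relevant structure of $\rep(\mcl{V}ir_c)_{\aff}$: it is a ribbon tensor category with twist $e^{2\pi iL(0)}$, it has a self-dual tensor generator $Y$ which is identified there with $\mcl{L}_{1,2}$, and the vertex algebra extension $\mcl{V}ir_c\to \mcl{W}_p$ furnishes a braided tensor functor $I:=\mcl{W}_p\ot-\colon \rep(\mcl{V}ir_c)_{\aff}\to \rep(\mcl{W}_p)$ which preserves the twist (it leaves the zero mode $L(0)$ untouched), which sends $\mcl{V}ir_c\mapsto \mcl{W}_p$ and $Y\mapsto X^+_2$, and which is faithful by Proposition \ref{prop:faithful}. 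Crucially, $I$ realises $\rep(\mcl{W}_p)$ as the de-equivariantization of $\rep(\mcl{V}ir_c)_{\aff}$ along the $\PSL(2)=\Aut(\mcl{W}_p)$-action, since $\mcl{W}_p$ is the regular representation of $\PSL(2)$ as a $\mcl{V}ir_c$-module (cf.\ \S\ref{sect:irrep_Wp}).

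Next I would verify the hypotheses of Theorems \ref{thm:ostrik} and \ref{thm:braidedostrik} for $Y$ by pushing forward along the faithful functor $I$. Since $I(Y)=X^+_2$ is simple, $d(Y)=d(X^+_2)=-(q+q^{-1})$ by Proposition \ref{prop:dimX2}. The recursion in the proof of Lemma \ref{lem:X2_nr} gives $I(F_Y(V_s))\cong X^+_s$ for all $s\le p$, so $I(F_Y(V_p))\cong X^+_p\neq 0$ and hence $F_Y(V_p)\neq 0$, i.e.\ $Y$ is non-reduced. Finally $I(c_{Y,Y})=c_{X^+_2,X^+_2}$ lies in $\operatorname{span}(f_{X^+_2},id)=I(\operatorname{span}(f_Y,id))$ by Proposition \ref{prop:c_XX}, so faithfulness forces $c_{Y,Y}\in\operatorname{span}(f_Y,id)$. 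Theorem \ref{thm:braidedostrik} then produces a braided tensor functor $\K=F_Y\colon \rep\SL(2)_q\to \rep(\mcl{V}ir_c)_{\aff}$ with $\K(\mbb{V})=Y$, under one of the four braidings on $\rep\SL(2)_q$. Since $\theta_Y=e^{2\pi i h_{1,2}}=-e^{3\pi i/(2p)}=-q^{3/2}=\theta^{-1}_{\mbb{V}}$ (the last equality as in the proof of Theorem \ref{thm:triplet}), and since the twist of a balanced braided category is determined by its braiding together with its value on a tensor generator (via $\theta_{X\ot Y}=(\theta_X\ot\theta_Y)c^2_{X,Y}$ and naturality under subquotients), the relevant braiding is the reversed one, and $\K$ is then automatically a ribbon functor.

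That the square \eqref{eq:1192} commutes is immediate from Ostrik's uniqueness clause: $I\circ\K$ and $\Theta\circ\res^\omega$ are both tensor functors $\rep\SL(2)^{\rm rev}_q\to \rep(\mcl{W}_p)$ carrying the self-dual object $\mbb{V}$ to $X^+_2$ compatibly with evaluation, hence are isomorphic. It remains to see that $\K$ is an equivalence. It is surjective because $Y$ generates $\rep(\mcl{V}ir_c)_{\aff}$. Unlike in Theorem \ref{thm:triplet}, neither category here is finite, so one cannot conclude via Frobenius--Perron dimensions; instead I would descend. By the commuting square and Theorem \ref{thm:N}, $I\circ\K\cong\overline{\Theta}\circ dE$, where $\overline{\Theta}:=\Theta\circ(\mbb{C}\ot^\omega_\O-)$ is the equivalence $(\rep\SL(2)_q)_{\PSL(2)}\overset{\sim}{\to}\rep(\mcl{W}_p)$ of Theorem \ref{thm:N}; since $dE$ and $I$ are the de-equivariantization functors of $\rep\SL(2)_q$ and of $\rep(\mcl{V}ir_c)_{\aff}$ respectively, the functor induced by $\K$ on de-equivariantizations is (up to these equivalences) $\overline{\Theta}$, hence an equivalence. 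And $\K$, being braided and surjective, restricts to a braided tensor equivalence of M\"uger centers $\rep\PSL(2)\overset{\sim}{\to}\rep\PSL(2)$: the M\"uger center of $\rep(\mcl{V}ir_c)_{\aff}$ is $\rep\PSL(2)$ (de-equivariantizing by it yields the non-degenerate $\rep(\mcl{W}_p)$), and a nonzero braided tensor endofunctor of $\rep\PSL(2)$ is an autoequivalence, surjectivity of $\K$ ruling out the trivial one. Re-equivariantizing, via the (de-)equivariantization calculus of Appendix \ref{sect:C_G}, then shows $\K$ itself is an equivalence.

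The hard part will be precisely this descent: establishing that $I=\mcl{W}_p\ot-$ genuinely is the de-equivariantization functor for $\rep(\mcl{V}ir_c)_{\aff}$ --- which is where the analysis of \cite{mcraeyang,creutzigkanademcrae} and the identification of $\mcl{W}_p$ as the regular $\PSL(2)$-representation over $\mcl{V}ir_c$ enter essentially --- and then upgrading the surjection $\K$ to an equivalence by the equivariantization formalism, since the easy route through Frobenius--Perron dimension is unavailable in this infinite setting.
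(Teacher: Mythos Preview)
Your construction of $\K$ via Ostrik's theorem applied to the self-dual generator $\mcl{L}_{1,2}$, the transport of the dimension and non-reducedness hypotheses across the faithful induction $I$, and the verification that the square \eqref{eq:1192} commutes by the uniqueness clause in Theorem~\ref{thm:ostrik}, all match the paper's argument essentially verbatim. The divergence is entirely in the proof that $\K$ is an equivalence.

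The paper does not use the (de-)equivariantization calculus here at all. Instead it proceeds directly: it shows that $\K$ induces an isomorphism on Grothendieck rings by matching simples via the fusion rules (Lemma~\ref{lem:grring}, Corollary~\ref{cor:grring}), and then that $\K$ sends indecomposable projectives bijectively to indecomposable projectives (Proposition~\ref{prop:1245}), using the explicit description of projective covers in $\rep(\mcl{V}ir_c)_{\aff}$ from \cite{mcraeyang} together with the fact that $F=I$ preserves projectives and is faithful. From these two facts one reads off $\dim\Hom$ between projectives as composition multiplicities, which agree on both sides, and fully faithfulness follows.

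Your re-equivariantization route is conceptually cleaner and would work, but it carries a circularity risk you have not resolved. You need, \emph{prior} to knowing $\K$ is an equivalence, that (i) the M\"uger center of $\rep(\mcl{V}ir_c)_{\aff}$ is exactly $\rep\PSL(2)$ as a symmetric tensor category, and (ii) $\mcl{W}_p$ is identified with the image of $\O(\PSL(2))$ as an algebra object, so that $I$ is genuinely the $\PSL(2)$-de-equivariantization. In the paper these identifications are only made \emph{after} Theorem~\ref{thm:LMM}: see the sentence following Lemma~\ref{lem:ZMp}, which explicitly invokes Theorem~\ref{thm:LMM} to identify the M\"uger center. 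Your parenthetical justification --- that de-equivariantizing by the M\"uger center yields the non-degenerate $\rep(\mcl{W}_p)$ --- tells you the center is Tannakian and admits $I$ as a fiber functor, but does not by itself pin it down as $\rep\PSL(2)$; you would still need to run Tannakian reconstruction and compute the automorphism group of that fiber functor, or else extract the center directly from the fusion data of \cite{mcraeyang}. This is doable, but it is exactly the ``hard part'' you flag at the end, and the sketch does not close it. The paper's projective-cover argument simply sidesteps the issue.
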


It was argued by Bushlanov, Feigin, Gainutdinov, and Tipunin \cite{bfgt09} that the fusion rings for $\rep\SL(2)_q$ and $\rep(\mcl{V}ir_c)_{\aff}$ are isomorphic.  The equivalence of Theorem \ref{thm:LMM} categorifies the isomorphism of fusion rings proposed in \cite{bfgt09}.
\par

Throughout the section we ignore (notationally) the inversion of the ribbon structure on $\rep\SL(2)_q$, and take for granted that we are considering the category of $\SL(2)_q$-representations along with its reverse braiding and inverted twist, relative to that of Section \ref{sect:SL2q}.

\subsection{Big categories of VOA modules}
\label{sect:ind_cat}

We begin with an aside on categories of ``big" modules for a given VOA.  Let $\mcl{V}$ be a vertex operator algebra with some category $\rep(\mcl{V})_{\rm dist}$ of preferred, finite length modules.  We'll require that the inclusion $\rep(\mcl{V})_{\rm dist}\subset \rep(\mcl{V})$ into the ambient category of finite length $\mcl{V}$-modules is fully faithful, and that the class of preferred modules is closed under taking subquotients.
\par

Given such $\rep(\mcl{V})_{\rm dist}$, we find ourselves in various situations which require a larger category $\Rep(\mcl{V})_{\rm dist}$ which contains $\rep(\mcl{V})_{\rm dist}$ and in which we can, say, take infinite direct sums of modules.  Formally speaking, we construct such a big category $\Rep(\mcl{V})_{\rm dist}$ by taking the Ind-category of $\rep(\mcl{V})_{\rm dist}$ \cite[\S 6.1]{kashiwaraschapira}.  In our setting, one can understand this category rather concretely \cite[Corollary 6.3.5]{kashiwaraschapira}: We first consider the category $\Rep(\mcl{V})$ of arbitrary $\mcl{V}$-modules, with no finiteness assumptions, then take
\[
\Rep(\mcl{V})_{\rm dist}:=\left\{\begin{array}{l}
\text{The full subcategory of $\mcl{V}$-modules $M$ in $\Rep(\mcl{V})$}\\
\text{for which $M$ is the union $M=\cup_\alpha M_\alpha$ of its}\\
\text{(finite length) submodules $M_\alpha\subset M$ in $\rep(\mcl{V})_{\rm dist}$.}
\end{array}\right.
\]
\par

The category $\Rep(\mcl{V})_{\rm dist}$ inherits an abelian structure from the inclusion $\Rep(\mcl{V})_{\rm dist}\subset \Rep(\mcl{V})$,  and it is closed under taking arbitrary filtered colimits.  We refer to the category $\Rep(\mcl{V})_{\rm dist}$ simply as the Ind-category of $\rep(\mcl{V})_{\rm dist}$, or informally as the ``big" category of preferred $\mcl{V}$-modules.
\par

We note that if $\rep(\mcl{V})_{\rm dist}$ admits a (braided) monoidal structure, for which the product is right exact and commutes with sums, then $\Rep(\mcl{V})_{\rm dist}$ inherits a unique (braided) monoidal structure which extends that of $\rep(\mcl{V})_{\rm dist}$ and commutes with colimits.  In particular, when $\rep(\mcl{V})_{\rm dist}$ admits a vertex tensor structure then the big category $\Rep(\mcl{V})_{\rm dist}$ is canonically braided monoidal.  One can see \cite{creutzigmcraeyangII} for an explicit, vertex algebraic, analysis of this extended monoidal structure.

\subsection{Categories of Virasoro modules}
\label{sect:Vaff}

We recall some results from \cite{creutzigjianghunzikerridoutyang,mcraeyang}.  We adopt a calligraphic notation for Virasoro modules $\mcl{L}$, to distinguish such objects from quantum group representations, which we generally denote via a Roman text $L$.
\par

We recall that there is a unique simple $\mcl{V}ir_c$-module of conformal weight $h$ for any $h\in\mbb{C}$.
Consider first the category $\rep(\mcl{V}ir_c)_{\rm int}$ of finite length $\mcl{V}ir_c$-modules with composition factors among the ``integral weighted" simples $\mcl{L}_{r,s}$.  The $\mcl{L}_{r,s}$ are specifically the simple $\mcl{V}ir_c$-modules of conformal weight $h_{r,s}=\frac{1}{4p}((rp-s)^2-(p-1)^2)$, where $r$ is a positive integer and $1\leq s\leq p$.  We consider all morphisms, so that $\rep(\mcl{V}ir_c)_{\rm int}$ is a full subcategory in the ambient category of all $\mcl{V}ir_c$-modules.
\par

It is shown in work of Creutzig, Jiang, Hunziker, Ridout, and Yang \cite{creutzigjianghunzikerridoutyang}, and also McRae and Yang \cite{mcraeyang}, that the category $\rep(\mcl{V}ir_c)_{\rm int}$ satisfies the necessary conditions to admit a vertex tensor structure, and that the corresponding braided monoidal category is rigid.  This category furthermore admits a ribbon structure provided by the (standard) exponential $\theta=e^{2\pi i L(0)}$.  The authors subsequently define the subcategory
\[
\begin{array}{l}
\rep(\mcl{V}ir_c)_{\rm aff}\vspace{1mm}\\
:=\{\text{the full subcategory in $\rep(\mcl{V}ir_c)_{\rm int}$ (tensor) generated by the simple }\mcl{L}_{1,2}\}.
\end{array}
\]
\par

The subcategory $\rep(\mcl{V}ir_c)_{\rm aff}$ in fact contains \emph{all} of the simples in $\rep(\mcl{V}ir_c)_{\rm int}$, and so is the subcategory in $\rep(\mcl{V}ir_c)_{\rm int}$ generated by all of the simple objects $\mcl{L}_{r,s}$.  One can alternatively define $\rep(\mcl{V}ir_c)_{\rm aff}$ as the M\"uger centralizer of the triplet algebra $\mcl{W}_p=\oplus_{n\geq 0}(2n-1)\mcl{L}_{2n+1,1}$ in $\rep(Vir)_{\rm int}$ \cite[Theorems 4.4, 5.2, \& 5.4]{mcraeyang}.  The category $\rep(\mcl{V}ir_c)_{\rm aff}$ is \emph{affine} in the sense that it admits a tensor generator (cf.\ \cite[\S II.5 Corollaire 5.2]{demazuregabriel70}).

\begin{remark}
The categories $\rep(\mcl{V}ir_c)_{\rm int}$ and $\rep(\mcl{V}ir_c)_{\aff}$ are denoted by $\mcl{O}_c$ and $\mcl{O}^0_c$ in \cite{mcraeyang}, respectively.
\end{remark}

By a general theory of VOA extensions \cite{huangkirillovlepowski15,creutzigkanademcrae}, which we've recalled in Appendix \ref{sect:VW}, we have an induction functor
\[
F:=I_{\mcl{V}ir_c}^{\mcl{W}_p}:\rep(\mcl{V}ir_c)_{\rm aff}\to \Rep(\mcl{W}_p)
\]
from the affine category of Virasoro modules to the Ind-category $\Rep(\mcl{W}_p)$ of $\mcl{W}_p$-modules.  McRae and Yang observe the following.

\begin{proposition}[{\cite[Proposition 7.4]{mcraeyang}}]
For the simple objects $\mcl{L}_{r,s}$ in $\rep(\mcl{V}ir_c)_{\aff}$ one has $F(\mcl{L}_{r,s})=rX^{\varepsilon(r)}_s$, with $\varepsilon(r)=(-1)^{r+1}$.
\end{proposition}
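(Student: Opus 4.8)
The plan is to determine $F(\mcl{L}_{r,s})$ in two stages: first compute its underlying Virasoro module, then promote this to an identification of $\mcl{W}_p$-modules via Frobenius reciprocity and a length count. Write $G\colon\Rep(\mcl{W}_p)\to\Rep(\mcl{V}ir_c)$ for the forgetful functor, so that $G$ is right adjoint to $F=I_{\mcl{V}ir_c}^{\mcl{W}_p}$. By the general theory of induction along a VOA extension (Appendix~\ref{sect:VW}; see also \cite{huangkirillovlepowski15,creutzigkanademcrae}), the composite $G\circ F$ is naturally isomorphic to $\mcl{W}_p\otimes(-)$ on $\rep(\mcl{V}ir_c)_{\aff}$, where $\mcl{W}_p$ is regarded as the Virasoro module $\bigoplus_{a\geq 1,\ a\ \mathrm{odd}}a\,\mcl{L}_{a,1}$ and $\otimes$ denotes the extension to the ind-category of the vertex tensor product. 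Feeding in the Virasoro fusion rules of \cite{creutzigmcraeyang,mcraeyang}, which give the semisimple decomposition $\mcl{L}_{a,1}\otimes\mcl{L}_{r,s}\cong\bigoplus_{b}\mcl{L}_{b,s}$ with $b$ ranging over $|a-r|+1,|a-r|+3,\dots,a+r-1$, one finds that $\mcl{L}_{b,s}$ appears in $GF(\mcl{L}_{r,s})$ with multiplicity $\sum_a a$, the sum taken over the arithmetic progression $a=|b-r|+1,|b-r|+3,\dots,b+r-1$, and only for those $b$ of parity $\varepsilon(r):=(-1)^{r+1}$ (since $b\equiv a+r-1\pmod 2$ with $a$ odd).

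A one-line computation evaluates this progression: it has $\min(b,r)$ terms with average $\frac{1}{2}(|b-r|+b+r)$, so the sum equals $rb$ in both cases $b\geq r$ and $b<r$. Comparing with the given Virasoro decompositions $GX_s^+=\bigoplus_{m\geq1}(2m-1)\mcl{L}_{2m-1,s}$ and $GX_s^-=\bigoplus_{m\geq1}2m\,\mcl{L}_{2m,s}$, this yields
\[
GF(\mcl{L}_{r,s})\;\cong\;r\cdot GX_s^{\varepsilon(r)}
\]
as Virasoro modules.

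To upgrade this to a $\mcl{W}_p$-module statement, I would argue as follows. Exactness of $G$ shows that any $\mcl{W}_p$-composition factor $X_t^{\epsilon}$ of $F(\mcl{L}_{r,s})$ has all of its Virasoro constituents among the $\mcl{L}_{b,s}$ with $b\equiv\varepsilon(r)\pmod 2$; inspecting the first index (and conformal weight) of the lowest-weight constituents of $GX_t^{\pm}$ forces $X_t^{\epsilon}\cong X_s^{\varepsilon(r)}$, and then comparison of Virasoro restrictions shows $F(\mcl{L}_{r,s})$ has length exactly $r$ with every composition factor isomorphic to $X_s^{\varepsilon(r)}$. By the adjunction $F\dashv G$ and semisimplicity of $GX_s^{\varepsilon(r)}$,
\[
\Hom_{\mcl{W}_p}\!\big(F(\mcl{L}_{r,s}),X_s^{\varepsilon(r)}\big)\;\cong\;\Hom_{\mcl{V}ir_c}\!\big(\mcl{L}_{r,s},GX_s^{\varepsilon(r)}\big),
\]
which is $r$-dimensional because $\mcl{L}_{r,s}$ occurs in $GX_s^{\varepsilon(r)}$ with multiplicity $r$. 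Hence the head of $F(\mcl{L}_{r,s})$ already has $\geq r$ composition factors; as the module has length $r$ this forces $\mathrm{rad}\,F(\mcl{L}_{r,s})=0$, so $F(\mcl{L}_{r,s})$ is semisimple, equal to $r\,X_s^{\varepsilon(r)}$.

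The main obstacle is the input to the first stage: making rigorous both the identification $G\circ F\cong\mcl{W}_p\otimes(-)$ and the adjunction $F\dashv G$ in the setting of Virasoro modules, where $\mcl{V}ir_c$ is not $C_2$-cofinite (this rests on the vertex-tensor-categorical machinery recalled in the appendix), and invoking the Virasoro fusion rules $\mcl{L}_{a,1}\otimes\mcl{L}_{r,s}$ in precisely the semisimple form used above, so that the multiplicity bookkeeping is a genuine direct-sum decomposition rather than merely a Jordan--H\"older count. Granting these inputs, the arithmetic and the head argument are elementary.
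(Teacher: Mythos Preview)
The paper does not prove this proposition; it is quoted verbatim from \cite[Proposition 7.4]{mcraeyang}. Your argument is a correct self-contained derivation along the lines one would expect that reference to follow: compute the Virasoro restriction of $F(\mcl{L}_{r,s})=\mcl{W}_p\otimes\mcl{L}_{r,s}$ via the fusion rules $\mcl{L}_{a,1}\otimes\mcl{L}_{r,s}$, read off that every $\mcl{W}_p$-composition factor must be $X_s^{\varepsilon(r)}$ and that the length is $r$, then use Frobenius reciprocity to force semisimplicity. The arithmetic $\sum a = rb$ is right (the progression has $\min(b,r)$ terms with average $\max(b,r)$), and the head argument is clean.

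Two small comments. First, the inputs you flag are exactly those established in \cite{creutzigkanademcrae,mcraeyang,creutzigjianghunzikerridoutyang}: the induction--restriction adjunction for VOA extensions in the non-$C_2$-cofinite setting, and the Virasoro fusion $\mcl{L}_{a,1}\otimes\mcl{L}_{r,s}\cong\bigoplus_b\mcl{L}_{b,s}$. Second, your composition-factor step only needs these fusion rules at the level of the Grothendieck group, not as honest direct-sum decompositions; so the ``semisimple decomposition'' hypothesis you worry about can be relaxed to a Jordan--H\"older count without affecting the rest of the argument.
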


The above proposition implies, in particular, that the induction of each simple object in $\rep(\mcl{V}ir_c)_{\rm aff}$ lies in the usual category $\rep(\mcl{W}_p)$ of finite length $\mcl{W}_p$-modules.  We apply Lemma \ref{lem:ind} to find

\begin{lemma}\label{lem:Iprime}
Induction provides  a surjective ribbon tensor functor $F:\rep(\mcl{V}ir_c)_{\rm aff}\to \rep(\mcl{W}_p)$.  In particular, $F$ is faithful and exact.
\end{lemma}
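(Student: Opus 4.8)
The plan is to combine the general properties of the induction functor established in Appendix~\ref{sect:VW} (Lemma~\ref{lem:ind}) with the computation of $F$ on the simple objects $\mcl{L}_{r,s}$ recalled just above. By Lemma~\ref{lem:ind}, the induction $F=I_{\mcl{V}ir_c}^{\mcl{W}_p}$ is a braided, right-exact, (strong) monoidal functor $\rep(\mcl{V}ir_c)_{\rm aff}\to\Rep(\mcl{W}_p)$ which sends the unit $\mcl{V}ir_c$ to the unit $\mcl{W}_p$ and which intertwines the twists $\theta=e^{2\pi i L(0)}$ on either side, so that in particular $F$ is compatible with the ribbon structures. The first step is to observe that $F$ in fact takes values in the subcategory $\rep(\mcl{W}_p)$ of \emph{finite length} $\mcl{W}_p$-modules. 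Indeed the preceding proposition gives $F(\mcl{L}_{r,s})=rX_s^{\varepsilon(r)}$, which is of finite length, and every object of $\rep(\mcl{V}ir_c)_{\rm aff}$ has a finite filtration with subquotients among the $\mcl{L}_{r,s}$; so an induction on composition length, using right-exactness of $F$ (for a short exact sequence $0\to A\to B\to C\to 0$ the object $F(B)$ is an extension of a subobject of $F(C)$ by a quotient of $F(A)$), shows that $F(M)$ has finite length for every $M$. Thus $F$ corestricts to a functor $F\colon\rep(\mcl{V}ir_c)_{\rm aff}\to\rep(\mcl{W}_p)$.

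Next I would upgrade right-exactness to exactness and deduce faithfulness. The category $\rep(\mcl{V}ir_c)_{\rm aff}$ is a tensor category in the sense of Section~\ref{sect:C} (rigid, with all objects of finite length and simple unit, though not a \emph{finite} tensor category), and $\rep(\mcl{W}_p)$ is a finite tensor category; a monoidal functor between rigid monoidal categories automatically preserves left and right duals, and the standard duality argument (cf.\ \cite[Exercise 2.10.6]{egno15}) then turns the right-exact monoidal functor $F$ into an exact one. Hence $F$ is a tensor functor in the sense of Section~\ref{sect:C}, and Proposition~\ref{prop:faithful} gives that $F$ is faithful. Combined with the twist-compatibility supplied by Lemma~\ref{lem:ind}, this shows $F$ is a ribbon tensor functor.

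Finally, surjectivity follows from tensor generation. Let $\mathcal{S}\subseteq\rep(\mcl{W}_p)$ be the full subcategory of objects which occur as a subquotient of some $F(M)$ with $M\in\rep(\mcl{V}ir_c)_{\rm aff}$. Since $F$ is exact, monoidal and duality-preserving, $\mathcal{S}$ is closed under tensor products, duals and subquotients, and it contains $\1=F(\mcl{V}ir_c)$ as well as $X_2^+=F(\mcl{L}_{1,2})$. So $\mathcal{S}$ is a tensor subcategory of $\rep(\mcl{W}_p)$, closed under subquotients, containing the generator $X_2^+$; whence $\mathcal{S}=\rep(\mcl{W}_p)$ by Corollary~\ref{cor:wp_gen}, i.e.\ $F$ is surjective. (Alternatively, since $F(\mcl{L}_{1,s})=X_s^+$ and $F(\mcl{L}_{2,s})=2X_s^-$ exhibit every simple object of $\rep(\mcl{W}_p)$ in the essential image, one may instead rerun the argument of Corollary~\ref{cor:wp_gen} verbatim to reach the projective covers.)

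The only step that really requires care is the very first one — checking that the abstractly-constructed induction functor genuinely lands in the finite tensor category $\rep(\mcl{W}_p)$ — and this rests entirely on the external computation of $F$ on the simples $\mcl{L}_{r,s}$ together with right-exactness; everything else is a formal manipulation with (rigid) tensor categories.
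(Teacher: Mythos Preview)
Your proof is correct and follows essentially the same route as the paper: use the general induction machinery of Appendix~\ref{sect:VW} together with the computation $F(\mcl{L}_{1,2})=X_2^+$ to obtain an exact braided tensor functor, invoke Proposition~\ref{prop:faithful} for faithfulness, and use Corollary~\ref{cor:wp_gen} for surjectivity. One small imprecision: Lemma~\ref{lem:ind} does not itself assert twist compatibility, so the ribbon statement needs the separate (easy) observation that $\theta=e^{2\pi i L(0)}$ is preserved under induction---the paper makes this explicit via centrality of $\mcl{W}_p$ and the balancing axiom $\theta_{\mcl{W}_p\ot\mcl{L}}=(\theta_{\mcl{W}_p}\ot\theta_{\mcl{L}})c^2_{\mcl{W}_p,\mcl{L}}$.
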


\begin{proof}
Any such induction functor is braided monoidal \cite[Theorem 1.6]{kirillovostrik02}, and exactness of $F$ follows from exactness of the tensor product on $\rep(\mcl{V}ir_c)_{\aff}$ \cite[Theorem 3.65]{creutzigkanademcrae}, or more directly from exactness of the tensor product and Lemma \ref{lem:ind}.  Faithfulness holds because any tensor functor between tensor categories is faithful \cite[Proposition 1.19]{delignemilne82}.  Since we have $F(\mcl{L}_{1,2})=X^+_2$, and since $X^+_2$ generates the tensor category $\rep(\mcl{W}_p)$ (Corollary \ref{cor:wp_gen}), we also see that $F$ is surjective.  Compatibility with the ribbon structure follows from the fact that $\mcl{W}_p$ is central in $\Rep(\mcl{V}ir_c)_{\rm aff}$ and is a fixed point for the twist, so that
\[
e^{2\pi i L(0)}|_{F(\mcl{L})}=\theta_{\mcl{W}_p,\mcl{L}}=(\theta_{\mcl{W}_p}\ot\theta_{\mcl{L}})c^2_{\mcl{W}_p,\mcl{L}}=\theta_{\mcl{L}}=e^{2\pi i L(0)}|_{\mcl{L}}
\]
at arbitrary $\mcl{L}$ in $\rep(\mcl{V}ir_c)_{\rm aff}$.
\end{proof}

\subsection{At the level of Grothendieck rings}

\begin{lemma}
There is a ribbon tensor functor $\K:\rep\SL(2)_q\to \rep(\mcl{V}ir_c)_{\aff}$ which fits into the diagram \eqref{eq:1192}.  In terms of Ostrik's theorem (Theorems \ref{thm:ostrik} and \ref{thm:braidedostrik}), $\K$ is specified by the self-dual simple object $\mcl{L}_{1,2}$ in $\rep(\mcl{V}ir_c)_{\aff}$.
\end{lemma}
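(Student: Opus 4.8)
The plan is to obtain $\K$ as the Ostrik functor $F_{\mcl{L}_{1,2}}$ of Theorem~\ref{thm:braidedostrik}, transporting the hypotheses of that theorem for $\mcl{L}_{1,2}$ from the already-established properties of $X^+_2$ along the induction functor $F=I_{\mcl{V}ir_c}^{\mcl{W}_p}=\mcl{W}_p\ot-$. First I would record that $\mcl{L}_{1,2}$ is self-dual: its contragredient is again a simple $\mcl{V}ir_c$-module of lowest conformal weight $h_{1,2}$, and simple Virasoro modules are determined by their conformal weight, so $\mcl{L}_{1,2}^\ast\cong\mcl{L}_{1,2}$. By Lemma~\ref{lem:Iprime} the functor $F$ is faithful, exact, and ribbon, and $F(\mcl{L}_{1,2})\cong X^+_2$. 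Since $F$ is monoidal it carries any (co)evaluation data for $\mcl{L}_{1,2}$ to (co)evaluation data for $X^+_2$, and since $F$ is the identity on $\End(\1)=\mbb{C}$, it preserves intrinsic dimension of simple objects; hence $d(\mcl{L}_{1,2})=d(X^+_2)=-(q+q^{-1})$ by Proposition~\ref{prop:dimX2}. As $q=e^{\pi i/p}$, the quantum order $\ord(q^2)=p$ is finite.

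Next I would verify non-reducedness and the braiding condition. By the universal property of Corollary~\ref{cor:809} one has $F\circ F_{\mcl{L}_{1,2}}=F_{X^+_2}$, so $F$ sends the distinguished objects $(\mcl{L}_{1,2})_s$ to $(X^+_2)_s\cong X^+_s$ (as in the proof of Lemma~\ref{lem:X2_nr}); in particular $F((\mcl{L}_{1,2})_p)\cong X^+_p\neq 0$ by Proposition~\ref{prop:319}, so $(\mcl{L}_{1,2})_p\neq 0$ by faithfulness, i.e.\ $\mcl{L}_{1,2}$ is non-reduced. For the braiding, faithfulness of $F$ gives an injection $\End(\mcl{L}_{1,2}\ot\mcl{L}_{1,2})\hookrightarrow\End(X^+_2\ot X^+_2)$; the target is $2$-dimensional (cf.\ the proof of Proposition~\ref{prop:dimX2}) and the source already contains the linearly independent pair $\{\,\mathrm{id},\,f_{\mcl{L}_{1,2}}\}$, whose images $\{\,\mathrm{id},\,f_{X^+_2}\}$ span the target; hence $\End(\mcl{L}_{1,2}\ot\mcl{L}_{1,2})=\mathrm{span}(\mathrm{id},f_{\mcl{L}_{1,2}})$, so $c_{\mcl{L}_{1,2},\mcl{L}_{1,2}}$ lies in this span. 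All hypotheses of Theorem~\ref{thm:braidedostrik} are then met, so $\mcl{L}_{1,2}$ determines a braided tensor functor $\K:=F_{\mcl{L}_{1,2}}:\rep\SL(2)_q\to\rep(\mcl{V}ir_c)_{\aff}$ with $\K(\mbb{V})\cong\mcl{L}_{1,2}$, braided for one of the four structures of Proposition~\ref{prop:braidingsTL}. Applying $F$ to the identity $c_{\mcl{L}_{1,2},\mcl{L}_{1,2}}=\K(c_{\mbb{V},\mbb{V}})$ and using $F(f_{\mcl{L}_{1,2}})=f_{X^+_2}$, Proposition~\ref{prop:c_XX}, and injectivity of $F$ on endomorphisms, I would identify $c_{\mcl{L}_{1,2},\mcl{L}_{1,2}}=q^{-1/2}f_{\mcl{L}_{1,2}}+q^{1/2}$, which is exactly the value on $\mbb{V}$ of the \emph{reverse} braiding; so $\K$ is braided as a functor $\rep\SL(2)_q^{\rm rev}\to\rep(\mcl{V}ir_c)_{\aff}$.

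Finally I would establish the square~\eqref{eq:1192} together with the ribbon compatibility, for which the faithfulness of $F$ again does the essential work. Both $F\circ\K$ and $\Theta\circ\res^\omega$ are braided tensor functors $\rep\SL(2)_q^{\rm rev}\to\rep(\mcl{W}_p)$ sending $\mbb{V}$ to $X^+_2$ --- for the latter, combine Theorem~\ref{thm:triplet} with the identification of $dE$ and $\res^\omega$ in Theorem~\ref{thm:N} --- and by the uniqueness clauses in Theorems~\ref{thm:ostrik} and~\ref{thm:braidedostrik} they are naturally isomorphic, which is precisely the commutativity of~\eqref{eq:1192}. Consequently $F\circ\K$ is a ribbon tensor functor, being isomorphic to the composite of the ribbon functors $\res^\omega$ and $\Theta$, and since $F$ is itself ribbon and faithful, for every $V$ the equalities $F(\K(\theta_V))=\theta_{F\K(V)}=F(\theta_{\K(V)})$ force $\K(\theta_V)=\theta_{\K(V)}$; thus $\K$ is ribbon. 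I expect the main obstacle to be not any single computation --- these are all cheap transports along $F$ --- but the bookkeeping: matching the precise ``reverse'' braided/ribbon structures on both sides and invoking the uniqueness statements in Ostrik's theorem and in the construction of $\Theta$ carefully enough that the $2$-cell~\eqref{eq:1192} genuinely commutes. It is exactly the faithfulness of the induction functor $F$ that lets the ribbon compatibility of $\K$ fall out for free, bypassing any independent classification of twists on $\rep\SL(2)_q$.
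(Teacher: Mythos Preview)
Your proposal is correct and follows essentially the same approach as the paper: transport the hypotheses of Theorem~\ref{thm:braidedostrik} from $X^+_2$ to $\mcl{L}_{1,2}$ along the faithful ribbon induction functor $F$, then use uniqueness in Ostrik's theorem to obtain the diagram~\eqref{eq:1192}, and deduce the ribbon compatibility of $\K$ from that of $F$ and $F\circ\K$. The paper is terser---it does not spell out the $\End(\mcl{L}_{1,2}^{\ot 2})$ dimension count or the explicit $\K(\theta_V)=\theta_{\K(V)}$ step---but the logic is the same.
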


\begin{proof}
We have the self-dual object $\mcl{L}_{1,2}$ in $\rep(\mcl{V}ir_c)_{\aff}$ \cite[Theorem 4.1]{mcraeyang} which is of intrinsic dimension
\[
d=d(\mcl{L}_{1,2})=d\left(F(\mcl{L}_{1,2})\right)=d(X^+_2)=-(q+q^{-1}).
\]
So we obtain, via the universal property of the Temperley-Lieb category, a linear monoidal functor $F_{\mcl{L}_{1,2}}:\mcl{TL}(d)\to \rep(\mcl{V}ir_c)_{\aff}$.  The composition
\[
\mcl{TL}(d)\to \rep(\mcl{V}ir_c)_{\aff}\overset{F}\to \rep(\mcl{W}_p)
\]
sends $\mcl{L}_{1,2}$ to $X^+_2$, so that the composite is the universal map specified by the self-dual object $X^+_2$ in $\rep(\mcl{W}_p)$.  We have already seen that this map to $\rep(\mcl{W}_p)$ is faithful, via Proposition \ref{prop:dimX2}, Theorem \ref{thm:ostrik}, and Proposition \ref{prop:faithful}, so that the original map $F_{\mcl{L}_{1,2}}$ to $\rep(\mcl{V}ir_c)_{\aff}$ must be faithful as well.
\par

By Ostrik's theorem we now have a unique extension $\K:\rep\SL(2)_q\to \rep(\mcl{V}ir_c)_{\aff}$ of $F_{\mcl{L}_{1,2}}$ to the category of all quantum group representations.  Furthermore, by Theorem \ref{thm:braidedostrik}, and the fact that the induction functor $F:\rep(\mcl{V}ir_c)_{\aff}\to \rep(\mcl{W}_p)$ is a faithful ribbon tensor functor, we find that $\K$ is a map of ribbon tensor categories.  The diagram \eqref{eq:1192} commutes simply because the functor $\Theta:\rep(u_q(\mfk{sl}_2))\to \rep(\mcl{W}_p)$ is constructed by de-equivariantizing the universal map $\rep\SL(2)_q\to \rep(\mcl{W}_p)$ specified by the self-dual object $X^+_2$, which is (up to natural isomorphism) just the composite $F\circ \K$.
\end{proof}

\begin{center}
\emph{From this point on $\K:\rep\SL(2)_q\to \rep(\mcl{V}ir_c)_{\aff}$ denotes the braided tensor functor specified by the self-dual object $\mcl{L}_{1,2}$ in $\rep(\mcl{V}ir_c)_{\aff}$}.\vspace{2mm}
\end{center}

We recall that the collection of all simples in $\rep\SL(2)_q$ is precisely the collection of products $\{L(\mu)\ot V_s:\mu\in \mbb{Z}_{\geq 0}\frac{p\alpha}{2},1\leq s\leq p\}$.  The following describes the behavior of the functor $\operatorname{K}$ on simple objects.

\begin{lemma}\label{lem:grring}
Take $\mu_r=rp\alpha/2$ for any nonnegative integer $r$.  For the simple objects in $\rep\SL(2)_q$ we have $\K\left(L(\mu_r)\ot V_s\right)\cong \mcl{L}_{r+1,s}$.
\end{lemma}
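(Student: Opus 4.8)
\emph{Proof proposal.} The plan is to leverage the commuting diagram \eqref{eq:1192}, which reduces the computation to one in $\rep(\mcl{W}_p)$, together with the known fusion rules of $\rep(\mcl{V}ir_c)_{\aff}$. First I would record that the $\SL(2)_q$-module $L(\mu_r)$ has $E$ and $F$ acting by zero and $K$ acting by $(-1)^r$ on every weight space; hence its restriction to $u_q(\mfk{sl}_2)$ is $(r+1)$ copies of $\1$ when $r$ is even and $(r+1)$ copies of $\chi$ when $r$ is odd. Using that $\res^\omega$ is monoidal, that $\Theta(V_s)\cong X^+_s$ and $\Theta(\chi\ot V_s)\cong X^-_s$ (as in the proof of Lemma \ref{lem:fpdim}, since $\Theta$ induces the isomorphism \eqref{eq:353}), and the diagram \eqref{eq:1192}, I obtain
\[
F\big(\K(L(\mu_r)\ot V_s)\big)\cong \Theta\big(\res^\omega(L(\mu_r)\ot V_s)\big)\cong (r+1)\,X^{\varepsilon(r+1)}_s\cong F(\mcl{L}_{r+1,s}),
\]
where $\varepsilon(r+1)=(-1)^r$ and the last identification is \cite[Proposition 7.4]{mcraeyang}. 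Since $F$ is faithful and exact and carries $\mcl{L}_{a,b}$ to $a\,X^{\varepsilon(a)}_b$, every composition factor of $\K(L(\mu_r)\ot V_s)$ is of the form $\mcl{L}_{a_i,s}$ with $a_i\equiv r+1\pmod 2$ and $\sum_i a_i=r+1$.

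For $r=0$ and $r=1$ this parity/length constraint already forces a single composition factor, namely $\mcl{L}_{1,s}$ (so $\K(V_s)\cong\mcl{L}_{1,s}$) and $\mcl{L}_{2,1}$ (so $\K(L(\mu_1))\cong\mcl{L}_{2,1}$) respectively; here I use the elementary fact that a finite-length object whose class equals a simple class with multiplicity one is that simple object. For $r\ge 2$ I would first treat $s=1$ by induction on $r$. The key input on the quantum group side is the Clebsch--Gordan decomposition $L(\mu_1)\ot L(\mu_{r-1})\cong L(\mu_r)\oplus L(\mu_{r-2})$ in $\rep\SL(2)_q$: this module is annihilated by $E$ and $F$, hence is a finite-dimensional module over the classical enveloping algebra acting through $E^{(p)}$ and $F^{(p)}$, hence semisimple, and a weight-space count pins down its summands. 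Applying $\K$ and the inductive hypothesis gives $\mcl{L}_{2,1}\ot\mcl{L}_{r,1}\cong \K(L(\mu_r))\oplus\mcl{L}_{r-1,1}$; comparing with the fusion rule $\mcl{L}_{2,1}\ot\mcl{L}_{r,1}\cong\mcl{L}_{r+1,1}\oplus\mcl{L}_{r-1,1}$ of $\rep(\mcl{V}ir_c)_{\aff}$ (conjectured in \cite{bfgt09}, established in \cite{mcraeyang}) then yields $\K(L(\mu_r))\cong\mcl{L}_{r+1,1}$. Finally, for general $s$ I would write $\K(L(\mu_r)\ot V_s)\cong \K(L(\mu_r))\ot\K(V_s)\cong \mcl{L}_{r+1,1}\ot\mcl{L}_{1,s}$ and invoke the fusion rule $\mcl{L}_{r+1,1}\ot\mcl{L}_{1,s}\cong\mcl{L}_{r+1,s}$.

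A more economical route, fitting the title of this subsection, is to argue entirely in Grothendieck rings. Since $\mbb{V}$ tensor generates $\rep\SL(2)_q$, the class $[\mbb{V}]$ generates $K(\rep\SL(2)_q)$ as a unital ring, so the ring homomorphism induced by the (exact, monoidal) functor $\K$ is the unique ring homomorphism $K(\rep\SL(2)_q)\to K(\rep(\mcl{V}ir_c)_{\aff})$ sending $[\mbb{V}]=[V_2]$ to $[\mcl{L}_{1,2}]$. By \cite{bfgt09,mcraeyang} this coincides with their fusion-ring isomorphism, which sends $[L(\mu_r)\ot V_s]$ to $[\mcl{L}_{r+1,s}]$; the passage from classes back to objects is again the elementary fact quoted above.

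I expect the main obstacle to be precisely that the identity $F\big(\K(L(\mu_r)\ot V_s)\big)\cong F(\mcl{L}_{r+1,s})$ does not by itself give the lemma, because a faithful exact tensor functor need not reflect isomorphisms or simplicity; the $F$-image only controls the composition factors up to parity and total length, which suffices for $\K(\mbb{V})$, $\K(V_s)$ and $\K(L(\mu_1))$ but not for $r\ge 2$. Thus one genuinely relies on the fusion rules (equivalently, the fusion-ring isomorphism) of $\rep(\mcl{V}ir_c)_{\aff}$ from \cite{bfgt09,mcraeyang}. The only other point needing a little care is the decomposition $L(\mu_1)\ot L(\mu_{r-1})\cong L(\mu_r)\oplus L(\mu_{r-2})$ in $\rep\SL(2)_q$, together with the observation that a finite-length object whose Grothendieck class is a single simple class with multiplicity one must itself be simple.
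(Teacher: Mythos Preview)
Your proposal is correct. Your second, ``economical'' route is essentially the paper's own argument: the paper runs an induction on tensor powers of the generator, observing via the fusion rules of \cite[\S 2.5]{ostrik08} and \cite[Theorem 4.11]{mcraeyang} that $\mbb{V}^{\ot n}$ and $\mcl{L}_{1,2}^{\ot n}$ have the same length and each contains exactly one simple composition factor not seen in lower powers, so by exactness $\K$ must send the new simple to the new simple. This is precisely your observation that $[\K]$ is the unique ring map with $[\mbb{V}]\mapsto[\mcl{L}_{1,2}]$, just unpacked degree by degree.

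Your first route---through the square \eqref{eq:1192} to $\rep(\mcl{W}_p)$ and then back via Clebsch--Gordan in $\rep\PSL(2)$ and the Virasoro fusion rules---is a valid alternative the paper does not take. Its virtue is that it recycles the already-proved equivalence $\Theta$ to pin down the base cases $r\le 1$ without touching the Virasoro fusion rules (only the induction formula $F(\mcl{L}_{r,s})=rX^{\varepsilon(r)}_s$ is needed there), and your parity/length bookkeeping via $F$ is a nice consistency check. On the other hand, for $r\ge 2$ you still invoke the fusion rules $\mcl{L}_{2,1}\ot\mcl{L}_{r,1}\cong\mcl{L}_{r+1,1}\oplus\mcl{L}_{r-1,1}$ and $\mcl{L}_{r+1,1}\ot\mcl{L}_{1,s}\cong\mcl{L}_{r+1,s}$ from \cite{mcraeyang}, so the net external input is comparable. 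The paper's tensor-power induction is marginally more self-contained in that it only needs the recursive structure of powers of the generator on both sides, rather than appealing to a pre-packaged fusion-ring isomorphism.
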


\begin{proof}
By the descriptions of the fusion rules for $\rep\SL(2)_q$ and $\rep(\mcl{V}ir_c)_{\aff}$, given for example in \cite[\S 2.5]{ostrik08} and \cite[Theorem 4.11]{mcraeyang}, we observe that the lengths of the powers $\mbb{V}^{\ot n}$ and $\mcl{L}_{1,2}^{\ot n}$ are the same, and that each such power has precisely one simple composition factor which does not appear in a lower tensor power.  Specifically, the simples $L(\mu_r)\ot V_s$ and $\mcl{L}_{r+1,s}$ appear first in the powers $\mbb{V}^{\ot (pr+s)}$ and $\mcl{L}_{1,2}^{\ot(pr+s)}$.  So we observe by induction, and exactness of $\K$, that the simples in $\rep\SL(2)_q$ map to simples in $\rep(\mcl{V}ir_c)_{\aff}$, in the prescribed manner.
\end{proof}

The following corollary is immediate.  

\begin{corollary}\label{cor:grring}
The induced map on Grothendieck rings $[\operatorname{K}]:K(\rep\SL(2)_q)\to K(\rep(\mcl{V}ir_c)_{\aff})$ is an isomorphism of $\mbb{Z}_+$-rings.
\end{corollary}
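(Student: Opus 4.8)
The plan is to deduce the statement directly from Lemma \ref{lem:grring}, together with the elementary fact that an exact tensor functor induces a unital ring homomorphism on Grothendieck rings. First I would record that, since $\K$ is exact, the assignment $[M]\mapsto [\K(M)]$ is well defined and additive on $K(\rep\SL(2)_q)$, and since $\K$ is a tensor functor it sends $[\1]$ to $[\1]$ and respects products; hence $[\K]$ is a homomorphism of unital rings. By the very definition of the Grothendieck ring of a tensor category, both $K(\rep\SL(2)_q)$ and $K(\rep(\mcl{V}ir_c)_{\aff})$ are $\mbb{Z}_+$-rings whose distinguished bases are the isoclasses of their simple objects, so it only remains to check that $[\K]$ identifies these two bases.

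Next I would invoke the classification of simples recalled just before Lemma \ref{lem:grring}: the simples of $\rep\SL(2)_q$ are exactly the $L(\mu_r)\ot V_s$ with $r\in\mbb{Z}_{\geq 0}$, $\mu_r=rp\alpha/2$, and $1\leq s\leq p$, while the simples of $\rep(\mcl{V}ir_c)_{\aff}$ are exactly the $\mcl{L}_{r',s}$ with $r'\in\mbb{Z}_{\geq 1}$ and $1\leq s\leq p$ (the affine category contains all the integral simples, by the discussion of Section \ref{sect:Vaff}). The assignment $(r,s)\mapsto (r+1,s)$ is a bijection between these two index sets. By Lemma \ref{lem:grring} we have $[\K]\big([L(\mu_r)\ot V_s]\big)=[\mcl{L}_{r+1,s}]$, so $[\K]$ carries the distinguished $\mbb{Z}$-basis of $K(\rep\SL(2)_q)$ bijectively onto that of $K(\rep(\mcl{V}ir_c)_{\aff})$. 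A ring homomorphism that restricts to a bijection of the distinguished bases is in particular bijective and matches the $\mbb{Z}_+$-structures, hence is an isomorphism of $\mbb{Z}_+$-rings.

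There is no real obstacle here: the corollary is immediate from Lemma \ref{lem:grring}, and the only point requiring care is that the induced map on simples is an honest bijection --- i.e.\ that $\K$ neither merges nor omits any simple object --- which is precisely what Lemma \ref{lem:grring} records (and which in turn rests on the ``first appearance in $\mbb{V}^{\ot n}$'' comparison of fusion rules used in its proof). Everything else is bookkeeping, so I would keep the written-out proof to a line or two citing Lemma \ref{lem:grring}.
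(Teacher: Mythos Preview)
Your proposal is correct and matches the paper's approach exactly: the paper states this corollary as immediate from Lemma~\ref{lem:grring}, and your argument---that $[\K]$ is a ring map because $\K$ is an exact tensor functor, and a bijection on the distinguished bases of simples by Lemma~\ref{lem:grring}---is precisely the intended one-line justification.
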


\subsection{Projective representations}

\begin{proposition}\label{prop:1245}
The functor $\K$ sends indecomposable projectives to indecomposable projectives.
\end{proposition}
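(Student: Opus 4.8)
The plan is to prove two things: first, that $\K$ sends projective objects to projective objects, and second, that it sends indecomposable projectives to indecomposable objects.

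For projectivity I would begin by pinning down $\K$ on the Steinberg module. Recall $V_p$ is projective in $\rep\SL(2)_q$ \cite[Lemma 3.2.1]{bfgt09}, and by Lemma \ref{lem:grring} (with $r=0$, $s=p$) we have $\K(V_p)\cong\mcl{L}_{1,p}$. The object $\mcl{L}_{1,p}$ is projective in $\rep(\mcl{V}ir_c)_{\aff}$: indeed $F(\mcl{L}_{1,p})=X^+_p$ is projective in $\rep(\mcl{W}_p)$ by Proposition \ref{prop:319}; since $\mcl{W}_p$ is a self-contragredient (Frobenius) commutative algebra in $\Rep(\mcl{V}ir_c)_{\aff}$, restriction $\res:\Rep(\mcl{W}_p)\to\Rep(\mcl{V}ir_c)_{\aff}$ is left adjoint to the exact induction functor $F$ (Lemma \ref{lem:Iprime}) and hence preserves projectives, while $\mcl{L}_{1,p}$ occurs as a direct summand of $\res(X^+_p)=\mcl{W}_p\ot\mcl{L}_{1,p}$ because the vacuum copy $\mcl{V}ir_c=\mcl{L}_{1,1}=\1$ is a $\mcl{V}ir_c$-summand of $\mcl{W}_p$ of multiplicity one; a summand of a projective of finite length is projective in $\rep(\mcl{V}ir_c)_{\aff}$. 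Now every projective $P$ in $\rep\SL(2)_q$ is a direct summand of an object of the form $V_p\ot Y$: the object $V_p\ot V_p$ is projective and, as $V_p$ is self-dual, $\Hom(V_p\ot V_p,\1)=\End(V_p)\neq 0$, so $\1$ (hence $L(\lambda)=\1\ot L(\lambda)$) is a quotient of the projective $V_p\ot V_p\ot L(\lambda)$, and therefore the projective cover of $L(\lambda)$ is a summand of it. Applying the exact tensor functor $\K$ and using that projectives form a tensor ideal in the rigid category $\rep(\mcl{V}ir_c)_{\aff}$ (rigidity plus exactness of $\ot$, \cite[Theorem 3.65]{creutzigkanademcrae}), $\K(P)$ becomes a direct summand of $\mcl{L}_{1,p}\ot\K(Y)$, which is projective. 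Hence $\K(P)$ is projective.

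For indecomposability, let $P$ be the projective cover of a simple $S$ in $\rep\SL(2)_q$. By exactness and Lemma \ref{lem:grring}, $\K$ carries a composition series of $P$ to one of $\K(P)$ and induces a bijection on simple objects, so $[\K(P):\K(T)]=[P:T]$ for every simple $T$, and the surjection $P\twoheadrightarrow S$ yields a surjection $\K(P)\twoheadrightarrow\K(S)$ onto a simple object. Since $\K(P)$ is projective, Krull--Schmidt gives $\K(P)\cong\mcl{Q}\oplus\mcl{R}$ with $\mcl{Q}$ the projective cover of $\K(S)$ and $\mcl{R}$ projective, and it remains to show $\mcl{R}=0$. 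Comparing composition lengths, $\ell(\mcl{Q})+\ell(\mcl{R})=\ell(\K(P))=\ell(P)$, so it suffices to know that the indecomposable projective covers in $\rep\SL(2)_q$ and in $\rep(\mcl{V}ir_c)_{\aff}$ have matching composition multiplicities under the correspondence of Lemma \ref{lem:grring}, i.e.\ that the two decomposition matrices agree. For this I would use that $\K$ is exact and carries the Weyl modules of $\rep\SL(2)_q$ to the standard modules of $\rep(\mcl{V}ir_c)_{\aff}$ — these being characterised, as in the proof of Lemma \ref{lem:grring}, by first appearance in tensor powers of the generator — together with BGG reciprocity and the self-duality of standard modules on both sides, so that the Cartan matrices correspond and $[P:T]=[\mcl{Q}:\K(T)]$ for all $T$, whence $\ell(P)=\ell(\mcl{Q})$ and $\mcl{R}=0$. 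Thus $\K(P)\cong\mcl{Q}$ is an indecomposable projective.

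The main obstacle is the last input above: the identification of the two decomposition matrices, equivalently the compatibility of $\K$ with the highest-weight structures on $\rep\SL(2)_q$ and $\rep(\mcl{V}ir_c)_{\aff}$. At the level of simple objects this is exactly Corollary \ref{cor:grring} (the isomorphism of fusion rings conjectured in \cite{bfgt09}); what is genuinely needed here is its refinement from simples to standard modules, which can either be extracted from the explicit module-theoretic analyses of \cite{bfgt09,mcraeyang} or derived from the highest-weight structure of $\rep(\mcl{V}ir_c)_{\aff}$ of \cite{creutzigjianghunzikerridoutyang,mcraeyang}. Everything else — exactness of $\K$, the Frobenius-reciprocity adjunction used for projectivity, the tensor-ideal property of projectives, and Krull--Schmidt — is routine.
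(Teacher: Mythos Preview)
Your projectivity argument is reasonable, though it leans on the Frobenius property of $\mcl{W}_p$ as an algebra object in $\Rep(\mcl{V}ir_c)_{\aff}$ to get that restriction has an exact right adjoint; this is plausible from self-contragredience of $\mcl{W}_p$ but not something the paper sets up. The paper instead simply cites \cite{mcraeyang} for the projectivity of $\mcl{L}_{1,p}$ and of the $\mcl{Q}_{r,s}$.

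The real issue is your indecomposability step. You correctly flag the obstacle: you need the Cartan matrices of $\rep\SL(2)_q$ and $\rep(\mcl{V}ir_c)_{\aff}$ to match under $\K$, and you propose to extract this from a comparison of standard modules and BGG reciprocity. But your characterisation of standard modules ``by first appearance in tensor powers of the generator'' is not right---that argument in Lemma~\ref{lem:grring} identifies \emph{simples}, not Weyl or standard modules---so the compatibility of $\K$ with the highest-weight structures is genuinely unproved in your sketch, and appealing to \cite{bfgt09,mcraeyang} for it is circular relative to what the paper is trying to establish.

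The paper avoids this entirely by exploiting the diagram~\eqref{eq:1192} and the already-proved equivalence $\Theta$. For the projective cover $P_1$ of $\1$ in $\rep\SL(2)_q$ one has $F(\K(P_1))\cong\Theta(\res^\omega P_1)\cong\msc{P}^+_1$; on the other hand the known composition series of $\mcl{Q}_1$ from \cite{mcraeyang} gives $F(\mcl{Q}_1)\cong\msc{P}^+_1$ directly. Projectivity of $\mcl{Q}_1$ produces a map $\mcl{Q}_1\to\K(P_1)$ covering the surjection to $\mcl{L}_{1,1}$; after applying $F$ this is an endomorphism of $\msc{P}^+_1$ hitting the simple head, hence an isomorphism, and since $F$ is exact and faithful the original map is an isomorphism too. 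The remaining indecomposable projectives are handled by tensoring with the invertible simples $\mcl{L}_{r,1}$. So the paper trades your decomposition-matrix comparison for a single use of faithfulness of $F$ together with the equivalence $\Theta$ already in hand---a much shorter route, and one that does not require any highest-weight input on the Virasoro side beyond the composition series of $\mcl{Q}_1$.
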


\begin{proof}
Since the induction functor $F:\rep(\mcl{V}ir_c)_{\aff}\to \rep(\mcl{W}_p)$ has an exact right adjoint, given by restriction along the algebra inclusion $\mcl{V}ir_c\to \mcl{W}_p$, it follows that induction preserves projective objects.  The object $\1=\mcl{L}_{1,1}$ in $\rep(\mcl{V}ir_c)_{\aff}$ has a projective cover $\mcl{Q}_1$ with simple socle and cosocle, both of which are just a copy of $\mcl{L}_{1,1}$.  We have the composition series
\[
[\mcl{Q}_{1}]=2[\mcl{L}_{1,1}]+[\mcl{L}_{2,p-1}]
\]
\cite[Theorem 5.7]{mcraeyang}.
\par

Since $F(\mcl{L}_{2,p-1})\cong (X^-_{p-1})^{\oplus 2}$ \cite[Proposition 7.4]{mcraeyang}, we see that the induction $F(\mcl{Q}_{1})$ is a length $4$ projective in $\rep(\mcl{W}_p)$ which comes equipped with a surjective map to the unit object.  Since the projective cover $\msc{P}^+_1$ of the unit in $\rep(\mcl{W}_p)$ is also of length $4$, we have $\msc{P}^+_1\cong F(\mcl{Q}_{1})$.
\par

Take $P_1$ the projective cover of the unit $\1$ in $\rep\SL(2)_q$.  This is just a lift of the corresponding projective cover in $\rep(u_q(\mfk{sl}_2))$, and so the image under composition
\[
F\circ\K(P_1)\cong \Theta(\operatorname{res}P_1)\cong \msc{P}^+_1
\]
is the projective cover of $X^+_1$.  Now, in $\rep(\mcl{V}ir_c)_{\aff}$ projectivity of $\mcl{Q}_1$ implies the existence of a map $\mcl{Q}_1\to \K(P_1)$ which lifts the surjection $\K(P_1)\to \K(\1)=\mcl{L}_{1,1}$.  After applying induction to $\rep(\mcl{W}_p)$, any such map is an isomorphism.  Since induction is exact and faithful, it follows that the lift $\mcl{Q}_1\to \K(P_1)$ is an isomorphism in $\rep(\mcl{V}ir_c)_{\aff}$.  Hence, $\K$ sends the projective cover $P_1$ of $\1$ to the projective cover $\mcl{Q}_1$ of the unit in $\rep(\mcl{V}ir_c)_{\aff}$.
\par

One similarly argues that the image $\K(P_s)$ of the projective cover of each simple $V_s$ in $\rep\SL(2)_q$ is the projective cover of the corresponding simple $\mcl{L}_{1,s}$ in $\rep(\mcl{V}ir_c)_{\aff}$, for $1\leq s<p$.  For the projective simples $V_p$ and $\mcl{L}_{1,p}$, we have already established that $\K(V_p)=\mcl{L}_{1,p}$ at Lemma \ref{lem:grring}.
\par

Now, for the remaining indecomposable projectives we have $\mcl{Q}_{r,s}=\mcl{L}_{r,1}\ot\mcl{Q}_{1,s}$ in $\rep(\mcl{V}ir_c)_{\aff}$ \cite[Theorem 5.9]{mcraeyang}, so that $\K(L(rp\alpha/2)\ot P_s)\cong \mcl{Q}_{r,s}$ by the above calculations and Lemma \ref{lem:grring}.  We note finally that each product $L(rp\alpha/2)\ot P_s$ in $\rep\SL(2)_q$ is the projective cover of the simple $L(rp\alpha/2)\ot V_s$ \cite[Lemma 4.1]{bfgt09} to complete the proof.
\end{proof}

As we saw in the proof, Proposition \ref{prop:1245} implies that $\K$ sends the projective cover of a given simple in $\rep\SL(2)_q$ to the projective cover of the corresponding simple in $\rep(\mcl{V}ir_c)_{\aff}$.

\subsection{The proof of Theorem \ref{thm:LMM}}

\begin{proof}[Proof of Theorem \ref{thm:LMM}]
By Proposition \ref{prop:1245} all projectives in $\rep(\mcl{V}ir_c)_{\aff}$ are in the image of $\K$.  By considering projective resolutions, we see that $\K$ is fully faithful, and also essentially surjective, if and only if its restriction to the additive subcategory of projectives in $\rep\SL(2)_q$ is fully faithful.  Rather, $\K$ is an \emph{equivalence} if and only if its restriction to the subcategory of projectives is fully faithful.
\par

As any tensor functor is faithful \cite[Proposition 1.19]{delignemilne82}, we already know that $\K$ is faithful.  So we need only establish an equality of dimensions
\begin{equation}\label{eq:1279}
\dim_\mbb{C}\Hom_{\SL(2)_q}(P,P')\overset{?}=\dim_\mbb{C}\Hom_{\mcl{V}ir_c}(\mcl{Q},\mcl{Q}'),
\end{equation}
for indecomposable projectives $P$ and $P'$ in $\rep\SL(2)_q$ with images $\mcl{Q}$ and $\mcl{Q}'$ in $\rep(\mcl{V}ir_c)_{\aff}$.  But now, for the corresponding simple $L$ so that $P$ is the projective cover of $L$, the dimension of the above Hom space is just the multiplicity of the simple $L$ in the composition series for $P'$,
\[
\dim_\mbb{C}\Hom_{\SL(2)_q}(P,P')=[L,P'].
\]
Similarly $\Hom_{\mcl{V}ir_c}(\mcl{Q},\mcl{Q}')=[\mcl{L},\mcl{Q}']$.  Since $\K$ induces an isomorphism at the level of Grothendieck rings, by Corollary \ref{cor:grring}, we observe that $[L,P]=[\mcl{L},\mcl{Q}]$, as desired.
\end{proof}

\section{Categories of modules for the singlet algebra}
\label{sect:repMp}

We describe the (affine) representation category for the singlet vertex operator algebra $\mcl{M}_p$. As with the Virasoro VOA, the singlet is not $C_2$-cofinite (hence not strongly-finite). The singlet algebra sits between the triplet algebra and the Virasoro $\mcl{V}ir_c\subset \mcl{M}_p\subset \mcl{W}_p$; in particular, $\mcl{M}_p$ is the invariant subalgebra (orbifold) in $\mcl{W}_p$ of a maximal torus $\mbb{C}^\times$ in  Aut$(\mcl{W}_p)\cong \PSL(2,\mbb{C})$ (a subgroup isomorphic to $S^1$ also works, but $\mbb{C}^\times$ is more natural from our perspective). Going in the other direction, $\mcl{W}_p$ is obtained from $\mcl{M}_p$ by extending by an infinite order simple current (invertible simple) which generates Rep$\,\mbb{C}^\times\cong\mbb{Z}$.
\par

Our presentation is based on work of Creutzig, McRae, and Yang \cite{creutzigmcraeyang}, but we provide further elaborations on the behaviors of induction, both from the Virasoro to the singlet, and from the singlet to the triplet.  We also provide a Tannakian (group theoretic) description of the M\"uger center of the affine representation category $\rep(\mcl{M}_p)_{\aff}$.  We discuss subsequent relationships between singlet modules and representations of quantum $\SL(2)$ in Section \ref{sect:singlet}.

\subsection{Categories of singlet modules}
\label{sect:Mp_rep}

The singlet algebra $\mcl{M}_p$ has simple modules $M_\lambda$ (sometimes called typicals) labeled by arbitrary functions $\lambda\in \mfk{h}^\ast=\mbb{C}\alpha$ \cite[\S 5]{adamovic03}.  Each $M_\lambda$ appears, in particular, as the lowest weight $\mcl{M}_p$-submodule in the associated Fock module $\mcl{F}_\lambda$.  We have furthermore the ``integral" (atypical) simples $M_{r,s}$ which are associated to the Fock modules of conformal weight $\lambda_{r,s}=\frac{1}{2}((1-r)-\frac{1}{p}(1-s))\sqrt{p}\alpha\in \mfk{h}$ \cite[\S 2]{creutzigridoutwood14}, where $r$ is an arbitrary integer and $1\leq s\leq p$.

We consider, first, the category $\rep(\mcl{M}_p)_{\mathrm{int}}$ of finite length grading-restricted $\mcl{M}_p$-modules with composition factors among the integral simples $M_{r,s}$.  It is shown in \cite{creutzigmcraeyang} that the category $\rep(\mcl{M}_p)_{\mathrm{int}}$ admits a rigid vertex tensor structure, and we have the tensor subcategory $\rep(\mcl{M}_p)_{\aff}$ in $\rep(\mcl{M}_p)_{\rm int}$ generated by the simples $\{M_{r,s}:r\in\mbb{Z},1\leq s\leq p\}$.

\begin{remark}
The categories $\rep(\mcl{M}_p)_{\mathrm{int}}$ and $\rep(\mcl{M}_p)_{\aff}$ above are the categories $\mcl{C}_{\mcl{M}(p)}$ and $\mcl{C}^0_{\mcl{M}(p)}$ of \cite{creutzigmcraeyang}, respectively.
\end{remark}

According to the fusion rules of \cite[Theorem 3.2.6]{creutzigmcraeyang}, the category $\rep(\mcl{M}_p)_{\aff}$ is alternatively generated by the distinguished simple $M_{1,2}$.  For yet another interpretation, the category $\rep(\mcl{M}_p)_{\aff}$ is the centralizer of the algebra object $\mcl{W}_p=\oplus_{n\in \mbb{Z}}M_{2n+1,1}$ in $\rep(\mcl{M}_p)_{\mathrm{int}}$ \cite[\S 1.1 vs.\ \S 5.1]{creutzigmcraeyang}.
\par

Each simple $M_{r,s}$ in $\rep(\mcl{M}_p)_{\aff}$ admits a decomposition as a product $M_{r,s}\cong M_{r,1}\ot M_{1,s}$, and the simple modules $M_{r,1}$ are all invertible with tensor inverse $M^\ast_{r,1}=M_{-r+2,1}$.  The simples $\{M_{r,1}:r\in \mbb{Z}\}$ provide a complete list of invertible objects in $\rep(\mcl{M}_p)_{\aff}$, and they satisfy the fusion rule $M_{r,1}\ot M_{r',1}\cong M_{r+r'-1,1}$ \cite[Theorem 5.2.1]{creutzigmcraeyang}.
\par

We consider the tensor subcategory $\langle M_{3,1}\rangle$ in $\rep(\mcl{M}_p)_{\aff}$ generated by the object $M_{3,1}$, in the sense of Section \ref{sect:tgen}.  By the above information, the simple objects in $\langle M_{3,1}\rangle$ are precisely those of the form $M_{r,1}$, with $r$ odd.

\begin{lemma}\label{lem:ext_Mr1}
There are no extensions $\Ext^1_{\mcl{M}_p}(M_{r,1},M_{r',1})=0$ when $r$ and $r'$ are odd.  Furthermore, when $p>2$, there are no extensions between $M_{r,1}$ and $M_{r',1}$ at arbitrary $r,r'\in\mbb{Z}$.
\end{lemma}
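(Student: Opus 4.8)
The plan is to reduce everything to a statement about $\Ext^1_{\mcl{M}_p}(\mathbf{1},M_{k,1})$ and then to control these groups by inducing up to the triplet $\mcl{W}_p$. Since each $M_{r,1}$ is invertible with $M_{r,1}^\ast=M_{-r+2,1}$, tensoring by the autoequivalence $M_{r,1}^\ast\ot(-)$ yields isomorphisms $\Ext^1_{\mcl{M}_p}(M_{r,1},M_{r',1})\cong\Ext^1_{\mcl{M}_p}(\mathbf{1},M_{r'-r+1,1})$. Thus the first assertion is equivalent to the vanishing of $\Ext^1_{\mcl{M}_p}(\mathbf{1},M_{k,1})$ for all odd $k$, and the $p>2$ assertion to this vanishing for every $k\in\mbb{Z}$.

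Next I would invoke the induction functor $F\colon\rep(\mcl{M}_p)_{\aff}\to\Rep(\mcl{W}_p)$ attached to the simple-current extension $\mcl{W}_p=\bigoplus_{n\in\mbb{Z}}M_{2n+1,1}$ (Appendix \ref{sect:VW}, Lemma \ref{lem:ind}); it is an exact tensor functor whose right adjoint $G$ (the forgetful/restriction functor) is exact and additive. Because $F$ is monoidal it carries invertibles to invertibles, and by the fusion rules of Theorem \ref{thm:323} the only invertible objects of $\rep(\mcl{W}_p)$ are $X^+_1=\mathbf{1}$ and the order-two object $X^-_1$; comparing the composition factors of $GF(M_{k,1})=\bigoplus_n M_{2n+k,1}$ with those of $G(X^+_1)=\bigoplus_n M_{2n+1,1}$ and $G(X^-_1)=\bigoplus_n M_{2n,1}$ shows $F(M_{k,1})=\mathbf{1}$ for $k$ odd and $F(M_{k,1})=X^-_1$ for $k$ even. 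Now apply $F$ to a short exact sequence $0\to M_{k,1}\to E\to\mathbf{1}\to 0$ representing a class in $\Ext^1_{\mcl{M}_p}(\mathbf{1},M_{k,1})$; exactness of $F$ produces a class in $\Ext^1_{\mcl{W}_p}(\mathbf{1},F(M_{k,1}))$, an extension of finite-length $\mcl{W}_p$-modules. For $k$ odd this group is $\Ext^1_{\mcl{W}_p}(\mathbf{1},\mathbf{1})=0$ by \cite[Theorem 2.17]{etingofostrik04}; for $k$ even it is $\Ext^1_{\mcl{W}_p}(\mathbf{1},X^-_1)$, i.e.\ the multiplicity of $X^-_1$ in the top radical layer of the projective cover $\msc{P}^+_1$ of $\mathbf{1}$ — and since $[\msc{P}^+_1]=2[X^+_1]+2[X^-_{p-1}]$ (recalled in the proof of Proposition \ref{prop:1245}), this multiplicity is $0$ exactly when $p>2$.

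In the cases of interest $F(E)$ therefore splits, $F(E)\cong\mathbf{1}\oplus F(M_{k,1})$. To descend this to $\mcl{M}_p$ I would use that the unit of adjunction $E\to GF(E)$ is a split monomorphism: the algebra unit $\mathbf{1}\hookrightarrow\mcl{W}_p$ is the inclusion of a direct summand in $\Rep(\mcl{M}_p)_{\aff}$ — it is the $M_{1,1}$-isotypic component of $\mcl{W}_p$ for the torus $\mbb{C}^\times\subset\PSL(2,\mbb{C})$ with $\mcl{M}_p=\mcl{W}_p^{\mbb{C}^\times}$, cf.\ \cite{creutzigmcraeyang} — and split monos survive $-\ot_{\mcl{M}_p}E$. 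Since $GF(E)\cong G(\mathbf{1})\oplus G(F(M_{k,1}))$ is a direct sum of pairwise nonisomorphic simple currents $M_{\ast,1}$, it is a semisimple $\mcl{M}_p$-module; hence so is its direct summand $E$, so the sequence $0\to M_{k,1}\to E\to\mathbf{1}\to 0$ splits and $\Ext^1_{\mcl{M}_p}(\mathbf{1},M_{k,1})=0$. Together with the first-paragraph reduction this proves the lemma.

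The real content here is borrowed from results already in hand — the Loewy datum $[\msc{P}^+_1]=2[X^+_1]+2[X^-_{p-1}]$, exactness of induction and restriction, and the simple-current decomposition of $\mcl{W}_p$ — so the only delicate point I anticipate is bookkeeping inside the Ind-categories: one must check that $F(E)$ really lands in the honest finite tensor category $\rep(\mcl{W}_p)$ (so that $\Ext^1$ there is computed from projective covers), and keep the $p=2$ versus $p>2$ dichotomy aligned with the coincidence $X^-_{p-1}=X^-_1$.
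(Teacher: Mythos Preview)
Your argument is correct and takes a genuinely different route from the paper's. The paper simply invokes the explicit projective resolution of the unit in $\rep(\mcl{M}_p)_{\aff}$ from \cite[Theorem 5.1.3]{creutzigmcraeyang}: the resolution begins $\cdots\to P_{0,p-1}\oplus P_{2,p-1}\to P_1\to\mathbf{1}\to 0$, and since $\Hom(P_{a,b},M_{r,1})$ is nonzero only when $(a,b)=(r,1)$, the group $\Ext^1_{\mcl{M}_p}(\mathbf{1},M_{r,1})$ vanishes unless $p-1=1$ and $r\in\{0,2\}$. This is a direct application of already-established singlet structure theory.

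You instead transport the problem to $\rep(\mcl{W}_p)$ via induction, read off the corresponding Ext-vanishing from the composition series of $\msc{P}^+_1$, and descend using the split inclusion $E\hookrightarrow GF(E)$ into a semisimple object. This has the virtue of keeping the singlet-specific input to a minimum---you use only the fusion of the invertibles $M_{r,1}$ and the simple-current decomposition of $\mcl{W}_p$---and instead leverages the better-understood triplet category. The paper's approach is shorter but imports more structural data about $\rep(\mcl{M}_p)_{\aff}$ from \cite{creutzigmcraeyang}. Two small points on your write-up: the phrase ``pairwise nonisomorphic simple currents'' is inaccurate when $k$ is odd, since then $GF(E)\cong\bigoplus_n M_{2n+1,1}^{\oplus 2}$; fortunately all you actually need is that $GF(E)$ is a direct sum of simples. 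Also, be sure your use of induction does not secretly rely on Corollary~\ref{cor:M31} (and hence on the lemma itself); it does not, because $\mcl{W}_p$ centralizing $\rep(\mcl{M}_p)_{\aff}$ is part of the \emph{definition} of the latter category as given in Section~\ref{sect:Mp_rep}, citing \cite{creutzigmcraeyang}.
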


\begin{proof}
Suppose $p>2$.  It suffices to show that the extensions $\Ext^1_{\mcl{M}_p}(\1,M_{r,1})$ vanish, via duality.  We have the projective resolution of the unit $P^\bullet=\cdots\to P_{0,p-1}\oplus P_{2,p-1}\to P_1\to 0$, where the $P_{r,s}$ are the projective covers of the simples $M_{r,s}$, and $P_1$ is the projective cover of the unit \cite[Theorem 5.1.3]{creutzigmcraeyang}.  In particular, there are no nonzero maps from the degree $-1$ portion of the resolution $P^{-1}$ to any $M_{r,1}$.  Hence the subquotient $\Ext^1_{\mcl{M}_p}(\1,M_{r,1})$ of $\Hom_{\mcl{M}_p}(P^{-1},M_{r,1})$ vanishes.
\par

At $p=2$ the above projective resolution of the unit is still valid, but now appears as $\cdots\to P_{0,1}\oplus P_{2,1}\to P_1\to 0$.  So we see that there are no such extensions between $\1$ and $M_{r,1}$ when $r$ is odd.
\end{proof}

We note that at $p=2$ there \emph{are} in fact non-vanishing classes in $\Ext^1_{\mcl{M}_p}(\1,M_{0,1})$ and $\Ext^1_{\mcl{M}_p}(\1,M_{2,1})$, provided by the length $2$ quotients of the projective cover $P_1$ of the unit.  In any case, the above lemma implies the following.  

\begin{corollary}\label{cor:M31}
The tensor subcategory $\langle M_{3,1}\rangle$ generated by $M_{3,1}$ in $\rep(\mcl{M}_p)_{\aff}$ is semisimple, and has simple objects $\{M_{r,1}:r\ \mrm{odd}\}$.
\end{corollary}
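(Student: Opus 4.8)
The plan is to combine three facts already in hand: the explicit description of the simple objects of $\langle M_{3,1}\rangle$, the closure of $\langle M_{3,1}\rangle$ under subquotients, and the $\Ext^1$-vanishing of Lemma \ref{lem:ext_Mr1}. First I would recall, as noted just above the statement, that since $M_{3,1}$ is invertible with $M_{3,1}^{\ot n}\cong M_{2n+1,1}$ and dual $M^\ast_{3,1}\cong M_{-1,1}$, every $M_{r,1}$ with $r$ odd is a tensor power of $M_{3,1}$ or of $M^\ast_{3,1}$, hence lies in $\langle M_{3,1}\rangle$; and conversely any simple subquotient of such a tensor power is again some $M_{r,1}$ with $r$ odd. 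So the simple objects of $\langle M_{3,1}\rangle$ are exactly $\{M_{r,1}:r\ \mrm{odd}\}$, and it remains only to check semisimplicity.

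Next I would observe that every object $X$ of $\langle M_{3,1}\rangle$ has finite length, since $\rep(\mcl{M}_p)_{\aff}\subset\rep(\mcl{M}_p)_{\mathrm{int}}$ consists of finite length modules, and that — because $\langle M_{3,1}\rangle$ is closed under subquotients — every composition factor of $X$ lies in $\langle M_{3,1}\rangle$, hence is one of the $M_{r,1}$ with $r$ odd. Then I would run an induction on the length of $X$: the length-one case is trivial, and for length $n>1$ one picks a simple submodule $M_{r,1}\hookrightarrow X$ and applies the inductive hypothesis to get $X/M_{r,1}\cong\bigoplus_i M_{r_i,1}$ with each $r_i$ odd. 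The extension class of $0\to M_{r,1}\to X\to\bigoplus_i M_{r_i,1}\to 0$ lives in $\bigoplus_i\Ext^1_{\mcl{M}_p}(M_{r_i,1},M_{r,1})$, which is zero by Lemma \ref{lem:ext_Mr1} since all indices are odd; hence the sequence splits and $X$ is a direct sum of the $M_{r,1}$. This shows $\langle M_{3,1}\rangle$ is semisimple with the claimed simples (indeed it is precisely the full subcategory of direct sums of the $M_{r,1}$, $r$ odd).

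I do not expect a genuine obstacle in this corollary: the substantive work is entirely in Lemma \ref{lem:ext_Mr1}, obtained from the projective resolution of the unit in \cite{creutzigmcraeyang}. The only points deserving care are the bookkeeping that closure under subquotients keeps all composition factors inside $\langle M_{3,1}\rangle$, and the observation that at $p=2$ one needs only the $\Ext$-vanishing among the \emph{odd}-indexed $M_{r,1}$ — precisely the range covered by Lemma \ref{lem:ext_Mr1} — even though the even-indexed objects $M_{0,1},M_{2,1}$ do carry nontrivial extensions with $\1$ at $p=2$, so these simply never enter $\langle M_{3,1}\rangle$.
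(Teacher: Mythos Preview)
Your proposal is correct and takes essentially the same approach as the paper, which gives no explicit proof but simply records the corollary as an immediate consequence of Lemma~\ref{lem:ext_Mr1}; your induction on length via the $\Ext^1$-vanishing among the odd-indexed $M_{r,1}$ is exactly the intended argument, and your remark about the $p=2$ case matches the paper's own comment preceding the corollary.
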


\subsection{Induction from the Virasoro}

As a Virasoro module, the algebra $\mcl{M}_p$ decomposes as the sum  of simples $\mcl{M}_p=\oplus_{n\geq 0} \mcl{L}_{2n+1,1}$.  Hence $\mcl{M}_p$ lies in the affine representation category $\rep(\mcl{V}ir_c)_{\aff}$, or rather its Ind-category $\Rep(\mcl{V}ir_c)_{\aff}$ (see Section \ref{sect:ind_cat}).  We therefore have the induction functor
\begin{equation}\label{eq:1415}
I=I_{\mcl{V}ir_c}^{\mcl{M}_p}:\rep(\mcl{V}ir_c)_{\aff}\to \Rep(\mcl{M}_p),\ \mcl{L}\mapsto \mcl{M}_p\ot \mcl{L},
\end{equation}
which has image in the braided monoidal category of $\mcl{M}_p$-modules whose restriction to $\mcl{V}ir_c$ lies in $\Rep(\mcl{V}ir_c)_{\aff}$ \cite[Theorem 1.2, Theorem 2.67]{creutzigkanademcrae}.  Since $\rep(\mcl{V}ir_c)_{\aff}$ is rigid, the functor $I$ is exact (see Appendix \ref{sect:VW}).

\begin{lemma}\label{lem:1334}
There are isomorphisms $I(\mcl{L}_{3,1})\cong M_{3,1}\oplus M_{1,1}\oplus M_{-1,1}$ and $I(\mcl{L}_{1,2})\cong M_{1,2}$.
\end{lemma}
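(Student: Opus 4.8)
The plan is to play the induction $I=I_{\mcl{V}ir_c}^{\mcl{M}_p}$ against the further induction $I_{\mcl{M}_p}^{\mcl{W}_p}=\mcl{W}_p\ot_{\mcl{M}_p}-$ from the singlet to the triplet. Three facts will do the work. First, transitivity of induction for the tower $\mcl{V}ir_c\subset\mcl{M}_p\subset\mcl{W}_p$ (see Appendix \ref{sect:VW}, \cite{creutzigkanademcrae}), which identifies $I_{\mcl{M}_p}^{\mcl{W}_p}\circ I$ with the Virasoro-to-triplet functor $F$ of Lemma \ref{lem:Iprime}; in particular $I_{\mcl{M}_p}^{\mcl{W}_p}I(\mcl{L}_{r,s})=F(\mcl{L}_{r,s})=rX^{\varepsilon(r)}_s$. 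Second, $I_{\mcl{M}_p}^{\mcl{W}_p}$ is exact and detects nonzero objects: the unit $N\to\res_{\mcl{M}_p}I_{\mcl{M}_p}^{\mcl{W}_p}(N)$ is a split injection because $\mcl{M}_p$ is a direct summand of $\mcl{W}_p$ as an $\mcl{M}_p$-module; consequently, applying $I_{\mcl{M}_p}^{\mcl{W}_p}$ to a composition series shows that $I_{\mcl{M}_p}^{\mcl{W}_p}$ carries a module of (finite or infinite) length $\ell$ to a module of length $\geq\ell$. Third, Frobenius reciprocity $\Hom_{\mcl{M}_p}(I(\mcl{L}),M)\cong\Hom_{\mcl{V}ir_c}(\mcl{L},\res_{\mcl{V}ir_c}M)$.

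For $I(\mcl{L}_{1,2})$: transitivity gives $I_{\mcl{M}_p}^{\mcl{W}_p}I(\mcl{L}_{1,2})=F(\mcl{L}_{1,2})=X^+_2$, which is simple, so by the second fact $I(\mcl{L}_{1,2})$ has length at most $1$; being visibly nonzero it is a \emph{simple} $\mcl{M}_p$-module. As $\mcl{L}_{1,2}$ occurs as a $\mcl{V}ir_c$-submodule of $M_{1,2}$, Frobenius reciprocity supplies a nonzero map $I(\mcl{L}_{1,2})\to M_{1,2}$, and a nonzero map between simple objects is an isomorphism.

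For $I(\mcl{L}_{3,1})$: transitivity gives $I_{\mcl{M}_p}^{\mcl{W}_p}I(\mcl{L}_{3,1})=F(\mcl{L}_{3,1})=3X^+_1=3\mcl{W}_p$, a module of length $3$, so $I(\mcl{L}_{3,1})$ has length at most $3$. On the other hand, $\mcl{L}_{3,1}$ occurs as a $\mcl{V}ir_c$-submodule of each of the three pairwise non-isomorphic simple currents $M_{1,1}=\mcl{M}_p=\oplus_{k\geq0}\mcl{L}_{2k+1,1}$, $M_{3,1}$, and $M_{-1,1}=M_{3,1}^{\ast}$ (for $M_{3,1}$ it is the bottom $\mcl{V}ir_c$-constituent; for $M_{-1,1}$ use that its $\mcl{V}ir_c$-restriction equals that of its dual $M_{3,1}$, Virasoro simples being self-dual), so Frobenius reciprocity realizes these three as quotients, hence as composition factors, of $I(\mcl{L}_{3,1})$. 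Therefore $I(\mcl{L}_{3,1})$ has length exactly $3$ with $[I(\mcl{L}_{3,1})]=[M_{1,1}]+[M_{3,1}]+[M_{-1,1}]$, and since there are no nontrivial extensions among the odd simple currents $M_{r,1}$ (Lemma \ref{lem:ext_Mr1}) it is semisimple, giving the asserted decomposition. As an internal check one can compute $\res_{\mcl{V}ir_c}I(\mcl{L}_{3,1})=\oplus_{n\geq0}(\mcl{L}_{2n+1,1}\ot\mcl{L}_{3,1})$; using that $\mcl{L}_{3,1}$ is $\K\circ\Fr$ of the adjoint representation of $\PSL(2)$ and that the tensor subcategory it generates is $\K(\Fr(\rep\PSL(2)))\simeq\rep\PSL(2)$, the $\mfk{sl}_2$ Clebsch--Gordan rule yields $\mcl{L}_{1,1}\oplus\bigoplus_{k\geq1}3\,\mcl{L}_{2k+1,1}$, which matches $\res_{\mcl{V}ir_c}(M_{1,1}\oplus M_{3,1}\oplus M_{-1,1})$.

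The point where I expect the real work to lie is verifying the embeddings $\mcl{L}_{3,1}\hookrightarrow\res_{\mcl{V}ir_c}M_{3,1}$ and $\mcl{L}_{1,2}\hookrightarrow\res_{\mcl{V}ir_c}M_{1,2}$, i.e.\ controlling the $\mcl{V}ir_c$-module structure of the atypical singlet modules precisely enough to see the ``reflected'' label $M_{-1,1}$ appear (rather than, say, a second copy of $M_{3,1}$). This is drawn from the analysis of \cite{creutzigmcraeyang}; granted it, the remainder is adjunction bookkeeping against the two length bounds.
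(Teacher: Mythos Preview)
Your proof is correct but follows a different route from the paper's. Both arguments begin with Frobenius reciprocity to produce nonzero maps from $I(\mcl{L}_{3,1})$ to each of $M_{1,1}$, $M_{3,1}$, $M_{-1,1}$ (and from $I(\mcl{L}_{1,2})$ to $M_{1,2}$), using the explicit Virasoro decompositions of the $M_{r,s}$. They diverge in how they upgrade the resulting surjection to an isomorphism. The paper stays entirely at the Virasoro level: it computes $\res_{\mcl{V}ir_c}I(\mcl{L}_{3,1})=\mcl{M}_p\ot\mcl{L}_{3,1}$ using the fusion rules for the simples $\mcl{L}_{2n+1,1}$ and observes that the multiplicities match those of $M_{1,1}\oplus M_{3,1}\oplus M_{-1,1}$, so the surjection of $\mcl{M}_p$-modules, being also a surjection of Virasoro modules between objects with identical semisimple Virasoro decompositions, is an isomorphism. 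You instead bound the length from above by pushing forward along $I_{\mcl{M}_p}^{\mcl{W}_p}$ and invoking transitivity together with the known values $F(\mcl{L}_{r,s})=rX^{\varepsilon(r)}_s$; since $I_{\mcl{M}_p}^{\mcl{W}_p}$ is exact and faithful (via the split unit), the length of $I(\mcl{L}_{3,1})$ is at most that of $F(\mcl{L}_{3,1})=3X^+_1$, namely $3$; the three distinct simple quotients then pin down the composition series, and Lemma~\ref{lem:ext_Mr1} forces semisimplicity.

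Your approach trades the explicit Virasoro tensor computation (which you relegate to a consistency check) for the triplet-level input $F(\mcl{L}_{r,s})=rX^{\varepsilon(r)}_s$ and the Ext-vanishing Lemma~\ref{lem:ext_Mr1}; both ingredients are available at this point in the paper, so there is no circularity. The paper's argument is a bit more self-contained in that it does not invoke the triplet at all, while yours is arguably cleaner once those inputs are in hand, since it avoids tracking multiplicities in an infinite Virasoro sum. Note that your concern about the ``real work'' is already handled in the paper: the Virasoro decompositions $M_{1,1}=\oplus_{n\geq 0}\mcl{L}_{2n+1,1}$ and $M_{3,1}=M_{-1,1}=\oplus_{n\geq 0}\mcl{L}_{2n+3,1}$ are quoted directly, so the appearance of $M_{-1,1}$ (rather than a second copy of $M_{3,1}$) is forced simply because these are distinct simple $\mcl{M}_p$-modules with the same Virasoro restriction.
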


\begin{proof}
We have the adjunction
\[
\Hom_{\mcl{M}_p}(I(\mcl{L}_{3,1}),M_{r,1})\cong \Hom_{\mcl{V}ir_c}(\mcl{L}_{3,1},M_{r,1})
\]
and the Virasoro decompositions $M_{1,1}=\oplus_{n\geq 0}\mcl{L}_{2n+1,1}$,  $M_{3,1}=M_{-1,1}=\oplus_{n\geq 0}\mcl{L}_{2n+3,1}$.  In particular, there is a unique-up-to-scaling nonzero map $\mcl{L}_{3+2n}\to M_{r,1}$ over the Virasoro, for $r=3$, $1$, $-1$.  So, by the above adjunction, we have a nonzero map $I(\mcl{L}_{3,1})\to M_{r,1}$ for each such $r$, and this nonzero map must be a surjection since each $M_{r,1}$ is simple.  We therefore have the product map $I(\mcl{L}_{3,1})\to M_{3,1}\oplus M_{1,1}\oplus M_{-1,1}$, which must also be surjective since each simple in the codomain is distinct.
\par

Note that the above surjection of $\mcl{M}_p$-modules is also a surjection of modules over the Virasoro VOA.  Since the decompositions of $I(\mcl{L}_{3,1})$ and $M_{3,1}\oplus M_{1,1}\oplus M_{-1,1}$ into simple Virasoro modules agree
\[
I(\mcl{L}_{3,1})=\mcl{M}_p\ot \mcl{L}_{3,1}\cong (\oplus_{n\geq 0}\mcl{L}_{2n+1,1})\bigoplus(\oplus_{n\geq 0}2\cdot \mcl{L}_{2n+3,1})=M_{3,1}\oplus M_{1,1}\oplus M_{-1,1}
\]
this surjection must be an isomorphism.
\par

Similarly, we consider the decomposition $M_{1,2}=\oplus_{n\geq 0}\mcl{L}_{2n+1,2}$ to see that there is a surjection $I(\mcl{L}_{1,2})\to M_{1,2}$.  This surjection must, again, be an isomorphism since these two objects have the same decompositions into simples over the Virasoro
\[
I(\mcl{L}_{1,2})=\mcl{M}_p\ot \mcl{L}_{1,2}\cong \oplus_{n\geq 0}\mcl{L}_{2n+1,2}=M_{1,2}.
\]
\end{proof}

Of course, the significance of the object $\mcl{L}_{1,2}$ is that it is the distinguished tensor generator for the affine representation category $\rep(\mcl{V}ir_c)_{\aff}$.  Similarly, the object $\mcl{L}_{3,1}$ generates the M\"uger center in $\rep(\mcl{V}ir_c)_{\aff}$.  This just follows from the fact that it is the image $\mcl{L}_{3,1}=\K\left(L(p\alpha)\right)$ of the central generator $L(p\alpha)$ in $\rep\PSL(2)\subset \rep\SL(2)_q$, under the equivalence of Theorem \ref{thm:LMM}.  One can see in particular Lemma \ref{lem:grring}.  The previous lemma and the aforementioned generating property for $\mcl{L}_{1,2}$ provide the following.

\begin{corollary}
The induction functor $I$ has image in the affine subcategory $\rep(\mcl{M}_p)_{\aff}$, and restricts to a surjective, ribbon, tensor functor
\begin{equation}\label{eq:1517}
I:\rep(\mcl{V}ir_c)_{\aff}\to \rep(\mcl{M}_p)_{\aff}.
\end{equation}
\end{corollary}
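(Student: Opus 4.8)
The plan is to run the proof of Lemma \ref{lem:Iprime} essentially verbatim, with Lemma \ref{lem:1334} playing the role of the computation $F(\mcl{L}_{1,2})\cong X^+_2$. Recall that $I$ is additive and exact --- the latter because $\rep(\mcl{V}ir_c)_{\aff}$ is rigid (see Appendix \ref{sect:VW}) --- and that it is a braided monoidal functor by the general theory of VOA extensions \cite[Theorem 1.6]{kirillovostrik02}. So the only issue is to see that $I$ lands in, and surjects onto, the affine subcategory, and that it is compatible with the twists.

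First I would pin down the image. Since $\mcl{L}_{1,2}$ tensor generates $\rep(\mcl{V}ir_c)_{\aff}$, every object $\mcl{L}$ of $\rep(\mcl{V}ir_c)_{\aff}$ is a subquotient of a finite direct sum of tensor powers $\mcl{L}_{1,2}^{\ot n}$; applying $I$ and using exactness, additivity, monoidality, and the isomorphism $I(\mcl{L}_{1,2})\cong M_{1,2}$ of Lemma \ref{lem:1334}, we find that $I(\mcl{L})$ is a subquotient of a finite direct sum of tensor powers of $M_{1,2}$. Each such module lies in the tensor subcategory $\rep(\mcl{M}_p)_{\aff}$ of $\rep(\mcl{M}_p)_{\rm int}$, and the latter is closed under subquotients, so $I(\mcl{L})\in\rep(\mcl{M}_p)_{\aff}$; in particular $I(\mcl{L})$ has finite length, so $I$ restricts to the functor \eqref{eq:1517}. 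For surjectivity I would use the dual observation: by \cite[Theorem 3.2.6]{creutzigmcraeyang} the category $\rep(\mcl{M}_p)_{\aff}$ is tensor generated by $M_{1,2}\cong I(\mcl{L}_{1,2})$ (which is self-dual, being the image under a tensor functor of the self-dual object $\mcl{L}_{1,2}$), so any finite-length object of $\rep(\mcl{M}_p)_{\aff}$ is a subquotient of $\bigoplus_i M_{1,2}^{\ot n_i}\cong I\big(\bigoplus_i \mcl{L}_{1,2}^{\ot n_i}\big)$.

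It remains to check that $I$ is a \emph{ribbon} functor, and here I would argue exactly as in Lemma \ref{lem:Iprime}: this reduces to $\mcl{M}_p$ being central in $\Rep(\mcl{V}ir_c)_{\aff}$ and fixed by the twist. Centrality holds because $\mcl{M}_p=\oplus_{n\geq 0}\mcl{L}_{2n+1,1}$ is an Ind-object of the M\"uger center $\rep\PSL(2)\subset \rep(\mcl{V}ir_c)_{\aff}$: by Lemma \ref{lem:grring}, $\mcl{L}_{2n+1,1}\cong \K(L(np\alpha))$, and $L(np\alpha)$ lies in $\rep\PSL(2)$ since its highest weight $np\alpha$ lies in the sublattice $\Lambda^M=p\mbb{Z}\alpha$. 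The twist fixes $\mcl{M}_p$ because the conformal weights $h_{2n+1,1}=n(np+p-1)$ are integers, so $e^{2\pi i L(0)}$ acts as the identity on $\mcl{M}_p$. Granting $\theta_{\mcl{M}_p}=\mrm{id}$ and $c^2_{\mcl{M}_p,\mcl{L}}=\mrm{id}$, the twist axiom gives, for every $\mcl{L}$ in $\rep(\mcl{V}ir_c)_{\aff}$,
\[
e^{2\pi i L(0)}|_{I(\mcl{L})}=\theta_{\mcl{M}_p\ot\mcl{L}}=(\theta_{\mcl{M}_p}\ot\theta_{\mcl{L}})\,c^2_{\mcl{M}_p,\mcl{L}}=\theta_{\mcl{L}}=e^{2\pi i L(0)}|_{\mcl{L}},
\]
which is the desired compatibility.

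The argument is almost entirely formal given the earlier results; the genuinely non-categorical input is Lemma \ref{lem:1334}, which is already in hand, so I do not anticipate a real obstacle here. If anything, the step requiring the most care is simply verifying that $I$ lands among finite-length modules, and this is handled simultaneously with surjectivity by the tensor-generation argument, which is why it is worth recording explicitly that the generator $\mcl{L}_{1,2}$ maps into $\rep(\mcl{M}_p)_{\aff}$.
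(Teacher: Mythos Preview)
Your proposal is correct and follows essentially the same approach as the paper: both use that $\mcl{L}_{1,2}$ tensor generates $\rep(\mcl{V}ir_c)_{\aff}$ and maps to the tensor generator $M_{1,2}$ of $\rep(\mcl{M}_p)_{\aff}$ (Lemma~\ref{lem:1334}) to establish image containment and surjectivity, and both defer the ribbon compatibility to the argument of Lemma~\ref{lem:Iprime}. Your treatment of the ribbon step is in fact more explicit than the paper's, which simply cites Lemma~\ref{lem:Iprime}; your verification that each $\mcl{L}_{2n+1,1}$ lies in the M\"uger center via Lemma~\ref{lem:grring} and that $h_{2n+1,1}\in\mbb{Z}$ is a welcome expansion of that reference.
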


\begin{proof}
Let $\Rep(\mcl{M}_p)$ denote the braided monoidal category of $\mcl{M}_p$-modules which restrict to objects in $\Rep(\mcl{V}ir_c)_{\aff}$, along the inclusion $\mcl{V}ir_c\to \mcl{M}_p$, by an abuse of notation.  We note that all of the simples $M_{r,s}$ in $\rep(\mcl{M}_p)_{\aff}$ lie in this category $\Rep(\mcl{M}_p)$ \cite[\S 2.3]{creutzigetal}, and that the induction functor $I$ is an \emph{exact} braided monoidal functor, via rigidity of $\rep(\mcl{V}ir_c)_{\aff}$.
\par

We have the (rigid) tensor subcategory $\rep(\mcl{M}_p)_{\aff}$ in $\Rep(\mcl{M}_p)$ which is closed under taking subquotients, and is tensor generated by the simple $M_{1,2}$.  Since $\rep(\mcl{V}ir_c)_{\aff}$ is tensor generated by the simple $\mcl{L}_{1,2}$, and $I(\mcl{L}_{1,2})=\mcl{M}_{1,2}$ by Lemma \ref{lem:1334}, we see that $I$ restricts to a surjective, braided, tensor functor as in \eqref{eq:1517}.  Compatibility of $I$ with the twist follows by the same arguments given in the proof of Lemma \ref{lem:Iprime}.
\end{proof}

\subsection{The M\"uger center in $\rep(\mcl{M}_p)_{\aff}$}

As a module over the singlet algebra, we have $\mcl{W}_p=\oplus_{n\in \mbb{Z}}M_{2n+1,1}$.  In particular, $\mcl{W}_p$ lies in the Ind-category of affine representations for the singlet $\mcl{M}_p$, and we have the braided monoidal functor $I':\rep(\mcl{M}_p)\to \Rep(\mcl{W}_p)$ provided by induction.

\begin{lemma}\label{lem:1550}
Induction restricts to a surjective, braided tensor functor
\[
I':\rep(\mcl{M}_p)_{\aff}\to \rep(\mcl{W}_p),
\]
and furthermore satisfies $I'(M_{r,s})=X^{\epsilon(r)}_s$ with $\epsilon(r)=(-1)^{r+1}$.
\end{lemma}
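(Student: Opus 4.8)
The plan is to follow the template of Lemma~\ref{lem:Iprime} (induction from $\mcl{V}ir_c$ to $\mcl{W}_p$) and the preceding corollary (induction from $\mcl{V}ir_c$ to $\mcl{M}_p$). Write $I'=\mcl{W}_p\ot_{\mcl{M}_p}-$, so $I'(M)=\mcl{W}_p\ot M$ carries its natural $\mcl{W}_p$-module structure. By the general theory of (simple current) VOA extensions \cite[Theorem 1.6]{kirillovostrik02} \cite[Theorem 1.2, Theorem 2.67]{creutzigkanademcrae}, $I'$ is a braided monoidal functor from $\rep(\mcl{M}_p)_{\aff}$ into the braided monoidal category of $\mcl{W}_p$-modules which restrict to $\Rep(\mcl{M}_p)_{\aff}$ along $\mcl{M}_p\hookrightarrow\mcl{W}_p$, and it is exact since $\rep(\mcl{M}_p)_{\aff}$ is rigid (Appendix~\ref{sect:VW}, or Lemma~\ref{lem:ind}). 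The only point requiring an argument is the formula $I'(M_{r,s})\cong X^{\epsilon(r)}_s$; from it I will deduce formally that $I'$ lands in the finite length modules $\rep(\mcl{W}_p)$, that it is surjective, and that it respects the ribbon structure.

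To establish the formula, fix $r\in\mbb{Z}$ and $1\le s\le p$. From $\mcl{W}_p=\bigoplus_{n\in\mbb{Z}}M_{2n+1,1}$ as an $\mcl{M}_p$-module, the decomposition $M_{r,s}\cong M_{r,1}\ot M_{1,s}$, and the fusion rule $M_{r',1}\ot M_{r'',1}\cong M_{r'+r''-1,1}$ \cite[Theorem 5.2.1]{creutzigmcraeyang}, one computes
\[
I'(M_{r,s})\big|_{\mcl{M}_p}\;\cong\;\bigoplus_{n\in\mbb{Z}}M_{2n+1,1}\ot M_{r,s}\;\cong\;\bigoplus_{n\in\mbb{Z}}M_{2n+r,s},
\]
whose summand labels are exactly the integers of the parity of $r$. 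On the other hand, since $\mcl{W}_p$ is the extension of $\mcl{M}_p$ by the infinite order simple current generating $\Rep\mbb{C}^\times\cong\mbb{Z}$, each simple $X^{\pm}_s$ restricts to a full $\mbb{Z}$-orbit of singlet simples, namely $X^{+}_s|_{\mcl{M}_p}\cong\bigoplus_{n}M_{2n+1,s}$ and $X^{-}_s|_{\mcl{M}_p}\cong\bigoplus_{n}M_{2n,s}$ (consistent with the Virasoro decompositions of Section~\ref{sect:irrep_Wp}). Thus $I'(M_{r,s})|_{\mcl{M}_p}$ agrees with $X^{\epsilon(r)}_s|_{\mcl{M}_p}$, both containing $M_{r,s}$ with multiplicity one; in particular $\epsilon(r)=(-1)^{r+1}$. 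By Frobenius reciprocity for $I'$,
\[
\Hom_{\mcl{W}_p}\!\big(I'(M_{r,s}),\,X^{\epsilon(r)}_s\big)\;\cong\;\Hom_{\mcl{M}_p}\!\big(M_{r,s},\,X^{\epsilon(r)}_s\big)\;\ne\;0,
\]
and any nonzero such map is onto, $X^{\epsilon(r)}_s$ being simple. It is also injective: its kernel is an $\mcl{M}_p$-submodule of the semisimple module $I'(M_{r,s})|_{\mcl{M}_p}$, so a nonzero kernel would make some $M_{m,s}$ occur with multiplicity $\geq 2$ in $I'(M_{r,s})|_{\mcl{M}_p}$, contradicting the computation above. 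Hence $I'(M_{r,s})\cong X^{\epsilon(r)}_s$.

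Since $\rep(\mcl{M}_p)_{\aff}$ is tensor generated by $M_{1,2}$ and $I'$ is exact and monoidal with $I'(M_{1,2})\cong X^+_2$, every object of $\rep(\mcl{M}_p)_{\aff}$ is a subquotient of a finite sum of powers $M_{1,2}^{\ot n}$, hence its image is a subquotient of a finite sum of powers $(X^+_2)^{\ot n}$, which lies in $\rep(\mcl{W}_p)$; so $I'$ restricts to an exact braided monoidal functor $\rep(\mcl{M}_p)_{\aff}\to\rep(\mcl{W}_p)$. The same observation together with Corollary~\ref{cor:wp_gen} (that $X^+_2$ generates $\rep(\mcl{W}_p)$) shows every object of $\rep(\mcl{W}_p)$ is a subquotient of some $(X^+_2)^{\ot n}\cong I'(M_{1,2}^{\ot n})$, so $I'$ is surjective (and faithful, by Proposition~\ref{prop:faithful}). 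Compatibility with the twist $\theta=e^{2\pi i L(0)}$ is as in Lemma~\ref{lem:Iprime}: $\mcl{W}_p$ is central in $\Rep(\mcl{M}_p)_{\aff}$ (every object of $\rep(\mcl{M}_p)_{\aff}$ lying in its M\"uger centralizer) and is a fixed point of $\theta$, so $e^{2\pi i L(0)}|_{I'(M)}=\theta_{\mcl{W}_p\ot M}=(\theta_{\mcl{W}_p}\ot\theta_M)c^2_{\mcl{W}_p,M}=\theta_M$ for all $M$ in $\rep(\mcl{M}_p)_{\aff}$. I expect the main obstacle to be the formula of the second paragraph, and within it the careful reconciliation of the labelling conventions for the singlet simples $M_{r,s}$ with those for the triplet simples $X^{\pm}_s$ so that the reciprocity argument closes exactly; once the $\mcl{M}_p$-module decompositions of $X^{\pm}_s$ are pinned down this is the bookkeeping recorded above, and everything else is a formal consequence of the extension machinery and rigidity.
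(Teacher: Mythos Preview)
Your proof is correct. The paper takes a slightly different, terser route: it uses the factorization $F=I'\circ I$ of the Virasoro-to-triplet induction through the singlet (together with $I(\mcl{L}_{1,2})=M_{1,2}$ from Lemma~\ref{lem:1334} and $F(\mcl{L}_{1,2})=X^+_2$) to obtain $I'(M_{1,2})=X^+_2$ in one line, then deduces that $I'$ lands in $\rep(\mcl{W}_p)$ and is surjective from the generating properties of $M_{1,2}$ and $X^+_2$; the general formula $I'(M_{r,s})\cong X^{\epsilon(r)}_s$ is then simply asserted to follow from the known fusion rules, without details. Your approach instead computes $I'(M_{r,s})$ directly for all $(r,s)$ at once, by matching the $\mcl{M}_p$-restrictions of $I'(M_{r,s})$ and $X^{\epsilon(r)}_s$ and closing with Frobenius reciprocity and multiplicity-one; this is more self-contained and actually fills in what the paper leaves to the reader, at the cost of needing the restriction decompositions $X^{\pm}_s|_{\mcl{M}_p}$ as input. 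Both routes are short and rely on the same underlying extension machinery; the paper's buys brevity by piggybacking on the Virasoro induction already analyzed, while yours makes the simple-current structure of the extension $\mcl{M}_p\subset\mcl{W}_p$ do the work explicitly.
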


\begin{proof}
Take $I=I_{\mcl{V}ir_c}^{\mcl{M}_p}$, $I'=I_{\mcl{M}_p}^{\mcl{W}_p}$, and $F=I_{\mcl{V}ir_c}^{\mcl{W}_p}$.  For induction $F:\rep(\mcl{V}ir_c)_{\aff}\to \rep(\mcl{W}_p)$ from the Virasoro, we have $F=I'\circ I$, by general shenanigans with adjunctions.  Hence
\[
I'(M_{1,2})=I'\circ I(\mcl{L}_{1,2})=F(\mcl{L}_{1,2})=X^+_{2}.
\]
Since $X^+_2$ generates the representation category of the triplet, and $M_{1,2}$ generates the representation category of the singlet, it follows that the braided tensor functor $I'$ has image in the category $\rep(\mcl{W}_p)$ of finite length $\mcl{W}_p$-modules, and is surjective.  The calculation of $I'(M_{r,s})$ follows by a direct analysis of the fusion rules for $\mcl{M}_p$ and $\mcl{W}_p$ \cite{mcraeyang,tsuchiyawood13}.
\end{proof}

\begin{lemma}\label{lem:1345}
The M\"uger center of $\rep(\mcl{M}_p)_{\aff}$ is precisely the tensor subcategory $\langle M_{3,1}\rangle$ generated by the invertible simple $M_{3,1}$.
\end{lemma}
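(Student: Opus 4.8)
The plan is to show both inclusions. For ``$\supseteq$'': I need to verify that each invertible simple $M_{r,1}$ with $r$ odd is M\"uger central in $\rep(\mcl{M}_p)_{\aff}$. The cleanest route is to use the ribbon tensor functor $I':\rep(\mcl{M}_p)_{\aff}\to \rep(\mcl{W}_p)$ of Lemma \ref{lem:1550}, together with the equivalence $\operatorname{K}$ of Theorem \ref{thm:LMM} and the factorization $\rep(\mcl{V}ir_c)_{\aff}\to \rep(\mcl{M}_p)_{\aff}\to \rep(\mcl{W}_p)$ through induction. Under $\operatorname{K}$, the object $\mcl{L}_{3,1}$ is the image of the central generator $L(p\alpha)\in \rep\PSL(2)\subset \rep\SL(2)_q$, and $\mcl{L}_{3,1}$ generates the M\"uger center of $\rep(\mcl{V}ir_c)_{\aff}$. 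Since $I:\rep(\mcl{V}ir_c)_{\aff}\to \rep(\mcl{M}_p)_{\aff}$ is a surjective braided tensor functor sending $\mcl{L}_{3,1}$ to $M_{3,1}\oplus M_{1,1}\oplus M_{-1,1}$ (Lemma \ref{lem:1334}), naturality of the braiding forces $M_{3,1}$ (and $M_{-1,1}$) to be M\"uger central: a braided functor sends central objects to objects whose double braiding against the image is trivial, and since $I$ is surjective the image generates, so $c^2_{M_{3,1},-}=\mathrm{id}$. All odd $M_{r,1}$ are tensor powers of $M_{3,1}$ and $M_{-1,1}$, hence also central, giving $\langle M_{3,1}\rangle\subseteq Z_{\text{\rm M\"ug}}(\rep(\mcl{M}_p)_{\aff})$.

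For the reverse inclusion ``$\subseteq$'': I must show no other simple is M\"uger central. As in the proof of Theorem \ref{thm:modular}, it suffices to check simple objects, since by naturality of the braiding the composition factors of a central object are central, and (by \cite[Theorem 2.17]{etingofostrik04}) to rule out extensions we only care about which simples are central. The simples of $\rep(\mcl{M}_p)_{\aff}$ are the $M_{r,s}=M_{r,1}\ot M_{1,s}$ with $r\in\mbb{Z}$, $1\leq s\leq p$. Since the $M_{r,1}$ are already known to be central when $r$ is odd, and tensoring with an invertible central object preserves (non-)centrality, it suffices to (i) show $M_{1,s}$ is not central for $1<s\leq p$, and (ii) show $M_{0,1}$ (equivalently any even $M_{r,1}$) is not central.

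For (i), I would push forward along $I':\rep(\mcl{M}_p)_{\aff}\to \rep(\mcl{W}_p)$: since $I'$ is a ribbon tensor functor with $I'(M_{1,s})=X^+_s$, if $M_{1,s}$ were central then $X^+_s$ would be M\"uger central in $\rep(\mcl{W}_p)$ — wait, that implication needs care, because $I'$ is not injective on objects and centrality need not descend through a non-injective functor. The safer argument is to compute the double braiding directly using the twist $\theta=e^{2\pi i L(0)}$ and the balancing equation $\theta_{X\ot Y}=(\theta_X\ot\theta_Y)c^2_{X,Y}$, exactly as in Theorem \ref{thm:modular}: compare $\theta_{M_{1,2}\ot M_{1,s}}$, read off from the fusion rule $M_{1,2}\ot M_{1,s}\cong M_{1,s-1}\oplus M_{1,s+1}$ (for $1<s<p$) and the known conformal weights $h_{r,s}$, against the scalar $\theta_{M_{1,2}}\theta_{M_{1,s}}$; distinctness of the two eigenvalues of $\theta_{M_{1,2}\ot M_{1,s}}$ shows $c^2_{M_{1,2},M_{1,s}}\neq\mathrm{id}$. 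For (ii), I would similarly test $M_{0,1}$ against $M_{1,2}$: the fusion $M_{1,2}\ot M_{0,1}\cong M_{0,2}$ (an invertible-twist of $M_{1,2}$) together with a twist computation shows $c^2_{M_{1,2},M_{0,1}}$ is a nontrivial scalar, so $M_{0,1}$ is not central. The twist/conformal-weight bookkeeping should mirror Lemma \ref{lem:wp_tw} and can be cited or left as a short computation.

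The main obstacle is carrying out step (ii) and the boundary cases $s=p$ cleanly: one needs the fusion rules for $\rep(\mcl{M}_p)_{\aff}$ (from \cite[Theorem 3.2.6, Theorem 5.2.1]{creutzigmcraeyang}) and the conformal weights of the $M_{r,s}$ to extract the relevant eigenvalues of $\theta$, and to confirm that the $s=p$ fusion (which, as for the triplet, involves a multiplicity-two term together with a sign-twisted summand) still yields a nontrivial double braiding. An alternative, perhaps slicker, approach for the whole reverse inclusion: identify $Z_{\text{\rm M\"ug}}(\rep(\mcl{M}_p)_{\aff})$ as the image under $I$ of $Z_{\text{\rm M\"ug}}(\rep(\mcl{V}ir_c)_{\aff})=\langle\mcl{L}_{3,1}\rangle\cong\rep\PSL(2)$, using that $I$ is surjective and that de-equivariantization/induction along the central subcategory has a well-understood effect on M\"uger centers (the singlet-to-triplet picture in the introduction, where $\mcl{M}_p\subset\mcl{W}_p$ is an extension by $\mrm{Rep}\,\mbb{C}^\times$); but making that precise requires the Tannakian analysis promised later in Section \ref{sect:repMp}, so for a self-contained proof the direct twist computation above is preferable.
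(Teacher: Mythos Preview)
Your $\supseteq$ inclusion is correct and matches the paper's argument. For $\subseteq$, you actually identify the paper's approach---push forward along $I'$---but then abandon it, writing that ``centrality need not descend through a non-injective functor.'' This is a mistake: the relevant property is not injectivity but \emph{surjectivity}, which $I'$ has by Lemma~\ref{lem:1550}. Since $I'$ is braided, $I'(c^2_{M,N})=c^2_{I'(M),I'(N)}$ for all $M,N$; so if $M$ is M\"uger central in $\rep(\mcl{M}_p)_{\aff}$ then $I'(M)$ centralizes every object in the image of $I'$, which is all of $\rep(\mcl{W}_p)$, and hence $I'(M)\in Z_{\text{\rm M\"ug}}(\rep(\mcl{W}_p))=Vect$ by Theorem~\ref{thm:modular}. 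The paper runs this even more directly on simples: a central simple $M_{r,s}$ certainly centralizes $M_{1,2}$, so $X^{\epsilon(r)}_s=I'(M_{r,s})$ centralizes $X^+_2=I'(M_{1,2})$, and the proof of Theorem~\ref{thm:modular} already shows the only simple in $\rep(\mcl{W}_p)$ centralizing $X^+_2$ is $X^+_1$, forcing $r$ odd and $s=1$.

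Your proposed direct twist computation in $\rep(\mcl{M}_p)_{\aff}$ would also succeed, but it duplicates exactly the work done in Theorem~\ref{thm:modular} (same fusion pattern, same conformal-weight bookkeeping) rather than reusing that result via $I'$. The push-forward argument is both shorter and cleaner, and you had it in hand before talking yourself out of it.
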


\begin{proof}
Since the induction functor $I:\rep(\mcl{V}ir_c)_{\aff}\to \rep(\mcl{M}_p)_{\aff}$ is surjective and braided, it must send the M\"uger center in $\rep(Vir)_{\aff}$ into the M\"uger center in $\rep(\mcl{M}_p)_{\aff}$.  By Lemma \ref{lem:1334} we have that $M_{3,1}$ is in the surjective image of the M\"uger center of $\rep(\mcl{V}ir_c)_{\aff}$, and hence the M\"uger center contains $\langle M_{3,1}\rangle$.  By Corollary \ref{cor:M31}, the category $\langle M_{3,1}\rangle$ is the subcategory of all objects in $\rep(\mcl{M}_p)_{\aff}$ with composition factors (only) among the $M_{r,1}$, with $r$ odd.  To see that no other simple in $\rep(\mcl{M}_p)_{\aff}$ are central, we simply apply the braided monoidal functor $I':\rep(\mcl{M}_p)_{\aff}\to \rep(\mcl{W}_p)$ and note that $I'(M_{r,s})=X^{\epsilon(r)}_s$ centralizes $I'(M_{1,2})=X^+_2$ if and only if $r$ is odd and $s=1$, by Theorem \ref{thm:modular}.
\end{proof}

We provide a further analysis of the M\"uger center in $\rep(\mcl{M}_p)_{\aff}$, and its behaviors under induction both from the Virasoro and to the triplet algebra.

\begin{lemma}\label{lem:ZMp}
The induction functor $I:\rep(\mcl{V}ir_c)_{\aff}\to \rep(\mcl{M}_p)_{\aff}$ restricts to a surjective, symmetric tensor functor
\[
Z_{\text{\rm M\"ug}}(\rep(\mcl{V}ir_c)_{\aff})\to Z_{\text{\rm M\"ug}}\left(\rep(\mcl{M}_p)_{\aff}\right).
\]
\end{lemma}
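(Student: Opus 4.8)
The plan is to run the same machine used in the proof of Lemma \ref{lem:1345}, but now keeping track of tensor generation in order to extract surjectivity, and then to observe that symmetry is automatic. Recall the ingredients already available: $I:\rep(\mcl{V}ir_c)_{\aff}\to \rep(\mcl{M}_p)_{\aff}$ is a surjective braided (indeed ribbon) tensor functor; $Z_{\text{\rm M\"ug}}(\rep(\mcl{V}ir_c)_{\aff})$ is tensor generated by $\mcl{L}_{3,1}=\K(L(p\alpha))$, as recalled above; $Z_{\text{\rm M\"ug}}(\rep(\mcl{M}_p)_{\aff})=\langle M_{3,1}\rangle$ is semisimple with simples the $M_{r,1}$, $r$ odd, by Lemma \ref{lem:1345} and Corollary \ref{cor:M31}; and $I(\mcl{L}_{3,1})\cong M_{3,1}\oplus M_{1,1}\oplus M_{-1,1}$ by Lemma \ref{lem:1334}.

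First I would check that $I$ carries $Z_{\text{\rm M\"ug}}(\rep(\mcl{V}ir_c)_{\aff})$ into $Z_{\text{\rm M\"ug}}(\rep(\mcl{M}_p)_{\aff})$. Let $X$ be M\"uger central in the source. For any object $W$ of $\rep(\mcl{M}_p)_{\aff}$, surjectivity of $I$ lets us realize $W$ as a subquotient of $I(Y)$ for some $Y$; compatibility of $I$ with the braidings identifies $c^2_{I(X),I(Y)}$ with the image under $I$ of $c^2_{X,Y}=\mathrm{id}$, so $c^2_{I(X),I(Y)}=\mathrm{id}$, and then naturality of the double braiding together with exactness of $\ot$ (exactly as in the proof of Theorem \ref{thm:modular}) gives $c^2_{I(X),W}=\mathrm{id}$. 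Hence $I(X)$ is M\"uger central, and $I$ restricts to an exact, $\mathbb{C}$-linear, braided monoidal functor between the (full, rigid) tensor subcategories $Z_{\text{\rm M\"ug}}(\rep(\mcl{V}ir_c)_{\aff})$ and $Z_{\text{\rm M\"ug}}(\rep(\mcl{M}_p)_{\aff})$.

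Next, surjectivity of this restricted functor. By Corollary \ref{cor:M31} it suffices to exhibit each simple $M_{r,1}$, $r$ odd, as a subquotient of $I(X)$ for some M\"uger central $X$ in $\rep(\mcl{V}ir_c)_{\aff}$. Since $\langle M_{3,1}\rangle$ is tensor generated by $M_{3,1}$, the simple $M_{r,1}$ is a subquotient of some tensor power $M_{3,1}^{\ot n}$; by Lemma \ref{lem:1334} the object $M_{3,1}$ is a direct summand of $I(\mcl{L}_{3,1})$, so $M_{3,1}^{\ot n}$ is a subquotient of $I(\mcl{L}_{3,1})^{\ot n}\cong I(\mcl{L}_{3,1}^{\ot n})$, using that $I$ is monoidal and exact; and $\mcl{L}_{3,1}^{\ot n}$ lies in $Z_{\text{\rm M\"ug}}(\rep(\mcl{V}ir_c)_{\aff})$ because M\"uger centers are closed under tensor products. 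Chaining subquotients, $M_{r,1}$ is a subquotient of $I(\mcl{L}_{3,1}^{\ot n})$, as needed.

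Finally, the restricted functor is \emph{symmetric} because the M\"uger center of any braided tensor category is symmetric monoidal: if $X$ and $Y$ are M\"uger central then $c_{Y,X}c_{X,Y}=\mathrm{id}$, which is precisely the symmetry axiom, so both $Z_{\text{\rm M\"ug}}(\rep(\mcl{V}ir_c)_{\aff})$ and $Z_{\text{\rm M\"ug}}(\rep(\mcl{M}_p)_{\aff})$ inherit canonical symmetric structures, and a braided tensor functor between symmetric tensor categories is automatically symmetric. The argument is otherwise routine; the one place requiring any care is the first step --- verifying that M\"uger centrality is preserved by the surjective braided functor $I$ --- and this is handled precisely as the corresponding step in the proof of Theorem \ref{thm:modular}, so no essentially new difficulty arises.
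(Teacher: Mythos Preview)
Your proof is correct and follows essentially the same approach as the paper's: use surjectivity of $I$ to see that M\"uger centrality is preserved, then invoke Lemma \ref{lem:1334} together with the fact that $M_{3,1}$ generates the target M\"uger center (Lemma \ref{lem:1345}) to conclude surjectivity of the restricted functor. The paper's version is more terse, leaving the details of the first and third steps implicit, but your elaborations are accurate and the logic is the same.
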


\begin{proof}
Surjectivity of the induction functor $I$ implies that $I$ sends the M\"uger center in $\rep(\mcl{V}ir_c)_{\aff}$ to the M\"uger center in $\rep(\mcl{M}_p)_{\aff}$.  So the result follows from the computation $I(\mcl{L}_{3,1})=M_{3,1}\oplus M_{1,1}\oplus M_{-1,1}$ of Lemma \ref{lem:1334}, the fact that $M_{3,1}$ generates the M\"uger center in $\rep(\mcl{M}_p)_{\aff}$, and the fact that $\mcl{L}_{3,1}=\K(L(p\alpha))$ is M\"uger central in $\rep(\mcl{V}ir_c)_{\aff}$.
\end{proof}

Via Theorem \ref{thm:LMM}, and Section \ref{sect:Fr}, we understand that the M\"uger center in $\rep(\mcl{V}ir_c)_{\aff}$ is equivalent to the representation category of $\PSL(2)$.  We have a corresponding group theoretic interpretation of the center in $\rep(\mcl{M}_p)_{\aff}$.

\begin{proposition}\label{prop:Gm}
The induction functor $I':\rep(\mcl{M}_p)_{\aff}\to \rep(\mcl{W}_p)$ restricts to a symmetric fiber functor $Z_{\text{\rm M\"ug}}\left(\rep(\mcl{M}_p)_{\aff}\right)\to Vect$.  Furthermore, there is a symmetric tensor equivalence
\[
\operatorname{Fr}':\rep\mbb{G}_m\overset{\sim}\to Z_{\text{\rm M\"ug}}\left(\rep(\mcl{M}_p)_{\aff}\right).
\]
\end{proposition}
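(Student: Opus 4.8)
The plan is to extract the fiber functor from induction to the triplet, and then invoke Tannakian reconstruction to identify the resulting affine group scheme with $\mbb{G}_m$. First I would observe that by Lemma \ref{lem:1345} the M\"uger center $Z_{\text{\rm M\"ug}}(\rep(\mcl{M}_p)_{\aff})$ is exactly $\langle M_{3,1}\rangle$, and by Corollary \ref{cor:M31} this is a semisimple tensor subcategory with simple objects $\{M_{r,1}: r\text{ odd}\}$, all invertible. Restricting the surjective braided tensor functor $I':\rep(\mcl{M}_p)_{\aff}\to \rep(\mcl{W}_p)$ of Lemma \ref{lem:1550} to this subcategory, and using that $I'(M_{r,1})=X^{\epsilon(r)}_1=X^+_1=\1$ for $r$ odd (since $\epsilon(r)=(-1)^{r+1}=1$), we see that $I'$ sends every simple in the M\"uger center to the unit, hence sends $Z_{\text{\rm M\"ug}}(\rep(\mcl{M}_p)_{\aff})$ into $Vect\subset\rep(\mcl{W}_p)$. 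Thus $I'|_{Z_{\text{\rm M\"ug}}}$ is a braided (indeed symmetric, since the M\"uger center is symmetric) tensor functor to $Vect$; because it is $\mbb{C}$-linear, exact, and monoidal with $\1\mapsto\mbb{C}$, it is a symmetric fiber functor.

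Next I would apply Tannakian reconstruction \cite[\S 2.11]{delignemilne82} (or \cite[Theorem 5.1]{delignemilne82}) to the symmetric tensor category $Z_{\text{\rm M\"ug}}(\rep(\mcl{M}_p)_{\aff})$ equipped with this fiber functor: it follows that $Z_{\text{\rm M\"ug}}(\rep(\mcl{M}_p)_{\aff})\simeq \rep(\operatorname{Aut}^{\ot}(I'|_{Z_{\text{\rm M\"ug}}}))$ for some affine group scheme $\mathbb{H}$ over $\mbb{C}$, as symmetric tensor categories. It remains to identify $\mathbb{H}$ with $\mbb{G}_m$. For this I would use that the Grothendieck ring of $Z_{\text{\rm M\"ug}}(\rep(\mcl{M}_p)_{\aff})$ is $\mbb{Z}[t,t^{-1}]$: indeed the invertible simples $M_{r,1}$ with $r$ odd are closed under tensor product via $M_{r,1}\ot M_{r',1}\cong M_{r+r'-1,1}$, so setting $t:=[M_{3,1}]$ (equivalently $[M_{-1,1}]=t^{-1}$) every simple is a distinct power of $t$, and the category is the free semisimple symmetric tensor category on one invertible object of trivial twist. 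Since the twist on $M_{3,1}$ is trivial (it is central and $I'$-compatibility forces $\theta_{M_{3,1}}=\theta_{X^+_1}=1$, matching $\theta$ for the Virasoro central generator $\mcl{L}_{3,1}$), the object $M_{3,1}$ has even self-braiding, so $Z_{\text{\rm M\"ug}}(\rep(\mcl{M}_p)_{\aff})$ is equivalent as a symmetric tensor category to $\rep\mbb{G}_m=\mbb{Z}$-graded vector spaces, with $M_{3,1}$ corresponding to the one-dimensional space in degree $1$. This yields the desired equivalence $\operatorname{Fr}':\rep\mbb{G}_m\overset{\sim}\to Z_{\text{\rm M\"ug}}(\rep(\mcl{M}_p)_{\aff})$, sending the generating character to $M_{3,1}$, with $I'\circ\operatorname{Fr}'$ the standard fiber functor.

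Finally I would note the compatibility claim implicit in the surrounding discussion: via Lemma \ref{lem:ZMp} the induction $I:\rep(\mcl{V}ir_c)_{\aff}\to\rep(\mcl{M}_p)_{\aff}$ restricts to a surjective symmetric functor $\rep\PSL(2)=Z_{\text{\rm M\"ug}}(\rep(\mcl{V}ir_c)_{\aff})\to Z_{\text{\rm M\"ug}}(\rep(\mcl{M}_p)_{\aff})=\rep\mbb{G}_m$, which on the Tannakian side corresponds to a closed immersion $\mbb{G}_m\hookrightarrow\PSL(2)$ as a maximal torus, consistent with the description of $\mcl{M}_p$ as the $\mbb{C}^\times$-orbifold of $\mcl{W}_p$. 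The main obstacle, such as it is, is verifying cleanly that $I'$ restricted to the M\"uger center really lands in $Vect$ and is exact and faithful there — this is where one needs Lemma \ref{lem:1550} together with the observation $\epsilon(r)=1$ for odd $r$ — and then confirming the twist on $M_{3,1}$ is trivial so that one obtains $\mbb{G}_m$ rather than a nontrivial form or a super-analog; both reduce to bookkeeping with results already established (Lemmas \ref{lem:1334}, \ref{lem:1345}, \ref{lem:1550}, \ref{lem:ZMp} and Theorem \ref{thm:LMM}).
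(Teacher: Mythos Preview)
Your proposal is correct and follows essentially the same three-step route as the paper: restrict $I'$ to the M\"uger center to obtain a symmetric fiber functor, apply Tannakian reconstruction, then identify the resulting group as $\mbb{G}_m$ using that the center is pointed semisimple with group of simples $\mbb{Z}$. The only cosmetic differences are that the paper shows $I'(Z_{\text{M\"ug}})\subset Vect$ abstractly via surjectivity of $I'$ and non-degeneracy of $\rep(\mcl{W}_p)$ (Theorem~\ref{thm:modular}) rather than by direct computation on the $M_{r,1}$, and that your discussion of the twist on $M_{3,1}$ is unnecessary once a fiber functor to $Vect$ (as opposed to $sVect$) is already in hand.
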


For the unfamiliar reader, $\mbb{G}_m$ denotes the multiplicative group $\mbb{C}^\times$, considered as an affine algebraic group, and the category $\rep\mbb{G}_m$ is identified with the symmetric tensor category of $\mbb{Z}$-graded vector spaces. This is the group $\mbb{Z}$ of simple currents, with which we extend the VOA $\mcl{M}_p$ to obtain $\mcl{W}_p$.

\begin{proof}
Take $\msc{Z}=Z_{\text{\rm M\"ug}}\left(\rep(\mcl{M}_p)_{\aff}\right)$.  Since $I'$ is a braided, surjective, tensor functor, it sends the M\"uger center in $\rep(\mcl{M}_p)_{\aff}$ to the M\"uger center in $\rep(\mcl{W}_p)$.  But the M\"uger center in $\rep(\mcl{W}_p)$ is trivial, by Theorem \ref{thm:modular}, so that $I'$ restricts to a symmetric fiber functor from the M\"uger center in $\rep(\mcl{M}_p)_{\aff}$, as claimed.  It follows now, by Tannakian reconstruction \cite[Theorem 2.11]{delignemilne82}, that there is an algebraic group $\mbb{G}$ which admits a symmetric tensor equivalence $\rep\mbb{G}\overset{\sim}\to \msc{Z}$.
\par

By our understanding of $\msc{Z}$ provided by Corollary \ref{cor:M31} and Lemma \ref{lem:1345}, $\mbb{G}$ has an invertible representation $\mbb{C}_\chi$ which generates $\rep\mbb{G}$ and admits no extensions from its various tensor powers.  This representation therefore specifies a surjective map of algebraic groups
\[
\mbb{G}\to \operatorname{GL}(\mbb{C}_{\chi})=\mbb{G}_m
\]
for which the restriction functor $\rep\mbb{G}_m\to \rep\mbb{G}\cong \msc{Z}$ is fully faithful and essentially surjective.  This functor is therefore an equivalence, so that $\msc{Z}\cong\rep\mbb{G}_m$.
\end{proof}

\section{The singlet algebra and torus extended $u_q(\mfk{sl}_2)$}
\label{sect:singlet}

In this section we establish a quantum group equivalence for the affine representation category of the singlet vertex operator algebra $\mcl{M}_p$.  We compare the singlet to the ``torus extended" small quantum group $\dotu_q(\mfk{sl}_2)$ of \cite[\S 36.2.1]{lusztig93}, at the given parameter $q=\exp(\pi i/p)$.  We prove the following analog of Theorems \ref{thm:triplet} and \ref{thm:LMM} for the singlet (see Section \ref{sect:Mp_rep}).  

\begin{theorem}\label{thm:singlet}
There is an equivalence $\Psi:\rep(\dotu_q(\mfk{sl}_2))^{\rm rev}\overset{\sim}\to \rep(\mcl{M}_p)_{\aff}$ of ribbon tensor categories which fits into a (2-)diagram
\[
\xymatrix{
\rep(\dotu_q(\mfk{sl}_2))^{\rm rev}\ar[d]_{\res^\omega}\ar[rr]^{\Psi}& & \rep(\mcl{M}_p)_{\aff}\ar[d]^{\mcl{W}_p\ot-}\\
\rep(u_q(\mfk{sl}_2))^{\rm rev}\ar[rr]^\Theta & & \rep(\mcl{W}_p).
}
\]
\end{theorem}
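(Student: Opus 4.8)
The plan is to obtain $\Psi$ by de-equivariantizing the ribbon equivalence $\K\colon\rep\SL(2)_q\overset{\sim}\to\rep(\mcl{V}ir_c)_{\aff}$ of Theorem \ref{thm:LMM} along an appropriate central commutative algebra, in exact parallel with the way $\Theta$ was obtained from $\rep\SL(2)_q$ by de-equivariantizing along $\rep\PSL(2)$. The substantive preliminary input is to set up two de-equivariantization presentations. Fix a maximal torus $\mbb{G}_m\subset\PSL(2)$ and the central commutative algebra $\O':=\Fr\O(\PSL(2)/\mbb{G}_m)=\bigoplus_{m\ge 0}L(mp\alpha)$ in $\Rep\SL(2)_q$ (here $\O(\PSL(2)/\mbb{G}_m)=\bigoplus_{m\ge0}V(m\alpha)$ as a $\PSL(2)$-algebra). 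First I would establish: (i) $\rep(\dotu_q(\mfk{sl}_2))$ is identified, as a ribbon category, with the de-equivariantization $(\rep\SL(2)_q)_{\mbb{G}_m}$ of $\O'$-modules in $\Rep\SL(2)_q$, with $\res^\omega\colon\rep\SL(2)_q\to\rep(\dotu_q(\mfk{sl}_2))$ the de-equivariantization functor and $dE$ of Theorem \ref{thm:N} factoring as the further de-equivariantization composed with $\res^\omega$ --- equivalently $\rep(\dotu_q(\mfk{sl}_2))\cong(\rep(u_q(\mfk{sl}_2)))^{\mbb{G}_m}$ for the rational $\mbb{G}_m$-action on $\rep(u_q(\mfk{sl}_2))$ got by restricting the $\PSL(2)$-action of Theorem \ref{thm:N}; and (ii) $\rep(\mcl{M}_p)_{\aff}$ is identified, as a ribbon category, with the de-equivariantization of $\rep(\mcl{V}ir_c)_{\aff}$ along the commutative algebra object $\mcl{M}_p\cong\bigoplus_{m\ge0}\mcl{L}_{2m+1,1}$, with $I$ the de-equivariantization functor (and, de-equivariantizing further along $\mcl{W}_p$, one recovers $\rep(\mcl{W}_p)$ with $I'$ the functor). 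Item (i) is the torus-extended analogue of Section \ref{sect:dE} (using Lusztig's modified form \cite{lusztig93} and the analysis of the rational $\mbb{C}^\times$-action), and (ii) is the general relationship between VOA extensions and de-equivariantization combined with the analysis of $\rep(\mcl{M}_p)_{\aff}$ from \cite{creutzigmcraeyang}; the Müger centers match ($\rep\mbb{G}_m$) by Proposition \ref{prop:Gm}.

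Next I would match the two central algebras under $\K$. By Section \ref{sect:irrep_Wp} and Theorems \ref{thm:triplet}/\ref{thm:N}, the composite $\K\circ\Fr\colon\rep\PSL(2)\to\rep(\mcl{V}ir_c)_{\aff}$ carries the regular representation $\O(\PSL(2))$ to $\mcl{W}_p$ as an algebra object, with the residual $\PSL(2)$-action corresponding to $\Aut(\mcl{W}_p)\cong\PSL(2,\mbb{C})$; restricting to $\mbb{G}_m$-invariants, $\K\Fr$ carries $\O(\PSL(2)/\mbb{G}_m)=\O(\PSL(2))^{\mbb{G}_m}$ to $\mcl{W}_p^{\mbb{G}_m}=\mcl{M}_p$ as algebra objects (on the level of objects, $\K(L(mp\alpha))=\K(L(\mu_{2m}))=\mcl{L}_{2m+1,1}$ by Lemma \ref{lem:grring}, so $\K(\O')=\bigoplus_m\mcl{L}_{2m+1,1}=\mcl{M}_p$, and $\K$ is monoidal so it transports the algebra structures and the residual torus actions). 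Since de-equivariantization is functorial in the pair (braided tensor category, central commutative algebra), the ribbon equivalence $\K$ together with this identification induces a ribbon tensor equivalence
\[
\Psi\colon\rep(\dotu_q(\mfk{sl}_2))=(\rep\SL(2)_q)_{\mbb{G}_m}\overset{\sim}\longrightarrow(\rep(\mcl{V}ir_c)_{\aff})_{\mcl{M}_p}=\rep(\mcl{M}_p)_{\aff}
\]
with $\Psi\circ\res^\omega\cong I\circ\K$; in particular $\Psi(\mbb{V})\cong I\K(\mbb{V})=I(\mcl{L}_{1,2})\cong M_{1,2}$ by Lemma \ref{lem:1334}. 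The twist is pinned down exactly as in Theorem \ref{thm:triplet}: on $\rep(\dotu_q(\mfk{sl}_2))^{\rm rev}$ one takes the twist with $\theta^{-1}_{\mbb{V}}=-q^{3/2}$, and this equals $e^{2\pi i h_{1,2}}=\theta_{M_{1,2}}$, so $\Psi$ is an equivalence of ribbon categories. (If instead one prefers the route of Section \ref{sect:LMM}, the same $\Psi$ arises by building $I\circ\K$ from the self-dual simple $M_{1,2}$ via Ostrik's theorem and Theorem \ref{thm:braidedostrik}, checking $\K(\O')\cong\mcl{M}_p$ so that $I\circ\K$ factors through $\rep(\dotu_q(\mfk{sl}_2))$ via Proposition \ref{prop:basechange}, and then proving fullness and faithfulness by comparing Grothendieck rings and projective covers --- via $I'$ and $\res^\omega$ preserving projectivity, as in Proposition \ref{prop:1245} --- since the clean Frobenius--Perron argument of Theorem \ref{thm:triplet} is unavailable for these non-finite categories.)

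Finally, the square in the statement commutes: both $I'\circ\Psi$ and $\Theta\circ\res^\omega$ (with $\res^\omega\colon\rep(\dotu_q(\mfk{sl}_2))\to\rep(u_q(\mfk{sl}_2))$) are obtained from $\Psi$, resp. from $\K$, by a further de-equivariantization along the central algebra corresponding to $\mcl{W}_p$, resp. $\O(\PSL(2))$. Concretely, precomposing with the essentially surjective functor $\res^\omega\colon\rep\SL(2)_q\to\rep(\dotu_q(\mfk{sl}_2))$ one has $I'\Psi\res^\omega\cong I'I\K=F\K\cong F_{X^+_2}\cong\Theta\, dE\cong\Theta\,\res^\omega\res^\omega$ (using $I'\circ I=F$, $F\circ\K\cong F_{X^+_2}$, and $\Theta\circ dE\cong F_{X^+_2}$ from Theorems \ref{thm:triplet} and \ref{thm:LMM}), so $I'\circ\Psi\cong\Theta\circ\res^\omega$ after the routine verification that this is a braided monoidal natural isomorphism. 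The main obstacle is step (i)--(ii), above all the quantum-group side (i): identifying $\rep(\dotu_q(\mfk{sl}_2))$ as the de-equivariantization of $\rep\SL(2)_q$ along $\O'$ --- the torus-extended counterpart of Section \ref{sect:dE} --- which requires genuinely new input (the analysis of the rational $\mbb{C}^\times$-actions on $\rep(u_q(\mfk{sl}_2))$ and $\rep(\mcl{W}_p)$ and the fact that these correspond under $\Theta$), compounded by the loss of finiteness of the ambient tensor categories.
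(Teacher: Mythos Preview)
Your approach is correct in outline and is essentially the dual of the paper's: you propose to \emph{de-equivariantize $\K$ from above} (along the intermediate central algebra $\O'=\Fr\,\O(\PSL(2))^{\mbb{G}_m}\cong\mcl{M}_p$), whereas the paper \emph{equivariantizes $\Theta$ from below}. Concretely, the paper first identifies $\rep(\mcl{M}_p)_{\aff}\cong\rep(\mcl{W}_p)^{\mbb{G}_m}$ (your step (ii), recast as an equivariantization), then uses Tannakian reconstruction to show the surjection $I|_{\rep\PSL(2)}:\rep\PSL(2)\to\rep\mbb{G}_m$ is restriction along some embedding $\alpha:\mbb{G}_m\hookrightarrow\PSL(2)$; hence the $\mbb{G}_m$-action on $\rep(\mcl{W}_p)$ coming from $\mcl{M}_p$ agrees with the restriction along $\alpha$ of the $\PSL(2)$-action transported from $\rep(u_q(\mfk{sl}_2))$ via $\Theta$. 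Since all maximal tori in $\PSL(2)$ are conjugate, acting by an element $x\in\PSL(2)$ identifies $\rep(u_q(\mfk{sl}_2))^{\alpha(\mbb{G}_m)}$ with $\rep(u_q(\mfk{sl}_2))^T$, and the latter is already known to be $\rep(\dotu_q(\mfk{sl}_2))$ by \cite[Proposition 9.1]{negron}. Thus $\Psi:=(\Theta\circ x)^{\mbb{G}_m}$.

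What each approach buys: the paper's route avoids your acknowledged obstacle (i) entirely, because the identification $\rep(\dotu_q(\mfk{sl}_2))\cong\rep(u_q(\mfk{sl}_2))^T$ is taken off the shelf from \cite{negron}, and equivariantizing an equivalence is formal (Lemma~\ref{lem:A1}). Your route would instead require setting up a ``partial de-equivariantization'' formalism for modules over the homogeneous-space algebra $\Fr\,\O(\PSL(2))^{T}$ --- note this is \emph{not} $\O(\mbb{G})$ for any affine group $\mbb{G}$, so it lies outside the Appendix's framework and genuinely needs new infrastructure. Your route also tacitly assumes the specific torus you fix on the quantum side is the one whose invariants in $\mcl{W}_p$ give $\mcl{M}_p$; this is exactly what the paper's Tannakian-plus-conjugation step handles, and you should not expect it to be automatic (a priori $\K$ only gives $\K\Fr\O(\PSL(2))\cong\mcl{W}_p$ as algebra objects in $\Rep(\mcl{V}ir_c)_{\aff}$, with \emph{some} $\PSL(2)$-action, not necessarily the $\Aut(\mcl{W}_p)$-action on the nose). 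Your parenthetical alternative via Ostrik's theorem and a projective-cover comparison would also work, but is strictly more laborious than equivariantizing $\Theta$.
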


The proof of Theorem \ref{thm:singlet} relies in essential ways on notions of equivariantization and de-equivariantization for tensor categories, relative to a given algebraic group action.  We recall the relevant constructions in Appendix \ref{sect:appendix}.

\begin{remark}\label{rem:CM}
As remarked in the introduction, a quantum group equivalence for the singlet was conjectured in \cite{creutzigmilas14,costantinoetal15}.  The works \cite{creutzigmilas14,costantinoetal15} conjecture, specifically, an equivalence between representations of the so-called unrolled quantum group $\mbf{u}^H_q(\mfk{sl}_2)$ and a certain extension $\rep_{\langle s\rangle}(\mcl{M}_p)$ of $\rep(\mcl{M}_p)_{\aff}$ by the category of $\mbb{C}/\mbb{Z}$-graded vector spaces.  The category of $\mbf{u}^H_q(\mfk{sl}_2)$-representations is just the category of $\mfk{h}^\ast$-graded representations of $u_q(\mfk{sl}_2)$, as opposed to those graded by the character lattice $\Lambda\subset \mfk{h}^\ast$.  So the above theorem differs, to some degree, from the precise conjectures of \cite{creutzigmilas14,costantinoetal15}. 
\end{remark}

\subsection{The category of $\dotu_q(\mfk{sl}_2)$-representations}
\label{sect:udot}

We understand the algebra $\dotu_q(\mfk{sl}_2)$ directly through its representations.  A representation of $\dotu_q(\mfk{sl}_2)$ is a finite-dimensional $\Lambda=\frac{1}{2}\mbb{Z}\alpha$-graded vector space $V$ which comes equipped with $p$-nilpotent linear endomorphisms $E,F:V\to V$ which (a) shift the degree by $\alpha$ and $-\alpha$, respectively, and (b) satisfy the quantum group relations of \cite{lusztig90}.  So, the construction of $\rep(\dotu_q(\mfk{sl}_2))$ is completely analogous to the construction of $\rep\SL(2)_q$ given in Section \ref{sect:SL2q}, where we simply forget about the additional operators $E^{(p)}$ and $F^{(p)}$.
\par

The expected coproduct $\Delta(E)=E\ot 1+K\ot E$, and $\Delta(F)=F\ot K^{-1}+1\ot F$, provides the category $\rep(\dotu_q(\mfk{sl}_2))$ with a rigid tensor structure.  Furthermore, the $R$-matrix and twist from \ref{sect:SL2q} define a (unique) ribbon structure on the category $\rep(\dotu_q(\mfk{sl}_2))$ so that the forgetful functor $\rep\SL(2)_q\to \rep(\dotu_q(\mfk{sl}_2))$ is a ribbon tensor functor.  This is the \emph{standard} ribbon structure on the category of $\dotu_q(\mfk{sl}_2)$-representations, and in the statement of Theorem \ref{thm:singlet} we consider $\rep(\dotu_q(\mfk{sl}_2))$ with its reversed braiding and inverted twist, relative to this standard structure.
\par

As in the previous sections, we omit the superscript $(-)^{\rm rev}$ from our analysis, and take for granted that we are considering the category of $\dotu_q(\mfk{sl}_2)$-representations with the reversed braiding and inverted twist.

\subsection{Triplet modules via (de-)equivariantization}
\label{sect:1611}

We let $\Rep(\mcl{M}_p)_{\aff}$ denote the Ind-category of $\rep(\mcl{M}_p)_{\aff}$, which we define as in Section \ref{sect:ind_cat}.

The equivalence $\operatorname{Fr}'$ from $\rep\mbb{G}_m$ to the M\"uger center of $\rep(\mcl{M}_p)_{\aff}$ sends the algebra object $\O(\mbb{G}_m)$ to the algebra object
\[
A=\oplus_{n\in \mbb{Z}}M_{2n+1,1}\cong \mcl{W}_p
\]
in the M\"uger center of $\rep(\mcl{M}_p)_{\aff}$.  (Note that the object $A$ has a unique commutative algebra structure in $\rep(\mcl{M}_p)_{\aff}$, up to isomorphism, under which it is a simple module over itself.)  The following essentially rephrases Lemma \ref{lem:1550}.

\begin{lemma}[{cf.\ \cite[Theorem 3.65]{creutzigkanademcrae}}]
Restriction $\rep(\mcl{W}_p)\to \Rep(\mcl{M}_p)_{\aff}$, and any choice of algebra isomorphism $\mcl{W}_p\cong A$, identifies $\rep(\mcl{W}_p)$ with the (ribbon tensor) category of finitely generated $A$-modules in $\Rep(\mcl{M}_p)_{\aff}$.  Furthermore, any finitely generated $A$-module is finitely presented.
\end{lemma}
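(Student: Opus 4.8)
The plan is to obtain the identification from the general theory of VOA extensions (\cite{huangkirillovlepowski15,creutzigkanademcrae}, recalled in Appendix~\ref{sect:VW}) applied to the extension $\mcl{M}_p\subset\mcl{W}_p$, and then to supply the finiteness bookkeeping needed to pass between ``finite length $\mcl{W}_p$-module'' and ``finitely generated $A$-module''.

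First I would record that $A=\oplus_{n\in\mbb{Z}}M_{2n+1,1}$ is a commutative algebra object in $\Rep(\mcl{M}_p)_{\aff}$: it is the image $\operatorname{Fr}'(\O(\mbb{G}_m))$ under the symmetric tensor equivalence of Proposition~\ref{prop:Gm}, so it lies in the M\"uger center and carries a symmetric, hence commutative, algebra structure. Uniqueness up to isomorphism is standard, since a graded algebra structure on an object with invertible, simple homogeneous components is classified by $H^2(\mbb{Z};\mbb{C}^\times)=0$; fixing the algebra isomorphism $\mcl{W}_p\cong A$ rigidifies a particular choice, and the resulting category of $A$-modules is independent of the choice up to canonical equivalence.

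Next I would invoke \cite[Theorem~3.65]{creutzigkanademcrae} (see Appendix~\ref{sect:VW}): restriction along $\mcl{M}_p\hookrightarrow\mcl{W}_p$ identifies the category of $\mcl{W}_p$-modules whose restriction to $\mcl{M}_p$ lies in $\Rep(\mcl{M}_p)_{\aff}$ with the category $\mrm{mod}_A$ of $A$-modules in $\Rep(\mcl{M}_p)_{\aff}$, as braided tensor categories, the product on the $A$-side being $\ot_A$; since $A$ is central, every $A$-module is local and the braiding is defined on all of $\mrm{mod}_A$. Compatibility with the ribbon structure is automatic, the twist on either side being the common operator $e^{2\pi iL(0)}$. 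It then remains to match finiteness. Any finite length $\mcl{W}_p$-module restricts into $\Rep(\mcl{M}_p)_{\aff}$: by additivity it suffices to check this on the simples $X^{\pm}_s$, and Frobenius reciprocity against $I'(M_{r,s})=X^{\epsilon(r)}_s$ (Lemma~\ref{lem:1550}), together with the Virasoro decompositions of Section~\ref{sect:irrep_Wp}, shows $\operatorname{res}X^{\pm}_s$ is grading-restricted with composition factors among the $M_{r,s}$, hence an object of $\Rep(\mcl{M}_p)_{\aff}$. Conversely a finitely generated $A$-module is a quotient of $A\ot V=I'(V)$ with $V\in\rep(\mcl{M}_p)_{\aff}$, and $I'$ lands in finite length $\mcl{W}_p$-modules by Lemma~\ref{lem:1550}, so it has finite length; conversely a finite length $\mcl{W}_p$-module $W$ is finitely generated over $\mcl{W}_p$, its generators lie in a finite length $\mcl{M}_p$-submodule $V$ of $\operatorname{res}W$ (a filtered union of such), and the adjunct $A\ot V\to W$ is surjective --- so the finite length $\mcl{W}_p$-modules are precisely the finitely generated $A$-modules. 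Finally, ``finitely generated $\Rightarrow$ finitely presented'': in $0\to N\to A\ot V\to M\to 0$ with $V\in\rep(\mcl{M}_p)_{\aff}$, the submodule $N$ of the finite length object $A\ot V$ is finite length, hence finitely generated over $A$ by the previous step, giving a finite presentation of $M$.

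I expect the main obstacle to be the correct invocation of \cite[Theorem~3.65]{creutzigkanademcrae} in this non-$C_2$-cofinite, Ind-completed setting: one must check its hypotheses apply to $\rep(\mcl{M}_p)_{\aff}$ and, crucially, that the $\mcl{W}_p$-modules it produces exhaust $\rep(\mcl{W}_p)$ rather than an a priori smaller class --- which is exactly what the computation $\operatorname{res}X^{\pm}_s\in\Rep(\mcl{M}_p)_{\aff}$ above secures.
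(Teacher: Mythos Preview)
Your proposal is correct and follows essentially the same route as the paper: both invoke the general extension theory of Appendix~\ref{sect:VW} (and \cite{creutzigkanademcrae}) for the identification, then use Lemma~\ref{lem:1550} to match ``finite length over $\mcl{W}_p$'' with ``finitely generated over $A$'', and deduce finite presentation from the kernel being again finite length. The one minor variation is in the direction ``finite length $\Rightarrow$ finitely generated'': the paper argues via enough projectives in $\rep(\mcl{M}_p)_{\aff}$ together with surjectivity of $I'$ (so any $X$ admits a projective surjection $I'(P)\twoheadrightarrow X$), whereas you use the filtered-union description of $\Rep(\mcl{M}_p)_{\aff}$ to locate a finite length $V\subset\res X$ containing generators. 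Both are clean; your version has the small bonus of making explicit why $\res X^{\pm}_s\in\Rep(\mcl{M}_p)_{\aff}$, which the paper's argument obtains implicitly.
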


To be clear, by a finitely generated $A$-module $X$ we mean one which admits an $A$-module surjection $A\ot V\to X$ from a free module, with $V$ in $\rep(\mcl{M}_p)_{\aff}$.  By a finitely presented module we mean one which admits an exact sequence $A\ot W\to A\ot V\to X\to 0$, where $W$ and $V$ are in $\rep(\mcl{M}_p)_{\aff}$.

\begin{proof}
Lemma \ref{lem:1550} implies that any free module $I'(V)=\mcl{W}_p\ot V$ is, in particular, a finite length $\mcl{W}_p$-module in $\Rep(\mcl{M}_p)_{\aff}$.  Hence any quotient $\mcl{W}_p\ot V\to X$ of a free module is a finite length $\mcl{W}_p$-module.  So we see that any finitely generated $\mcl{W}_p$-module is of finite length, and so lies in $\rep(\mcl{W}_p)$.  Conversely, since the category $\rep(\mcl{M}_p)_{\aff}$ has enough projectives, surjectivity of the induction functor $I'$ (Lemma \ref{lem:1550}) is equivalent to the claim that any finite length $\mcl{W}_p$-module $X$ admits a surjection from a free module $\mcl{W}_p\ot V\to X$, and so is finitely generated.  Finally, since the kernel of such a surjection $\mcl{W}_p\ot V\to X$ is also of finite length, we observe that any finitely generated module is also finitely presented.  This shows that $\rep(\mcl{W}_p)$ is identified with the category of finitely presented $\mcl{W}_p$-modules in $\Rep(\mcl{M}_p)_{\aff}$ under restriction.  Restricting along any algebra isomorphism $A\to \mcl{W}_p$ now provides the claimed result.
\end{proof}

In the language of Section \ref{sect:dE}, and Appendix \ref{sect:A2}, we have an identification of ribbon tensor categories
\[
\rep(\mcl{W}_p)=(\rep(\mcl{M}_p)_{\aff})_{\mbb{G}_m}:=\left\{
\begin{array}{c}
\text{finitely presented $A=\operatorname{Fr}'\O(\mbb{G}_m)$}\\
\text{modules in }\Rep(\mcl{M}_p)_{\aff}
\end{array}\right\}
\]
between $\mcl{W}_p$-modules and the de-equivariantization of $\rep(\mcl{M}_p)$ along the central functor $\rep\mbb{G}_m\to \rep(\mcl{M}_p)$.  By a general result \cite{arkhipovgaitsgory03,negron}, it follows that there is a categorical action of $\mbb{G}_m$ on $\rep(\mcl{W}_p)$ for which we have an equivalence
\begin{equation}\label{eq:equiv}
\rep(\mcl{W}_p)^{\mbb{G}_m}\cong \rep(\mcl{M}_p)_{\aff}
\end{equation}
between the category of $\mbb{G}_m$-equivariant objects in $\rep(\mcl{W}_p)$ and the affine representation category of $\mcl{M}_p$ \cite[Proposition A.2]{negron}.

Let us explain the equivalence \eqref{eq:equiv} in more tangible terms.  As explained above, the restriction functor $\rep(\mcl{W}_p)\to \Rep(\mcl{M}_p)_{\aff}$ identifies finite length modules over the triplet algebra with finitely generated $A$-modules in $\Rep(\mcl{M}_p)_{\aff}$.  The algebra $A$ can be written as $\mcl{W}_p\cong A=\mcl{M}_p[t,t^{-1}]$ where $t$ is the invertible object $M_{3,1}$ and $t^{-1}=M_{3,1}^\ast=M_{-1,1}$.  Furthermore, the translation action of $\mbb{G}_m$ on $A=\Fr'\O(\mbb{G}_m)$ corresponds precisely to the $\mbb{Z}$-grading on $A$--and also $\mcl{W}_p$--specified by taking $\deg(t)=1$, $\deg(t^{-1})=-1$.
\par

With the above framing in mind, a $\mbb{G}_m$-equivariant object in $\rep(\mcl{W}_p)$ is just a $\mcl{W}_p$-module $X$ in $\Rep(\mcl{M}_p)_{\aff}$ with a compatible $\mbb{Z}$-grading so that $t^{\pm 1}\cdot X_k=X_{k\pm 1}$.  We are claiming at \eqref{eq:equiv} that the map
\begin{equation}\label{eq:1648}
\rep(\mcl{W}_p)^{\mbb{G}_m}=\mcl{W}_p\text{-mod}^\mbb{Z}\to \rep(\mcl{M}_p)_{\aff},\ \ X\mapsto X_0
\end{equation}
is an equivalence of categories, and has inverse provided by induction.  One can see directly (or abstractly as above) that this is indeed the case.  The functor \eqref{eq:1648} is furthermore an equivalence of ribbon tensor categories, as its inverse (induction) is compatible with the ribbon tensor structure.

\begin{remark}
For an alternate take on the above information, phrased explicitly in the language of vertex operator algebras and Lie group actions, one can see \cite[Section 7.2]{mcraeyang}.
\end{remark}

\subsection{Principles for Theorem \ref{thm:singlet}}

As recalled in Appendix \ref{sect:appendix}, the equivalence $(\rep\SL(2)_q)_{\PSL(2)}\cong \rep(u_q(\mfk{sl}_2))$ of Theorem \ref{thm:N} induces a $\PSL(2)$-action on the category of $u_q(\mfk{sl}_2)$-representations.  We have the standard torus $\mbb{G}_m\cong T\subset \PSL(2)$ of diagonal matrices, and we can restrict the action of $\PSL(2)$ to an action of the torus on $\rep(u_q(\mfk{sl}_2))$.
\par

Given this torus action on $\rep(u_q(\mfk{sl}_2))$, we can consider the non-full tensor subcategory $\rep(u_q(\mfk{sl}_2))^T\subset \rep(u_q(\mfk{sl}_2))$ of $T$-equivariant representations.  One should view objects in $\rep(u_q(\mfk{sl}_2))^T$ as $u_q(\mfk{sl}_2)$-representations equipped with a compatible rational $T$-action, although this is an oversimplification.  This subcategory is identified with the category of $\dotu_q(\mfk{sl}_2)$-representations via restriction.

\begin{proposition}[{\cite[Proposition 9.1]{negron} \cite{arkhipovgaitsgory03}}]
There is an equivalence $\rep(u_q(\mfk{sl}_2))^T\cong \rep(\dotu_q(\mfk{sl}_2))$ of ribbon tensor categories.
\end{proposition}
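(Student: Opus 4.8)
The plan is to realize $\rep(u_q(\mfk{sl}_2))^T$ as a de-equivariantization of big quantum $\SL(2)$ and match it to $\rep(\dotu_q(\mfk{sl}_2))$, using Theorem \ref{thm:N} together with the (de-)equivariantization calculus of the appendix. First I would recall, from Theorem \ref{thm:N} and Appendix \ref{sect:appendix}, that the equivalence $\rep(u_q(\mfk{sl}_2)) \cong (\rep\SL(2)_q)_{\PSL(2)} = \Fr\O(\PSL(2))\text{-mod in }\Rep\SL(2)_q$ carries a residual $\PSL(2)$-action realized concretely by the translation action of $\PSL(2)$ on the algebra $\O := \Fr\O(\PSL(2))$, and that restricting this action along $T = \mbb{G}_m \hookrightarrow \PSL(2)$ produces exactly the torus action appearing in the statement. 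Under this description a $T$-equivariant object of $\rep(u_q(\mfk{sl}_2))$ is an $\O$-module in $\Rep\SL(2)_q$ equipped with a compatible $T$-action; since $\PSL(2) \to \PSL(2)/T$ is a $T$-torsor and $\PSL(2)/T \cong (\mbb{P}^1 \times \mbb{P}^1) \setminus \Delta$ is affine, the tensor-categorical form of faithfully flat descent recorded in Appendix \ref{sect:C_G} identifies the category of such objects with the category of modules over the $T$-invariant subalgebra $\O' := \O^{T} = \Fr\O(\PSL(2)/T)$. As $\O'$ is a subobject of $\O$ it again lies in $Z_{\text{\rm M\"ug}}(\rep\SL(2)_q) = \rep\PSL(2)$, so $\O'\text{-mod in }\Rep\SL(2)_q$ carries an induced ribbon tensor structure and the identification above is one of ribbon tensor categories.

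What remains --- the heart of the matter --- is to identify $\O'\text{-mod in }\Rep\SL(2)_q$ with $\rep(\dotu_q(\mfk{sl}_2))$. This is the $\dotu_q$-analogue of Theorem \ref{thm:N}: de-equivariantizing $\rep\SL(2)_q$ along all of $\O(\PSL(2))$ trivializes the entire image of the quantum Frobenius (forgetting $E^{(p)}, F^{(p)}$ and collapsing the weight grading $\Lambda$ down to $\Lambda/\Lambda^M$, i.e.\ to the $K$-grading of $u_q$), whereas de-equivariantizing only along the smaller algebra $\O' = \O(\PSL(2)/T)$ trivializes the divided powers but retains the full $\Lambda$-grading, which is precisely the defining data of $\rep(\dotu_q(\mfk{sl}_2))$ from Section \ref{sect:udot}. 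Concretely one checks that the reduction $\mbb{C}\ot^\omega_{\O'}\Fr(-)\colon \rep\SL(2)_q \to \O'\text{-mod}$ annihilates $E^{(p)}, F^{(p)}$, is the identity on the toral part, and agrees with the forgetful functor $\rep\SL(2)_q \to \rep(\dotu_q(\mfk{sl}_2))$; comparing simple and projective objects (or Frobenius--Perron dimensions) then upgrades this to an equivalence, exactly as in the proof of Theorem \ref{thm:N}. Equivalently, and perhaps more cleanly, one can observe that $\rep(\dotu_q(\mfk{sl}_2))$ contains the symmetric Müger-central subcategory of $\Lambda^M$-graded objects on which $E$ and $F$ act by zero, a copy of $\rep\mbb{G}_m$; de-equivariantizing along it returns $\rep(u_q(\mfk{sl}_2))$ together with precisely the torus action above, and the proposition then follows because equivariantization and de-equivariantization along $\mbb{G}_m$ are mutually inverse (Appendix \ref{sect:appendix}, \cite{arkhipovgaitsgory03}).

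The main obstacle is this identification, and especially the bookkeeping of the quasi-Hopf/cocycle corrections: the equivalence of Theorem \ref{thm:N} carries a nontrivial, associator-dependent tensor compatibility $\omega$, and one must verify that the corresponding twist appears compatibly when only the subalgebra $\O'$ is inverted, so that the comparison with the genuinely Hopf algebra $\dotu_q(\mfk{sl}_2)$ is unobstructed and the $T$-equivariant structures on the two sides are matched. All of this is carried out in \cite[Proposition 9.1]{negron}, building on \cite{arkhipovgaitsgory03}.
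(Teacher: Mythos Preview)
The paper does not give its own proof of this proposition; it is simply quoted as \cite[Proposition~9.1]{negron} (building on \cite{arkhipovgaitsgory03}) and used as a black box in the proof of Theorem~\ref{thm:singlet}. So there is nothing in the text to compare your sketch against directly.

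That said, your outline is mathematically sound and tracks the argument one finds in the cited references. The identification $\PSL(2)/T\cong(\mbb{P}^1\times\mbb{P}^1)\setminus\Delta$ is correct, this variety is affine (the diagonal is an ample divisor), and faithfully flat descent along the $T$-torsor $\PSL(2)\to\PSL(2)/T$ does identify $T$-equivariant $\O(\PSL(2))$-modules with $\O(\PSL(2)/T)$-modules, all compatibly with the ambient $\Rep\SL(2)_q$-structure since everything lives in the M\"uger center. Of your two routes to the remaining identification, the second --- observing that $\rep(\dotu_q(\mfk{sl}_2))$ has M\"uger center $\rep\mbb{G}_m$ (the $\Lambda^M$-supported objects killed by $E$ and $F$), de-equivariantizing to recover $\rep(u_q(\mfk{sl}_2))$ with its torus action, and invoking the inverse relationship between $(-)^{\mbb{G}_m}$ and $(-)_{\mbb{G}_m}$ --- is the cleaner one and is essentially what \cite{negron} does. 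One small notational slip: the expression $\mbb{C}\ot^\omega_{\O'}\Fr(-)$ does not type-check ($\Fr$ takes $\rep\PSL(2)$ as its source, not $\rep\SL(2)_q$); you mean the free-module functor $V\mapsto\O'\ot V$ followed by the comparison with the forgetful functor to $\rep(\dotu_q(\mfk{sl}_2))$.
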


We consider the above theorem in parallel with the equivalence \eqref{eq:1648} for the singlet.  We claim at this point that (after some error correction if necessary) the equivalence $\Theta: \rep(u_q(\mfk{sl}_2))\overset{\sim}\to \rep(\mcl{W}_p)$ of Theorem \ref{thm:triplet} commutes with the $\mbb{G}_m$-actions on these categories, where we act on $\rep(u_q(\mfk{sl}_2))$ via the torus $\mbb{G}_m\cong T$ and we act on $\rep(\mcl{W}_p)$ in the manner prescribed in Section \ref{sect:1611}.  One then obtains an induced equivalence on the subcategories of $\mbb{G}_m$-equivariant objects
\[
\xymatrix{
\rep(\dotu_q(\mfk{sl}_2))\cong \rep(u_q(\mfk{sl}_2))^T\ar[rr]^\sim_(.53){\Psi:=\Theta^{\mbb{G}_m}} & & \rep(\mcl{W}_p)^{\mbb{G}_m}\cong \rep(\mcl{M}_p),
}
\]
which provides the claimed result.

\subsection{The proof of Theorem \ref{thm:singlet}}

We freely use the calculus of equivariantization and de-equivariantization in the proof.  One should see Appendix \ref{sect:appendix}, or the original texts \cite{arkhipovgaitsgory03,davydovetingofnikshych18,negron}.

\begin{proof}[Proof of Theorem \ref{thm:singlet}]
We have the ($2$-)diagram of braided tensor functors
\[
\xymatrix{
\rep\SL(2)_q\cong \rep(\mcl{V}ir_c)_{\aff}\ar[rr]^(.6)I & & \rep(\mcl{M}_p)_{\aff}\ar[rr]^{I'} & & \rep(\mcl{W}_p)\\
\rep\PSL(2)\ar[u]^{\rm Fr}\ar[rr]_{\rm surject}^{\rm Lem\ \ref{lem:ZMp}} & & \rep\mbb{G}_m\ar[u]^{\rm Fr'}\ar[rr]^{\rm Prop\ \ref{prop:Gm}}_{\rm fiber} & & Vect,\ar[u]^{\rm unit}
}
\]
where $I$ and $I'$ are the appropriate induction functors.  By Tannakian reconstruction, and surjectivity of the map
\[
I|_{\rep\PSL(2)}:\rep\PSL(2)\to \rep\mbb{G}_m,
\]
we have that $I|_{\rep\PSL(2)}$ is isomorphic to restriction $\res_\alpha:\rep\PSL(2)\to \rep\mbb{G}_m$ along a group embedding $\alpha:\mbb{G}_m\to \PSL(2)$ \cite[Theorem 2.3.2]{rivano06} \cite[Proposition 2.21]{delignemilne82}.  So the above diagram can be replaced with the ($2$-)diagram
\[
\xymatrix{
\rep(\mcl{V}ir_c)_{\aff}\ar[rr]^I & & \rep(\mcl{M}_p)_{\aff}\ar[r]^{I'} & \rep(\mcl{W}_p)\\
\rep\PSL(2)\ar[u]^{\rm Fr}\ar[rr]^{\res_\alpha} & & \rep\mbb{G}_m\ar[u]^{\rm Fr'}\ar[r]^{\rm fiber} & Vect.\ar[u]^{\rm unit}
}
\]
Hence we have that the $\mbb{G}_m$-action on $\rep(\mcl{W}_p)$ induced by the identification $\rep(\mcl{W}_p)=\rep(\mcl{M}_p)_{\mbb{G}_m}$ is isomorphic to the restriction
\[
\mbb{G}_m\overset{\alpha}\to \PSL(2)\to \underline{\Aut}_{\ot}(\rep(\mcl{W}_p))
\]
of the $\PSL(2)$-action on $\rep(\mcl{W}_p)$ induced by the equivalence $\rep(\mcl{W}_p)\cong (\rep(\mcl{V}ir_c)_{\aff})_{\PSL(2)}$ of Theorem \ref{thm:triplet}, and Theorem \ref{thm:LMM}.  Let us argue this point more directly.
\par

The map $\alpha:\mbb{G}_m\to \PSL(2)$ provides, dually, a map $\alpha^\ast:\O(\PSL(2))\to \O(\mbb{G}_m)$ of Hopf algebras (in $Vect$).  From the map $\alpha^\ast$ we also realize $\O(\PSL(2))$ as a $\O(\mbb{G}_m)$-comodule algebra, and $\alpha^\ast$ itself is a map of $\O(\mbb{G}_m)$-comodule algebras.  It follows that the equivalence
\begin{equation}\label{eq:1568}
\O(\mbb{G}_m)\ot_{\O(\PSL(2))}I(-):(\rep(\mcl{V}ir_c)_{\aff})_{\PSL(2)}\overset{\sim}\to (\rep(\mcl{M}_p)_{\aff})_{\mbb{G}_m}
\end{equation}
is $\mbb{G}_m$-equivariant, since the $\mbb{G}_m$-actions on these categories are induced by the $\O(\mbb{G}_m)$-coactions on the associated algebras \cite[Section A.1]{negron}.  (Note that we have abused notation in the expression \eqref{eq:1568}, as $\O(\mbb{G}_m)$ should really be $\Fr'\O(\mbb{G}_m)$, for example.)
\par

Now, the $\PSL(2)$-action on $\rep(\mcl{W}_p)$ induced by the equivalence
\[
(\rep\SL(2)_q)_{\PSL(2)}\cong (\rep(\mcl{V}ir_c)_{\aff})_{\PSL(2)}\cong \rep(\mcl{W}_p)
\]
is such that the equivalence $\Theta:\rep(u_q(\mfk{sl}_2))\to \rep(\mcl{W}_p)$ of Theorem \ref{thm:triplet} is $\PSL(2)$-equivariant.  We therefore have an equivalence
\begin{equation}\label{eq:1832}
\Theta^{\mbb{G}_m}:\rep(u_q(\mfk{sl}_2))^{\mbb{G}_m}\overset{\sim}\to \rep(\mcl{W}_p)^{\mbb{G}_m},
\end{equation}
where we act via the map $\alpha:\mbb{G}_m\to \PSL(2)$.  We note that the map $\alpha:\mbb{G}_m\to \PSL(2)$ identifies $\mbb{G}_m$ with a maximal torus in $\PSL(2)$, and all maximal tori are conjugate.  Therefore we can find $x\in \PSL(2)$ so that
\[
\operatorname{Ad}_x:\PSL(2)\to \PSL(2)
\]
sends $\alpha(\mbb{G}_m)$ to the standard torus $T=\{\operatorname{diag}\{a,a^{-1}\}:a\in \mbb{C}^\times\}\subset \PSL(2)$.  It follows that the action of $x$ on $\rep(u_q(\mfk{sl}_2))$ provides an equivalence
\[
x\cdot-:\rep(u_q(\mfk{sl}_2))^T\overset{\sim}\to \rep(u_q(\mfk{sl}_2))^{\mbb{G}_m},
\]
and we find from \eqref{eq:1832} an equivalence $\rep(u_q(\mfk{sl}_2))^T\cong \rep(\mcl{W}_p)^{\mbb{G}_m}$.  We recall that the equivariantization of $\rep(u_q(\mfk{sl}_2))$ by the action of the torus of diagonal matrices is the representation category $\rep(\dotu_q(\mfk{sl}_2))$ \cite[Proposition 9.1]{negron} to observe finally
\[
\rep(\dotu_q(\mfk{sl}_2))\cong \rep(u_q(\mfk{sl}_2))^T\cong \rep(\mcl{W}_p)^{\mbb{G}_m}\cong \rep(\mcl{M}_p)_{\aff}.
\]
Compatibility with the ribbon structure follows from the diagram
\[
\xymatrix{
\rep(\dotu_q(\mfk{sl}_2))\ar[r]^\sim \ar[dr]_{\rm res}& \rep(u_q(\mfk{sl}_2))^T\ar[rr]^{\rm induced}\ar[d]_{\rm forget} & & \rep(\mcl{W}_p)^{\mbb{G}_m}\ar[d]^{\rm forget}& \rep(\mcl{M}_p)\ar[l]_\sim\ar[dl]^{I'}\\
& \rep(u_q(\mfk{sl}_2))\ar[rr]^{\Theta\circ x} & & \rep(\mcl{W}_p)
}
\]
and the fact that all of the functors present, save for possibly the induced map in question, are (faithful) ribbon tensor functors.
\end{proof}

\appendix

\section{Induction for VOA extensions}
\label{sect:VW}

We recall some information regarding extensions of vertex operator algebras and induction.  The original references for the following materials are \cite{kirillovostrik02,huangkirillovlepowski15,creutzigkanademcrae}.

\subsection{Vertex algebra extensions and induction}

Consider $\mcl{V}$ a vertex operator algebra with $\rep(\mcl{V})_{\rm dist}$ a full subcategory of distinguished objects in the category of finite length $\mcl{V}$-modules.  We suppose additionally that the subcategory $\rep(\mcl{V})_{\rm dist}$ is closed under taking subquotients, and that this subcategory admits a vertex tensor structure as described in Section \ref{sect:voacat}.  We let $\Rep(\mcl{V})_{\rm dist}$ denote the associated Ind-category of distinguished modules, which one can describe as in Section \ref{sect:ind_cat}.  In this setting, the category $\Rep(\mcl{V})_{\rm dist}$ inherits a unique braided monoidal structure for which the product $\ot$ commutes with colimits.  Furthermore, this monoidal structure is specified by intertwining operators, as one might expect \cite[\S 6]{creutzigmcraeyangII}.

Consider $\rep(\mcl{V})_{\rm dist}$ as above, and suppose we have a vertex operator algebra extension $\mcl{V}\to \mcl{W}$ with $\mcl{W}$ lying in $\Rep(\mcl{V})_{\rm dist}$, as a $\mcl{V}$-module.  Then the vertex algebra structure on $\mcl{W}$ gives it the structure of a commutative algebra object in $\Rep(\mcl{V})_{\rm dist}$ \cite[Theorem 3.13]{creutzigkanadelinshaw20}.  In such a setting we let $\rep(\mcl{W})_{\rm dist}$ denote the category of finite length $\mcl{W}$-modules which restrict to $\mcl{V}$-modules in $\Rep(\mcl{V})_{\rm dist}$, along the extension $\mcl{V}\to \mcl{W}$.
\par

Consider now a general commutative algebra object $A$ in $\Rep(\mcl{V})_{\rm dist}$.  Recall that an $A$-module $M$ is called local if the action map $act:A\ot M\to M$ is such that $act\circ c^2_{A,M}=act$, where $c_{A,M}$ is the braiding for $\Rep(\mcl{V})_{\rm dist}$.  In this way any local $A$-module admits an unambiguous $A$-bimodule structure, and the category of local $A$-modules inherits a braided monoidal structure under the product $\ot_A$ \cite[Theorem 1.10]{kirillovostrik02}.  We let $\rep^0(A)$ denote the category of finite length, local, $A$-modules in $\Rep(\mcl{V})_{\rm dist}$, and let $\Rep^0(A)$ denote its Ind-category.  So, $\Rep^0(A)$ is the category of $A$-modules in $\Rep(\mcl{V})_{\rm dist}$ which are the unions of their finite length submodules.
\par

We have the following essential results of Creutzig, Kanade, McRae, and Yang, and also Huang, Kirillov, and Lepowsky.

\begin{theorem}[{\cite[Theorem 7.7]{creutzigmcraeyangII}, \cite[Theorem 3.65]{creutzigkanademcrae}, \cite{huangkirillovlepowski15}}]\label{thm:A}
Consider $\rep(\mcl{V})_{\rm dist}$ as above, and an extension of vertex operator algebras $\iota:\mcl{V}\to \mcl{W}$ with $\mcl{W}$ lying in $\Rep(\mcl{V})_{\rm dist}$.  Let $A$ denote $\mcl{W}$, considered as an algebra object in $\Rep(\mcl{V})_{\rm dist}$.  Then the category $\rep(\mcl{W})_{\rm dist}$ admits a vertex tensor structure, and restriction along $\iota$ provides an identification of braided monoidal categories $\rep(\mcl{W})_{\rm dist}=\rep^0(A)$.
\end{theorem}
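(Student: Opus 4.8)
The plan is to prove Theorem~\ref{thm:A} by setting up a dictionary between the vertex-algebraic description of $\mcl{W}$-modules and the categorical description of local $A$-modules, following Huang--Kirillov--Lepowsky \cite{huangkirillovlepowski15} and Creutzig--Kanade--McRae--Yang \cite{creutzigkanademcrae,creutzigmcraeyangII}.

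\emph{Step 1: the algebra object.} First I would record that $\mcl{W}$, regarded as an object of $\Rep(\mcl{V})_{\rm dist}$, carries a commutative, associative, unital algebra structure with trivial twist $\theta_A=\mathrm{id}_A$. The multiplication $\mu_A\colon A\ot A\to A$ is the morphism corresponding, under the universal property of the $P(z)$-tensor product, to the $\mcl{V}$-intertwining operator $Y_{\mcl{W}}(\cdot,z)$ of type $\mcl{V}\left({\mcl{W}\atop\mcl{W}\mcl{W}}\right)$; commutativity is skew-symmetry of $Y_{\mcl{W}}$, associativity is the associativity (operator product) property of vertex operators, the unit $\1\to A$ is $\iota\colon\mcl{V}\to\mcl{W}$, and $\theta_A=\mathrm{id}$ because $L(0)$ acts with integral eigenvalues on $\mcl{W}$. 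Since $\mcl{W}_{(0)}=\mbb{C}\1$ the algebra $A$ is haploid. This is the content of \cite[Theorem 3.13]{creutzigkanadelinshaw20} combined with \cite{huangkirillovlepowski15}.

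\emph{Step 2: the abelian equivalence.} Given $M$ in $\rep(\mcl{W})_{\rm dist}$, the action $Y_M$ restricts to a $\mcl{V}$-intertwining operator of type $\mcl{V}\left({M\atop\mcl{W}M}\right)$ and hence determines a morphism $\mrm{act}_M\colon A\ot M\to M$ in $\Rep(\mcl{V})_{\rm dist}$ via \cite[Proposition 4.17]{huanglepowskyzhang10}. The $\mcl{W}$-module identity and associativity axioms become the unit and associativity axioms of an $A$-module, and the commutation relations for $Y_M(w,z)Y_M(w',z')$ with $w,w'\in\mcl{W}$ become precisely the locality condition $\mrm{act}_M\circ c^2_{A,M}=\mrm{act}_M$. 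Conversely, a local $A$-module in $\Rep(\mcl{V})_{\rm dist}$ produces a $\mcl{W}$-module, and a $\mcl{V}$-linear map is $\mcl{W}$-linear iff it is $A$-linear. Since $\mcl{W}$-submodules of $M$ correspond to $A$-submodules and $\rep(\mcl{V})_{\rm dist}$ is closed under subquotients, finite length over $\mcl{W}$ matches the finiteness condition defining $\rep^0(A)$. This gives an equivalence of abelian categories $\rep(\mcl{W})_{\rm dist}\cong\rep^0(A)$, compatible with the forgetful functors to $\Rep(\mcl{V})_{\rm dist}$.

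\emph{Step 3: the tensor structure.} The category $\rep^0(A)$ carries a braided monoidal structure with product $\ot_A$ by \cite[Theorem 1.10]{kirillovostrik02}, and rigidity of $\rep(\mcl{V})_{\rm dist}$ ensures the requisite coequalizers exist and that $M_1\ot_A M_2$ is again finitely presented. To see that transporting this structure along the equivalence of Step 2 recovers the vertex tensor structure on $\rep(\mcl{W})_{\rm dist}$, I would prove that for $M_1,M_2,M_3$ in $\rep(\mcl{W})_{\rm dist}$ there is a natural isomorphism $\mcl{W}\left({M_3\atop M_1M_2}\right)\cong\Hom_A(M_1\ot_A M_2,M_3)$. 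The crux is that a $\mcl{V}$-intertwining operator of type $\mcl{V}\left({M_3\atop M_1M_2}\right)$ underlies a $\mcl{W}$-intertwining operator exactly when it is \emph{$A$-balanced}, i.e.\ compatible with $\mrm{act}_{M_1}$ and $\mrm{act}_{M_2}$; by the universal property of $\ot_A$ as the coequalizer of the two $A$-actions on $M_1\ot M_2$, balanced maps are the same as $A$-module maps out of $M_1\ot_A M_2$. Feeding this through the Huang--Lepowsky--Zhang construction of $P(z)$-tensor products shows that the $\mcl{W}$-module attached to $M_1\ot_A M_2$ represents the functor $M_3\mapsto\mcl{W}\left({M_3\atop M_1M_2}\right)$, hence \emph{is} the $\mcl{W}$-module tensor product; closure of $\rep^0(A)$ under $\ot_A$ then yields existence of all $P(z)$-tensor products in $\rep(\mcl{W})_{\rm dist}$, and the associativity, unit and braiding isomorphisms transport across the equivalence.

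\emph{Main obstacle.} The analytic heart is Step~3: identifying $\mcl{W}$-intertwining operators with $A$-balanced $\mcl{V}$-intertwining operators and matching the HLZ $P(z)$-tensor product with the categorical $\ot_A$. This rests on the convergence and associativity theorems of the Huang--Lepowsky--Zhang program for logarithmic intertwining operators together with the induction formalism of \cite{huangkirillovlepowski15}; the delicate point is that the construction must be carried out in the Ind-completion $\Rep(\mcl{V})_{\rm dist}$ while verifying that everything restricts back to the finite-length subcategories, which is exactly where rigidity of $\rep(\mcl{V})_{\rm dist}$ --- hence exactness of induction and finite presentability of $\ot_A$ --- is used.
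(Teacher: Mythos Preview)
The paper does not give its own proof of this theorem: it is stated with attribution to \cite{creutzigmcraeyangII}, \cite{creutzigkanademcrae}, and \cite{huangkirillovlepowski15} and then used as a black box. Your proposal is a faithful outline of the argument as it appears in those cited references --- the three-step dictionary (algebra object, abelian equivalence via the correspondence between $\mcl{W}$-module structures and local $A$-module structures, then matching $\ot_A$ with the HLZ $P(z)$-tensor product via the identification of $\mcl{W}$-intertwining operators with $A$-balanced $\mcl{V}$-intertwining operators) is exactly the strategy of Huang--Kirillov--Lepowsky and its extension to the Ind-completed/logarithmic setting by Creutzig--Kanade--McRae(--Yang). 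There is nothing to compare against in the present paper beyond noting that the authors import the result wholesale; your sketch is consistent with the sources they cite.
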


Having fixed a commutative algebra object $A$ in $\Rep(\mcl{V})_{\rm dist}$, we let $\Rep^0(\mcl{V})_{\rm dist}$ denote the M\"uger centralizer of $A$ in $\Rep(\mcl{V})_{\rm dist}$.  We then have the free module functor $A\ot-:\Rep^0(\mcl{V})_{\rm dist}\to \Rep^0(A)$.  This functor is braided monoidal, and is left adjoint to the restriction functor $\Rep^0(A)\to \Rep(\mcl{V})_{\rm dist}$ \cite[Theorem 1.6]{kirillovostrik02}.  Taking this fact, and Theorem \ref{thm:A} into account, we observe a braided monoidal functor
\begin{equation}\label{eq:1626}
I_{\mcl{V}}^{\mcl{W}}=\mcl{W}\ot-:\Rep^0(\mcl{V})_{\rm dist}\to \Rep(\mcl{W})_{\rm dist}
\end{equation}
to the category of $\mcl{W}$-modules, for any VOA extension $\mcl{V}\to \mcl{W}$ with $\mcl{W}$ in $\Rep(\mcl{V})_{\rm dist}$.
\par

Note that when the category $\rep(\mcl{V})_{\rm dist}$ is rigid, the tensor product $\ot$ is necessarily biexact and commutes with colimits \cite[Proposition 2.1.8]{bakalovkirillov01}.  It follows that the induction functor $I_{\mcl{V}}^{\mcl{W}}$ is exact whenever $\rep(\mcl{V})_{\rm dist}$ is rigid.

\begin{lemma}\label{lem:ind}
Consider $\mcl{V}$ and $\mcl{W}$ as above.  Suppose that $\mcl{W}$ lies in the M\"uger center of $\rep(\mcl{V})_{\rm dist}$, that the category $\rep(\mcl{V})_{\rm dist}$ is rigid, and that the free modules $\mcl{W}\ot L$ are of finite length over $\mcl{W}$ for each simple module $L$ in $\rep(\mcl{V})_{\rm dist}$.  Then induction provides an exact braided monoidal functor
\begin{equation}\label{eq:1633}
I_{\mcl{V}}^{\mcl{W}}:\rep(\mcl{V})_{\rm dist}\to \rep(\mcl{W})_{\rm dist}
\end{equation}
which is left adjoint to restriction.
\end{lemma}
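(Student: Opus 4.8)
The plan is to derive Lemma~\ref{lem:ind} by restricting the Ind-category machinery already in place --- the free-module functor \eqref{eq:1626}, the identification $\rep(\mcl{W})_{\rm dist}=\rep^0(A)$ of Theorem~\ref{thm:A}, and the Kirillov--Ostrik theory \cite[Theorems 1.6, 1.10]{kirillovostrik02} --- down to the full subcategories of finite-length objects. Only two of the hypotheses do real work: M\"uger centrality of $\mcl{W}$ ensures the free-module functor is \emph{defined} on $\rep(\mcl{V})_{\rm dist}$, and the finiteness condition on $\mcl{W}\ot L$ ensures its \emph{image} lands in $\rep(\mcl{W})_{\rm dist}$; rigidity supplies exactness, and the braided/adjoint properties are then formal.

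First I would fix $A=\mcl{W}$ and observe that, since $\mcl{W}$ is M\"uger central in $\rep(\mcl{V})_{\rm dist}$ --- equivalently in its Ind-category --- we have $c^2_{A,X}=\mrm{id}$ for \emph{every} $X$ in $\Rep(\mcl{V})_{\rm dist}$, so the M\"uger centralizer $\Rep^0(\mcl{V})_{\rm dist}$ of $A$ equals all of $\Rep(\mcl{V})_{\rm dist}$; in particular $\rep(\mcl{V})_{\rm dist}\subseteq\Rep^0(\mcl{V})_{\rm dist}$, so the braided monoidal free-module functor $I_{\mcl{V}}^{\mcl{W}}=\mcl{W}\ot-$ of \eqref{eq:1626} is defined on all of $\rep(\mcl{V})_{\rm dist}$. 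By Theorem~\ref{thm:A} its values are local $\mcl{W}$-modules, and their restriction to $\mcl{V}$ lies in $\Rep(\mcl{V})_{\rm dist}$ for the trivial reason that $\Rep(\mcl{V})_{\rm dist}$ is monoidal and hence closed under $M\mapsto\mcl{W}\ot M$.

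The step I expect to be the main obstacle --- really the only point requiring an argument --- is showing that $\mcl{W}\ot M$ is of \emph{finite length over} $\mcl{W}$ for every $M$ in $\rep(\mcl{V})_{\rm dist}$, which is not obvious because $\mcl{W}$ itself need not be of finite length over $\mcl{V}$. Here I would use that rigidity of $\rep(\mcl{V})_{\rm dist}$ makes $\ot$ biexact and colimit-preserving \cite[Proposition 2.1.8]{bakalovkirillov01}, so $\mcl{W}\ot-$ is an exact functor into $\Rep(\mcl{V})_{\rm dist}$, and hence into $\mcl{W}$-modules since the forgetful functor $\rep^0(A)\to\Rep(\mcl{V})_{\rm dist}$ reflects exactness. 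Choosing a finite composition series $0=M_0\subseteq M_1\subseteq\cdots\subseteq M_n=M$ in $\rep(\mcl{V})_{\rm dist}$ with simple quotients $L_i=M_i/M_{i-1}$ and applying $\mcl{W}\ot-$ yields a finite filtration of $\mcl{W}\ot M$ by $\mcl{W}$-submodules with subquotients $\mcl{W}\ot L_i$, each of finite length over $\mcl{W}$ by hypothesis; since finite length is stable under extensions, $\mcl{W}\ot M$ is of finite length over $\mcl{W}$, so $I_{\mcl{V}}^{\mcl{W}}$ indeed takes values in $\rep(\mcl{W})_{\rm dist}$.

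It then remains to collect the formal consequences. Exactness of $I_{\mcl{V}}^{\mcl{W}}$ on $\rep(\mcl{V})_{\rm dist}$ is precisely the exactness used in the previous step. The braided monoidal structure is the restriction, to the full subcategory $\rep(\mcl{V})_{\rm dist}\subseteq\Rep^0(\mcl{V})_{\rm dist}$, of the braided monoidal structure on the free-module functor \eqref{eq:1626} (equivalently, of \cite[Theorems 1.6 and 1.10]{kirillovostrik02}). Finally, the left adjunction with restriction is the restriction to finite-length objects of the free/forgetful adjunction $\Hom_{\mcl{W}}(\mcl{W}\ot M,N)\cong\Hom_{\mcl{V}}(M,\res N)$ of \cite[Theorem 1.6]{kirillovostrik02}, valid for $M$ in $\rep(\mcl{V})_{\rm dist}$ and $N$ in $\rep(\mcl{W})_{\rm dist}=\rep^0(A)$, using that $\rep(\mcl{V})_{\rm dist}$ sits as a full subcategory of $\Rep(\mcl{V})_{\rm dist}$. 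Assembling these points establishes the lemma.
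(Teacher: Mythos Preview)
Your proposal is correct and follows essentially the same approach as the paper: the paper's proof is a one-line invocation of exactness (from rigidity, noted just before the lemma) together with induction on the length of objects to pass from the hypothesis on simples to arbitrary finite-length $M$, which is exactly your composition-series argument. You have simply spelled out in more detail the points the paper leaves implicit (M\"uger centrality making \eqref{eq:1626} defined on all of $\rep(\mcl{V})_{\rm dist}$, and the braided monoidal and adjunction statements being inherited from \cite{kirillovostrik02}).
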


\begin{proof}
Given that $I_{\mcl{V}}^{\mcl{W}}$ is exact, and that $I_\mcl{V}^{\mcl{W}}(L)$ is of finite length for each simple $L$, we have by induction on the lengths of objects that $I_\mcl{V}^{\mcl{W}}$ restricts to a functor \eqref{eq:1633} between the subcategories of finite length objects.
\end{proof}

This should be contrasted with the more familiar case where both $\mcl{V}$ and $\mcl{W}$ are strongly-finite. In this case, $\mcl{W}$ won't lie in the M\"uger center (unless $\mcl{W}=\mcl{V}$), and induction lands in Rep$(A)$ rather than its subcategory Rep$^0(A)$.

\section{Rational (de-)equivariantization, again}
\label{sect:appendix}

We elaborate on the presentation of \cite[\S 8, Appendix]{negron}, which itself is an elaboration on the presentations of \cite{arkhipovgaitsgory03,davydovetingofnikshych18}, in order to clarify some of the mechanics in the proof of Theorem \ref{thm:singlet}.  Our aim is to clarify certain points about de-equivariantization and equivariantization along isomorphic group actions and isomorphic central embeddings. For the VOA theorist, equivariantization and de-equivariantization are the two steps which when combined capture tensor-categorically the orbifold construction of VOAs.

\subsection{Equivariantization and equivariant tensor functors}
\label{sect:A1}

Consider $\mbb{G}$ an affine algebraic group and write $R=\O(\mbb{G})$ for the algebra of global functions on $\mbb{G}$, viewed as a Hopf algebra in $Vect$ with comultiplication $\Delta$ and counit $\epsilon$.  We recall, from \cite{arkhipovgaitsgory03,davydovetingofnikshych18,negron}, that a rational action of an affine algebraic group $\mbb{G}$ on a tensor category $\msc{C}$, or more generally $\mbb{C}$-linear monoidal category, is a triple $\phi=(\phi,\nabla,\varepsilon)$ of an exact monoidal functor $\phi:\msc{C}\to \msc{C}_R$ to the base change \cite[\S 8.2]{negron} along with $R$-linear natural transformations $\nabla:\phi\to \phi^2$ and $\varepsilon:\phi\to id_\msc{C}$ whose adjoint maps provide isomorphisms $\Delta_\ast\phi\overset{\cong}\to \phi^2$ and $\epsilon_\ast\phi\overset{\cong}\to id_\msc{C}$ \cite[\S 8.1]{negron}.  So, an action of $\mbb{G}$ on $\msc{C}$ is a choice of a sufficiently structured comonad for $\msc{C}$ (or rather, the Ind-category $\operatorname{Ind}\msc{C}$).  We write
\[
\phi:\mbb{G}\to \underline{\Aut}_\ot(\msc{C})
\]
for a particular choice of $\mbb{G}$-action on $\msc{C}$.
\par

An isomorphism between two actions $\phi,\phi':\mbb{G}\rightrightarrows \underline{\Aut}_\ot(\msc{C})$ is a natural, $R$-linear isomorphism of monoidal functors $\eta:\phi\overset{\cong}\to \phi'$ which forms the appropriate diagrams with the structure maps $\nabla,\ \nabla',\ \varepsilon$, and $\varepsilon'$.  Also, given two categories $\msc{C}$ and $\msc{D}$ equipped with actions $\phi:\mbb{G}\to \underline{\Aut}_\ot(\msc{C})$ and $\psi:\mbb{G}\to \underline{\Aut}_\ot(\msc{C})$, a $\mbb{G}$-equivariant structure on a tensor functor $F:\msc{C}\to \msc{D}$ is a choice of $R$-linear isomorphism of monoidal functors $\tau:F\phi\to \psi F$ which again forms the appropriate diagrams with the structure maps for $\phi$ and $\psi$.
\par

Recall, finally, that the equivariantization $\msc{C}^\mbb{G}=\msc{C}^{\mbb{G},\phi}$ of a tensor category, relative to some $\mbb{G}$-action $\phi$, is the non-full subcategory of objects $V$ in $\msc{C}$ which are equipped with coassociative, counital, coaction $\rho:V\to \phi(V)$.  Rather, this is the category of comodules over the comonad $\phi$.  We have the following basic lemma.

\begin{lemma}\label{lem:A1}
(a) If two actions $\phi,\phi':\mbb{G}\rightrightarrows \underline{\Aut}_\ot(\msc{C})$ on a tensor category $\msc{C}$ are isomorphic, via some isomorphism $\eta:\phi\overset{\cong}\to \phi'$, then $\eta$ induces an equivalence of categories $\eta_\ast:\msc{C}^{\mbb{G},\phi}\to \msc{C}^{\mbb{G},\phi'}$.
\par

(b) If $F:\msc{C}\to \msc{D}$ is a $\mbb{G}$-equivariant tensor functor, with comparison transformation $\tau:F\phi\overset{\cong}\to \psi F$, then there is an induced monoidal functor between equivariantizations
\[
F^\mbb{G}:\msc{C}^\mbb{G}\to \msc{D}^\mbb{G},\ \ (V,\rho)\mapsto \left(FV,\tau F\rho\right).
\]
Furthermore, if $F$ is an equivalence then $F^\mbb{G}$ is an equivalence.
\end{lemma}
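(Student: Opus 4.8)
The plan is to regard each equivariantization as the category of comodules over the corresponding comonad on $\operatorname{Ind}\msc{C}$ (resp.\ $\operatorname{Ind}\msc{D}$), exactly as indicated in the text preceding the lemma, so that both claims become instances of standard facts about comodule categories under morphisms and isomorphisms of comonads. Throughout, the only subtlety is to carry the $R$-linear structure maps $\nabla$, $\varepsilon$ and the comparison $\tau$ correctly through the base-change functor.

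For part (a): an isomorphism of actions $\eta\colon\phi\overset{\cong}\to\phi'$ is by definition an $R$-linear monoidal natural isomorphism compatible with the comultiplications and counits, i.e.\ an isomorphism of the associated comonads. I would define $\eta_\ast$ on objects by $(V,\rho)\mapsto(V,\eta_V\circ\rho)$ and on morphisms by the identity of $\msc{C}$. Coassociativity and counitality of $\eta_V\circ\rho$ follow immediately from the two squares expressing compatibility of $\eta$ with $(\nabla,\nabla')$ and $(\varepsilon,\varepsilon')$; naturality of $\eta$ shows a $\phi$-comodule morphism is literally a $\phi'$-comodule morphism after relabeling, so $\eta_\ast$ is well defined and fully faithful, with inverse $(\eta^{-1})_\ast$. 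Monoidality of $\eta_\ast$ is automatic: it is the identity on underlying objects and morphisms, and the tensor and unit constraints on $\msc{C}^{\mbb{G},\phi}$ and $\msc{C}^{\mbb{G},\phi'}$ are those of $\msc{C}$, so they agree.

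For part (b): given $\tau\colon F\phi\overset{\cong}\to\psi F$, set $F^\mbb{G}(V,\rho)=(FV,\ \tau_V\circ F(\rho))$ and $F^\mbb{G}(f)=F(f)$. That $\tau_V\circ F(\rho)$ is a coaction follows by applying $F$ to the comodule axioms for $\rho$ and then using the diagrams that express compatibility of $\tau$ with the structure maps of $\phi$ and $\psi$; naturality of $\tau$ makes $F(f)$ a $\psi$-comodule morphism, and $F^\mbb{G}$ inherits a monoidal structure from that of $F$ together with the monoidality of $\tau$. For the last assertion, note that $U_\msc{D}\circ F^\mbb{G}=F\circ U_\msc{C}$ for the (comonadic) forgetful functors $U_\msc{C},U_\msc{D}$. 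When $F$ is an equivalence it is fully faithful, so $F^\mbb{G}$ is fully faithful because $\Hom$-spaces of comodules are computed as equalizers of maps between the corresponding $\Hom$-spaces downstairs, and these equalizers are preserved under the bijections induced by $F$ and post-composition with $\tau$; for essential surjectivity, given $(W,\sigma)$ in $\msc{D}^\mbb{G}$ one chooses $V$ and an isomorphism $g\colon FV\overset{\cong}\to W$, transports $\sigma$ through $g$ and $\tau^{-1}$ to a candidate coaction on $V$, and uses full faithfulness of $F$ to see that the comodule axioms for this coaction hold because they hold after applying $F$.

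The main obstacle is not conceptual but organizational: one must verify several pentagon- and triangle-type coherence diagrams involving $\nabla$, $\varepsilon$, $\tau$ and the base-change functor, and keep careful track of $R$-linearity. The one point going slightly beyond formal nonsense is the essential surjectivity in part (b), where an isomorphism downstairs must be promoted to an isomorphism of comodules; this is the usual argument that equivalences reflect and create algebraic structure, valid precisely because $F$ is fully faithful.
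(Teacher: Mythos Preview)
Your proposal is correct and follows essentially the same approach as the paper: for (a) you transport the coaction via $\eta$ and invert via $\eta^{-1}$, and for (b) you verify that fully faithfulness and essential surjectivity of $F$ lift to $F^{\mbb{G}}$ by transporting comodule structures through $\tau$. The paper's proof is considerably terser---it omits the coherence checks and the equalizer description of $\Hom$-spaces that you spell out---but the underlying argument is identical.
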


\begin{proof}
(a) The equivalence $\eta_\ast$ sends a $\phi$-comodule $(V,\rho_V)$ to the $\phi'$-comodule $(V,\eta_V\rho_V)$.  One similarly constructs the inverse $\eta_\ast^{-1}$ via $\eta^{-1}$.  (b) We only speak to the second point, and so assume $F$ is an equivalence.  It is clear that fully faithfulness of $F$ implies fully faithfulness of $F^\mbb{G}$.  Similarly, essential surjectivity of $F$ implies essential surjectivity of $F^\mbb{G}$.  Indeed, if an object $W$ in $\msc{D}$ is isomorphic to some $F(V)$, and $W$ admits a $\psi$-comodule structure $\rho_W$, then $F(V)$ admits an isomorphic $\psi$-comodule structure, which then induces a corresponding $\phi$-comodule structure on $V$ via $\tau$.  So we see that $(W,\rho_W)$ is in the essential image of $F^\mbb{G}$.
\end{proof}

Given a group map $\mbb{T}\to \mbb{G}$, with dual Hopf algebra map $w:\O(\mbb{G})\to \O(\mbb{T})$, and an action $(\phi,\nabla,\varepsilon):\mbb{G}\to \underline{\Aut}_\ot(\msc{C})$ we pull back to get an action $\mbb{T}\to \underline{\Aut}_\ot(\msc{C})$ with structure maps
\[
(\phi,\nabla,\varepsilon)|_\mbb{T}:=(w_\ast\phi:\msc{C}\to \msc{C}_S,\ w_\ast\phi \to (w_\ast \phi)^2,\ w_\ast\phi\to id).
\]
Here $R=\O(\mbb{G})$ and $S=\O(\mbb{T})$, considered as Hopf algebras in $Vect$, and $w_\ast:\msc{C}_R\to \msc{C}_S$ is the base change functor.
%In constructing the above structure map $w_\ast\phi\to (w_\ast\phi)^2$ one notes that the unit of the restriction base-change adjunction provides a map $\phi^2\to (w_\ast\phi)^2$, and that the composite $\phi\to (w_\ast\phi)^2$ has a uniquely defined adjoint map $w_\ast\phi\to (w_\ast\phi)^2$.

\subsection{Actions by ribbon and braided automorphisms}

Let $\msc{C}$ be a ribbon tensor category.  Then the braiding and twist on $\msc{C}$ induces a unique braiding and twist on the base change $\msc{C}_R$ so that the base change functor $R\ot_\mbb{C}-:\msc{C}\to \msc{C}_R$ is braided monoidal functor which commutes with the twist.  We say an action $\mbb{G}\to \underline{\Aut}_{\ot}(\msc{C})$ is an action by ribbon automorphisms if the associated monoidal functor $\phi:\msc{C}\to \msc{C}_R$ is braided and commutes with the twist.  Note that preservation of the ribbon structure, for a given action, is a property not an additional structure.
\par

One can check that, when $\mbb{G}$ acts by ribbon automorphisms the de-equivariantization $\msc{C}^\mbb{G}$ admits a unique ribbon structure so that the forgetful functor $\msc{C}^\mbb{G}\to \msc{C}$ is a map of ribbon tensor categories.  One also observes that the class of ribbon actions $\mbb{G}\to \underline{\Aut}_{\ot}(\msc{C})$ is closed under isomorphism.
\par

Of course, we can omit the twist to obtain the appropriate notion of a rational group action by braided automorphisms.  If a group acts $\mbb{G}\to \underline{\Aut}_{\ot}(\msc{C})$ by braided automorphisms then the equivariantization $\msc{C}^\mbb{G}$ inherits a unique braiding so that the forgetful functor is a braided tensor functor.

\subsection{De-equivariantization}
\label{sect:A2}

We again follow \cite{arkhipovgaitsgory03,davydovetingofnikshych18,negron}.  Consider a braided tensor category $\msc{C}$ with a M\"uger central tensor functor $i:\rep\mbb{G}\to \msc{C}$, where $\mbb{G}$ is an affine algebraic group.  Let $\O$ denote the regular (co)representation $\O(\mbb{G})$ in $\Rep\mbb{G}=\operatorname{Corep}\O(\mbb{G})$.  The de-equivariantization $\msc{C}_\mbb{G}=\msc{C}_{\mbb{G},i}$ is defined as the category of finitely presented $i\msc{O}$-modules in $\msc{C}$. 
\par

The de-equivariantization $\msc{C}_\mbb{G}$ is an exact $\mbb{C}$-linear monoidal category, with product $\ot_{i\O}$, and exact structure induced by the faithful inclusion $\msc{C}_\mbb{G}\to \operatorname{Ind}\msc{C}$.  In most cases, the de-equivariantization $\msc{C}_\mbb{G}$ will actually be an \emph{abelian} subcategory in $\operatorname{Ind}\msc{C}$.  Rigidity of the monoidal structure, however, should only hold when $i$ is an embedding, i.e.\ when the image of $\rep\mbb{G}$ in $\msc{C}$ is closed under taking subquotients (see \cite[\S 5.1, \S 5.2]{negron}).
\par

For any M\"uger central functor $\rep\mbb{G}\to\msc{C}$ as above we have the de-equivariantization functor, or free module functor $dE:\msc{C}\to \msc{C}_\mbb{G}$, $V\mapsto i\O\ot V$.  The category $\msc{C}_\mbb{G}$ admits a unique braiding so that the de-equivariantization map is a map of braided monoidal categories.  When $\msc{C}$ is additionally ribbon, and the twist acts trivially on the image of $\rep\mbb{G}$, then $\msc{C}_\mbb{G}$ also inherits a twist.  This twist gives $\msc{C}_\mbb{G}$ a ribbon structure, in the event that the de-equivariantization is indeed rigid.
\par

In the statement of the following lemma, by ``the" fiber functor for $\rep\mbb{G}$ we mean the forgetful functor $\rep\mbb{G}\to Vect$.  We note that there is only one fiber functor for $\rep\mbb{G}$ up to isomorphism, in any case \cite{delignemilne82}.

\begin{lemma}
Given any M\"uger central tensor functor $\rep\mbb{G}\to \msc{C}$, the composite $\rep\mbb{G}\to \msc{C}\to \msc{C}_\mbb{G}$ factors through the fiber functor $\rep\mbb{G}\to Vect\to \msc{C}_\mbb{G}$.
\end{lemma}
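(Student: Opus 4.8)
The plan is to unwind the definition of de-equivariantization and compute the image of the regular representation directly. Recall that the de-equivariantization $\msc{C}_\mbb{G}$ consists of finitely presented $i\O$-modules in $\msc{C}$, where $\O=\O(\mbb{G})$ is the regular corepresentation, and that the composite $\rep\mbb{G}\to \msc{C}\to \msc{C}_\mbb{G}$ is $V\mapsto i\O\ot i(V)\cong i(\O\ot V)$, viewed as a free $i\O$-module. So what must be shown is that for each $V$ in $\rep\mbb{G}$ there is a natural isomorphism of $i\O$-modules $i(\O\ot V)\cong i\O\ot_\mbb{C}(\text{underlying vector space of }V)$, i.e.\ that the free module on $i(V)$ is isomorphic to a ``trivial'' free module whose multiplicity space is the forgetful image of $V$.

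First I would recall the classical fact, internal to $\rep\mbb{G}$, that $\O\ot V\cong \O\ot_\mbb{C} U(V)$ as $\O$-comodules, where $U(V)$ denotes the underlying vector space of $V$ with trivial $\mbb{G}$-action; this is the standard ``unit object times $V$ is free'' statement, proved via the coaction map $V\to \O\ot V$ together with the counit, exactly as in the proof that the regular representation absorbs tensor factors (cf.\ the uniqueness-of-fiber-functor circle of ideas in \cite{delignemilne82}). Equivalently, $\O\otimes V$ is the induced module $\mathrm{Ind}_{1}^{\mathbb{G}}U(V)$. Then I would apply the monoidal functor $i:\rep\mbb{G}\to \msc{C}$ to this isomorphism: since $i$ is monoidal and $\O\ot_\mbb{C}U(V)$ is just a direct sum (or filtered colimit of finite direct sums) of copies of $\O$ indexed by a basis of $U(V)$, we get $i(\O\ot V)\cong i\O\ot_\mbb{C}U(V)$ in $\mathrm{Ind}\,\msc{C}$, and this isomorphism is one of $i\O$-modules because the $\O$-module structures match on both sides. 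Passing to $\msc{C}_\mbb{G}$, the left side is the image of $V$ under $\rep\mbb{G}\to\msc{C}\to\msc{C}_\mbb{G}$ and the right side is $dE(\1)\ot_\mbb{C}U(V)=\big(\text{fiber functor}\to Vect\big)$ followed by $Vect\to\msc{C}_\mbb{G}$, $U\mapsto i\O\ot_\mbb{C}U$. Naturality in $V$ follows since the classical isomorphism $\O\ot V\cong\O\ot_\mbb{C}U(V)$ is natural and $i$ preserves naturality.

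The main subtlety — and the step I would be most careful about — is book-keeping with the Ind-completion and finite presentation: for $V$ infinite-dimensional (e.g.\ $\O$ itself) the isomorphism lives in $\mathrm{Ind}\,\msc{C}$ rather than $\msc{C}$, so one should either restrict attention to finite-dimensional $V$ (which suffices, since every object of $\rep\mbb{G}$ is a union of finite-dimensional subobjects and both functors in question commute with filtered colimits) or check directly that the relevant modules are finitely presented. A second, lighter point is to confirm that the monoidal structure isomorphisms of $i$ are threaded through correctly so that the module-structure maps are identified, but this is routine coherence. Modulo these, the argument is essentially the classical ``free modules over the regular representation are trivial'' statement transported along $i$.
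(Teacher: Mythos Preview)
Your proposal is correct and takes essentially the same approach as the paper: the key step in both is the isomorphism $\O\ot V\cong \O\ot_{\mbb{C}} U(V)$ of $\O$-modules in $\rep\mbb{G}$, which the paper identifies as the fundamental theorem of Hopf modules \cite[Theorem 1.9.4]{montgomery93}. The paper's only organizational difference is that it first observes the composite factors through the de-equivariantization $\rep\mbb{G}\to(\rep\mbb{G})_{\mbb{G}}$ and then applies the Hopf module theorem there, rather than applying the isomorphism inside $\rep\mbb{G}$ and pushing forward along $i$ as you do; your version is slightly more explicit about the module structures, while the paper's is terser and gives the result a name.
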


By factoring through the fiber functor, we mean that the two maps to $\msc{C}_\mbb{G}$ are isomorphic as monoidal functors.

\begin{proof}
The unit $Vect\to \msc{C}_\mbb{G}$ sends a vector space $V$ to the free module $\O\ot V$.  The composite $\rep\mbb{G}\to \msc{C}_\mbb{G}$ factors through the de-equivariantization map $\rep\mbb{G}\to (\rep\mbb{G})_\mbb{G}$ for $\rep\mbb{G}$ itself.  By the fundamental theorem of Hopf modules \cite[Theorem 1.9.4]{montgomery93} the de-equivariantization map for $\rep\mbb{G}$ is isomorphic to the composite $\rep\mbb{G}\to Vect \to (\rep\mbb{G})_\mbb{G}$ of the forgetful functor and the unit morphism for $(\rep\mbb{G})_\mbb{G}$.
\end{proof}

\begin{remark}
One could more generally consider de-equivariantization along central tensor functors from $\rep\mbb{G}$ to an arbitrary tensor category $\msc{C}$, in the sense of \cite[Definition 4.15]{dgno10}.  We stick to the braided setting for simplicity.
\end{remark}

\subsection{The rational $\mbb{G}$-action on $\msc{C}_\mbb{G}$, and $\rep\mbb{G}$-linear functors}
\label{sect:C_G}

As in the previous section, we consider a M\"uger central tensor functor $i:\rep\mbb{G}\to \msc{C}$ into a braided tensor category $\msc{C}$, and fix $\O$ the regular (co)representation in $\Rep\mbb{G}$.
\par

Take $R$ to be the image of $\O$ in $Vect$ under the forgetful functor $\rep\mbb{G}\to Vect$.  Of course, we have the unit $Vect\to \rep\mbb{G}$ so that $R$ is also an algebra object in the completion $\Rep\mbb{G}$.  One sees that the comultiplication on $\O(\mbb{G})$ provides $\O$ with a canonical $R$-comodule structure $\Delta:\msc{O}\to R\ot\msc{O}$ in $\Rep\msc{O}$.
\par

Now, the group $\mbb{G}$ acts naturally on the de-equivariantization $\msc{C}_\mbb{G}$ by braided automorphisms, and ribbon automorphisms when in the ribbon context.  This action is specified by the functor $\phi(M):=iR\ot M$, where $M$ is an $\O$-module in $\operatorname{Ind}\msc{C}$ and $\O$ acts on $iR\ot M$ via the coaction $\Delta:i\O\to iR\ot i\O$.  Coassociativity of the $R$-coaction on $\O$, and the counit, provide the necessary transformations $\phi\to \phi^2$ and $\phi\to id$.  The following lemma is straightforward.

\begin{lemma}\label{lem:A2}
If the M\"uger central embeddings $i,i':\rep\mbb{G}\to \msc{C}$ are isomorphic, with chosen monoidal isomorphism $\eta:i\overset{\cong}\to i'$, then the de-equivariantizations admit a $\mbb{G}$-equivariant braided equivalence
\[
\msc{C}_{\mbb{G},i'}\to \msc{C}_{\mbb{G},i},\ \ M\mapsto M|_{i\O},
\]
where we restrict along the algebra map $\eta:i\O\to i'\O$, and the isomorphism between the associated $\mbb{G}$-actions is given by $\eta\ot id:\phi(M)=iR\ot M\to i'R\ot M=\phi'(M)$. 
\end{lemma}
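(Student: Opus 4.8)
\textbf{Proof plan for Lemma \ref{lem:A2}.}
The statement asks for two things: first, that restriction along the algebra isomorphism $\eta:i\O\to i'\O$ gives a braided tensor equivalence $\msc{C}_{\mbb{G},i'}\to\msc{C}_{\mbb{G},i}$; and second, that this equivalence is $\mbb{G}$-equivariant with respect to the natural actions $\phi,\phi'$ built from the two central embeddings, the comparison transformation being $\eta\ot\mrm{id}$. The plan is to unwind each of these claims to a sequence of routine naturality checks, using the bookkeeping of \S\ref{sect:C_G} and \S\ref{sect:A1}, and to observe that the only real input is that $\eta$ is a monoidal isomorphism of \emph{M\"uger central} functors.

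First I would observe that since $\eta:i\overset{\cong}\to i'$ is a monoidal natural isomorphism, it carries the commutative algebra structure on $i\O$ (induced from the Hopf algebra structure on $\O(\mbb{G})$ in $Vect$, transported through the monoidal functor $i$) to that on $i'\O$; so $\eta_{\O}:i\O\to i'\O$ is genuinely an algebra isomorphism in $\msc{C}$ (or rather $\mrm{Ind}\,\msc{C}$), and restriction of modules along it is an equivalence $\msc{C}_{\mbb{G},i'}\to\msc{C}_{\mbb{G},i}$ of abelian categories with quasi-inverse given by restriction along $\eta_{\O}^{-1}$. Next, because the product $\ot_{i'\O}$ is computed as a coequalizer and $\eta_{\O}$ is an algebra map, restriction is monoidal: an $i'\O$-module $M\ot_{i'\O}N$, viewed over $i\O$ via $\eta_{\O}$, is canonically identified with $M|_{i\O}\ot_{i\O}N|_{i\O}$, and one checks the hexagon/associativity coherences using that $\eta$ is \emph{monoidal} (so it intertwines the two central structures, hence the two braidings used to define the bimodule structures on $\O$-modules). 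Compatibility with the braidings on $\msc{C}_{\mbb{G},i}$ and $\msc{C}_{\mbb{G},i'}$ follows from the fact that those braidings are uniquely determined by the requirement that the respective de-equivariantization functors $dE,dE'$ be braided, together with the evident commuting triangle $\msc{C}\to\msc{C}_{\mbb{G},i'}\to\msc{C}_{\mbb{G},i}$; in the ribbon case one argues identically with the twist. So the restriction functor is a braided (ribbon) tensor equivalence.

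For the equivariance claim, recall from \S\ref{sect:C_G} that the $\mbb{G}$-action on $\msc{C}_{\mbb{G},i}$ is $\phi(M)=iR\ot M$ with $i\O$ acting through the $R$-coaction $\Delta:i\O\to iR\ot i\O$, and likewise $\phi'(M)=i'R\ot M$; here $R$ is the image of $\O$ in $Vect$, so $iR$ and $i'R$ are the images of that \emph{same} vector-space algebra, and $\eta$ restricts to an isomorphism $\eta_R:iR\to i'R$. I would then check that $\eta_R\ot\mrm{id}_M:\phi(M)=iR\ot M\to i'R\ot M=\phi'(M)$ is $i'\O$-linear when the source is given its $i'\O$-module structure by restriction along $\eta_{\O}$ — this is exactly the statement that $\eta$ intertwines the coactions $\Delta$ and $\Delta'$, which holds because $\eta$ is a monoidal natural transformation and $\Delta,\Delta'$ are both induced from the single comultiplication on $\O(\mbb{G})$ — and that $\eta_R\ot\mrm{id}$ is compatible with the structure transformations $\phi\to\phi^2$, $\phi\to\mrm{id}$ (these come from coassociativity and counitality of the $R$-coaction, and $\eta$ respects them for the same reason). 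This produces the required $R$-linear monoidal isomorphism $F\phi'\overset{\cong}\to\phi F$ where $F$ is the restriction functor, i.e.\ a $\mbb{G}$-equivariant structure on $F$ in the sense of \S\ref{sect:A1}.

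The main obstacle is not any single hard step but rather the sheer amount of coherence bookkeeping: one must be careful that ``the same'' algebra $R$, ``the same'' comultiplication $\Delta$, and ``the same'' braiding really do transport correctly under $\eta$, and that the monoidal structure on the restriction functor is the one for which all of these diagrams commute simultaneously. I expect the delicate point to be verifying monoidality of $F$ in a way that is genuinely compatible with the central (half-braiding) structures $i,i'$ — i.e.\ making precise that ``$\eta$ monoidal'' forces ``$\eta$ central'' in the appropriate sense — since the bimodule structure on $\O$-modules, and hence the definition of $\ot_{i\O}$, secretly depends on the half-braiding. Once that is pinned down, the equivariance verification is a mechanical consequence, as all the relevant transformations are manufactured from the fixed Hopf data on $\O(\mbb{G})$ and so are automatically intertwined by the monoidal isomorphism $\eta$.
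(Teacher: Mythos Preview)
Your proposal is correct and is precisely the kind of routine coherence check the paper has in mind: the paper gives no proof at all, simply calling the lemma ``straightforward'' and leaving the verification to the reader. Your plan supplies exactly those omitted details.

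One remark: the ``delicate point'' you flag about whether $\eta$ monoidal forces $\eta$ central is not actually an issue in the paper's setting. As noted in \S\ref{sect:A2} (and the remark at its end), the paper works only in the case where $\msc{C}$ is already braided and $i,i'$ land in the M\"uger center; the bimodule structure used to define $\ot_{i\O}$ comes from the ambient braiding of $\msc{C}$, not from a half-braiding attached to $i$. So there is nothing to transport under $\eta$ beyond the algebra structure itself, and monoidality of the restriction functor is genuinely automatic once $\eta_\O$ is an algebra map. Your concern would become relevant in the more general central-functor setting the paper explicitly declines to treat.
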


Let us call a braided category $\msc{C}$, with a fixed central tensor functor $i:\rep\mbb{G}\to \msc{C}$, a tensor category \emph{over} $\rep\mbb{G}$.  By a map of tensor categories over $\rep\mbb{G}$ we mean a tensor functor $F:\msc{C}\to \msc{D}$ and a specific choice of natural isomorphism  of tensor functors $l:j\overset{\cong}\to Fi$ which makes the diagram
\[
\xymatrix{
 & \rep\mbb{G}\ar[dl]_i\ar[dr]^j\\
\msc{C}\ar[rr]^F & & \msc{D}
}
\]
commute. In this case we have immediately a $\mbb{G}$-equivariant functor between the de-equivariantizations
\[
\msc{C}_{\mbb{G}}\to \msc{D}_{\mbb{G},Fi},\ \ M\mapsto FM,
\]
and Lemma \ref{lem:A2} provides a $\mbb{G}$-equivariant equivalence $\msc{D}_{\mbb{G},Fi}\overset{\cong}\to \msc{D}_{\mbb{G},j}$.  Thus we have

\begin{lemma}\label{lem:A4}
Any map $F=(F,l):\msc{C}\to \msc{D}$ of tensor categories over $\rep\mbb{G}$ induces a $\mbb{G}$-equivariant braided monoidal functor
\[
F_\mbb{G}:\msc{C}_\mbb{G}\to \msc{D}_\mbb{G},\ \ M\mapsto FM|_{j\O}.
\]
\end{lemma}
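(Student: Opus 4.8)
The plan is to obtain $F_\mbb{G}$ as a composite of two $\mbb{G}$-equivariant braided monoidal functors, each essentially already recorded in the paragraphs preceding the lemma. First I would observe that, since $F$ is an exact $\mbb{C}$-linear monoidal functor, it extends to an exact monoidal functor $\Ind\msc{C}\to\Ind\msc{D}$ which carries the central algebra object $i\O$ to the central algebra object $Fi\O$ and sends finitely presented $i\O$-modules to finitely presented $Fi\O$-modules; hence $F$ restricts to a braided monoidal functor $\msc{C}_\mbb{G}\to\msc{D}_{\mbb{G},Fi}$, $M\mapsto FM$. Next I would check that this functor is $\mbb{G}$-equivariant: by \S\ref{sect:C_G} the $\mbb{G}$-actions on source and target are $M\mapsto iR\ot M$ and $N\mapsto FiR\ot N$, with structure transformations coming from the $R$-coaction $\Delta\colon i\O\to iR\ot i\O$ together with its counit; applying $F$ to $\Delta$ yields the corresponding coaction on $Fi\O$, the monoidal structure maps of $F$ assemble into a comparison isomorphism $F\phi\overset{\cong}\to\phi'F$, and compatibility with the comonad data $\nabla,\varepsilon$ is a routine diagram chase (this is the $F_\mbb{G}$-construction behind Lemma \ref{lem:A1}(b), applied to de-equivariantizations rather than equivariantizations).

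Second, I would invoke Lemma \ref{lem:A2} for the two M\"uger central functors $j$ and $Fi$ into $\msc{D}$, which are isomorphic via the given monoidal isomorphism $l\colon j\overset{\cong}\to Fi$. That lemma produces a $\mbb{G}$-equivariant braided equivalence $\msc{D}_{\mbb{G},Fi}\overset{\cong}\to\msc{D}_{\mbb{G},j}=\msc{D}_\mbb{G}$, $N\mapsto N|_{j\O}$, given by restriction of scalars along the algebra isomorphism $l\colon j\O\to Fi\O$, with the isomorphism of the associated $\mbb{G}$-actions furnished by $l\ot\mrm{id}$. Composing this equivalence with the functor from the first step gives $F_\mbb{G}\colon\msc{C}_\mbb{G}\to\msc{D}_\mbb{G}$, $M\mapsto FM|_{j\O}$, which is $\mbb{G}$-equivariant with comparison transformation obtained by splicing the two comparison transformations; in the ribbon case one adds the remark that $F$ preserves the twist, hence so do both functors in the composite, and Lemma \ref{lem:A2} delivers a ribbon equivalence, so $F_\mbb{G}$ respects the ribbon structure as well.

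The only genuinely substantive point is the first one — that the monoidal structure of $F$ and the comonad structures defining the two $\mbb{G}$-actions are compatible enough to make $\msc{C}_\mbb{G}\to\msc{D}_{\mbb{G},Fi}$ equivariant. Since the $\mbb{G}$-actions on both de-equivariantizations are manufactured from the same coalgebra $\O$ by tensoring inside $\msc{C}$, resp.\ $\msc{D}$, and $F$ is monoidal, this reduces to naturality of the monoidal constraint of $F$ against $\Delta$ and its iterates, which I expect to be a short verification rather than a real obstacle. Everything else is bookkeeping with restriction of scalars along $l$ and the transport-of-structure statements in Lemma \ref{lem:A1}.
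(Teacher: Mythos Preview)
Your proposal is correct and follows essentially the same approach as the paper: the paragraph immediately preceding the lemma already records the two-step construction you describe (the ``immediate'' $\mbb{G}$-equivariant functor $\msc{C}_\mbb{G}\to\msc{D}_{\mbb{G},Fi}$, $M\mapsto FM$, followed by the $\mbb{G}$-equivariant equivalence $\msc{D}_{\mbb{G},Fi}\overset{\cong}\to\msc{D}_{\mbb{G},j}$ from Lemma~\ref{lem:A2}), and the lemma is stated as their composite. Your added detail on why the first functor is $\mbb{G}$-equivariant simply unpacks what the paper leaves implicit.
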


We have the following general version of Proposition \ref{prop:basechange}.

\begin{lemma}\label{lem:A5}
Suppose a tensor category $\msc{D}$ is equipped with the trivial $\rep\mbb{G}$ structure, $\rep\mbb{G}\to Vect\to \msc{D}$, and $i:\rep\mbb{G}\to \msc{C}$ is an arbitrary tensor category over $\msc{C}$.  Suppose also that the de-equivariantization $\msc{C}_\mbb{G}$ is a (rigid) tensor category.  Then any map $F:\msc{C}\to \msc{D}$ of tensor categories over $\rep\mbb{G}$ admits a factorization $f:\msc{C}_\mbb{G}\to \msc{D}$ defined by $f=\1\ot_{F\O}F(-)$.
\end{lemma}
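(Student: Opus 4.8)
The plan is to reprise, now in the absence of any braided structure, the argument proving Proposition \ref{prop:basechange}, with the central algebra identified via the coherence data $l$ rather than via an annihilation hypothesis. Unwinding definitions, a map of tensor categories over $\rep\mbb{G}$ consists of an exact monoidal functor $F\colon\msc{C}\to\msc{D}$ together with a monoidal isomorphism $l\colon j\overset{\cong}\to Fi$, where $j\colon\rep\mbb{G}\to Vect\to\msc{D}$ is the trivial embedding. Applying $F$ to the central algebra $i\O$ in $\Ind\msc{C}$ and transporting along $l$ yields an algebra isomorphism $F(i\O)\cong j\O$. Writing $R$ for the trivial algebra $\O(\mbb{G})$ in $\Ind\msc{D}$, i.e.\ $R=j\O$, we thus identify $F(i\O)\cong R$, and since $R$ lies in the image of the canonically central functor $Vect\to\msc{D}$ we may form the monoidal category $\mrm{mod}_\msc{D}(R)$ of finitely presented $R$-modules in $\msc{D}$, with product $\ot_R$ — exactly as in the proof of Proposition \ref{prop:basechange} (the central structure on $Vect\to\msc{D}$ being what lets us regard $R$-modules as $R$-bimodules).

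Next I would build $f$. Exactness and monoidality of $F$, together with the identification $F(i\O)\cong R$, give an exact monoidal functor $F_\O\colon\msc{C}_\mbb{G}=\mrm{mod}_\msc{C}(i\O)\to\mrm{mod}_\msc{D}(R)$, $M\mapsto FM$. Fix the augmentation $\epsilon\colon R\to\1$ coming from the counit of $\O(\mbb{G})$ — equivalently, evaluation at the identity of $\mbb{G}$ — and set $f:=(\1\ot_R-)\circ F_\O$, which is precisely $f=\1\ot_{F\O}F(-)$. The reduction functor $\1\ot_R-\colon\mrm{mod}_\msc{D}(R)\to\msc{D}$ is monoidal and admits a right adjoint, namely $V\mapsto V$ with $R$ acting through $\epsilon$; hence it is right exact, so $f$ is a right exact monoidal functor between the rigid tensor categories $\msc{C}_\mbb{G}$ and $\msc{D}$. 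Compatibility with duals \cite[Exercise 2.10.6]{egno15} then upgrades right exactness to exactness, so $f$ is a tensor functor. The factorization is then immediate: since the composite $\msc{D}\overset{Free}\longrightarrow\mrm{mod}_\msc{D}(R)\overset{\1\ot_R-}\longrightarrow\msc{D}$ is isomorphic to the identity, for $V$ in $\msc{C}$ we get $f(dE(V))=\1\ot_R F(i\O\ot V)\cong\1\ot_R(R\ot FV)\cong FV$, naturally.

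The main obstacle I anticipate is not conceptual but bookkeeping: one must check that every identification used above — of $F(i\O)$ with $R$ as algebras via $l$, of $F$-images of $i\O$-modules with $R$-modules, of $f$ as a monoidal (not just functorial) functor, and of the factorization isomorphism $f\circ dE\cong F$ — is compatible with the monoidal coherence constraints, so that the displayed isomorphisms are genuinely isomorphisms of monoidal functors. This is the content hidden behind "general shenanigans with adjunctions", and it is handled exactly as in the proof of Proposition \ref{prop:basechange}; no new idea beyond that proof is required here, since nothing is being asserted about braidings or $\mbb{G}$-equivariance in this statement.
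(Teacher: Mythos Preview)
Your proposal is correct and takes exactly the approach the paper does: the paper's proof reads, in its entirety, ``Same as the proof of Proposition \ref{prop:basechange},'' and you have faithfully reprised that argument, correctly identifying the algebra $F(i\O)\cong R$ via the coherence datum $l$, building $f$ as the composite of $F_\O$ with the fiber at the counit, and establishing exactness via the right-adjoint-plus-duality trick. Your closing remark that no new idea beyond Proposition \ref{prop:basechange} is needed is precisely the paper's own assessment.
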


\begin{proof}
Same as the proof of Proposition \ref{prop:basechange}.
\end{proof}

We note, in closing, that the implicit isomorphism $l:Fi\cong forget$ specifies an algebra isomorphism $F\O\cong R$, where $R=forget\O$.  This isomorphism specifies a particular point (algebra map) $F\O\cong R\overset{\epsilon}\to \1$, and is in fact \emph{specified by} its associated point $F\O\to \1$.  It is precisely at this point for $F\O$ at which we take the fiber to obtain the factorizing map $f=\1\ot_{F\O}F(-)$.

%\subsection{Maps between Tannakian categories}

%\begin{lemma}
%Let $G$ and $G'$ be affine algebraic groups (over an algebraically closed field).  Any symmetric tensor functor $F:\rep(G)\to \rep(G')$ is isomorphic to the restriction functor $\res_\alpha:\rep(G)\to \rep(G')$ along a group map $\alpha:G'\to G$. 
%\end{lemma}

%\begin{proof}
%Let $w:\rep(G)\to Vect$ be the forgetful functor and $w':\rep(G)\to Vect$ be the composite of $F$ with the forgetful functor functor $\rep(G')\to Vect$.  We claim that the identity $id:\rep(G)\to \rep(G)$ is isomorphic to a tensor equivalence $\eta:\rep(G)\to \rep(G)$ with $w=w'\eta$.
%\par

%First note that the fiber functor $w'$ lifts to a symmetric tensor isomorphism $W':\rep(G)\to \rep(A)$, $V\mapsto w'(V)$, where $A=\Aut^{\ot}(w')$, and suffices to show that this map is isomorphic to restriction along some $\alpha:A\to G$.  Take $\sigma:w\to w'$ any isomorphism (which exists by uniqueness of the fiber functor) and define $\alpha=\operatorname{Ad}_\sigma:G=\Aut^{\ot}(w)\overset{\cong}\to A$.  Then we have the composite
%\[
%\eta:=\res_\alpha\circ W':\rep(G)\to \rep(A)\to \rep(G)
%\]
%with $w\eta=w'$ and natural maps $\sigma:V\to \res_\alpha(w'(V))$ which provide a natural isomorphism $id_{\rep(G)}\cong \eta$.
%\end{proof}

\bibliographystyle{abbrv}
%\bibliography{wp}

\begin{thebibliography}{10}

\bibitem{adamovic03}
D.~Adamovi{\'c}.
\newblock Classification of irreducible modules of certain subalgebras of free
  boson vertex algebra.
\newblock {\em J. Algebra}, 270(1):115--132, 2003.

\bibitem{adamoviclinmilas12}
D.~Adamovi{\'c}, X.~Lin, and A.~Milas.
\newblock {ADE} subalgebras of the triplet vertex algebra ${W}(p)$: {A}-series.
\newblock {\em Commun. Contemp. Math.}, 15(06):1350028, 2012.

\bibitem{adamovicmilas08}
D.~Adamovi\'{c} and A.~Milas.
\newblock On the triplet vertex algebra ${W}(p)$.
\newblock {\em Adv. Math}, 217(6):2664--2699, 2008.

\bibitem{adamovicmilas14}
D.~Adamovi{\'c} and A.~Milas.
\newblock ${C}_2$-cofinite $\mathcal{W}$-algebras and their logarithmic
  representations.
\newblock In {\em Conformal Field Theories and Tensor Categories}, pages
  249--270. Springer, 2014.

\bibitem{andersen92}
H.~H. Andersen.
\newblock Tensor products of quantized tilting modules.
\newblock {\em Comm. Math. Phys.}, 149(1):149--159, 1992.

\bibitem{andersenwen92}
H.~H. Andersen and W.~Kexin.
\newblock Representations of quantum algebras. the mixed case.
\newblock {\em J. Reine Angew. Math.}, 1992(427):35--50, 1992.

\bibitem{andersenparadowski95}
H.~H. Andersen and J.~Paradowski.
\newblock Fusion categories arising from semisimple {L}ie algebras.
\newblock {\em Comm. Math. Phys.}, 169(3):563--588, 1995.

\bibitem{andersenstroppeltubbenhauer17}
H.~H. Andersen, C.~Stroppel, and D.~Tubbenhauer.
\newblock Cellular structures using $\mathrm{U}_q$-tilting modules.
\newblock {\em Pacific J. Math.}, 292(1):21--59, 2017.

\bibitem{arkhipovgaitsgory03}
S.~Arkhipov and D.~Gaitsgory.
\newblock Another realization of the category of modules over the small quantum
  group.
\newblock {\em Adv. Math.}, 173(1):114--143, 2003.

\bibitem{bakalovkirillov01}
B.~Bakalov and A.~A. Kirillov.
\newblock {\em Lectures on tensor categories and modular functors}, volume~21.
\newblock American Mathematical Society, 2001.

\bibitem{beilinsonbezrukavnikovmirkovic04}
A.~A. Beilinson, R.~V. Bezrukavnikov, and I.~Mirkovi{\'c}.
\newblock Tilting exercises.
\newblock {\em Mosc. Math. J}, 4(3):547--557, 2004.

\bibitem{bfgt09}
P.~Bushlanov, B.~Feigin, A.~Gainutdinov, and I.~Y. Tipunin.
\newblock Lusztig limit of quantum $sl(2)$ at root of unity and fusion of
  $(1,p)$ {V}irasoro logarithmic minimal models.
\newblock {\em Nuclear Phys. B}, 818(3):179--195, 2009.

\bibitem{carquevilleflohr06}
N.~Carqueville and M.~Flohr.
\newblock Nonmeromorphic operator product expansion and {C2}-cofiniteness for a
  family of-algebras.
\newblock {\em J. Phys. A}, 39(4):951, 2006.

\bibitem{costantinoetal15}
F.~Costantino, N.~Geer, and B.~Patureau-Mirand.
\newblock Some remarks on the unrolled quantum group of sl(2).
\newblock {\em J. Pure Appl. Algebra}, 219(8):3238--3262, 2015.

\bibitem{coulembieretingofostrikpauwels}
K.~Coulembier, P.~Etingof, V.~Ostrik, and B.~Pauwels.
\newblock Monoidal abelian envelopes with a quotient property.
\newblock {\em preprint \href{http://arxiv.org/abs/2103.00094}{\tt
  arXiv:2103.00094}}.

\bibitem{creutzigetal}
T.~Creutzig, A.~M. Gainutdinov, and I.~Runkel.
\newblock A quasi-{H}opf algebra for the triplet vertex operator algebra.
\newblock {\em Commun. Contemp. Math.}, 22(03):1950024, 2020.

\bibitem{creutzigjianghunzikerridoutyang}
T.~Creutzig, C.~Jiang, F.~O. Hunziker, D.~Ridout, and J.~Yang.
\newblock Tensor categories arising from the {V}irasoro algebra.
\newblock {\em Adv. Math.}, 380:107601, 2021.

\bibitem{creutzigkanadelinshaw20}
T.~Creutzig, S.~Kanade, and A.~R. Linshaw.
\newblock Simple current extensions beyond semi-simplicity.
\newblock {\em Commun. Contemp. Math.}, 22(01):1950001, 2020.

\bibitem{creutzigkanademcrae}
T.~Creutzig, S.~Kanade, and R.~McRae.
\newblock Tensor categories for vertex operator superalgebra extensions.
\newblock {\em preprint \href{http://arxiv.org/abs/1705.05017}{\tt
  arXiv:1705.05017}}.

\bibitem{clr}
T.~Creutzig, S.~Lentner, and M.~Rupert.
\newblock Characterizing braided tensor categories associated to logarithmic vertex operator algebras.
\newblock {\em preprint}.

\bibitem{creutzigmcraeyangII}
T.~Creutzig, R.~McRae, and J.~Yang.
\newblock Direct limit completions of vertex tensor categories.
\newblock {\em Commun. Contemp. Math.}, to appear.

\bibitem{creutzigmcraeyang}
T.~Creutzig, R.~McRae, and J.~Yang.
\newblock On ribbon categories for singlet vertex algebras.
\newblock {\em preprint \href{http://arxiv.org/abs/2007.12735}{\tt
  arXiv:2007.12735}}.

\bibitem{creutzigmilas14}
T.~Creutzig and A.~Milas.
\newblock False theta functions and the {V}erlinde formula.
\newblock {\em Adv. Math}, 262:520--545, 2014.

\bibitem{creutzigridoutwood14}
T.~Creutzig, D.~Ridout, and S.~Wood.
\newblock Coset constructions of logarithmic $(1, p)$ models.
\newblock {\em Lett. Math. Phys.}, 104(5):553--583, 2014.

\bibitem{davydovetingofnikshych18}
A.~Davydov, P.~Etingof, and D.~Nikshych.
\newblock Autoequivalences of tensor categories attached to quantum groups at
  roots of 1.
\newblock In {\em Lie Groups, Geometry, and Representation Theory}, pages
  109--136. Springer, 2018.

\bibitem{davydovmolev11}
A.~Davydov and A.~Molev.
\newblock A categorical approach to classical and quantum {S}chur--{W}eyl
  duality.
\newblock {\em Contemp. Math.}, 537:143--171, 2011.

\bibitem{delignemilne82}
P.~Deligne and J.~S. Milne.
\newblock Tannakian categories.
\newblock In {\em Hodge cycles, motives, and Shimura varieties}, pages
  101--228. Springer, 1982.

\bibitem{demazuregabriel70}
M.~Demazure and P.~Gabriel.
\newblock {\em Groupes alg\'{e}briques. Tome {I}. {G}\'{e}om\'{e}trie
  alg\'{e}brique g\'{e}n\'{e}ralit\'{e}s. Groupes commutatifs}.
\newblock North-Holland, 1970.

\bibitem{DongLiMason2000}
C.~Dong, H.~Li, and G.~Mason.
\newblock Modular-invariance of trace functions in orbifold theory and
  generalized moonshine.
\newblock {\em Comm. Math. Phys.}, 214:1--56, 2000.

\bibitem{dgno10}
V.~Drinfeld, S.~Gelaki, D.~Nikshych, and V.~Ostrik.
\newblock On braided fusion categories {I}.
\newblock {\em Selecta Math.}, 16(1):1--119, 2010.

\bibitem{eliaslibedinsky17}
B.~Elias and N.~Libedinsky.
\newblock Indecomposable {S}oergel bimodules for universal {C}oxeter groups.
\newblock {\em Trans. Amer. Math. Soc.}, 369(6):3883--3910, 2017.

\bibitem{etingofostrik04}
P.~Etingof and V.~Ostrik.
\newblock Finite tensor categories.
\newblock {\em Mosc. Math. J}, 4(3):627--654, 2004.

\bibitem{egno15}
P.~I. Etingof, S.~Gelaki, D.~Nikshych, and V.~Ostrik.
\newblock {\em Tensor categories}, volume 205.
\newblock American Mathematical Society, 2015.

\bibitem{feigingainutdinovsemikhatovtipunin06}
B.~L. Feigin, A.~M. Gainutdinov, A.~M. Semikhatov, and I.~Y. Tipunin.
\newblock Modular group representations and fusion in logarithmic conformal
  field theories and in the quantum group center.
\newblock {\em Comm. Math. Phys.}, 265(1):47--93, 2006.

\bibitem{feigintipunin}
B.~L. Feigin and I.~Y. Tipunin.
\newblock Logarithmic {CFT}s connected with simple {L}ie algebras.
\newblock {\em available at \href{http://arxiv.org/abs/1002.5047}{\tt
  arXiv:1002.5047}}.

\bibitem{freedman03}
M.~H. Freedman.
\newblock A magnetic model with a possible {C}hern-{S}imons phase (with
  appendix by {F}.\ {G}oodman and {H}.\ {W}enzl).
\newblock {\em Comm. Math. Phys.}, 234(1):129--183, 2003.

\bibitem{freydyetter89}
P.~Freyd and D.~Yetter.
\newblock Braided compact closed categories with applications to low
  dimensional topology.
\newblock {\em Adv. Math}, 77(2):156--182, 1989.

\bibitem{fuchshwangsemikhatovtipunin04}
J.~Fuchs, S.~Hwang, A.~M. Semikhatov, and I.~Y. Tipunin.
\newblock Nonsemisimple fusion algebras and the {V}erlinde formula.
\newblock {\em Comm. Math. Phys.}, 247(3):713--742, 2004.

\bibitem{gainutdinovlentnerohrmann}
A.~M. Gainutdinov, S.~Lentner, and T.~Ohrmann.
\newblock Modularization of small quantum groups.
\newblock {\em preprint \href{http://arxiv.org/abs/1809.02116}{\tt
  arXiv:1809.02116}}.

\bibitem{gainutdinovrunkel17}
A.~M. Gainutdinov and I.~Runkel.
\newblock Symplectic fermions and a quasi-{H}opf algebra structure on
  $\bar{U}_isl(2)$.
\newblock {\em J. Algebra}, 476:415--458, 2017.

\bibitem{gainutdinovetal06}
A.~M. Gainutdinov, A.~M. Semikhatov, I.~Y. Tipunin, and B.~L. Feigin.
\newblock Kazhdan-{L}usztig correspondence for the representation category of
  the triplet ${W}$-algebra in logarithmic {CFT}.
\newblock {\em Theoret. Math. Phys.}, 148(3):1210--1235, 2006.

\bibitem{huang08}
Y.-Z. Huang.
\newblock Rigidity and modularity of vertex tensor categories.
\newblock {\em Commun. Contemp. Math.}, 10(supp01):871--911, 2008.

\bibitem{huang09}
Y.-Z. Huang.
\newblock Cofiniteness conditions, projective covers and the logarithmic tensor
  product theory.
\newblock {\em J. Pure Appl. Algebra}, 213(4):458--475, 2009.

\bibitem{huangkirillovlepowski15}
Y.-Z. Huang, A.~Kirillov, and J.~Lepowsky.
\newblock Braided tensor categories and extensions of vertex operator algebras.
\newblock {\em Comm. Math. Phys.}, 337(3):1143--1159, 2015.

\bibitem{huanglepowskyii}
Y.-Z. Huang and J.~Lepowsky.
\newblock A theory of tensor products for module categories for a vertex
  operator algebra, {II}.
\newblock {\em Selecta Math.}, 1(4):757--786, 1995.

\bibitem{huanglepowskyzhang10}
Y.-Z. Huang, J.~Lepowsky, and L.~Zhang.
\newblock Logarithmic tensor category theory, {III}: Intertwining maps and
  tensor product bifunctors.
\newblock {\em preprint \href{http://arxiv.org/abs/1012.4197}{\tt
  arXiv:1012.4197v2}}.

\bibitem{huanglepowskyzhang}
Y.-Z. Huang, J.~Lepowsky, and L.~Zhang.
\newblock Logarithmic tensor category theory, {VIII}: Braided tensor category
  structure on categories of generalized modules for a conformal vertex
  algebra.
\newblock {\em preprint \href{http://arxiv.org/abs/1110.1931}{\tt
  arXiv:1110.1931}}.

\bibitem{huanglepowskyzhang14}
Y.-Z. Huang, J.~Lepowsky, and L.~Zhang.
\newblock Logarithmic tensor category theory for generalized modules for a
  conformal vertex algebra, {I}: {I}ntroduction and strongly graded algebras
  and their generalized modules.
\newblock In {\em Conformal field theories and tensor categories}, pages
  169--248. Springer, 2014.

\bibitem{jantzen95}
J.~Jantzen.
\newblock Lectures on quantum groups.
\newblock {\em Graduate Studies in Mathematics}, Nov 1995.

\bibitem{jones97}
V.~Jones.
\newblock A polynomial invariant for knots via von {N}eumann algebras.
\newblock In {\em Fields Medallists' Lectures}, pages 448--458. World
  Scientific, 1997.

\bibitem{kashiwaraschapira}
M.~Kashiwara and P.~Schapira.
\newblock {\em Categories and sheaves}, volume 332 of {\em Grundlehren der
  Mathematischen Wissenschaften [Fundamental Principles of Mathematical
  Sciences]}.
\newblock Springer-Verlag, Berlin, 2006.

\bibitem{kauffmanlins94}
L.~H. Kauffman and S.~L. Lins.
\newblock {\em Temperley-Lieb recoupling theory and invariants of 3-manifolds}.
\newblock Number 134. Princeton University Press, 1994.

\bibitem{kausch91}
H.~G. Kausch.
\newblock Extended conformal algebras generated by a multiplet of primary
  fields.
\newblock {\em Physics Letters B}, 259(4):448--455, 1991.

\bibitem{kirillovostrik02}
A.~Kirillov~Jr and V.~Ostrik.
\newblock On a $q$-analogue of the {M}c{K}ay correspondence and the {ADE}
  classification of $sl_2$ conformal field theories.
\newblock {\em Adv. Math}, 171(2):183--227, 2002.

\bibitem{kondosaito11}
H.~Kondo and Y.~Saito.
\newblock Indecomposable decomposition of tensor products of modules over the
  restricted quantum universal enveloping algebra associated to
  $\mathfrak{sl}_2$.
\newblock {\em J. Algebra}, 330(1):103--129, 2011.

\bibitem{lentner21}
S.~Lentner.
\newblock Quantum groups and {N}ichols algebras acting on conformal field
  theories.
\newblock {\em Adv. Math.}, 378:107517, 2021.

\bibitem{lepowskyli}
J.~Lepowsky and H.~Li.
\newblock {\em Introduction to Vertex Operator Algebras and their
  Representations}.
\newblock Birkh\"auser, 2004.

\bibitem{lusztig89}
G.~Lusztig.
\newblock Modular representations and quantum groups.
\newblock {\em Contemp. Math}, 82(1080):59--78, 1989.

\bibitem{lusztig90}
G.~Lusztig.
\newblock Finite dimensional {H}opf algebras arising from quantized universal
  enveloping algebras.
\newblock {\em J. Amer. Math. Soc.}, 3(1):257--296, 1990.

\bibitem{lusztig93}
G.~Lusztig.
\newblock {\em Introduction to Quantum Groups}.
\newblock Birkh\"auser Boston, 1993.

\bibitem{mcraeyang}
R.~McRae and J.~Yang.
\newblock Structure of {V}irasoro tensor categories at central charge
  $13-6p-6p^{-1}$ for integers $p>1$.
\newblock {\em preprint \href{http://arxiv.org/abs/2011.02170}{\tt
  arXiv:2011.02170}}.

\bibitem{montgomery93}
S.~Montgomery.
\newblock {\em {H}opf algebras and their actions on rings}.
\newblock Number~82. American Mathematical Soc., 1993.

\bibitem{nagatomotsuchiya11}
K.~Nagatomo and A.~Tsuchiya.
\newblock The triplet vertex operator algebra ${W}(p)$ and the restricted
  quantum group $\bar{U}_q(sl2)$ at $q=e^{\pi i/p}$.
\newblock In {\em Exploring new structures and natural constructions in
  mathematical physics}, pages 1--49. Mathematical Society of Japan, 2011.

\bibitem{negron}
C.~Negron.
\newblock Log-modular quantum groups at even roots of unity and the quantum
  {F}robenius {I}.
\newblock {\em Comm. Math. Phys.}, 382(2), 2021.

\bibitem{ostrik08}
V.~Ostrik.
\newblock Module categories over representations of $\operatorname{SL}_q(2)$ in
  the non-semisimple case.
\newblock {\em Geom. Funct. Anal.}, 17(6):2005--2017, 2008.

\bibitem{pearceetal06}
P.~A. Pearce, J.~Rasmussen, and J.-B. Zuber.
\newblock Logarithmic minimal models.
\newblock {\em J. Stat. Mech. Theory Exp.}, 2006(11):P11017, 2006.

\bibitem{rasmussenpearce07}
J.~Rasmussen and P.~A. Pearce.
\newblock Fusion algebras of logarithmic minimal models.
\newblock {\em J. Phys. A}, 40(45):13711--13733, 2007.

\bibitem{rivano06}
N.~S. Rivano.
\newblock {\em Cat\'{e}gories tannakiennes}, volume 265.
\newblock Springer, 2006.

\bibitem{sawin06}
S.~F. Sawin.
\newblock Quantum groups at roots of unity and modularity.
\newblock {\em J. Knot Theory Ramifications}, 15(10):1245--1277, 2006.

\bibitem{sugimoto}
S.~Sugimoto.
\newblock On the {F}eigin-{T}ipunin conjecture.
\newblock {\em preprint \href{http://arxiv.org/abs/2004.05769}{\tt
  arXiv:2004.05769}}.

\bibitem{tsuchiyawood13}
A.~Tsuchiya and S.~Wood.
\newblock The tensor structure on the representation category of the triplet
  algebra.
\newblock {\em J. Phys. A:\ Math. Theor.}, 46(44):445203, 2013.

\bibitem{turaev90}
V.~G. Turaev.
\newblock Operator invariants of tangles, and {R}-matrices.
\newblock {\em Mathematics of the USSR-Izvestiya}, 35(2):411, 1990.

\bibitem{wenzl87}
H.~Wenzl.
\newblock On sequences of projections.
\newblock {\em CR Math. Rep. Acad. Sci. Canada}, 9(1):5--9, 1987.

\bibitem{yamagami}
S.~Yamagami.
\newblock A categorical and diagrammatical approach to {T}emperley-{L}ieb
  algebras.
\newblock {\em available at \href{http://arxiv.org/abs/0405267}{\tt
  arXiv:0405267}}.

\end{thebibliography}

\end{document}